\theoremstyle{plain}
\newtheorem{definition}[equation]{Definition}
\newtheorem{lemma}[equation]{Lemma}
\newtheorem{proposition}[equation]{Proposition}
\newtheorem{theorem}[equation]{Theorem}
\newtheorem{example}{Example}
\newtheorem{notation}{Notation}
\newtheorem{remark}[equation]{Remark}
\numberwithin{equation}{subsection}
\title{Deformations of compact holomorphic Poisson submanifolds}
\author{Chunghoon Kim}
\thanks{The author was partially supported by NRF grant 2011-0027969}
\email{ckim042@gmail.com}            
\begin{document}

\maketitle

\begin{abstract}
In this paper, we study deformations of compact holomorphic Poisson submanifolds  which extend Kodaira's series of papers on semi-regularity (deformations of compact complex submanifolds of codimension $1$) (\cite{Kod59}), deformations of compact complex submanifolds of arbitrary codimensions (\cite{Kod62}), and stability of compact complex submanifolds (\cite{Kod63}) in the context of holomorphic Poisson deformations. We also study simultaneous deformations of holomorphic Poisson structures and holomorphic Poisson submanifolds on a fixed underlying compact complex manifold. In appendices, we present deformations of Poisson closed subschemes in the language of functors of Artin rings which is the algebraic version of deformations of holomorphic Poisson submanifolds. We identify first-order deformations and obstructions.
\end{abstract}

\tableofcontents

\section{Introduction}

In this paper, we study deformations of compact holomorphic Poisson submanifolds which extend Kodaira's series of papers on semi-regularity (deformations of compact complex submanifols of codimension $1$) (\cite{Kod59}), deformations of compact complex submanifolds of arbitrary codimensions (\cite{Kod62}), and stability of compact complex submanifolds (\cite{Kod63}) in the context of holomorphic Poisson deformations. We will review deformation theory of compact complex submainfolds presented in \cite{Kod59},\cite{Kod62},\cite{Kod63}, and explain how the theory can be extended in the context of holomorphic Poisson deformations.

Let us review deformations of compact complex submanifolds of codimension $1$ of a complex manifold presented in \cite{Kod59} where  Kodaira-Spencer proved the theorem of completeness of characteristic systems of complete continuous systems of semi-regular complex submanifolds of codimension $1$. For the precise statement, we recall the definitions of a complex analytic family of compact complex submanifolds of codimension $1$, and maximality (or completeness) of a complex analytic family:
\begin{definition}\label{003}
Let $W$ be a complex manifold\footnote{In this paper, all manifolds under consideration are paracompact and connected.} of dimension $n+1$. We denote a point in $W$ by $w$ and a local coordinate of $w$ by $(w^1,...,w^{n+1})$. By a complex analytic family of compact complex submanifolds of codimension $1$ of $W$, we mean a complex submanifold $\mathcal{V}\subset W\times M$ of codimension $1$ where $M$ is a complex manifold, such that $V_t\times t:=\omega^{-1}(t)=\mathcal{V}\cap \pi^{-1}(t)$ for each point $t\in M$ is a connected compact complex submanifold of $W\times t$, where $\omega:\mathcal{V}\to M$ is the map induced from the canonical projection $\pi:W\times M\to M$, and for each point $p\in \mathcal{V}$, there is a holomorphic function $S(w,t)$ on a neighborhood $\mathcal{U}_p$ of $p$ in $W\times M$ such that $\sum_{\alpha=1}^{n+1}|\frac{\partial S(w,t)}{\partial w^\alpha}|^2\ne 0$ at each point in $\mathcal{U}_p\cap \mathcal{V}$, and $\mathcal{U}_p\cap \mathcal{V}$ is defined by $S(w,t)=0$.
\end{definition}

\begin{definition}\label{004}
Let $\mathcal{V}\subset W\times M \xrightarrow{\omega} M$ be a complex analytic family of compact complex submanifolds of $W$ of codimension $1$ and let $t_0$ be a point on $M$. We say that $\mathcal{V}\xrightarrow{\omega} M$ is maximal at $t_0$ if, for any complex analytic family $\mathcal{V}'\subset W\times M'\xrightarrow{\omega'} M'$ of compact complex submanifolds of $W$ of codimension $1$ such that $\omega^{-1}(t_0)=\omega'^{-1}(t_0'),t_0'\in M'$, there exists a holomorphic map $h$ of a neighborhood $N'$ of $t_0'$ on $M'$ into $M$ which maps $t_0'$ to $t_0$ such that $\omega'^{-1}(t')=\omega^{-1}(h(t'))$ for $t'\in N'$. We note that if we set a holomorphic map $\hat{h}:W\times N'\to W\times M$ defined by $(w,t')\to (w,h(t'))$, then the restriction map of $\hat{h}$ to $\mathcal{V}'|_{N'}=\omega'^{-1}(N')\subset W\times N'$ defines a holomorphic map $\mathcal{V}'|_{N'}\to \mathcal{V}$ so that $\mathcal{V}'|_{N'}$ is the family induced from $\mathcal{V}$ by $h$, which means $\mathcal{V}\xrightarrow{\omega} M$ is complete at $t_0$.
\end{definition}
Given a complex analytic family $\mathcal{V}\subset W\times M\xrightarrow{\omega} M$ of compact complex submanifolds of codimension $1$, each fibre $V_t=\omega^{-1}(t)$ of $\mathcal{V}$ for $t\in M$ defines a complex line bundle $\mathcal{N}_t$ on $W$. Then infinitesimal deformations of $V_t$ in the family $\mathcal{V}$ are encoded in the cohomology group $H^0(V_t, \mathcal{N}_t|_{V_t})$, and we can define the characteristic map (see \cite{Kod59} p.479-480)
\begin{align*}
\rho_{d,t}:T_t M\to H^0(V_t, \mathcal{N}_t|_{V_t})
\end{align*}
In \cite{Kod59}, Kodaira-Spencer defined a concept of semi-regularity, and showed that the semi-regularity is `the right condition' for `theorem of existence' and thus   `theorem of completeness' for deformations of compact complex submanifolds of codimension $1$ as follows.

\begin{definition}
Let $V_0$ be a compact complex submanifold of $W$ of codimension $1$ and $\mathcal{N}_0$ be the complex line bundle over $W$ determined by $V_0$. Let $r_0:\mathcal{N}_0\to \mathcal{N}_0|_{V_0}$ be the restriction map which induces a homomorphism $r_0^*:H^1(W,\mathcal{N}_0)\to H^1(V_0, \mathcal{N}_0|_{V_0})$. We say that $V_0$ is semi-regular if $r_0^* H^1(W,\mathcal{N}_0)$ is zero.
\end{definition}

\begin{theorem}[theorem of existence]\label{001}
If $V_0$ is semi-regular, then there exists a complex analytic family $\mathcal{V}\subset W\times M\xrightarrow{\omega} M$ of compact complex submanifolds of $W$ containing $V_0$ as the fibre $\omega^{-1}(0)$ over $0\in M$ such that the characteristic map
\begin{align*}
\rho_{d,0}:T_0 M \to H^0(V_0,\mathcal{N}_0|_{V_0})
\end{align*}
is an isomorphism.
\end{theorem}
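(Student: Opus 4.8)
The plan is to build the family $\mathcal{V}$ by deforming the local defining equations of $V_0$ and to control the construction through the cohomology of the normal bundle $N := \mathcal{N}_0|_{V_0}$. Choose a finite cover $\{U_j\}$ of $W$ by coordinate polydiscs on which $V_0\cap U_j$ is cut out by a single holomorphic function $f_j$ with $df_j\neq 0$ along $V_0$, so that on overlaps $f_j = g_{jk}f_k$ with $\{g_{jk}\}$ the nowhere-vanishing transition cocycle of $\mathcal{N}_0$. A family over a neighborhood $M$ of $0$ in $\mathbb{C}^m$, $m=\dim H^0(V_0,N)$, amounts to holomorphic functions $f_j(w,t)$ with $f_j(w,0)=f_j(w)$ satisfying $f_j(w,t)=g_{jk}(w,t)f_k(w,t)$ for units $g_{jk}(w,t)$; the common zero locus is then $\mathcal{V}$, and since $f_k$ is not a zero divisor the cocycle relation for $\{g_{jk}(w,t)\}$ is automatic, so $\mathcal{N}_t$ is a genuine line bundle on each fibre.

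I would solve this equation as a power series in $t$, order by order. Writing $f_j=\sum_\nu f_j^{(\nu)}$ and $g_{jk}=\sum_\nu g_{jk}^{(\nu)}$ graded by degree in $t$, the degree-$\mu$ part of the gluing relation reads
\[
f_j^{(\mu)} - g_{jk}f_k^{(\mu)} - g_{jk}^{(\mu)}f_k = \Gamma_{jk}^{(\mu)}, \qquad \Gamma_{jk}^{(\mu)}=\sum_{0<\lambda<\mu} g_{jk}^{(\lambda)}f_k^{(\mu-\lambda)},
\]
where $\Gamma_{jk}^{(\mu)}$ depends only on previously constructed terms. For the first-order term I would set $f_j^{(1)}=\sum_{i=1}^m t_i\,\phi_{i,j}$ with $\{\phi_{i,j}|_{V_0}\}$ a fixed basis $\sigma_1,\dots,\sigma_m$ of $H^0(V_0,N)$; this forces the characteristic map to send $\partial/\partial t_i$ to $\sigma_i$, hence to be an isomorphism once the construction yields a smooth $m$-dimensional base. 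Restricting the degree-$\mu$ equation to $V_0$ (where the $g_{jk}^{(\mu)}f_k$ term drops) leaves the Čech equation $\psi_j^{(\mu)}-g_{jk}\psi_k^{(\mu)}=\Gamma_{jk}^{(\mu)}|_{V_0}$ for $\psi_j^{(\mu)}=f_j^{(\mu)}|_{V_0}$; a direct check using the lower-order solved relations and the cocycle identity for $g$ shows $\{\Gamma_{jk}^{(\mu)}|_{V_0}\}$ is an $N$-valued $1$-cocycle, whose class is the obstruction in $H^1(V_0,N)$.

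The conceptual crux is that this obstruction vanishes, and this is precisely where semi-regularity enters. The key point is that the functions $\Gamma_{jk}^{(\mu)}$ are defined on all of $W$, not merely on $V_0$; computing $\delta\Gamma^{(\mu)}$ on triple overlaps gives a multiple of $f$, so in the long exact sequence of
\[
0 \to \mathcal{O}_W \xrightarrow{\;\cdot f\;} \mathcal{N}_0 \xrightarrow{\;r_0\;} \mathcal{N}_0|_{V_0} \to 0
\]
the connecting homomorphism $H^1(V_0,N)\to H^2(W,\mathcal{O}_W)$ annihilates the obstruction. Hence the obstruction lies in the image of $r_0^*\colon H^1(W,\mathcal{N}_0)\to H^1(V_0,N)$, which is zero by the hypothesis $r_0^*H^1(W,\mathcal{N}_0)=0$. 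The restricted equation is then solvable for $\psi_j^{(\mu)}$; lifting each $\psi_j^{(\mu)}$ arbitrarily to $f_j^{(\mu)}$ on $U_j$ makes $f_j^{(\mu)}-g_{jk}f_k^{(\mu)}-\Gamma_{jk}^{(\mu)}$ vanish along $V_0$, hence divisible by $f_k$, which defines $g_{jk}^{(\mu)}$ and closes the induction, producing a formal power-series solution.

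The remaining, most technical step is convergence. Here I would remove the ambiguity in $\psi_j^{(\mu)}$ (a global section of $N$) at each stage by passing to Dolbeault representatives and choosing the solution orthogonal to the harmonic space via the Green operator $\bar\partial^{*}G$ of a fixed Hermitian metric on $N$; the resulting elliptic estimate $\|\psi^{(\mu)}\|\le C\,\|\Gamma^{(\mu)}\|$ then feeds a standard majorant-series argument, as in Kodaira--Spencer, to show the $f_j(w,t)$ converge on a neighborhood of $0$ and define an honest complex analytic family. I expect this convergence estimate to be the hard part: the obstruction-killing via semi-regularity is decisive but essentially formal, whereas producing uniform \emph{a priori} bounds compatible with the nonlinear recursion $\Gamma_{jk}^{(\mu)}=\sum_{0<\lambda<\mu} g_{jk}^{(\lambda)}f_k^{(\mu-\lambda)}$ demands the careful harmonic-theoretic bookkeeping that forms the technical heart of the argument.
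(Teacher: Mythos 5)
Your overall architecture coincides with the one the paper uses (for the Poisson analogue, Theorem \ref{jj20}, following \cite{Kod59}): expand the defining functions and transition functions in powers of $t$, prescribe the linear term by a basis of $H^0(V_0,\mathcal{N}_0|_{V_0})$, exhibit the degree-$\mu$ failure of the gluing relation restricted to $V_0$ as a $1$-cocycle with values in $\mathcal{N}_0|_{V_0}$, kill it using semi-regularity, and close with a majorant-series convergence argument. However, there is a genuine gap at the decisive step. You assert that because $\delta\Gamma^{(\mu)}$ on triple overlaps is a multiple of $f$, the connecting homomorphism $\delta\colon H^1(V_0,\mathcal{N}_0|_{V_0})\to H^2(W,\mathcal{O}_W)$ annihilates the obstruction. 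That divisibility is automatic — it is forced by the fact that $\Gamma^{(\mu)}|_{V_0}$ is a cocycle on $V_0$ — and it is precisely the \emph{recipe} for computing $\delta$, not a proof that $\delta$ vanishes on the class: by definition $\delta[\Gamma^{(\mu)}|_{V_0}]$ is the class of the quotient $(\delta\Gamma^{(\mu)})/f$ in $H^2(W,\mathcal{O}_W)$, which need not be zero. Concretely, writing the paper's notation, one computes
\begin{align*}
\Gamma^{(\mu)}_{ij}+f_{ij|0}\,\Gamma^{(\mu)}_{jk}-\Gamma^{(\mu)}_{ik}
=\bigl[\,(f_{ij}^{\mu}f_{jk}^{\mu}-f_{ik}^{\mu})\,S_k^{\mu}\,\bigr]_{\mu+1}
=\bigl[f_{ij}^{\mu}f_{jk}^{\mu}-f_{ik}^{\mu}\bigr]_{\mu+1}\cdot S_{k|0},
\end{align*}
so the connecting image is represented by the degree-$(\mu+1)$ failure of the \emph{truncated} transition functions to satisfy the multiplicative cocycle identity. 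Showing that this $\mathcal{O}_W$-valued $2$-cocycle is a coboundary is the real content of the step, and nothing in your argument addresses it. This is exactly what Kodaira--Spencer's Lemma 1 (\cite{Kod59}, p.488; reproduced in subsection \ref{mm18}, equation $(\ref{23c})$) supplies: one replaces $f_{ik}^{\mu}$ by the \emph{exact} multiplicative cocycle $\hat f_{ik}^{\mu+1}=f_{ik|0}\exp g_{ik}^{\mu}$ (with $\{g_{ik}^\mu\}$ an exact additive cocycle agreeing with $\log(f_{ik}^\mu/f_{ik|0})$ up to degree $\mu$), and then $\eta_{ik}\equiv_{\mu+1}\hat f_{ik}^{\mu+1}S_k^{\mu}-S_i^{\mu}$ is an honest $\mathcal{N}_0$-valued $1$-cocycle on all of $W$ (the paper's $(\ref{23f})$) whose restriction to $V_0$ is the obstruction $(\ref{our1})$. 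Only then does semi-regularity, $r_0^*H^1(W,\mathcal{N}_0)=0$, kill the obstruction — equivalently, only then do you know the obstruction lies in $\ker\delta=\mathrm{im}\,r_0^*$. Without this lemma (or an equivalent correction of the transition data), your induction cannot proceed.

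Two smaller remarks. First, the elementary parts of your setup are fine: the automatic cocycle identity for $\{g_{jk}(w,t)\}$ when the gluing holds exactly, the cocycle property of $\Gamma^{(\mu)}|_{V_0}$, and the identification of the characteristic map. Second, for convergence you propose Hodge theory ($\bar\partial^*G$ with respect to a Hermitian metric) to get the a priori bound on the solution of the coboundary equation; the paper instead proves this bound (Lemma \ref{1lemma}) by a normal-families contradiction argument and then runs explicit Cauchy-estimate majorants. Your route is viable in principle for compact $V_0$, but it is a different (and not obviously lighter) mechanism, and in any case it does not repair the gap above, which is formal rather than analytic.
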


\begin{theorem}[theorem of completeness]\label{002}
Let $\mathcal{V}\subset W\times M \xrightarrow{\omega} M$ be a complex analytic family of compact complex submanifolds of $W$ of codimension $1$. If the characteristic map
\begin{align*}
\rho_{d,0}:T_0 M\to H^0(V_0,\mathcal{N}_0|_{V_0})
\end{align*}
 is an isomorphism, then the family $\mathcal{V}\xrightarrow{\omega} M$ is maximal at the point $t=0$.
\end{theorem}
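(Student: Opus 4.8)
The plan is to realize both families locally as graphs of sections of the normal bundle of $V_0$ and then to invert the characteristic map. First I would fix a tubular neighborhood $T$ of the compact submanifold $V_0 = \omega^{-1}(0)$ in $W$; since $V_0$ has codimension $1$, the holomorphic normal bundle $\mathcal{N}_0|_{V_0}$ is a line bundle, and $T$ is biholomorphic to a neighborhood of the zero section of $\mathcal{N}_0|_{V_0}$. Any codimension-$1$ submanifold of $W$ that is $C^1$-close to $V_0$ and contained in $T$ is then the graph of a unique holomorphic section of $\mathcal{N}_0|_{V_0}$. Because the fibres of a complex analytic family vary holomorphically and $\omega^{-1}(0) = \omega'^{-1}(t_0') = V_0$, after shrinking $M$ and $M'$ around $0$ and $t_0'$ every fibre $V_t$ and $V_{t'}'$ lies in $T$ and is such a graph. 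This produces holomorphic maps
\[
\sigma : M \to H^0(V_0, \mathcal{N}_0|_{V_0}), \qquad \sigma' : M' \to H^0(V_0, \mathcal{N}_0|_{V_0}),
\]
with $\sigma(0) = \sigma'(t_0') = 0$ (the zero section corresponds to $V_0$ itself), sending a parameter to the section whose graph is the corresponding fibre.

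The key step is to identify the differential $d\sigma_0 : T_0 M \to H^0(V_0, \mathcal{N}_0|_{V_0})$ with the characteristic map $\rho_{d,0}$. This is a local computation: writing a fibre $V_t$ locally as $\{S(w,t) = 0\}$, differentiating in $t$ and projecting the normal component of $\partial S/\partial t$ onto $\mathcal{N}_0|_{V_0}$ recovers, on the one hand, the linearization of $\sigma$ and, on the other hand, the definition of $\rho_{d,0}$ from \cite{Kod59}. Granting this, the hypothesis that $\rho_{d,0}$ is an isomorphism says $d\sigma_0$ is an isomorphism, so by the holomorphic inverse function theorem $\sigma$ restricts to a biholomorphism from a neighborhood of $0$ in $M$ onto a neighborhood $U$ of $0$ in $H^0(V_0, \mathcal{N}_0|_{V_0})$.

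With this in hand I would define the desired map by $h := \sigma^{-1} \circ \sigma'$. Since $\sigma'$ is holomorphic and $\sigma'(t_0') = 0$, there is a neighborhood $N'$ of $t_0'$ with $\sigma'(N') \subset U$, so $h : N' \to M$ is holomorphic and $h(t_0') = 0$. For $t' \in N'$ we have $\sigma(h(t')) = \sigma'(t')$; because a section determines its graph uniquely, the fibre $V_{h(t')}$ and the fibre $V_{t'}'$ are the same submanifold, i.e. $\omega'^{-1}(t') = \omega^{-1}(h(t'))$. By Definition~\ref{004} this exhibits $\mathcal{V}'|_{N'}$ as the family induced from $\mathcal{V}$ by $h$, which proves maximality (completeness) at $t = 0$.

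The step I expect to be the main obstacle is the rigorous construction of the section maps $\sigma, \sigma'$ together with the identification $d\sigma_0 = \rho_{d,0}$: one must check that the graph representation is valid uniformly over a neighborhood of the base point (so that $\sigma$ and $\sigma'$ are genuinely defined and holomorphic, using compactness of $V_0$ and continuity of the family), and that the normal-bundle bookkeeping matches Kodaira--Spencer's definition of the characteristic map. Everything after that is a formal application of the inverse function theorem; notably, semi-regularity of $V_0$ plays no role here, as it is needed only for the existence Theorem~\ref{001}.
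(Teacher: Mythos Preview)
Your argument has a genuine gap at the very first step. You claim that a tubular neighborhood $T$ of $V_0$ in $W$ is biholomorphic to a neighborhood of the zero section in the total space of $\mathcal{N}_0|_{V_0}$. This is the holomorphic tubular neighborhood theorem, and unlike its $C^\infty$ counterpart it is \emph{false} in general: there are well-known cohomological obstructions (going back to Grauert and Griffiths) to linearizing a holomorphic neighborhood of a compact complex submanifold. Without such a biholomorphism, your map $\sigma:M\to H^0(V_0,\mathcal{N}_0|_{V_0})$ is not well defined. Nearby hypersurfaces can of course be written locally as graphs $w_i=\phi_i(z_i,t)$ in adapted charts, but these $\phi_i$ transform under the full nonlinear coordinate changes of $W$, not under the linearized transition functions $F_{ik}(z)=\partial f_{ik}/\partial w_k|_{w_k=0}$ of the normal bundle; they do not glue to a global section of $\mathcal{N}_0|_{V_0}$. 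Only their first-order parts in $t$ do, which is exactly the content of the characteristic map, not of a finite map $\sigma$.

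Kodaira's proof (and the Poisson version in Theorem~\ref{012} of the paper) circumvents this by never leaving the level of local defining equations. One seeks $h(s)$ together with nowhere-vanishing local functions $f_i(p,s)$ satisfying $f_i(p,s)R_i(p,s)=S_i(p,h(s))$, expands everything as formal power series in $s$, and solves the resulting congruences degree by degree. At degree $\mu$ the discrepancy $\Gamma_{i|\mu}$ restricts on $V_0$ to an element of $H^0(V_0,\mathcal{N}_0|_{V_0})$; the hypothesis that $\rho_{d,0}$ is an isomorphism lets one choose $h_{|\mu}(s)$ to kill it, and a separate majorant-series estimate gives convergence. In effect the inductive construction plus convergence proof \emph{replaces} the inverse function theorem you wanted to invoke, and it works precisely because at each finite order only the normal bundle (the linearization) is seen. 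If you want to run a genuine inverse-function-theorem argument, you would need either an infinite-dimensional (Banach-analytic) framework or the existence of the Douady space, both of which are far heavier than what the theorem requires.
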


In section \ref{section2}, we extend the concept of semi-regularity and prove an analogue of theorem of existence (Theorem \ref{001}) and an analogue of theorem of completeness (Theorem \ref{002})  in the context of holomorphic Poisson deformations. A holomorphic Poisson manifold $W$ is a complex manifold whose structure sheaf is a sheaf of Poisson algebras.\footnote{We refer to \cite{Lau13} for general information on Poisson geometry} A holomorphic Poisson structure on $W$ is encoded in a holomorphic section (a holomorphic bivector field) $\Lambda_0\in H^0(W,\wedge^2 T_W)$ with $[\Lambda_0,\Lambda_0]=0$, where $T_W$ is the sheaf of germs of holomorphic vector fields, and the bracket $[-,-]$ is the Schouten bracket on $W$. In the sequel a holomorphic Poisson manifold will be denoted by $(W,\Lambda_0)$. Let $V$ be a complex submanifold of a holomorphic Poisson manifold $(W,\Lambda_0)$ and let $i:V\hookrightarrow W$ be the embedding. Then $V$ is called a holomorphic Poisson submanifold of $(W,\Lambda_0)$ if $V$ is a holomorphic Poisson manifold  and the embedding $i$ is a Poisson map with respect to the holomorphic Poisson structures. Then the holomorphic Poisson structure on $V$ is unique. Equivalently a holomorphic Poisson submanifold $V$ of $(W,\Lambda_0)$ can be characterized in the following way: let $V$ be covered by coordinate neighborhoods $W_i, i\in I$ in $W$. We choose a local coordinate $(w_i, z_i):=(w_i^1,...,w_i^r, z_i^1,...,z_i^d)$ on each neighborhood $W_i$ such that $w_i^1=\cdots=w_i^r=0$ defines $V\cap W_i$. Then $V$ is a holomorphic Poisson submanifold of $(W,\Lambda_0)$ if the restriction $[\Lambda_0,w_i^\alpha]|_{V\cap U_i}$ of $[\Lambda_0, w_i^\alpha]$ to $V\cap U_i$ is $0$, i.e. $[\Lambda_0, w_i^\alpha]|_{V\cap U_i}:=[\Lambda_0, w_i^\alpha]|_{w_i=0}=0,\alpha=1,...,r$, or $[\Lambda_0, w_i^\alpha]$ is of the form: $[\Lambda_0, w_i^\alpha]=\sum_{\beta=1}^r w_i^\beta T_{i\alpha}^\beta(w_i,z_i)$ for some $T_{i\alpha}^\beta(w_i,z_i)\in \Gamma(W_i, T_W)$.

Now we explain how we can extend the theory of deformations of compact complex submanifolds of codimension $1$ to the theory of deformations of compact holomorphic Poisson submanifolds of a holomorphic Poisson manifold of codimension $1$. We extend Definition \ref{003} and Definition \ref{004} to define concepts of a family of compact holomorphic Poisson submanifolds of codimension $1$ (see Definition \ref{2d}), and maximality (or completeness) of a family of compact holomorphic Poisson submanifolds (see Definition \ref{005}).

\begin{definition}\label{00023}
Let $(W,\Lambda_0)$ be a holomorphic Poisson manifold of dimension $n+1$. We denote a point in $W$ by $w$ and a local coordinate of $w$ by $(w^1,...,w^{n+1})$. By a Poisson analytic family of compact holomorphic Poisson submanifolds of codimension $1$ of $(W,\Lambda_0)$, we mean a holomorphic Poisson submanifold $\mathcal{V}\subset (W\times M,\Lambda_0)$ of codimension $1$ where $M$ is a complex manifold and $\Lambda_0$ is the holomorphic Poisson structure on $W\times M$ induced from $(W,\Lambda_0)$, such that $V_t:=\omega^{-1}(t)=\mathcal{V}\cap \pi^{-1}(t)$ for each point $t\in M$ is a connected compact holomorphic Poisson submanifold of $(W\times t,\Lambda_0)$, where $\omega:\mathcal{V}\to M$ is the map induced from the canonical projection $\pi:W\times M\to M$, and for each point $p\in \mathcal{V}$, there is a holomorphic function $S(w,t)$ on a neighborhood $\mathcal{U}_p$ of $p$ in $W\times M$ such that $\sum_{\alpha=1}^{n+1}|\frac{\partial S(w,t)}{\partial w^\alpha}|^2\ne 0$ at each point in $ \mathcal{U}_p \cap \mathcal{V}$, and $\mathcal{U}_p\cap \mathcal{V}$ is defined by $S(w,t)=0$.

\end{definition}

\begin{definition}\label{020}
Let $\mathcal{V}\subset (W\times M,\Lambda_0)\xrightarrow{\omega} M$ be a Poisson analytic family of compact holomorphic Poisson submanifolds of $(W,\Lambda_0)$ of codimension $1$ and let $t_0$ be a point on $M$. We say that $\mathcal{V}\xrightarrow{\omega} M$ is maximal at $t_0$ if, for any Poisson analytic family $\mathcal{V}'\subset (W\times M',\Lambda_0)\xrightarrow{\omega'} M'$ of compact holomorphic Poisson submanifolds of $(W,\Lambda_0)$ of codimension $1$ such that $\omega^{-1}(t_0)=\omega'^{-1}(t_0'),t_0'\in M'$, there exists a holomorphic map $h$ of a neighborhood $N'$ of $t_0'$ on $M'$ into $M$ which maps $t_0'$ to $t_0$ such that $\omega'^{-1}(t')=\omega^{-1}(h(t'))$ for $t'\in N'$. We note that if we set a Poisson map $\hat{h}:(W\times N',\Lambda_0)\to (W\times M ,\Lambda_0)$ defined by $(w,t')\to (w,h(t'))$, then the restriction map of $\hat{h}$ to $\mathcal{V}'|_{N'}=\omega'^{-1}(N')\subset (W\times N',\Lambda_0)$ defines a Poisson map $\mathcal{V}'|_{N'}\to \mathcal{V}$ so that $\mathcal{V}'|_{N'}$ is the family induced from $\mathcal{V}$ by $h$, which means $\mathcal{V}\xrightarrow{\omega} M$ is complete at $t_0$. 
\end{definition}
Given a Poisson analytic family $\mathcal{V}\subset (W\times M, \Lambda_0)\to M$ of compact holomorphic Poisson submanifolds of codimension $1$, each fibre $V_t=\omega^{-1}(t)$ of $\mathcal{V}$ for $t\in M$ defines a Poisson line bundle $(\mathcal{N}_t,\nabla_t)$ on $(W,\Lambda_0)$, where $\nabla_t$ is the Poisson connection on $\mathcal{N}_t$ which defines the Poisson line bundle structure (see \cite{Kim16}) so that we have a complex of sheaves on $W$ (see \cite{Kim16})
\begin{align*}
\mathcal{N}_t^\bullet:\mathcal{N}_t\xrightarrow{\nabla_t} \mathcal{N}_t\otimes T_W\xrightarrow{\nabla_t} \mathcal{N}_t\otimes \wedge^2 T_W\xrightarrow{\nabla_t} \cdots
\end{align*}
We will denote the $i$-th hypercohomology group by $\mathbb{H}^i(W,N_t^\bullet)$. We note that $\mathcal{N}_t^\bullet$ induces, by restriction on $V_t$, the complex of sheaves on $V_t$
\begin{align*}
\mathcal{N}_t^\bullet|_{V_t}:\mathcal{N}_t|_{V_t}\xrightarrow{\nabla_t|_{V_t}} \mathcal{N}_t|_{V_t}\otimes T_W|_{V_t}\xrightarrow{\nabla_t|_{V_t}} \mathcal{N}_t|_{V_t}\otimes \wedge^2 T_W|_{V_t}\xrightarrow{\nabla_t|_{V_t}} \cdots
\end{align*}
We will denote the $i$-th hypercohomology group by $\mathbb{H}^i(V_t,\mathcal{N}_t^\bullet |_{V_t})$. Then infinitesimal deformations of $V_t$ in the family $\mathcal{V}$ are encoded in the cohomology group $\mathbb{H}^0(V_t, \mathcal{N}_t^\bullet |_{V_t})$, and we can define the characteristic map (see subsection \ref{009})
\begin{align*}
\rho_{d,t}: T_t M\to \mathbb{H}^0(V_t, \mathcal{N}_t^\bullet |_{V_t})
\end{align*}

As in the concept of semi-regularity in \cite{Kod59}, we similarly define a concept of Poisson semi-regularity and show that Poisson semi-regularity (see Definition \ref{010}) implies `theorem of existence' (see Theorem \ref{jj20}) and thus `theorem of completeness'  (see Theorem \ref{012}) for deformations of compact holomorphic Poisson submanifolds of codimension $1$ as follows.
\begin{definition}
Let $V_0$ be a compact holomorphic Poisson submanifold of a holomorphic Poisson manifold $(W,\Lambda_0)$ of codimension $1$ and let $(\mathcal{N}_0,\nabla_0)$ be the Poisson line bundle over $(W,\Lambda_0)$ determined by $V_0$. We denote by $r_0:\mathcal{N}^\bullet\to \mathcal{N}^\bullet|_{V_0}$ the restriction map of the following complex of sheaves
\begin{center}
$\begin{CD}
\mathcal{N}_0^\bullet: @.\mathcal{N}_0@>\nabla_0>> \mathcal{N}_0\otimes T_W@>\nabla_0>> \mathcal{N}_0\otimes \wedge^2 T_W@>\nabla_0>>\cdots\\
@.@VVV @VVV @VVV \\
\mathcal{N}_0^\bullet |_{V_0}: @. \mathcal{N}_0|_{V_0}@>\nabla_0|_{V_0}>> \mathcal{N}_0|_{V_0}\otimes T_W|_{V_0}@>\nabla_0|_{V_0}>> \mathcal{N}_0|_{V_0}\otimes \wedge^2 T_W|_{V_0}@>\nabla_0|_{V_0}>>\cdots
\end{CD}$
\end{center}
which induces a homomorphism $r_0^*:\mathbb{H}^1(W,\mathcal{N}_0^\bullet)\to \mathbb{H}^1(V_0,\mathcal{N}_0^\bullet |_{V_0})$. We say that $V_0$ is Poisson semi-regular if the image $r_0^* \mathbb{H}^1(W,\mathcal{N}_0^\bullet)$ is zero.
\end{definition}

\begin{theorem}[Theorem of existence]
If $V_0$ is Poisson semi-regular, then there exists a Poisson analytic family $\mathcal{V}\subset (W\times M,\Lambda_0)\xrightarrow{\omega} M$ of compact holomorphic Poisson submanifolds of $(W,\Lambda_0)$ containing $V_0$ as the fibre $\omega^{-1}(0)$ over $0\in M$ such that the characteristic map
\begin{align*}
\rho_{d,0}:T_0 M\to \mathbb{H}^0(V_0,\mathcal{N}_0^\bullet |_{V_0})
\end{align*}
ia an isomorphism.
\end{theorem}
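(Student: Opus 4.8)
The plan is to adapt Kodaira--Spencer's power series construction behind the classical existence theorem (Theorem \ref{001}), systematically replacing the line bundle cohomology $H^\bullet(W,\mathcal{N}_0)$, $H^\bullet(V_0,\mathcal{N}_0|_{V_0})$ by the hypercohomology of the Poisson complexes $\mathcal{N}_0^\bullet$, $\mathcal{N}_0^\bullet|_{V_0}$, so that the deformation of the submanifold and its Poisson compatibility are produced \emph{simultaneously}. Fix a basis $\theta_1,\dots,\theta_m$ of $\mathbb{H}^0(V_0,\mathcal{N}_0^\bullet|_{V_0})$ and let $t=(t_1,\dots,t_m)$ range over a small polydisc $M\subset\mathbb{C}^m$. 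Choosing a finite cover $\{W_i\}$ of $W$ adapted to $V_0$, with $V_0\cap W_i=\{w_i^1=0\}$, I would seek local defining functions $S_i(w,t)=w_i^1+\sum_{|\mu|\ge 1}S_{i,\mu}(w)\,t^\mu$ satisfying two coupled families of conditions: the overlap conditions on $W_i\cap W_j$ (controlled by the transition data of $\mathcal{N}_0$) that make the zero loci glue to a codimension-one submanifold $V_t$, and the Poisson condition that $[\Lambda_0,S_i](w,t)$ be divisible by $S_i(w,t)$, which is exactly the datum recorded by the differential $\nabla_0=[\Lambda_0,-]$ of the complex $\mathcal{N}_0^\bullet$. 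Prescribing the first-order terms by the classes $\theta_k$ forces $\rho_{d,0}(\partial/\partial t_k)=\theta_k$, so that the characteristic map is an isomorphism by construction once the family exists.

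The recursive heart of the formal construction is standard in outline. Assuming $S_i$ has been determined modulo degree $\mu$ in $t$, the degree-$\mu$ parts of the overlap and Poisson conditions become a single linear inhomogeneous equation for the unknowns $S_{i,\mu}$, whose right-hand side is a universal polynomial in the lower-order data (and, through the Schouten bracket, in their first derivatives). Assembling the overlap and Poisson discrepancies into a Čech--hypercohomology representative, the obstruction to solving at order $\mu$ is a class in $\mathbb{H}^1(V_0,\mathcal{N}_0^\bullet|_{V_0})$. The point is that this inhomogeneous term is manufactured from quantities defined on all of $W$ and only afterwards restricted to $V_0$; hence the obstruction is the image under $r_0^*$ of a class in $\mathbb{H}^1(W,\mathcal{N}_0^\bullet)$. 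Poisson semi-regularity asserts precisely that $r_0^*\mathbb{H}^1(W,\mathcal{N}_0^\bullet)=0$, so the obstruction vanishes, the order-$\mu$ equation is solvable, and the induction continues to all orders.

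To upgrade the formal power series to a convergent family I would run the Kodaira--Spencer elliptic/majorant scheme. Passing to Dolbeault-type resolutions turns $\mathcal{N}_0^\bullet|_{V_0}$ into an elliptic complex on $V_0$; fixing Hölder norms and the Green's (homotopy) operator of its total Laplacian, one makes a canonical choice of $S_{i,\mu}$ at each step together with a quantitative bound. An induction on $\mu$ then shows the chosen coefficients satisfy a majorant recursion dominated by a convergent power series, whence $S_i(w,t)$ converges for $|t|$ small. The zero loci $V_t$ are then genuine compact holomorphic Poisson submanifolds of $(W\times t,\Lambda_0)$ depending holomorphically on $t$, and $\mathcal{V}=\bigcup_t V_t$ is the desired Poisson analytic family.

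I expect the main obstacle to be this convergence step. Two features distinguish it from the classical divisor case: the Poisson differential $\nabla_0=[\Lambda_0,-]$ is a genuine first-order operator, so the nonlinear terms in the recursion carry derivatives of the previously constructed data and one must control an apparent loss of derivatives; and the governing object is the hypercohomology of a \emph{complex} of sheaves, so the elliptic theory, Laplacian, and bounded homotopy operator must be set up on the total double complex coming from the Dolbeault resolution rather than on a single twisted $\bar\partial$-complex. Producing a homotopy operator with norm estimates compatible with the Schouten bracket, and verifying that the bracket terms do not spoil the majorant bound, is the technical crux; by contrast the obstruction-killing via semi-regularity is a faithful translation of \cite{Kod59} once the passage from $H^\bullet$ to $\mathbb{H}^\bullet$ has been installed.
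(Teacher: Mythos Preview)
Your formal construction matches the paper's essentially verbatim: the paper builds power series $S_i(p,t)$, $f_{ik}(p,t)$, $T_i(p,t)$ inductively, identifies the degree-$(\mu+1)$ obstruction as a class $(\psi_{\mu+1}(t),W_{\mu+1}(t))\in\mathbb{H}^1(V_0,\mathcal{N}_0^\bullet|_{V_0})$, and then constructs an explicit lift $(\eta(t),\omega(t))\in\mathbb{H}^1(W,\mathcal{N}_0^\bullet)$ whose restriction is this obstruction, so Poisson semi-regularity kills it. Your description of this step is accurate.

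The genuine divergence is in the convergence proof. You propose elliptic Hodge theory on the Dolbeault resolution of $\mathcal{N}_0^\bullet|_{V_0}$, with a bounded Green/homotopy operator and H\"older estimates, and you correctly flag the loss-of-derivatives issue coming from the first-order operator $[\Lambda_0,-]$. The paper instead stays entirely within the classical Kodaira majorant framework: it controls the derivative in $[\Lambda_0,S_i^\mu-S_{i|0}]$ by Cauchy's integral formula on a $\delta$-shrunken polycylinder $U_i^\delta$ (so the loss of one derivative costs only a factor $1/\delta$ in the majorant), and replaces your Green's-operator bound by an elementary compactness lemma (Lemma~\ref{1lemma}) of Montel type showing that any $1$-coboundary $(\psi,W)$ in the \v{C}ech--hypercohomology complex admits a $0$-cochain preimage with $\|\phi\|\le c\,\|(\psi,W)\|$ for a constant $c$ independent of $\mu$. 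This buys a much lighter analytic setup---no elliptic theory on a complex of sheaves is needed---at the price of tracking explicit constants through the recursion; your route would be heavier to set up but more conceptual and closer to Kuranishi-style arguments. Either approach should succeed, but the paper's is the more direct continuation of \cite{Kod59}.
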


\begin{theorem}[Theorem of completeness]
Let $\mathcal{V}\subset (W\times M, \Lambda_0)\xrightarrow{\omega} M$ be a Poisson analytic family of compact Poisson submanifolds of $(W,\Lambda_0)$ of codimension $1$. If the characteristic map
\begin{align*}
\rho_{d,0}:T_0 M\to\mathbb{H}^0(V_0,\mathcal{N}_0^\bullet|_{V_0})
\end{align*}
 is an isomorphism, then the family $\mathcal{V}\xrightarrow{\omega} M$ is maximal at the point $t=0$.
\end{theorem}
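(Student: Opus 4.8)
The plan is to mimic Kodaira's proof of the classical completeness theorem (Theorem \ref{002}), replacing the normal line bundle $\mathcal{N}_0|_{V_0}$ and the group $H^0(V_0,\mathcal{N}_0|_{V_0})$ throughout by the restricted Poisson normal complex $\mathcal{N}_0^\bullet|_{V_0}$ and its hypercohomology $\mathbb{H}^0(V_0,\mathcal{N}_0^\bullet|_{V_0})$, and then checking at every stage that the additional Poisson--flatness constraint is preserved. Thus the classical combinatorics of matching two families order by order is retained, while the genuinely new content is the verification that the objects produced land in the correct hypercohomology group and that the analytic estimates survive the passage from an ordinary cohomology group to the hypercohomology of a complex.

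First I would fix a finite cover $\{W_i\}$ of $V_0$ by coordinate polydiscs with coordinates $(w_i,z_i)$ as in the characterization of holomorphic Poisson submanifolds, so that $V_0\cap W_i=\{w_i=0\}$. Given the two families $\mathcal{V}\to M$ and $\mathcal{V}'\to M'$ with $\omega^{-1}(0)=\omega'^{-1}(t_0')=V_0$, I would represent their fibres over neighbourhoods of $0$ and $t_0'$ as graphs $w_i=\varphi_i(z_i,t)$ and $w_i=\psi_i(z_i,t')$, where $\varphi_i,\psi_i$ are holomorphic, $\varphi_i(z_i,0)=0=\psi_i(z_i,t_0')$, and where the requirement that every fibre be a holomorphic Poisson submanifold of $(W,\Lambda_0)$ is encoded as the vanishing of $[\Lambda_0,w_i-\varphi_i]$ along the graph (and likewise for $\psi_i$). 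By Definition \ref{020}, maximality at $0$ then amounts to producing a holomorphic map $h$ on a neighbourhood $N'$ of $t_0'$ with $h(t_0')=0$ solving the functional equation $\varphi_i(z_i,h(t'))=\psi_i(z_i,t')$ on every patch; this is exactly the condition $\omega'^{-1}(t')=\omega^{-1}(h(t'))$, and the clause that $\hat h$ be a Poisson map is automatic from the product structure of $\Lambda_0$ on $W\times N'$.

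Next I would solve this equation formally, writing $h(t')=\sum_{\nu\ge 1}h_\nu(t'-t_0')^\nu$ and determining the coefficients inductively. Differentiating $\varphi_i(z_i,h(t'))=\psi_i(z_i,t')$ at $t'=t_0'$ identifies the first--order term with the relation $\rho_{d,0}(h_1)=\rho'_{d,t_0'}$, where $\rho'$ is the characteristic map of $\mathcal{V}'$; since $\rho_{d,0}$ is an isomorphism this determines $h_1$ uniquely. Inductively, once $h$ is known modulo order $\nu$, the discrepancy $\psi_i(z_i,t')-\varphi_i(z_i,h(t'))$ has leading part a family of local sections of $\mathcal{N}_0|_{V_0}$ which, because $\{\varphi_i\}$ and $\{\psi_i\}$ glue compatibly on overlaps, patch to a global section $\sigma_\nu$. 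The crucial Poisson point is that, as $\sigma_\nu$ measures a first--order difference between two families of \emph{Poisson} submanifolds, it is Poisson--flat, $\nabla_0\sigma_\nu=0$, so it represents a class in $\mathbb{H}^0(V_0,\mathcal{N}_0^\bullet|_{V_0})=\ker\bigl(\Gamma(\mathcal{N}_0|_{V_0})\xrightarrow{\nabla_0}\Gamma(\mathcal{N}_0|_{V_0}\otimes T_W|_{V_0})\bigr)$. Surjectivity of $\rho_{d,0}$ (a consequence of the isomorphism hypothesis) then lets me choose $h_\nu$ with $\rho_{d,0}(h_\nu)=\sigma_\nu$, cancelling the $\nu$-th order term and continuing the induction.

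Finally I would establish convergence of the formal series $h(t')$ on a genuine neighbourhood $N'$. This I would carry out by Kodaira's method of majorants: fixing Hermitian metrics and a harmonic/orthogonal splitting realising the inverse $\rho_{d,0}^{-1}$, I would derive uniform elliptic estimates for $h_\nu$ in suitable Hölder norms and compare the series against a convergent power--series majorant; once the majorant bound is secured, $h$ is holomorphic and the functional equation holds on $N'$, giving maximality at $t_0=0$. The main obstacle, and the place where work beyond the classical case is required, is precisely this convergence step carried out compatibly with the Poisson constraint: the solutions now live in the hypercohomology of the normal complex rather than in an ordinary cohomology group, so one must arrange the a priori estimates so that the flatness condition $\nabla_0\sigma_\nu=0$ is respected uniformly in $\nu$ while the norms stay controlled. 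Concretely, I expect the harmonic projection onto $\mathbb{H}^0(V_0,\mathcal{N}_0^\bullet|_{V_0})$ associated with a Hodge decomposition of the double complex to be the technical device that simultaneously solves the linearized equation at each order and keeps the Poisson--flat representative bounded, and securing this uniform control is where the difficulty concentrates.
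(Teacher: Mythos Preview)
Your overall strategy is correct and close to the paper's, but the framing differs and your assessment of where the difficulty lies is inverted.

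The paper does not work with graphs $w_i=\varphi_i(z_i,t)$; that is the arbitrary--codimension setup of Section~\ref{section3}. In the codimension--$1$ proof (Theorem~\ref{012}) the fibres are described by local defining equations $S_i(p,t)=0$ with transition functions $f_{ik}$ and Poisson data $T_i$ satisfying $[\Lambda_0,S_i]=S_iT_i$, and the functional equation to solve is $f_i(p,s)R_i(p,s)=S_i(p,h(s))$ for auxiliary nonvanishing functions $f_i$. The induction produces obstruction terms $\Gamma_{i|\mu}$ that are shown, via an explicit bracket computation (equations (\ref{55g})--(\ref{ab5})), to satisfy both the line--bundle transition relation and the flatness condition $[\Lambda_0,\Gamma_{i|\mu}]|_{V_0}=\Gamma_{i|\mu}T_{i|0}$, hence lie in $\mathbb{H}^0(V_0,\mathcal{N}_0^\bullet|_{V_0})$. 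Your graph approach is an equivalent packaging, and your heuristic that $\sigma_\nu$ is Poisson--flat because it is a first--order difference of Poisson families is morally right, but the paper carries out this verification concretely rather than leaving it as a principle; this explicit check is in fact the entire new content beyond Kodaira.

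Where your proposal goes astray is the convergence discussion. You identify the ``main obstacle'' as controlling estimates compatibly with the Poisson constraint and propose harmonic projection onto $\mathbb{H}^0$ of the double complex as the device. In the paper the convergence step requires \emph{no} Poisson modification at all: once $h^\mu(s)$ and $f_i^\mu(p,s)$ are formally constructed, the estimates are literally those of \cite{Kod59} p.497--498, because the quantities being bounded ($h_{r|\mu}$, $f_{i|\mu}$) are the same scalar and function data as in the classical case, and the Poisson flatness of $\Gamma_{i|\mu}$ was only used to guarantee solvability, not to alter the analytic form of the solution. So the emphasis should be reversed: the formal Poisson--flatness verification is the substantive new step and deserves a careful computation, while convergence is classical and needs no hypercohomological Hodge theory.
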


Next we review deformation theory of compact complex submanifolds of arbitrary codimensions presented in \cite{Kod62} and explain how the theory can be extended to the theory of compact holomorphic Poisson submanifolds of arbitrary codimensions in terms of holomorphic Poisson deformations. In \cite{Kod62}, Kodaira showed that deformations of a compact complex submanifold $V$ of a complex manifold $W$ is controlled by the normal bundle $\mathcal{N}_{V/W}$ of $V$ in $W$ so that infinitesimal deformations are encoded in the cohomology group $H^0(V, \mathcal{N}_{V/W})$ and obstructions are encoded in the cohomology group $H^1(V, \mathcal{N}_{V/W})$. For the precise statement, we recall the following definition which generalize Definition \ref{003} to arbitrary codimensions.
\begin{definition}[\cite{Kod62}]\label{017}
Let $W$ be a complex manifold of dimension $d+r$. We denote a point in $W$ by $w$ and a local coordinate of $w$ by $(w^1,...,w^{r+d})$. By a complex analytic family of compact complex submanifolds of dimension $d$ of $W$, we mean a complex submanifold $\mathcal{V}\subset W\times M$ of codimension $r$, where $M$ is a complex manifold, such that 
\begin{enumerate}
\item for each point $t\in M$, $V_t\times t:=\omega^{-1}(t)=\mathcal{V}\cap \pi^{-1}(t)$ is a connected compact complex submanifold of $W\times t$ of dimension $d$, where $\omega:\mathcal{V}\to M$ is the map induced from the canonical projection $\pi:W\times M\to M$.
\item for each point $p\in \mathcal{V}$, there exist $r$ holomorphic functions $f_\alpha(w,t),\alpha=1,...,r$ defined on a neighborhood $\mathcal{U}_p$ of $p$ in $W\times M$ such that $\textnormal{rank} \frac{\partial ( f_1,...,f_r)}{\partial (w^1,...w^{r+d})}=r$, and $\mathcal{U}_p\cap \mathcal{V}$ is defined by the simultaneous equations $f_\alpha(w,t)=0,\alpha=1,...,r$.
\end{enumerate}
We call $\mathcal{V}\subset W\times M$ a complex analytic family of compact complex submanifolds $V_t,t\in M$ of $W$. We also call $\mathcal{V}\subset W\times M$ a complex analytic family of deformations of a compact complex submanifold $V_{t_0}$ of $W$ for each fixed point $t_0\in M$.
\end{definition}
We can define the concept of maximality (or completeness) of a complex analytic family of compact complex submanifolds of arbitrary codimenions as in Definition \ref{004}. Given a complex analytic family $\mathcal{V}\subset W\times M\xrightarrow{\omega} M$, for each fibre $V_t=\omega^{-1}(t)$ of $\mathcal{V}$ for $t\in M$,  infinitesimal deformations of $V_t$ in the family $\mathcal{V}$ are encoded in the cohomology group $H^0(V_t,\mathcal{N}_{V_t/W})$, and we can define the characteristic map (see \cite{Kod62} p.147-150)
\begin{align*}
\sigma_t: T_t M\to H^0(V_t, \mathcal{N}_{V_t/W})
\end{align*}
In \cite{Kod62}, Kodaira showed that given a compact complex submanifold $V$ of a complex manifold $W$, obstructions of deformations of $V$ in $W$ are encoded in $H^1(V, \mathcal{N}_{V/W})$ and so when obstructions vanish, we can prove `theorem of existence' and `theorem of completeness' as follows.

\begin{theorem}[theorem of existence]\label{015}
Let $V$ be a compact complex submanifold of a complex manifold $W$. If $H^1(V, \mathcal{N}_{V/W})=0$, then there exists a complex analytic family $\mathcal{V}$ of compact complex submanifolds $V_t$, $t\in M_1$ of $W$ such that $V_0=V$ and the characteristic map 
\begin{align*}
\sigma_0:T_0(M_1)&\to H^0(V,\mathcal{N}_{V/W})
\end{align*}
is an isomorphism.
\end{theorem}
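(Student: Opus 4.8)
The plan is to realize $\mathcal{V}$ by Kodaira's power-series construction, deforming the local defining equations of $V$ and controlling the solvability of the glueing conditions at each order by $H^1(V,\mathcal{N}_{V/W})$. First I would choose a finite cover of a neighbourhood of $V$ in $W$ by coordinate polydiscs $U_i$ with adapted coordinates $(w_i,z_i)=(w_i^1,\dots,w_i^r,z_i^1,\dots,z_i^d)$ for which $V\cap U_i=\{w_i=0\}$, and record the ambient transition functions $w_i^\lambda=f_{ik}^\lambda(w_k,z_k)$, $z_i^p=g_{ik}^p(w_k,z_k)$ on the overlaps. A candidate deformation over a parameter polydisc $M_1\subset\mathbb{C}^m$ is described locally by $w_i^\lambda=\phi_i^\lambda(z_i,t)$ with $\phi_i^\lambda(z_i,0)=0$, and the demand that these local pieces fit together into a single submanifold $\mathcal{V}\subset W\times M_1$ becomes the system of glueing equations $f_{ik}^\lambda(\phi_k(z_k,t),z_k)=\phi_i^\lambda(g_{ik}(\phi_k(z_k,t),z_k),t)$ on the overlaps.

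Next I would expand $\phi_i=\sum_{\nu\geq 1}\phi_i^{(\nu)}$ into parts $\phi_i^{(\nu)}$ homogeneous of degree $\nu$ in $t$ and solve the glueing equations order by order. Differentiating once at $t=0$ shows that the linear parts $\{\phi_i^{(1)}\}$ transform under the normal-bundle transition cocycle $\partial_w f_{ik}(0,z_k)$, hence glue to a global section of $\mathcal{N}_{V/W}$; this is exactly the characteristic map. I would therefore take $m=\dim_{\mathbb{C}}H^0(V,\mathcal{N}_{V/W})$ and prescribe $\{\phi_i^{(1)}\}$ so that the linear term identifies $T_0 M_1$ with a basis of $H^0(V,\mathcal{N}_{V/W})$, forcing $\sigma_0$ to be an isomorphism. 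At order $\nu\geq 2$ the glueing equation takes the form $\phi_i^{(\nu)}-(\partial_w f_{ik}(0,z_k))\,\phi_k^{(\nu)}=P_{ik}^{(\nu)}$, where $P_{ik}^{(\nu)}$ is a universal polynomial in the previously determined $\phi^{(1)},\dots,\phi^{(\nu-1)}$. A formal Bianchi-type computation using the lower-order equations shows $\{P_{ik}^{(\nu)}\}$ is a $1$-cocycle with values in $\mathcal{N}_{V/W}$; since $H^1(V,\mathcal{N}_{V/W})=0$ it is a coboundary, so $\{\phi_i^{(\nu)}\}$ can be solved for and a formal power-series solution exists. The freedom to prescribe arbitrary first-order data in the previous step is itself a consequence of this unobstructedness.

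The main obstacle is convergence, and this is where the hypothesis must be used quantitatively. I would fix a Hermitian metric on $V$ and invoke Hodge theory to select, at each order, the minimal-norm solution $\phi^{(\nu)}$ via a bounded solving operator for the coboundary map; the vanishing of $H^1(V,\mathcal{N}_{V/W})$ guarantees that the cocycle $\{P_{ik}^{(\nu)}\}$ always lies in the image of the coboundary, so this operator is defined uniformly with an operator-norm bound independent of $\nu$. With this canonical choice I would carry out the classical Kodaira--Spencer majorant estimate: using the algebra property of the majorant norm together with the uniform bound on the solving operator, I would dominate the recursively defined $\phi_i^{(\nu)}$ by the coefficients of a fixed convergent majorant series of the shape $A(t)=\tfrac{b}{c}\sum_{\nu\geq 1}\tfrac{c^{\nu}(t_1+\cdots+t_m)^{\nu}}{\nu^2}$, which proves that each $\phi_i(z_i,t)$ converges on a suitable polydisc $M_1$. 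Since $\phi_i(z_i,0)=0$, the defining functions $w_i^\lambda-\phi_i^\lambda(z_i,t)$ have Jacobian of rank $r$ near $t=0$, so the resulting $\mathcal{V}\subset W\times M_1$ is a bona fide complex analytic family in the sense of Definition \ref{017} with $V_0=V$ and $\sigma_0$ an isomorphism, completing the proof.
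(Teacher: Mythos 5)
Your proposal is correct and follows essentially the same route as Kodaira's proof of this theorem (which the paper recapitulates and extends to the Poisson setting in its proof of Theorem \ref{33a}): adapted coordinate covers, local graphs $w_i=\phi_i(z_i,t)$, order-by-order solution of the glueing equations, identification of the degree-$\nu$ failure as a $1$-cocycle with values in $\mathcal{N}_{V/W}$ killed by the hypothesis $H^1(V,\mathcal{N}_{V/W})=0$, and a majorant series $A(t)$ for convergence. The one substantive difference is how you obtain the uniform bound on solutions of the coboundary equation, which is the crux of the convergence step. The paper (following Kodaira) proves this as Lemma \ref{nb2}: it introduces sup-norms on \v{C}ech cochains over the fixed cover, defines $\iota(\psi)=\inf_{\delta\varphi=\psi}\|\varphi\|$, and shows by a Montel-type compactness/contradiction argument that $\iota(\psi)\leq c\|\psi\|$ for a constant $c$ independent of the order, so a solution with the needed majorant bound can be chosen at every step. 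You instead propose fixing a Hermitian metric and taking the Hodge-theoretic minimal-norm solution via a bounded solving operator. That route is legitimate and is the standard alternative (it is how the Kodaira--Nirenberg--Spencer existence theorems are run), but note what it costs relative to the paper's argument: the harmonic-theory bound is naturally an $L^2$ (or Sobolev) bound, while the majorant scheme consumes sup-norm bounds on the coordinate patches, so you must pass \v{C}ech $\to$ Dolbeault $\to$ \v{C}ech with estimates and invoke elliptic interior estimates on slightly shrunk patches $U_i^\delta$ (the shrinking is also present in the paper's argument, via Cauchy-integral estimates). The compactness argument avoids metrics and elliptic theory entirely and works directly with the norms the induction uses; the Hodge-theoretic choice buys canonicity of the solution but not any weakening of hypotheses. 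Either way the inductive scheme closes and the theorem follows.
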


\begin{theorem}[theorem of completeness]\label{016}
Let $\mathcal{V}$ be a complex analytic family of compact holomorphic Poisson submanifolds $V_t$, $t\in M_1$, of $W$. If the characteristic map
\begin{align*}
\sigma_0:T_0(M_1)\to H^0(V_0, \mathcal{N}_{V_0/W})
\end{align*}
is an isomorphism, then the family $\mathcal{V}$ is maximal at $t=0$.
\end{theorem}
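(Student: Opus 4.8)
The plan is to follow Kodaira's argument in \cite{Kod62}: encode both families by their local graph equations over $V_0$, reduce maximality to solving a single functional equation relating the two, solve that equation formally order by order using that $\sigma_0$ is an isomorphism, and finally establish convergence by the method of majorants.

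\emph{Local set-up.} First I would fix a finite cover of $V_0$ by coordinate polydiscs $U_j$ with coordinates $(z_j,w_j)$, $z_j\in\mathbb{C}^d$, $w_j\in\mathbb{C}^r$, adapted to a tubular neighborhood so that $V_0=\{w_j=0\}$. Taking $t_0=0$ and $t_0'=0$ with $\omega^{-1}(0)=\omega'^{-1}(0)=V_0$, for $t$ (resp. $t'$) near $0$ each fibre $V_t$ of $\mathcal{V}$ (resp. $V_{t'}'$ of $\mathcal{V}'$) lies in this neighborhood and is the graph $w_j=\varphi_j(z_j,t)$ (resp. $w_j=\psi_j(z_j,t')$) of a vector of $r$ holomorphic functions with $\varphi_j(\cdot,0)=\psi_j(\cdot,0)=0$. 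Since the graph representation is canonical once the tubular coordinates are fixed, two such fibres coincide exactly when their graph functions agree; moreover $\{\partial_{t_\lambda}\varphi_j(\cdot,0)\}_j$ represents $\sigma_0(\partial/\partial t_\lambda)\in H^0(V_0,\mathcal{N}_{V_0/W})$, and similarly for the characteristic map $\sigma_0'$ of $\mathcal{V}'$.

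\emph{Formal solution.} To realize $\mathcal{V}'$ as induced from $\mathcal{V}$ I must construct a holomorphic $h\colon N'\to M_1$ with $h(0)=0$ and $\varphi_j(z_j,h(t'))=\psi_j(z_j,t')$ for every $j$. Expanding $h(t')=\sum_{|\nu|\ge 1}h_\nu t'^\nu$ and matching coefficients, the degree-one terms yield $\sigma_0\circ(dh)_0=\sigma_0'$, which is solvable because $\sigma_0$ is surjective and uniquely so because $\sigma_0$ is injective. At each higher degree $k$ the matching condition, after substituting the coefficients already determined, reduces to an equation $\sigma_0(h_{(k)})=c_{(k)}$ with $c_{(k)}\in H^0(V_0,\mathcal{N}_{V_0/W})$; here one verifies that $c_{(k)}$ is a genuine global section by using that $\{\varphi_j\}$ and $\{\psi_j\}$ satisfy the overlap relations forced by their defining honest submanifolds. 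Since $\sigma_0$ is an isomorphism, $h_{(k)}=\sigma_0^{-1}c_{(k)}$ is determined uniquely, producing a unique formal series $h$.

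\emph{Convergence and conclusion.} The main obstacle is to show this formal series converges on a neighborhood of $0$; I would do this by the majorant method, dominating the $c_{(k)}$ by an explicit convergent series with nonnegative coefficients---using uniform bounds for the $\varphi_j,\psi_j$ on the compact $V_0$ together with the boundedness of the fixed inverse $\sigma_0^{-1}$ on the finite-dimensional space $H^0(V_0,\mathcal{N}_{V_0/W})$---and thereby bounding $\|h_\nu\|$ term by term. With $h$ holomorphic, the map $\hat h(w,t')=(w,h(t'))$ satisfies $\omega'^{-1}(t')=\omega^{-1}(h(t'))$ for $t'\in N'$, which is precisely maximality at $t=0$ in the sense of Definition \ref{004}. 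Thus the crux of the whole argument is the pair of technical points sitting on top of the formal solution: the verification that the higher-order data $c_{(k)}$ remain global sections, and the majorant estimates that secure convergence.
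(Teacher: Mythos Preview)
Your proposal is correct and follows essentially the same approach as the paper. Note that Theorem~\ref{016} is stated in the introduction only as a review of Kodaira's classical result from \cite{Kod62} and is not re-proved in the paper; however, the paper's proof of the Poisson analogue (Theorem~\ref{3.5a}) mirrors exactly the strategy you outline---graph equations $w_i=\varphi_i(z_i,t)$ and $w_i=\theta_i(z_i,s)$, the functional equation $\theta_i(z_i,s)=\varphi_i(z_i,h(s))$, inductive solution where the degree-$(m{+}1)$ discrepancy $\omega_i=[\theta_i-\varphi_i(\cdot,h^m)]_{m+1}$ is shown to be a global section of the normal bundle so that $\sigma_0^{-1}$ produces $h_{m+1}$, and convergence deferred to the majorant estimates of \cite{Kod62}.
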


In section \ref{section3}, we extend Definition \ref{017} to define a concept of compact holomorphic Poisson submanifolds of arbitrary codimensions, and prove an analogue of theorem of existence (Theorem \ref{015}) and an analogue of theorem of completeness (Theorem \ref{016}) in the context of holomorphic Poisson deformations. Let $V$ be a holomorphic Poisson submanifold of a holomorphic Poisson manifold $(W,\Lambda_0)$. Then we can define a complex of sheaves associated with the normal bundle $\mathcal{N}_{V/W}$ (see subsection \ref{subsection3.1} and Definition \ref{332a} )
\begin{align*}
\mathcal{N}_{V/W}^\bullet:\mathcal{N}_{V/W}\xrightarrow{\nabla}\mathcal{N}_{V/W}\otimes T_W|_V\xrightarrow{\nabla}\mathcal{N}_{V/W}\otimes \wedge^2 T_W|_V\xrightarrow{\nabla}\mathcal{N}_{V/W}\otimes \wedge^3 T_W|_V\xrightarrow{\nabla}\cdots
\end{align*}
which is called the complex associated with the normal bundle $\mathcal{N}_{V/W}$ of a holomorphic Poisson submanifold $V$ of a holomorphic Poisson manifold $(W,\Lambda_0)$, and denote its $i$-th hypercohomology group by $\mathbb{H}^i(V,\mathcal{N}_{V/W}^\bullet)$. Then holomorphic Poisson deformations of $V$ in $(W, \Lambda_0)$ is controlled by the complex of sheaves $\mathcal{N}_{V/W}^\bullet$ so that infinitesimal deformations are encoded in the cohomology group $\mathbb{H}^0(V, \mathcal{N}_{V/W}^\bullet)$ and obstructions are encoded in the cohomology group $\mathbb{H}^1(V, \mathcal{N}_{V/W}^\bullet)$. For the precise statement, we define a concept of compact holomorphic Poisson submanifolds of an arbitrary codimension which generalize Definition \ref{00023} and Definition \ref{017} as follows.
\begin{definition}[compare Definition \ref{017}]\label{024}
Let $(W,\Lambda_0)$ be a holomorphic Poisson manifold of dimension $d+r$. We denote a point in $W$ by $w$ and a local coordinate of $w$ by $(w^1,...,w^{r+d})$. By a Poisson analytic family of compact holomorphic Poisson submanifolds of dimension $d$ of $(W,\Lambda_0)$, we mean a holomorphic Poisson submanifold $\mathcal{V}\subset(W\times M, \Lambda_0)$ of codimension $r$, where $M$ is a complex manifold and $\Lambda_0$ is the holomorphic Poisson structure on $W\times M$ induced from $(W,\Lambda_0)$, such that
\begin{enumerate}
\item for each point $t\in M$, $V_t\times t:=\omega^{-1}(t)=\mathcal{V}\cap \pi^{-1}(t)$ is a connected compact holomorphic Poisson submanifold of $(W\times t,\Lambda_0)$ of dimension $d$, where $\omega:\mathcal{V}\to M$ is the map induced from the canonical projection $\pi:W\times M\to M$.
\item for each point $p\in \mathcal{V}$, there exist $r$ holomorphic functions $f_\alpha(w,t),\alpha=1,...,r$ defined on a neighborhood $\mathcal{U}_p$ of $p$ in $W\times M$ such that $\textnormal{rank} \frac{\partial ( f_1,...,f_r)}{\partial (w^1,...w^{r+d})}=r$, and $\mathcal{U}_p\cap \mathcal{V}$ is defined by the simultaneous equations $f_\alpha(w,t)=0,\alpha=1,...,r$.
\end{enumerate}
We call $\mathcal{V}\subset (W\times M, \Lambda_0)$ a Poisson analytic family of compact holomorphic Poisson submanifolds $V_t,t\in M$ of $(W,\Lambda_0)$. We also call $\mathcal{V}\subset( W\times M,\Lambda_0)$ a Poisson analytic family of deformations of a compact holomorphic Poisson submanifold $V_{t_0}$ of $(W,\Lambda_0)$ for each fixed point $t_0\in M$.
\end{definition}
We can define the concept of maximality (or completeness) of a Poisson analytic family of compact holomorphic Poisson submanifolds of arbitrary codimensions as in Definition \ref{020}. Given a Poisson analytic family $\mathcal{V}\subset (W\times M, \Lambda_0)\xrightarrow{\omega} M$ of compact holomorphic Poisson submanifolds, for each fibre $V_t=\omega^{-1}(t)$ of $\mathcal{V}$ for $t\in M$, infinitesimal deformations of $V_t$ in the family $\mathcal{V}$ are encoded in the cohomology group $\mathbb{H}^0(V_t, \mathcal{N}_{V_t/W}^\bullet)$, and we can define the characteristic map (see subsection \ref{3infinitesimal})
\begin{align*}
\sigma_t:T_t M\to \mathbb{H}^0(V_t, \mathcal{N}_{V_t/W}^\bullet)
\end{align*}
Given a compact holomorphic Poisson submanifold $V$ of a holomorphic Poisson manifold $(W,\Lambda_0)$, obstructions of holomorphic Poisson deformations of $V$ in $(W,\Lambda_0)$ are encoded in $\mathbb{H}^1(V,\mathcal{N}_{V/W}^\bullet)$ and so when obstructions vanish, we can prove `theorem of existence' (see Theorem \ref{33a}) and `theorem of completeness' (see Theorem \ref{3.5a}) as fallows.
\begin{theorem}[theorem of existence]
Let $V$ be a compact holomorphic Poisson submanifold of a holomorphic Poisson manifold $(W,\Lambda_0)$. If $\mathbb{H}^1(V, \mathcal{N}_{V/W}^\bullet)=0$, then there exists a Poisson analytic family $\mathcal{V}$ of compact holomorphic Poisson submanifolds $V_t$, $t\in M_1$ of $(W,\Lambda_0)$ such that $V_0=V$ and the characteristic map 
\begin{align*}
\sigma_0:T_0(M_1)&\to \mathbb{H}^0(V,\mathcal{N}_{V/W}^\bullet)
\end{align*}
is an isomorphism.
\end{theorem}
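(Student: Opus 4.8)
The plan is to adapt Kodaira's power-series method from \cite{Kod62} to the holomorphic Poisson setting, replacing the normal bundle $\mathcal{N}_{V/W}$ by the complex $\mathcal{N}_{V/W}^\bullet$ and ordinary cohomology by hypercohomology. First I would fix a finite cover of $V$ by coordinate polydiscs $U_j \subset W$ with coordinates $(w_j, z_j) = (w_j^1,\dots,w_j^r, z_j^1,\dots,z_j^d)$ in which $V \cap U_j$ is cut out by $w_j^1 = \cdots = w_j^r = 0$ and in which the Poisson condition reads $[\Lambda_0, w_j^\alpha]|_{w_j = 0} = 0$. A deformation is then encoded by $r$ holomorphic functions $f_{j,\alpha}(w_j, z_j, t)$ with $f_{j,\alpha}(\cdot,0) = w_j^\alpha$, subject to two requirements: the loci $f_{j,\alpha} = 0$ must glue to a complex submanifold $V_t$ (the classical compatibility of \cite{Kod62}), and each $V_t$ must be a Poisson submanifold, i.e. $[\Lambda_0, f_{j,\alpha}]$ must lie in the ideal generated by $f_{j,1},\dots,f_{j,r}$. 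Linearizing both conditions at $t=0$ reproduces exactly the two arrows of $\mathcal{N}_{V/W}^\bullet$, so that the space of infinitesimal Poisson deformations is $\mathbb{H}^0(V, \mathcal{N}_{V/W}^\bullet)$, compatibly with the characteristic map of subsection \ref{3infinitesimal}.

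Next I would carry out the formal construction. Writing $f_{j,\alpha}(w_j,z_j,t) = w_j^\alpha + \sum_{m \geq 1} f_{j,\alpha}^{(m)}(w_j, z_j, t)$ with $f^{(m)}$ homogeneous of degree $m$ in $t \in \mathbb{H}^0(V, \mathcal{N}_{V/W}^\bullet)$, the gluing and Poisson conditions become, order by order in $t$, equations of the form $D\, u_m = \Phi_m$, where $D$ is the total differential of the complex $\mathcal{N}_{V/W}^\bullet$ and $\Phi_m$ is a $D$-closed total-degree-$1$ hypercochain assembled from the already-determined terms $u_1,\dots,u_{m-1}$ (its $\mathcal{N}_{V/W}$-component is Kodaira's quadratic obstruction, its $\mathcal{N}_{V/W}\otimes T_W|_V$-component the Schouten-bracket Poisson obstruction, and its $D$-closedness follows from the lower-order equations). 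The class $[\Phi_m] \in \mathbb{H}^1(V, \mathcal{N}_{V/W}^\bullet)$ is the obstruction to solving at order $m$; since $\mathbb{H}^1(V, \mathcal{N}_{V/W}^\bullet) = 0$ by hypothesis, each $\Phi_m$ is a coboundary and $u_m$ exists. Prescribing the linear term $u_1$ to range over a basis of $\mathbb{H}^0(V, \mathcal{N}_{V/W}^\bullet)$ makes the characteristic map $\sigma_0$ the identity by construction.

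To promote the formal family to an analytic one I would pass to the Dolbeault resolution of $\mathcal{N}_{V/W}^\bullet$, obtaining a double complex with commuting differentials $\overline{\partial}$ and $\nabla$ whose associated total Laplacian $\Box$ is elliptic; Hodge theory then supplies harmonic representatives of $\mathbb{H}^\bullet(V, \mathcal{N}_{V/W}^\bullet)$, a Green operator $G$, and, since $\mathbb{H}^1=0$ kills the harmonic projection, the canonical solution $u_m = D^* G\, \Phi_m$ at each stage. Equipping the sections of the bundles in the total complex with Hölder norms $\|\cdot\|_{k+\nu}$, I would establish the smoothing estimate $\|D^* G \varphi\|_{k+1+\nu} \leq C \|\varphi\|_{k+\nu}$ together with the product estimates bounding $\|\Phi_m\|_{k+\nu}$ in terms of the lower-order data, and then run Kodaira's majorant-series argument against a series of the form $A\sum b^m |t|^m/m^2$ to conclude that $\sum_m u_m(t)$ converges in a polydisc $M_1 \subset \mathbb{H}^0(V, \mathcal{N}_{V/W}^\bullet)$ about the origin. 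The resulting $f_{j,\alpha}(w_j,z_j,t)$ then define a Poisson analytic family $\mathcal{V} \subset (W \times M_1, \Lambda_0)$ in the sense of Definition \ref{024}, with $V_0 = V$ and $\sigma_0$ an isomorphism.

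The step I expect to be the main obstacle is the convergence argument: unlike Kodaira's original situation, the governing elliptic complex is the total complex of a $\overline{\partial}$-$\nabla$ double complex, so I must verify that $\Box$ is genuinely elliptic across all bidegrees, that the smoothing of $G$ gains one derivative uniformly, and that the two distinct product estimates — one for the quadratic normal-bundle obstruction, one for the Schouten-bracket Poisson obstruction — are simultaneously controlled by a single scale of Hölder norms. Once these functional-analytic points are secured, the recursive majorant bound on the coefficients $u_m$ is formally identical to the one in \cite{Kod62}, and the theorem of completeness then follows by the same reasoning applied to the characteristic map.
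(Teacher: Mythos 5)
Your formal construction coincides with the paper's proof of Theorem \ref{33a}: the same covering by coordinate polydiscs, the same order-by-order power series for the local defining functions, the same pair of obstructions (Kodaira's gluing obstruction and the Schouten-bracket obstruction) assembled into a $1$-cocycle for the \v{C}ech resolution of $\mathcal{N}_{V/W}^\bullet$, killed at each order by $\mathbb{H}^1(V,\mathcal{N}_{V/W}^\bullet)=0$, with a basis of $\mathbb{H}^0(V,\mathcal{N}_{V/W}^\bullet)$ prescribed as the linear term so that $\sigma_0$ is an isomorphism. Where you genuinely diverge is the convergence proof. The paper never leaves the holomorphic \v{C}ech picture: it uses sup norms on cochains (with shrunk sets $U_i^\delta$ for the $T_W|_V$-component), and its key a priori estimate is Lemma \ref{nb2} --- every coboundary $(\psi,G)$ admits a primitive $\varphi$ with $\|\varphi\|\leq c\,\|(\psi,G)\|$ for a constant $c$ independent of the order --- proved by contradiction via Montel's theorem and the identity theorem, using only compactness of $V$; this feeds directly into the majorant series of \cite{Kod62}. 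You instead invoke Hodge theory for the total $\bar{\partial}$-$\nabla$ complex. On the point you flag as the main risk: ellipticity does hold, for a soft reason. For a real covector $\xi\neq 0$ one has $\xi^{0,1}\neq 0$, so the columns of the symbol bicomplex (exterior multiplication by $\xi^{0,1}$) are already exact, and hence the total symbol complex is exact no matter how degenerate the $\nabla$-symbol (which vanishes wherever $\Lambda_0$ does). So the Green operator and Schauder smoothing are available.

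The genuine gap is at the interface between your second and third paragraphs. Your unknowns and your obstruction $\Phi_m$ are \v{C}ech data built from \emph{local holomorphic} functions --- this is forced by the problem, since $V_t$ must be cut out by holomorphic equations --- whereas $D^*G$ acts on smooth global Dolbeault forms. You cannot literally set $u_m=D^*G\,\Phi_m$: you must first transport $\Phi_m$ to a Dolbeault representative via a partition of unity, apply $D^*G$ to obtain a smooth primitive, and then correct it by solving an auxiliary $\bar{\partial}$-equation (again with uniform Schauder estimates) so as to recover local \emph{holomorphic} primitives $\varphi_{i|m+1}$ satisfying the \v{C}ech coboundary identities, with the H\"older norms of this holomorphic output controlled by the norms of the input uniformly in $m$. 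All of this can be carried out, since the comparison maps and the elliptic estimates involve only fixed operators independent of $m$, but it is precisely the content your sketch omits, and it is the step the paper replaces by the short compactness argument of Lemma \ref{nb2}. The trade-off is real: your route yields a canonical solution at each order at the cost of a substantially heavier analytic apparatus, while the paper's route is elementary but only asserts existence of a uniformly bounded primitive.
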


\begin{theorem}[theorem of completeness]
Let $\mathcal{V}$ be a Poisson analytic family of compact holomorphic Poisson submanifolds $V_t$, $t\in M_1$, of $(W,\Lambda_0)$. If the characteristic map
\begin{align*}
\sigma_0:T_0(M_1)\to \mathbb{H}^0(V_0, \mathcal{N}_{V_0/W}^\bullet)
\end{align*}
is an isomorphism, then the family $\mathcal{V}$ is maximal at $t=0$.
\end{theorem}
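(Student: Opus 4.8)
The plan is to transcribe Kodaira's proof of the completeness theorem for compact complex submanifolds (Theorem \ref{016}) from \cite{Kod62} into the Poisson setting, systematically replacing the ordinary cohomology $H^\bullet(V_0,\mathcal{N}_{V_0/W})$ of the normal bundle by the hypercohomology $\mathbb{H}^\bullet(V_0,\mathcal{N}_{V_0/W}^\bullet)$ of the complex associated with the normal bundle, so that the Poisson submanifold condition is carried along at every step. By the hypothesis only the isomorphism of $\sigma_0$ is available (no vanishing of $\mathbb{H}^1$), so surjectivity will supply existence and injectivity uniqueness of the comparison map.

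First I would reduce maximality (Definition \ref{020}) to a concrete analytic matching problem. Let $\mathcal{V}'\subset(W\times M',\Lambda_0)\xrightarrow{\omega'}M'$ be an arbitrary Poisson analytic family (Definition \ref{024}) with $\omega'^{-1}(0')=\omega^{-1}(0)=V_0$. Cover a neighborhood of $V_0$ by coordinate polydiscs $W_i$ with coordinates adapted to $V_0$; both families are then cut out fibrewise by systems of $r$ holomorphic functions $f_i(w,t)$ and $f'_i(w,t')$. Because $\mathcal{V}$ and $\mathcal{V}'$ are Poisson submanifolds of the respective product manifolds, their defining functions satisfy the Poisson compatibility $[\Lambda_0,f_{i\alpha}]\in(f_{i1},\dots,f_{ir})$, which is exactly the infinitesimal constraint encoded by the differential $\nabla$ of $\mathcal{N}_{V_0/W}^\bullet$ (see \cite{Kim16}). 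Producing the map $h$ then amounts to solving, on each $W_i$, the matrix equation $f'_i(w,t')=A_i(w,t')\,f_i(w,h(t'))$ for a fibrewise invertible holomorphic matrix $A_i$ and a holomorphic $h$ with $h(0')=0$; such data give $\{f'_i=0\}=\{f_i(\cdot,h(t'))=0\}$, i.e. $\omega'^{-1}(t')=\omega^{-1}(h(t'))$, together with the Poisson map $\hat h$ exhibiting $\mathcal{V}'|_{N'}$ as induced from $\mathcal{V}$.

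Next I would construct $h$ and the $A_i$ as formal power series in the coordinates $t'$ of $M'$ about $0'$ and solve the matching equations order by order. At zeroth order the equation reduces to the known invertible change of defining functions of $V_0$. At first order the discrepancy is the characteristic image of $\mathcal{V}'$, an element of $\mathbb{H}^0(V_0,\mathcal{N}_{V_0/W}^\bullet)$; since the characteristic map $\sigma_0$ of $\mathcal{V}$ is an isomorphism onto this space, there is a unique linear term $h_1$ realizing it. Inductively, once the families are matched modulo $(t')^\nu$, the order-$\nu$ discrepancy is the leading term of the deformation of $\mathcal{V}'$ relative to the already-constructed pulled-back family, hence defines a genuine class in $\mathbb{H}^0(V_0,\mathcal{N}_{V_0/W}^\bullet)$; no $\mathbb{H}^1$ obstruction arises, precisely because we are comparing two bona fide Poisson analytic families rather than trying to extend one. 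Surjectivity of $\sigma_0$ then produces a coefficient $h_\nu$ absorbing this discrepancy, while injectivity makes $h_\nu$, and hence the whole series, unique. The Poisson refinement is essential here: the defining functions and their discrepancies are constrained by $[\Lambda_0,f_{i\alpha}]\in(f_{i1},\dots,f_{ir})$, so the relevant cocycles are those of the complex $\mathcal{N}_{V_0/W}^\bullet$ and the classes live in $\mathbb{H}^0$, guaranteeing that each $V_{h(t')}$ is again a Poisson submanifold.

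The hard part will be convergence of the formal series for $h$ and the $A_i$. I would run Kodaira's majorant and elliptic-estimate machinery adapted to the complex $\mathcal{N}_{V_0/W}^\bullet$: fix Hermitian metrics, develop the harmonic theory (Green's and homotopy operators) for the associated elliptic complex, and extract an a priori inequality that furnishes a bounded solution operator at each order with constants uniform in $\nu$. Dominating the order-$\nu$ equations by a single convergent majorant of the classical form $\tfrac{b}{c}\sum_{\nu\ge1}(ct')^\nu/\nu^2$ then forces convergence of $h$ and the $A_i$ in a neighborhood $N'$ of $0'$. The resulting $h$ is holomorphic with $h(0')=0$ and satisfies $\omega'^{-1}(t')=\omega^{-1}(h(t'))$ on $N'$, which is exactly maximality of $\mathcal{V}$ at $t=0$ in the sense of Definition \ref{020}. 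I expect this analytic step to be the main obstacle, since the formal step is little more than the isomorphism hypothesis plus homological algebra, whereas convergence requires the full functional-analytic apparatus set up for the hypercohomology complex rather than for a single coherent sheaf.
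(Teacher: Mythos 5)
Your formal induction coincides with the paper's proof of Theorem \ref{3.5a}. The paper writes both families in graph form over the same covering, $w_i=\varphi_i(z_i,t)$ for $\mathcal{V}$ and $w_i=\theta_i(z_i,s)$ for $\mathcal{V}'$, reduces maximality to $\theta_i(z_i,s)=\varphi_i(z_i,h(s))$, and solves this by induction on the degree in $s$: the degree-$(m+1)$ discrepancy $\omega_i=[\theta_i(z_i,s)-\varphi_i(z_i,h^m(s))]_{m+1}$ is shown to satisfy both the normal-bundle transition relation and the $\nabla$-closedness condition (the paper's $(\ref{uu1})$, $(\ref{uu2})$, the latter proved by exactly the bracket computation you indicate from the Poisson conditions $(\ref{uu5})$, $(\ref{uu6})$), hence defines a class in $\mathbb{H}^0(V_0,\mathcal{N}_{V_0/W}^\bullet)$ which is absorbed, uniquely, by the isomorphism $\sigma_0$; and, as you say, no $\mathbb{H}^1$ term appears because two genuine families are being compared. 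One presentational difference: because the paper uses graph coordinates, equality of fibres is literally equality of the graph functions, so no multiplier matrices $A_i$ are needed; your formulation with fibrewise invertible matrices is the analogue of the paper's codimension-one argument (Theorem \ref{012}), where a scalar multiplier $f_i(p,s)$ does appear, and it would oblige you to construct and estimate the $A_i$ order by order as well.

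Where your plan goes astray is the convergence step, which you identify as the main obstacle and propose to handle by ``harmonic theory for the associated elliptic complex.'' First, $\mathcal{N}_{V_0/W}^\bullet$ is not an elliptic complex in general: its differential is $-[\cdot,\Lambda_0]$ plus zeroth-order terms, so its symbol is wedging with the contraction of $\Lambda_0$ against a covector, and this degenerates wherever $\Lambda_0$ does (it vanishes identically if $\Lambda_0=0$); there is no Green's operator to appeal to. Second, and more to the point, no such operator is needed: homotopy/solution operators for coboundary equations $\delta\phi=\psi$ are what the \emph{existence} theorem requires (and the paper obtains them there by the normed \v{C}ech compactness argument of Lemma \ref{nb2}, not by ellipticity), whereas in the completeness theorem the only equation solved at each order is $\sigma_0(h_{m+1})=[\omega]$, whose solution operator is the fixed inverse of a linear isomorphism between finite-dimensional spaces (finite-dimensionality holding by coherence and compactness of $V_0$, independently of any ellipticity). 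Boundedness of $\sigma_0^{-1}$ is then just equivalence of norms in finite dimensions, and convergence of $h(s)$ follows from the elementary power-series majorant argument dominating the composition $\varphi_i(z_i,h^m(s))$ --- which is the part of your convergence plan that survives, and is all the paper invokes when it cites \cite{Kod62} p.160--161.
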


In section \ref{section4}, we study simultaneous deformations of holomorphic Poisson structures and compact holomorphic Poisson submanifolds. Let $V$ be a holomorphic Poisson submanifold of a holomorphic Poisson manifold $(W,\Lambda_0)$. As we saw as above, the complex associated with the normal bundle
\begin{align}\label{031}
\mathcal{N}_{V/W}^\bullet:\mathcal{N}_{V/W}\xrightarrow{\nabla} \mathcal{N}_{V/W}\otimes T_W|_V\xrightarrow{\nabla}\mathcal{N}_{V/W}\otimes \wedge^2 T_W|_V\xrightarrow{\nabla}\cdots
\end{align}
controls holomorphic Poisson deformations of $V$ of $(W,\Lambda_0)$, and the complex
\begin{align}\label{032}
\wedge^2 T_W^\bullet:\wedge^2 T_W\xrightarrow{-[-,\Lambda_0]} \wedge^3 T_W \xrightarrow{-[-,\Lambda_0]} \cdots
\end{align}
controls deformations of the holomorphic Poisson structure $\Lambda_0$ on the fixed underlying complex manifold $W$ (see Appendix \ref{appendixa}).
By combining two complexes $(\ref{031})$ and $(\ref{032})$, we can define a complex of sheaves on $W$ (see subsection \ref{43a} and Definition \ref{033})
\begin{align*}
(\wedge^2 T_W\oplus i_*\mathcal{N}_{V/W})^\bullet:\wedge^2 T_W\oplus i_* \mathcal{N}_{V/W}\xrightarrow{\tilde{\nabla}}\wedge^3 T_W\oplus i_*(\mathcal{N}_{V/W}\otimes T_W|_V)\xrightarrow{\tilde{\nabla}}\wedge^4 T_W\oplus i_*(\mathcal{N}_{V/W}\otimes \wedge^2 T_W|_V)\xrightarrow{\tilde{\nabla}}\cdots
\end{align*}
where $i:V\hookrightarrow W$ is the embedding, which is called the extended complex associated with the normal bundle $\mathcal{N}_{V/W}$ of a holomorphic Poisson submanifold $V$ of a holomorphic Poisson manifold $W$, and denote its $i$-th hypercohomology group by $\mathbb{H}^i(V,(\wedge^2 T_W\oplus i_*\mathcal{N}_{V/W})^\bullet)$. 
Then $(\wedge^2 T_W\oplus i_*\mathcal{N}_{V/W})^\bullet$ controls simultaneous deformations of $\Lambda_0$ and $V$ in $(W,\Lambda_0)$ so that infinitesimal deformations are encoded in the cohomology group $\mathbb{H}^0(W, (\wedge^2 T_W\oplus i_* \mathcal{N}_{V/W})^\bullet)$ and obstructions are encoded in $\mathbb{H}^1(W, (\wedge^2 T_W\oplus i_*\mathcal{N}_{V/W})^\bullet)$. For the precise statements, we extend Definition \ref{024} of a Poisson analytic family of compact holomorphic submanifolds of a holomorphic Poisson manifold $(W,\Lambda_0)$ by deforming $\Lambda_0$ on the fixed complex manifold $W$ as well as a holomorphic Poisson submanifold $V$ of $(W,\Lambda_0)$ as follows (see Definition \ref{4a}).
\begin{definition}
Let $W$ be a complex manifold of dimension $d+r$. We denote a point in $W$ by $w$ and a local coordinate of $w$ by $(w^1,...,w^{r+d})$. By an extended Poisson analytic family of compact holomorphic Poisson submanifolds of dimension $d$ of $W$, we mean a holomorphic Poisson submanifold $\mathcal{V}\subset(W\times M, \Lambda)$ of codimension $r$, where $M$ is a complex manifold and $\Lambda$ is a holomorphic Poisson structure on $W\times M$, such that
\begin{enumerate}
\item the canonical projection $\pi:(W\times M,\Lambda)\to M$ is a Poisson analytic family in the sense of \cite{Kim15} $($but we allow non-compact fibres$)$ so that $\Lambda\in H^0(W\times M, \wedge^2 T_{W\times M/M})$ and $\pi^{-1}(t):=(W\times t,\Lambda_t)$ is a holomorphic Poisson subsmanifold of $(W\times M, \Lambda)$ for each point $t\in M$.
\item for each point $t\in M$, $V_t\times t:=\omega^{-1}(t)=\mathcal{V}\cap \pi^{-1}(t)$ is a connected compact holomorphic Poisson submanifold of $(W\times t,\Lambda_t)$ of dimension $d$, where $\omega:\mathcal{V}\to M$ is the map induced from $\pi$.
\item for each point $p\in \mathcal{V}$, there exist $r$ holomorphic functions $f_\alpha(w,t),\alpha=1,...,r$ defined on a neighborhood $\mathcal{U}_p$ of $p$ in $W\times M$ such that $\textnormal{rank} \frac{\partial ( f_1,...,f_r)}{\partial (w^1,...w^{r+d})}=r$, and $\mathcal{U}_p\cap \mathcal{V}$ is defined by the simultaneous equations $f_\alpha(w,t)=0,\alpha=1,...,r$.
\end{enumerate}
We call $\mathcal{V}\subset (W\times M, \Lambda)$ an extended Poisson analytic family of compact holomorphic Poisson submanifolds $V_t,t\in M$ of $(W,\Lambda_t)$. We also call $\mathcal{V}\subset (W\times M,\Lambda)$ an extended Poisson analytic family of simultaneous deformations of a holomorphic Poisson submanifold $V_{t_0}$ of $(W,\Lambda_{t_0})$ for each fixed point $t_0\in M$.
\end{definition}

As in Definition \ref{004}, we can similarly define the concept of maximality (or completeness) of an extended Poisson analytic family (see Definition \ref{025}). Given an extended Poisson analytic family $\mathcal{V}\subset (M\times M, \Lambda)\xrightarrow{\omega} M$ of compact holomorphic Poisson submanifolds, for each fibre $V_t=\omega^{-1}(t)\subset (W,\Lambda_t)$ of $\mathcal{V}$ for $t\in M$, infinitesimal deformations of $(\Lambda_t, V_t)$ in the family $\mathcal{V}$ are encoded in the cohomology group $\mathbb{H}^0(W, (\wedge^2 T_W\oplus i_*\mathcal{N}_{V_t/W} )^\bullet)$, and we can define the characteristic map (see subsection \ref{027})
\begin{align*}
\sigma_t:T_t M\to \mathbb{H}^0(W, (\wedge^2 T_W\oplus i_*\mathcal{N}_{V_t/W} )^\bullet)
\end{align*}
Given a compact holomorphic Poisson manifold $V$ of a compact holomorphic Poisson manifold $(W,\Lambda_0)$, obstructions of simultaneous deformations of $\Lambda_0$ and $V$ are encoded in $\mathbb{H}^1(W,(\wedge^2 T_W\oplus i_*\mathcal{N}_{V/W} )^\bullet)$ and so when obstructions vanish, we can prove `theorem of  existence' (see Theorem \ref{43c}) and `theorem of completeness' (see Theorem \ref{3.5b}) as follows.
\begin{theorem}[theorem of existence]
Let $V$ be a holomorphic Poisson submanifold of a compact holomorphic Poisson manifold $(W,\Lambda_0)$. If $\mathbb{H}^1(W, (\wedge^2 T_W\oplus i_* \mathcal{N}_{W/V})^\bullet ) =0$, then  there exists an extended Poisson analytic family $\mathcal{V}\subset (W\times M_1,\Lambda)$ of compact holomorphic Poisson submanifolds $V_t,t\in M_1,$ of $(W,\Lambda_t)$ such that $V=V_0\subset (W,\Lambda_0)$ and the characteristic map 
\begin{align*}
\sigma_0:T_0(M_1)&\to \mathbb{H}^0(W, (\wedge^2 T_W\oplus i_*\mathcal{N}_{V/W})^\bullet)
\end{align*}
is an isomorphism.
\end{theorem}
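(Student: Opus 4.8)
The plan is to adapt Kodaira's power-series method, exactly as in the analogue Theorem \ref{33a} for a fixed Poisson structure, to the coupled complex $(\wedge^2 T_W\oplus i_*\mathcal{N}_{V/W})^\bullet$: the vanishing of $\mathbb{H}^1$ will kill all obstructions order by order, and the elliptic theory of the double complex on the compact manifold $W$ will supply convergence. First I would fix a finite cover $\{U_i\}$ of $W$ by coordinate polydiscs acyclic for the relevant coherent sheaves, with adapted coordinates $(w_i,z_i)=(w_i^1,\dots,w_i^r,z_i^1,\dots,z_i^d)$ so that $V\cap U_i=\{w_i=0\}$. I then encode an extended Poisson analytic family over a polydisc $M_1\subset\mathbb{C}^m$, $m=\dim\mathbb{H}^0(W,(\wedge^2 T_W\oplus i_*\mathcal{N}_{V/W})^\bullet)$, by two families of unknowns: a global bivector $\Lambda(t)=\Lambda_0+\sum_{|\nu|\ge 1}\Lambda_\nu t^\nu\in H^0(W\times M_1,\wedge^2 T_{W\times M_1/M_1})$, and local defining functions $w_i^\alpha(t)=w_i^\alpha+\sum_{|\nu|\ge 1}\phi_{i,\nu}^\alpha(w_i,z_i)\,t^\nu$ cutting out $V_t\cap U_i$. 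These are subject to the integrability condition $[\Lambda(t),\Lambda(t)]=0$, the gluing of the patches $\{w_i(t)=0\}$ into a single submanifold $\mathcal{V}$, and the Poisson submanifold condition $[\Lambda(t),w_i^\alpha(t)]\equiv 0$ modulo the ideal $(w_i^1(t),\dots,w_i^r(t))$; the rank condition in the definition of family then holds automatically for small $t$ by continuity.

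The second step is to read off the order-$(\mu+1)$ part of this system inside the \v{C}ech--Dolbeault bicomplex computing $\mathbb{H}^\bullet(W,(\wedge^2 T_W\oplus i_*\mathcal{N}_{V/W})^\bullet)$. Grouping the deformation of $\Lambda_0$ (governed by $\wedge^2 T_W^\bullet$) with the deformation of $V$ (governed by $\mathcal{N}_{V/W}^\bullet$) through $\tilde\nabla$, the linear term produces a class in $\mathbb{H}^0$, while the obstruction to solving at each new order is a class $o_{\mu+1}\in\mathbb{H}^1(W,(\wedge^2 T_W\oplus i_*\mathcal{N}_{V/W})^\bullet)$. The crucial algebraic point is that $o_{\mu+1}$ is $\tilde\nabla$-closed: collecting the lower-order terms, the $\wedge^2 T_W$ component closes by the graded Jacobi identity for the Schouten bracket (precisely as for deformations of the Poisson structure recalled in Appendix \ref{appendixa}), the $i_*\mathcal{N}_{V/W}$ component closes by the lower-order Poisson submanifold relations, and the coupling term of $\tilde\nabla$ matches these two under the restriction maps defining $\mathcal{N}_{V/W}^\bullet$. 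Since $\mathbb{H}^1=0$ by hypothesis, each $o_{\mu+1}$ is a coboundary; applying the Green operator of the elliptic Laplacian of the bicomplex, I solve for $(\Lambda_\nu,\phi_{i,\nu})_{|\nu|=\mu+1}$ canonically and continue the induction, producing a formal solution whose linear term represents a prescribed basis of $\mathbb{H}^0$.

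The analytic heart of the argument is convergence, which I would carry out following Kodaira verbatim. Equip the spaces of $(0,q)$-forms valued in the complex with Hölder norms $\|\cdot\|_{k+\alpha}$, use the a priori estimate $\|G\psi\|_{k+2+\alpha}\le C\|\psi\|_{k+\alpha}$ for the Green operator $G$ of the elliptic resolution of $(\wedge^2 T_W\oplus i_*\mathcal{N}_{V/W})^\bullet$, and bound the formal solution term by term against the standard majorant $A(t)=\frac{b}{16c}\sum_{\mu\ge 1}\frac{c^\mu}{\mu^2}(t_1+\cdots+t_m)^\mu$, whose self-domination property drives the induction. The nonlinearities here are quadratic (the Schouten bracket $[\Lambda(t),\Lambda(t)]$) and substitution-type (the reduction of $[\Lambda(t),w_i^\alpha(t)]$ modulo the moving ideal of $V_t$); both are bilinear operations satisfying the multiplicative norm bounds required by Kodaira's majorant lemma, so the inductive estimate closes and the series converges in a neighborhood of $0\in M_1$, defining an honest extended Poisson analytic family $\mathcal{V}\subset(W\times M_1,\Lambda)$ with $V_0=V\subset(W,\Lambda_0)$.

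Finally the characteristic map is an isomorphism by construction: choosing the linear terms $(\Lambda_\nu,\phi_{i,\nu})_{|\nu|=1}$ to represent a basis of $\mathbb{H}^0(W,(\wedge^2 T_W\oplus i_*\mathcal{N}_{V/W})^\bullet)$ makes $\sigma_0$ surjective, effectiveness of the parametrization makes it injective, and the dimension count $\dim T_0(M_1)=m=\dim\mathbb{H}^0$ forces it to be an isomorphism. I expect the main obstacle to be the convergence step, and within it the verification that the coupled nonlinearity — the interaction of $[\Lambda(t),-]$ with the moving ideal of $V_t$ — satisfies the multiplicative norm bounds uniformly; by contrast the cohomological obstruction-vanishing is comparatively routine once the $\tilde\nabla$-closedness of $o_{\mu+1}$ has been established.
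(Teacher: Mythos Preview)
Your formal scheme---power-series ansatz, order-by-order obstruction in $\mathbb{H}^1$, closedness via the Jacobi identity coupled to the submanifold relations---matches the paper's proof of Theorem~\ref{43c} in substance. Two points of divergence are worth noting.

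First, the paper does \emph{not} perturb the defining functions $w_i^\alpha$ by corrections depending on $(w_i,z_i)$; it uses the graph ansatz $w_i=\varphi_i(z_i,t)$ with $\varphi_i$ depending only on the $V$-coordinates $z_i$. This fixes the gauge ambiguity in your formulation and makes the identification of the linear term with a section of $\mathcal{N}_{V/W}$ transparent. The obstruction cocycle is then written down explicitly in \v{C}ech form (equations \eqref{43p}--\eqref{433a} and the verifications \eqref{43z}, \eqref{43u}, \eqref{43x}, \eqref{43y}), rather than abstractly in a \v{C}ech--Dolbeault bicomplex.

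Second, and more seriously, your convergence argument appeals to ``the Green operator of the elliptic Laplacian of the bicomplex'' and H\"older a~priori estimates. The complex $(\wedge^2 T_W\oplus i_*\mathcal{N}_{V/W})^\bullet$ is \emph{not} a complex of vector bundles on a single manifold: the first summand lives on $W$, the second is the pushforward of a bundle on the lower-dimensional $V$. There is no off-the-shelf elliptic Laplacian for such a mixed object, and you would have to say precisely what operator you mean and why it is elliptic (or replace it by a pair of coupled Laplacians on $W$ and $V$ with compatible estimates). The paper avoids this entirely: convergence is handled purely in \v{C}ech terms via the majorant series $A(t)$ and Lemma~\ref{nb1}, whose proof is a Montel-type normal-families argument showing that coboundaries can be chosen with norm controlled by the cocycle, with no harmonic theory at all. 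If you want to push through an elliptic approach you must confront the mixed support of the complex; otherwise the paper's direct \v{C}ech estimate is both simpler and complete.
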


\begin{theorem}[theorem of completeness]
Let $\mathcal{V}\subset(W\times M_1,\Lambda)$ be an extended Poisson analytic family of compact holomorphic Poisson submanifolds $V_t$ of $(W,\Lambda_t)$. If the characteristic map
\begin{align*}
\rho_0:T_0(M_1)&\to \mathbb{H}^0(W, (\wedge^2 T_W\oplus i_*\mathcal{N}_{W/V_0})^\bullet)
\end{align*}
is an isomorphism, then the family $\mathcal{V}$ is maximal at $t=0$.
\end{theorem}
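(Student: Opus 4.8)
The plan is to follow Kodaira's method for the completeness theorems already established in this paper (Theorem \ref{016} and its Poisson analogue Theorem \ref{3.5a}), reducing maximality at $t=0$ to the construction of a single holomorphic map. Concretely, I would start from an arbitrary extended Poisson analytic family $\mathcal{V}'\subset (W\times M',\Lambda')\xrightarrow{\omega'}M'$ with $(\Lambda'_{0},V'_{0})=(\Lambda_{0},V_{0})$ and seek a holomorphic map $h\colon N'\to M_{1}$, defined on a neighborhood $N'$ of $0\in M'$ with $h(0)=0$, such that $\Lambda'_{s}=\Lambda_{h(s)}$ and $V'_{s}=V_{h(s)}$ simultaneously for all $s\in N'$. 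By Definition \ref{025} this is exactly what it means for $\mathcal{V}'|_{N'}$ to be induced from $\mathcal{V}$, hence for $\mathcal{V}$ to be maximal at $0$. After passing to a tubular neighborhood of the compact submanifold $V_{0}$ I would describe each fibre $V_{t}$ (resp. $V'_{s}$) as the graph of a global normal section $\phi(\cdot,t)$ (resp. $\phi'(\cdot,s)$), so that the two conditions on $h$ combine into one equation $\Psi(h(s))=\Psi'(s)$, where $\Psi(t)=(\Lambda_{t},\phi(\cdot,t))$ takes values in pairs consisting of a holomorphic Poisson structure on $W$ and a normal section along $V_{0}$.

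The map $h$ would be built as a convergent power series $h=\sum_{\mu\geq 1}h_{\mu}$ in the coordinates of $M'$, determined order by order. For the linear term I would use the functoriality of the characteristic map: if $\mathcal{V}'$ were induced from $\mathcal{V}$ by $h$, then its characteristic map $\rho'_{0}$ must satisfy $\rho'_{0}=\rho_{0}\circ (dh)_{0}$, so the hypothesis that $\rho_{0}$ is an isomorphism forces $(dh)_{0}=\rho_{0}^{-1}\circ\rho'_{0}$ and pins down $h_{1}$. For the inductive step I would assume $h$ has been chosen through order $\mu-1$ so that $\Psi(h(s))\equiv\Psi'(s)$ modulo terms of degree $\geq\mu$, and examine the degree-$\mu$ discrepancy $D_{\mu}$, a pair consisting of the order-$\mu$ difference of the Poisson bivectors and of the normal sections. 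Replacing $h$ by $h+h_{\mu}$ alters $\Psi(h(s))$ at order $\mu$ only through the linear term of $\Psi$, namely by $\rho_{0}(h_{\mu})$ (the contributions of the higher-order terms of $\Psi$ at degree $\mu$ involve only the already-fixed lower-order parts of $h$), so the step reduces to solving $\rho_{0}(h_{\mu})=D_{\mu}$, which is possible, and uniquely so, once $D_{\mu}$ is known to lie in $\mathbb{H}^{0}(W,(\wedge^{2}T_{W}\oplus i_{*}\mathcal{N}_{V_{0}/W})^{\bullet})$.

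The decisive point, and the step I expect to be the main obstacle, is precisely this last claim: that $D_{\mu}$ represents a class in $\mathbb{H}^{0}$ of the extended complex, i.e. that it is $\tilde\nabla$-closed as a global section of $\wedge^{2}T_{W}\oplus i_{*}\mathcal{N}_{V_{0}/W}$. Here the coupling in the differential $\tilde\nabla$ between the Poisson-structure component and the normal-bundle component must be handled carefully: both $\Psi(h(s))$ and $\Psi'(s)$ satisfy the nonlinear conditions defining genuine extended Poisson families (the integrability $[\Lambda,\Lambda]=0$ together with the requirement that the graph of $\phi$ remain a Poisson submanifold of $(W,\Lambda)$), and subtracting these relations and extracting the leading order should yield exactly the linearized equation $\tilde\nabla D_{\mu}=0$, using the inductive hypothesis that all lower-order terms already match. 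This is where the definition of the extended complex and the Poisson conditions are genuinely used, and where the argument departs from the purely complex-analytic completeness theorem, in which every normal section already lies in the relevant $H^{0}$ with no closedness condition to verify.

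Finally, having produced $h$ as a formal power series, I would establish its convergence by the same elliptic and harmonic-theoretic estimates used for the existence theorem attached to the extended complex, comparing the formal solution against a majorant series built from the already-convergent data $\Psi$ and $\Psi'$ and from a bounded right inverse to $\rho_{0}$ on $\mathbb{H}^{0}$. Once convergence is in hand, $h$ is a genuine holomorphic map, the identity $\Psi(h(s))=\Psi'(s)$ holds, and the associated Poisson map $\hat h\colon (W\times N',\Lambda')\to (W\times M_{1},\Lambda)$ exhibits $\mathcal{V}'|_{N'}$ as the family induced from $\mathcal{V}$ by $h$, completing the proof of maximality at $t=0$.
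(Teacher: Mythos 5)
Your proposal follows essentially the same route as the paper's proof: an order-by-order construction of $h$ whose linear term is fixed by the isomorphism $\rho_0$, an inductive step in which the degree-$(m+1)$ discrepancy $(\{B_i\},\{\omega_i\})=([\Lambda'_i-\Lambda_i(\cdot,h^m(s))]_{m+1},[\theta_i-\varphi_i(\cdot,h^m(s))]_{m+1})$ is shown to be $\tilde\nabla$-closed (the paper's conditions $(\ref{pp3})$--$(\ref{pp6})$, verified exactly as you describe by subtracting the defining Poisson relations of the two families and extracting leading order), and then absorbed by $\rho_0(h_{m+1})$, followed by a majorant-series convergence argument borrowed from Kodaira. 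The only cosmetic differences are that the paper works with local coordinate data glued by the cocycle conditions rather than a global tubular-neighborhood section, and its convergence step is a pure power-series majorant estimate rather than anything elliptic.
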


Lastly we review stability of compact complex submanifolds presented in \cite{Kod63} and explain how we can extend the concept of stability in the context of holomorphic Poisson deformations. In \cite{Kod63}, Kodaira defined a concept of stability of compact complex submanifolds as follow:
\begin{definition}
Let $V$ be a compact complex submanifold of a complex manifold $W$. We call $V$ a stable complex submanifold of $W$ if and only if, for any complex fibre manifold \footnote{ for the definition of a complex fibre manifold and a complex fibre submanifold, see \cite{Kod63} p.79} $(\mathcal{W},B,p)$ with $p:\mathcal{W}\to B$ such that $p^{-1}(0)=W$ for a point $0\in B$, there exist a neighborhood $N$ of $0$ in $B$ and a complex fibre submanifold $\mathcal{V}$ with compact fibres of the complex fibre manifold $\mathcal{W}|_N$ such that $\mathcal{V}\cap W=V$, where $\mathcal{W}|_N$ is the restriction $(p^{-1}(N), N, p)$ of $(\mathcal{W},B,p)$ to $N$.
\end{definition}
and he proved 
\begin{theorem}
Let $V$ be a compact complex submanifold of a complex manifold $W$. If the first cohomology group $H^1(V,\mathcal{N}_{V/W})$ vanishes, then $V$ is a stable complex submanifold of $W$.
\end{theorem}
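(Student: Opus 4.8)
The plan is to follow Kodaira's strategy from \cite{Kod63}, constructing the required fibre submanifold fibre-by-fibre as a convergent power series in the base parameter and using $H^1(V,\mathcal{N}_{V/W})=0$ to kill every obstruction. The hypothesis is the natural one: writing $m=\dim B$ and using that the normal bundle $\mathcal{N}_{W/\mathcal{W}}$ of the fibre $W=p^{-1}(0)$ is trivial, the normal bundle sequence gives $0\to\mathcal{N}_{V/W}\to\mathcal{N}_{V/\mathcal{W}}\to\mathcal{O}_V^{\,m}\to0$, whence $H^1(V,\mathcal{N}_{V/W})=0$ forces $H^0(V,\mathcal{N}_{V/\mathcal{W}})\to H^0(V,\mathcal{O}_V^{\,m})=T_0B$ to be surjective; thus every infinitesimal base direction is already realized by a genuine deformation of $V$ inside $\mathcal{W}$, which is what must be integrated.

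First I would choose adapted coordinates. Shrinking $B$ to a polydisc $N$ around $0$, cover $\mathcal{W}|_N$ by holomorphic patches on which $p$ is given by a coordinate $t\in N$ and the fibre coordinates split as $z_j=(x_j,y_j)$, with the $r$ functions $y_j$ the normal directions, so that $V\cap U_j=\{y_j=0\}$ in the central fibre; the transition relations $z_k=g_{kj}(z_j,t)$ are holomorphic and restrict at $t=0$ to those of $W$. I then seek $V_t\subset\mathcal{W}_t$ presented in each patch as a graph $y_j=\xi_j(x_j,t)$, holomorphic in $x_j$ with $\xi_j(x_j,0)=0$, the collection being subject to the gluing system $g''_{kj}(x_j,\xi_j,t)=\xi_k\big(g'_{kj}(x_j,\xi_j,t),t\big)$ on overlaps. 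Solving this nonlinear system is the whole problem.

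The heart of the argument is to solve order by order. Expanding $\xi_j=\sum_{\nu\ge1}\xi_j^{(\nu)}t^{\nu}$ and collecting the coefficient of $t^{\nu}$, the gluing relation becomes $\delta\,\xi^{(\nu)}=\Psi^{(\nu)}$, where $\delta$ is the \v{C}ech coboundary for $\mathcal{N}_{V/W}$ --- its defining cocycle is exactly $\partial g''_{kj}/\partial y_j$ at $y_j=0,\,t=0$ --- and $\Psi^{(\nu)}$ is an explicit universal polynomial in the lower terms $\xi^{(<\nu)}$ and the $t$-Taylor coefficients of the $g_{kj}$. A direct check shows $\Psi^{(\nu)}$ is a $1$-cocycle, so its class in $H^1(V,\mathcal{N}_{V/W})$ is the only obstruction to solving for $\xi^{(\nu)}$; since that group vanishes, $\Psi^{(\nu)}$ is a coboundary and a solution exists at every order, yielding a formal submanifold with $V_0=V$.

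The main obstacle is convergence, handled by the elliptic machinery of \cite{Kod62}. After passing to a fine resolution and fixing Hermitian metrics on $V$, Hodge theory supplies a fixed right inverse $E$ to $\delta$ via the Green operator, bounded in the H\"older norms $\|\cdot\|_{k+\alpha}$ precisely because $H^1(V,\mathcal{N}_{V/W})=0$; choosing $\xi^{(\nu)}=E\,\Psi^{(\nu)}$ gives $\|\xi^{(\nu)}\|_{k+\alpha}\le C\,\|\Psi^{(\nu)}\|_{k+\alpha}$ with $C$ independent of $\nu$. Feeding this into the recursion for $\Psi^{(\nu)}$ and comparing against a scalar majorant of the form $A\sum_{\nu}(b\,|t|)^{\nu}/\nu^{2}$ then forces $\sum\xi_j^{(\nu)}t^{\nu}$ to converge on a smaller polydisc $N'$. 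The limit defines a holomorphic fibre submanifold $\mathcal{V}\subset\mathcal{W}|_{N'}$ with compact fibres and $\mathcal{V}\cap W=V$, which is the asserted stability. I expect the uniform boundedness of $E$ and the majorant comparison to be the delicate points, exactly as in Kodaira's existence proof.
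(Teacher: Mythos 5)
Your argument is correct, and it reaches the theorem by a leaner route than the paper's. The paper (Section \ref{section5}, following \cite{Kod63}; the classical statement is the case $\Lambda_0=0$) never builds the fibre submanifold directly over the base: it first proves a stronger existence theorem (Theorem \ref{5theorem1}), producing a complete family of deformations of $V$ inside the total space $\mathcal{W}$ parametrized by all of $H^0(V,\mathcal{N}_{V/\mathcal{W}})$, with a basis normalized through the exact sequence $0\to \mathcal{N}_{V/W}\to \mathcal{N}_{V/\mathcal{W}}\to \mathcal{O}_V^{\,q}\to 0$ (your sequence, $q=\dim B$) so that the last $q$ parameters enter the defining equations literally as $u^\rho=t_{n-q+\rho}$; stability then follows by restricting this family to the linear slice $t=(0,\dots,0,u^1,\dots,u^q)$. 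You construct only that slice, i.e.\ the fibrewise graphs over a neighborhood of $0\in B$, which suffices for stability; what you forgo is the completeness statement (the characteristic map isomorphism onto $H^0(V,\mathcal{N}_{V/\mathcal{W}})$) that the paper obtains en route. The pivot of both proofs is the point you identified: because the base coordinates are matched exactly, the order-by-order gluing obstruction has vanishing component in the quotient $\mathcal{O}_V^{\,q}$, hence is a cocycle valued in the subsheaf $\mathcal{N}_{V/W}$, so only $H^1(V,\mathcal{N}_{V/W})=0$ is needed. The one step where your plan diverges from what actually gets used is convergence: the paper employs no Hodge theory, Green operators, or H\"older norms. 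Bounded solvability of the coboundary equation comes from Kodaira's elementary compactness lemma (Lemma \ref{nb2}, invoked again in Section \ref{section5} through subsection \ref{3convergence}): if no uniform constant existed, Montel's theorem and the identity theorem yield a contradiction, giving a sup-norm bounded right inverse on slightly shrunken covers, which is then fed into the scalar majorant $A(t)$ of Notation \ref{notation3}. Your Green-operator inverse is a Dolbeault tool applied to a \v{C}ech-formulated obstruction; it can be made to work after converting cocycles by a partition of unity and re-deriving uniform bounds, but in this purely holomorphic situation sup norms and the compactness lemma are simpler and are exactly what the inductive majorant estimate requires.
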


In section \ref{section5}, we similarly define a concept of stable compact holomorphic Poisson submanifolds as follows (see Definition \ref{029}):
\begin{definition}
Let $V$ be a compact holomorphic Poisson submanifold of a holomorphic Poisson manifold $(W,\Lambda_0)$. We call $V$ a stable holomorphic Poisson submanifold of $(W,\Lambda_0)$ if and only if, for any holomorphic Poisson fibre manifold $(\mathcal{W},\Lambda,B,p)$ $($see Definition $\ref{fibre}$$)$ such that $p^{-1}(0)=(W,\Lambda_0)$ for a point $0\in B$, there exist a neighborhood $N$ of $0$ in $B$ and a holomorphic Poisson fibre submanifold $\mathcal{V}$ with compact fibres of the holomorphic Poisson fibre manifold $(\mathcal{W},\Lambda)|_N$ such that $\mathcal{V}\cap W=V$, where $(\mathcal{W},\Lambda)|_N$ is the restriction $(p^{-1}(N), \Lambda|_{p^{-1}(N)}, N,p)$ of $(\mathcal{W},\Lambda, B,p)$ to $N$.
\end{definition}
and we prove (see Theorem \ref{stability})
\begin{theorem}
Let $V$ be a compact holomorphic Poisson submanifold of a holomorphic Poisson manifold $(W,\Lambda_0)$. If the first cohomology group $\mathbb{H}^1(V,\mathcal{N}_{V/W}^\bullet)$ vanishes, then $V$ is a stable holomorphic Poisson submanifold of $(W,\Lambda_0)$.
\end{theorem}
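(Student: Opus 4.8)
The plan is to follow Kodaira's strategy in \cite{Kod63} for the classical stability theorem, carried out relative to the prescribed Poisson family and with the ordinary normal bundle replaced by the complex $\mathcal{N}_{V/W}^\bullet$. The essential point is that, since the holomorphic Poisson fibre manifold $(\mathcal{W},\Lambda,B,p)$ is \emph{given}, we need not construct any deformation of the ambient Poisson structure; we only have to produce a compact holomorphic Poisson submanifold inside each nearby fibre $W_t:=p^{-1}(t)$ that restricts to $V$ over $0$. This is exactly why the hypothesis involves only $\mathbb{H}^1(V,\mathcal{N}_{V/W}^\bullet)$ and not the hypercohomology of the extended complex of Section \ref{section4}, where the ambient structure is also being deformed.

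First I would fix local data. Choosing a small polydisc $N\ni 0$ in $B$ and a finite cover of a neighborhood of $V$ in $\mathcal{W}|_N$ by coordinate patches $\mathcal{U}_i$ with coordinates $(w_i,z_i,t)$, where $w_i=(w_i^1,\dots,w_i^r)$, $z_i=(z_i^1,\dots,z_i^d)$, the fibre $W_t$ is cut out by fixing $t$, and $V=V_0$ is defined in $W_0$ by $w_i=0$. The fibre Poisson structures $\Lambda_t=\Lambda|_{W_t}$ and the transition functions then depend holomorphically on $t$. I would look for each nearby fibre $V_t$ as a graph $w_i=\varphi_i(z_i,t)$ over $V$, with $\varphi_i(z_i,0)=0$, subject to two requirements: the graphs must glue to a global submanifold $\mathcal{V}$, which is a \v{C}ech compatibility condition for the $\varphi_i$ valued in $\mathcal{N}_{V/W}$, and each $V_t$ must be a Poisson submanifold of $(W_t,\Lambda_t)$, i.e.\ $[\Lambda_t,\,w_i^\alpha-\varphi_i^\alpha]$ must vanish along $\mathcal{V}$, a condition expressed through the Poisson connection $\nabla$.

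Next I would solve for $\varphi_i=\sum_{m\ge 1}\varphi_i^{(m)}t^m$ order by order in $t$. The linearization of the combined gluing-and-Poisson system at order $m$ is governed by the total (\v{C}ech--hypercohomology) complex of $\mathcal{N}_{V/W}^\bullet$: the unknown $\{\varphi_i^{(m)}\}$ is a $0$-cochain, while the terms already determined at lower orders, together with the $t$-derivatives of the prescribed data $\{\Lambda_t\}$ and of the transition functions, assemble into a $1$-cocycle representing a class in $\mathbb{H}^1(V,\mathcal{N}_{V/W}^\bullet)$. Because this group vanishes by hypothesis, the class is a coboundary and the order-$m$ equation is solvable. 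This is the formal, algebraic part of the argument and proceeds essentially as in the proof of Theorem \ref{33a}, specialized to a fixed ambient family.

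The hard part will be the convergence of the resulting power series, and this is where I expect the main obstacle. As in \cite{Kod63}, I would introduce H\"older norms on the coordinate patches and replace bare solvability at each order by a \emph{bounded} solution operator, obtained from harmonic theory for the complex $\mathcal{N}_{V/W}^\bullet$ on the compact manifold $V$: the associated hypercohomology Laplacian is elliptic, so $\mathbb{H}^1(V,\mathcal{N}_{V/W}^\bullet)=0$ furnishes a Green-type operator with a priori estimates in the H\"older norms. Using this bounded right inverse I would set up a recursion $\varphi^{(m)}=-G(\,\cdots\,)$ and dominate it by a convergent majorant series of the standard Kodaira form, so that $\sum_m\varphi_i^{(m)}t^m$ converges on a possibly smaller polydisc. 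The genuine subtlety, absent in the classical case, is that the solving operator must simultaneously control the \v{C}ech (gluing) direction and the $\nabla$ (Poisson) direction of the total complex; constructing the correct Banach spaces and a bounded homotopy operator for this total complex, and verifying that the estimates compose, is the crux. Once convergence is established, the functions $w_i^\alpha-\varphi_i^\alpha(z_i,t)$ define a holomorphic Poisson fibre submanifold $\mathcal{V}\subset(\mathcal{W},\Lambda)|_N$ with compact fibres and $\mathcal{V}\cap W=V$, proving that $V$ is stable.
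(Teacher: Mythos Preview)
Your proposal is correct and follows essentially the same strategy as the paper, with two organizational differences worth noting.

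First, rather than working directly over $N\subset B$, the paper proves an auxiliary result (Theorem~\ref{5theorem1}): it views $V$ as a submanifold of $(\mathcal{W},\Lambda)$, sets up an exact sequence of complexes $0\to\mathcal{N}_{V/W}^\bullet\to R^\bullet\to\mathcal{Q}^\bullet\to 0$ with $R^\bullet$ a subcomplex of $\mathcal{N}_{V/\mathcal{W}}^\bullet$ and $\mathbb{H}^0(V,\mathcal{Q}^\bullet)\cong\mathbb{C}^q$, and constructs a Poisson analytic family of compact Poisson submanifolds of $(\mathcal{W},\Lambda)$ parametrized by $M_\epsilon\cong\mathbb{C}^n$, $n=\dim\mathbb{H}^0(V,R^\bullet)$. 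The base coordinates $u^\rho$ are forced to equal the last $q$ of the deformation parameters $t$, so the obstruction cocycle at each step automatically lands in $\mathcal{N}_{V/W}^\bullet\subset R^\bullet$ and is killed by the hypothesis. Stability then follows by restricting to the linear slice $t=(0,\dots,0,u^1,\dots,u^q)$. Your direct construction over $N$ amounts to working on this slice from the outset; the cocycle computation is the same.

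Second, for convergence the paper does \emph{not} invoke harmonic theory or an elliptic Laplacian on $\mathcal{N}_{V/W}^\bullet$. It simply refers back to the majorant method of subsection~\ref{3convergence}, where the bounded right inverse comes from a Montel-type compactness argument (Lemma~\ref{nb2}) rather than Hodge theory. Your elliptic approach would also work and is closer in spirit to Kodaira's original \cite{Kod63}, but the paper's route is more elementary and sidesteps the issue you flag of setting up Hodge theory for the total \v{C}ech--$\nabla$ complex.
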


In appendices \ref{appendixb} and \ref{appendixc}, we present deformations of Poisson closed subschemes in the language of functors of Artin rings which is the algebraic version of deformations of holomorphic Poisson submanifolds. We identity first-order deformations and obstructions (see Proposition \ref{b15} and Proposition \ref{cp}).

\section{Deformations of compact holomorphic Poisson submanifolds of codimension $1$ and Poisson semi-regularity}\label{section2}

\begin{definition}\label{2d}
Let $(W,\Lambda_0)$ be a holomorphic Poisson manifold. We denote a point in $W$ by $w$ and a local coordinate of $w$ by $(w^1,...,w^{n+1})$. By a Poisson analytic family of compact holomorphic Poisson submanifolds of codimension $1$ of $(W,\Lambda_0)$, we mean a holomorphic Poisson submanifold $\mathcal{V}\subset (W\times M,\Lambda_0)$ of codimension $1$ where $M$ is a complex manifold and $\Lambda_0$ is the holomorphic Poisson structure on $W\times M$ induced from $(W,\Lambda_0)$, such that $V_t\times t:=\omega^{-1}(t)=\mathcal{V}\cap \pi^{-1}(t)$ for each point $t\in M$ is a connected compact  holomorphic Poisson submanifold of $(W\times t,\Lambda_0)$, where $\omega:\mathcal{V}\to M$ is the map induced from the canonical projection $\pi:W\times M\to M$, and for each point $p\in \mathcal{V}$, there is a holomorphic function $S(w,t)$ on a neighborhood $\mathcal{U}_p$ of $p$ in $W\times M$ such that $\sum_{\alpha=1}^{n+1}|\frac{\partial S(w,t)}{\partial w^\alpha}|^2\ne 0$ at each point in $\mathcal{U}_p\cap \mathcal{V}$, and $ \mathcal{U}_p\cap \mathcal{V}$ is defined by $S(w,t)=0$.
\end{definition}

\subsection{Infinitesimal deformations}\label{009}\

Let $(W,\Lambda_0)$ be a holomorphic Poisson manifold. We denote by $w$ a point on $(W,\Lambda_0)$ and by $(w^1,...,w^{n+1})$ the local holomorphic coordinates (not specified) of $w$. Consider a small spherical neighborhood $N$ of a point on $M$ and let $\{U_i\}$ be a locally finite covering of $W$ by sufficiently small coordinate neighborhoods $U_i$. 

Consider a Poisson analytic family $\mathcal{V}\subset (W\times M,\Lambda_0)$ of compact holomorphic Poisson submanifolds of codimension $1$ as in Definition \ref{2d}. Let $N$ be a small spherical neighborhood of a point on $M$. Then the holomorphic Poisson submanifold $\mathcal{V}$ of $(W\times M, \Lambda_0)$ is defined in each neighborhood $U_i\times N$ by a holomorphic equation $S_i(w,t)=0$ where $S_i(w,t)$ is a holomorphic function on $U_i\times N$ such that $\sum_{\alpha=1}^{n+1} \left|\frac{\partial S_i(w,t)}{\partial w^\alpha}\right|^2\ne 0$ at each point $(w,t)$ of $\mathcal{V}\cap (U_i\times N)$ (if $\mathcal{V}\cap (U_i\times N)$ is empty, we set $S_i(w,t)=1$). By letting 
\begin{align}\label{jj6}
S_i(w,t)=f_{ik}(w,t)S_k(w,t),\,\,\,\,\,w\in U_i\cap U_k, 
\end{align}
we get a system $\{f_{ik}(w,t)\}$ of non-vanishing holomorphic functions $f_{ik}(w,t)$ defined, respectively, on $(U_i\times N)\cap (U_k\times N)$ satisfying 
\begin{align}\label{2a}
f_{ik}(w,t)=f_{ij}(w,t)f_{jk}(w,t),\,\,\,\,\,\,w\in U_i\cap U_j\cap U_k.
\end{align}
 On the other hand, since $S_i(w,t)=0$ define a holomorphic Poisson submanifold, $[\Lambda_0,S_i(w,t)]$ is of the form 
 \begin{align}\label{jj5}
[\Lambda_0, S_i(w_i,t)]= S_i(w,t)T_i(w,t)
 \end{align}
for some $T_i(w,t)=\sum_{i=1}^{n+1} T_i^\alpha(w,t)\frac{\partial}{\partial w^\alpha}$, where $T_i^\alpha(w,t)$ is a holomorphic function on $U_i\times N$. By taking $[\Lambda_0,-]$ on $(\ref{jj5})$, we have $0=-[\Lambda_0, S_i(w,t)]\wedge T_i(w,t)]+S_i(w,t)[\Lambda_0, T_i(w,t)]=-S_i(w,t)T_i(w,t)\wedge T_i(w,t)+S_i(w,t)[\Lambda_0, T_i(w,t)]=S_i(w,t)[\Lambda_0,T_i(w,t)]$ so that
 \begin{align}\label{2b}
  [\Lambda_0,T_i(w,t)]=0,\,\,\,\,w\in U_i
\end{align}  
From $(\ref{jj6})$ and $(\ref{jj5})$, $f_{ik}(w,t)S_k(w,t)T_i(w,t)=S_i(w,t)T_i(w,t)=[\Lambda_0,S_i(w,t)]=[\Lambda_0,f_{ik}(w,t)S_k(w,t)]=S_k(w,t)[\Lambda_0,f_{ik}(w,t)]+f_{ik}(w,t)[\Lambda_0, S_k(w,t)]=S_k(w,t)[\Lambda_0,f_{ik}(w,t)]+f_{ik}(w,t)S_k(w,t)T_k(w,t)$ so that
\begin{align}\label{2c}
f_{ik}(w,t)T_i(w,t)-f_{ik}(w,t)T_k(w,t)=[\Lambda_0,f_{ik}(w,t)],\,\,\,\,w\in U_i\cap U_k
\end{align}
Then the conditions (\ref{2a}),(\ref{2b}),(\ref{2c}) show that $(\{f_{ik}(w,t)\},\{T_i(w,t)\})$ defines the Poisson line bundle over $(W,\Lambda_0)$ for each $t\in N$. We will denote the Poisson line bundle by $(\mathcal{N}_t,\nabla_t)$ for $t\in N$, where $\nabla_t$ is the Poisson connection on $\mathcal{N}_t$ which defines the Poisson line bundle structure (see \cite{Kim16}). Then we have a complex of sheaves on $W$ (see \cite{Kim16})
\begin{align*}
\mathcal{N}_t^\bullet:\mathcal{N}_t\xrightarrow{\nabla_t} \mathcal{N}_t\otimes T_W\xrightarrow{\nabla_t} \mathcal{N}_t\otimes \wedge^2 T_W\xrightarrow{\nabla_t} \cdots
\end{align*}
We will denote the $i$-th hypercohomology group by $\mathbb{H}^i(W,N_t^\bullet)$. We note that $\mathcal{N}_t^\bullet$ induces, by restriction on $V_t$, the complex of sheaves on $V_t$
\begin{align*}
\mathcal{N}_t^\bullet|_{V_t}:\mathcal{N}_t|_{V_t}\xrightarrow{\nabla_t|_{V_t}} \mathcal{N}_t|_{V_t}\otimes T_W|_{V_t}\xrightarrow{\nabla_t|_{V_t}} \mathcal{N}_t|_{V_t}\otimes \wedge^2 T_W|_{V_t}\xrightarrow{\nabla_t|_{V_t}} \cdots
\end{align*}
We will denote the $i$-th hypercohomology group by $\mathbb{H}^i(V_t,\mathcal{N}_t^\bullet|_{V_t})$. 

Denote by $(t_1,..,t_m)$ a system of holomorphic coordinates on $N$. For any tangent vector $v=\sum_{r=1}^m v_r\frac{\partial}{\partial t_r}$ of $M$ at $t\in N$, we set $\psi_i(w,t)=-\sum_{r=1}^m v_r\frac{\partial S_i(w,t)}{\partial t_r}$. Since $S_i(w,t)=0$ for $w\in V_t$, by taking the derivative of $(\ref{jj6})$ and $(\ref{jj5})$ with respect to $t$, we obtain
\begin{align}
& \psi_i(w,t)=f_{ik}(w,t)\psi_k(w,t),\,\,\,\,\,w\in V_t\cap U_i\cap U_k,\label{jj10}\\
  [\Lambda_0, \psi_i(w,t)]|_{V_t}=&[\Lambda_0,\psi_i(w,t)]|_{S_i(w,t)=0}=\psi_i(w,t) T_i(w,t),\,\,\,\,\,w\in V_t\cap U_i\label{jj11}
\end{align}
From $(\ref{jj10})$ and $(\ref{jj11})$, $\{\psi_i(w,t)\}$ define an element in $\mathbb{H}^0(V_t, \mathcal{N}_t^\bullet|_{V_t})$ so that we have a linear map
\begin{align*}
\rho_{d,t}:T_t M&\to \mathbb{H}^0(V_t, \mathcal{N}_t^\bullet|_{V_t})\\
\frac{\partial}{\partial t}&\mapsto \frac{\partial V_t}{\partial t}:=\{\psi_i(z_i,t)\}
\end{align*}
We call $\rho_{d,t}$ the characteristic map.

\begin{definition}\label{010}
Let $V_0$ be a compact holomorphic Poisson submanifold of a holomorphic Poisson manifold $(W,\Lambda_0)$ of codimension $1$ and let $(\mathcal{N}_0,\nabla_0)$ be the Poisson line bundle over $(W,\Lambda_0)$ determined by $V_0$. We denote by $r_0:\mathcal{N}^\bullet\to \mathcal{N}^\bullet|_{V_0}$ the restriction map of the following complex of sheaves
\begin{center}
$\begin{CD}
\mathcal{N}_0^\bullet: @.\mathcal{N}_0@>\nabla_0>> \mathcal{N}_0\otimes T_W@>\nabla_0>> \mathcal{N}_0\otimes \wedge^2 T_W@>\nabla_0>>\cdots\\
@.@VVV @VVV @VVV \\
\mathcal{N}_0^\bullet |_{V_0}: @. \mathcal{N}_0|_{V_0}@>\nabla_0|_{V_0}>> \mathcal{N}_0|_{V_0}\otimes T_W|_{V_0}@>\nabla_0|_{V_0}>> \mathcal{N}_0|_{V_0}\otimes \wedge^2 T_W|_{V_0}@>\nabla_0|_{V_0}>>\cdots
\end{CD}$
\end{center}
which induces a homomorphism $r_0^*:\mathbb{H}^1(W, \mathcal{N}_0^\bullet)\to \mathbb{H}^1(V_0,\mathcal{N}_0^\bullet|_{V_0})$. We say that $V_0$ is Poisson semi-regular if the image $r_0^* \mathbb{H}^1(W,\mathcal{N}_0^\bullet)$ is zero.

\end{definition}

\subsection{Theorem of existence}\label{mm18}\

We extend the argument in \cite{Kod59} in the context of Poisson deformations (see \cite{Kod59} p.484-493). We tried to maintain notational consistency with \cite{Kod59}. Let $(W,\Lambda_0)$ be a  holomorphic Poisson manifold of dimension $n+1\geq 2$ and let $V_0$ be a compact holomorphic Poisson submanifold of $(W,\Lambda_0)$ of dimension $n$. In what follows we denote by $p$ a point on $W$ and by $(w^1(p),,,,, w^{n+1}(p))$ the coordinate of $p$ with respect to a system of local holomorphic coordinate $(w^1,...,w^{n+1})$ on $W$. We choose a locally finite covering $\mathcal{U}=\{U_i\}$ of $W$ such that 
\begin{enumerate}
\item each neighborhood $U_i$ is a polycylinder: $U_i=\{p||w_i^1(p)|<1,...,|w_i^{n+1}(p)|<1\}$ where $(w_i^1,...,w_i^{n+1})$ is a system of local holomorphic coordinates which covers the closure of $U_i$.
\item $V_0\cap U_i$ coincides with the coordinate plane $w_i^{n+1}=0$ if $V_0\cap U_i$ is not empty,
\item $V_0\cap U_i\cap U_k$ is not empty if $V_0\cap U_i,V_0\cap U_k$ and $U_i\cap U_k$ are not empty.
\item if $V_0\cap U_i$ is not empty,  we write $w_i=w^{n+1}_i, z^1_i=w_i^1,...,z^n_i=w^n_i$.
\item $\{U_i^\delta\}$ covers $W$, where $U_i^\delta=\{p||w_i(p)|<1-\delta,...,|z_i^1(p)|<1-\delta,...,|z_i^n(p)|<1-\delta, |w_i(p)|<1\}$ for a sufficiently small $\delta$.
\end{enumerate}

Let $S_{i|0}(p)=w_i(p)$ if $V_0\cap U_i\ne \emptyset$ and $S_{i|0}(p)=1$ if $V_0\cap U_i=\emptyset$ for $p\in U_i$. If we let
\begin{align*}
f_{ik|0}(p)&:=\frac{S_{1|0}(p)}{S_{k|0}(p)},\,\,\,\,\,\text{for $p\in U_i\cap U_k$},\,\,\,\,\,\,\,\,\,\, \\
T_{i|0}(p)&:=[\Lambda_0, \log S_{i|0}(p)](\,\,\text{i.e}\,\, [\Lambda_0, S_{i|0}(p)]=S_{i|0}(p)T_{i|0}(p)),\,\,\,\,\,\text{for $p\in U_i$},
\end{align*}
then the system $(\{f_{ik|0}(p)\},\{T_{i|0}(p)\})$ defines the Poisson line bundle $\mathcal{N}_0$. Let $\{\beta_1,...,\beta_m\}$ be a basis of  $\mathbb{H}^0(V_0,\mathcal{N}_0^\bullet|_{V_0})$. Each $\beta_r$ is a holomorphic section of $\mathcal{N}_0$ over $V_0$ and $\beta_r$ is written in the form via the identification $\mathcal{N}_0|_{U_i}\cong U_i\times \mathbb{C}$,
\begin{align*}
\beta_r:p\mapsto (p,\beta_{ri}(p))
\end{align*}
where $\beta_{ri}(p)$ are holomorphic functions on $V_0\cap U_i$ satisfying
\begin{align}
&\beta_{ri}(p)=f_{ik|0}(p)\beta_{rk}(p),\,\,\,\,\,\text{for $p\in V_0\cap U_i\cap U_k$.}\\
-&[\Lambda_0, \beta_{ri}(p)]|_{V_0}+\beta_{ri}(p)  T_{i|0}(p)=0,\,\,\,\,\,\text{for $p\in V_0\cap U_i$}\label{22g}
\end{align}
With this preparation, we prove
\begin{theorem}[Theorem of existence]\label{jj20}
If $V_0$ is Poisson semi-regular, then there exists a Poisson analytic family $\mathcal{V}\subset (W\times M,\Lambda_0)\xrightarrow{\omega} M$ of compact holomorphic Poisson submanifolds of $(W,\Lambda_0)$ containing $V_0$ as the fibre $\omega^{-1}(0)$ over $0\in M$ such that the characteristic map
\begin{align*}
\rho_{d,0}: T_0 M&\to \mathbb{H}^0(V_0,\mathcal{N}_0|_{V_0}^\bullet)\\
\frac{\partial}{\partial t}&\mapsto \left( \frac{\partial V_t}{\partial t}\right)_{t=0}
\end{align*}
is an isomorphism.
\end{theorem}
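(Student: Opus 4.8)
The plan is to transcribe the Kodaira--Spencer existence argument of \cite{Kod59} (p.\,484--493) into the Poisson setting, the new ingredient being that the defining functions must stay compatible with the bracket at every order. I would construct the family by producing, for each $i$, a convergent power series
\[
S_i(w,t) = S_{i|0}(w) - \sum_r \beta_{ri}(w)\,t_r + \sum_{|\nu|\ge 2} S_i^{(\nu)}(w)\,t^\nu, \qquad t=(t_1,\dots,t_m),
\]
whose linear part is fixed by the chosen basis $\{\beta_1,\dots,\beta_m\}$ of $\mathbb{H}^0(V_0,\mathcal{N}_0^\bullet|_{V_0})$, together with matching power series $f_{ik}(w,t)$ and $T_i(w,t)$ refining $f_{ik|0}$ and $T_{i|0}$. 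The two structural requirements are the gluing relation (\ref{jj6}) and the Poisson relation (\ref{jj5}). Expanding these in $t$ and collecting the terms of each total degree $\mu$ turns them into a linear system for the degree-$\mu$ coefficients $\bigl(S_i^{(\mu)},\,f_{ik}^{(\mu)},\,T_i^{(\mu)}\bigr)$ whose inhomogeneous terms are polynomial expressions in the already-determined lower-order data. Since (\ref{jj6}) governs the \v{C}ech direction and (\ref{jj5}) governs the $\nabla_0$ direction, solvability of the degree-$\mu$ system is precisely a statement about the complex $\mathcal{N}_0^\bullet$, and this is where the hypercohomology formulation is forced on us.

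First I would identify the obstruction. At each stage the failure to solve the degree-$\mu$ equations is a $1$-cocycle for the total complex computing $\mathbb{H}^1(V_0,\mathcal{N}_0^\bullet|_{V_0})$, combining the \v{C}ech error in (\ref{jj6}) with the bracket error in (\ref{jj5}) into a single class. The decisive point, exactly as in the classical codimension-one case, is that this obstruction is the restriction to $V_0$ of a class defined over all of $W$: the lower-order truncations $S_i^{(<\mu)}$ are genuine holomorphic functions on the $U_i\times N$, so their errors assemble into a well-defined $o_\mu\in\mathbb{H}^1(W,\mathcal{N}_0^\bullet)$ with $r_0^*(o_\mu)$ equal to the obstruction in $\mathbb{H}^1(V_0,\mathcal{N}_0^\bullet|_{V_0})$. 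By Poisson semi-regularity (Definition \ref{010}) we have $r_0^*\mathbb{H}^1(W,\mathcal{N}_0^\bullet)=0$, so the obstruction vanishes and the degree-$\mu$ coefficients exist; iterating yields a formal solution with the prescribed linear term.

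The remaining and genuinely hard step is convergence. Here I would carry Kodaira's elliptic machinery over to the hypercohomology complex $\mathcal{N}_0^\bullet|_{V_0}$: fix a Hermitian metric, replace each term of the complex by its Dolbeault resolution, form the total Laplacian of the resulting double complex, and introduce its Green operator $G$ and harmonic projector $H$. Solving each degree-$\mu$ equation by applying $G$ (rather than taking an arbitrary solution) produces coefficients satisfying the sharp a priori Hölder estimates needed to dominate the formal series by a convergent majorant of the type used in \cite{Kod59}. The Poisson constraint (\ref{jj5}) and its integrability consequence (\ref{2b}) must be threaded through these estimates: one has to check that the coupling operator $[\Lambda_0,-]$, which injects the $\nabla_0$-differential into the Laplacian, is of the order permitted by the elliptic theory so that the norm inequalities survive. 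I expect this compatibility of the Poisson differential with the elliptic estimates to be the main obstacle; once it is established the majorant argument proceeds as in \cite{Kod59}.

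Finally I would verify the output. Convergence on a suitably shrunk polydisc $M\subset N$ gives holomorphic $S_i(w,t)$ on $U_i\times M$ with $\sum_\alpha|\partial S_i/\partial w^\alpha|^2\ne 0$, inherited from $S_{i|0}=w_i$ after shrinking, and satisfying (\ref{jj6}) and (\ref{jj5}); hence $\mathcal{V}:=\{S_i=0\}$ is a Poisson analytic family in the sense of Definition \ref{2d} with $\omega^{-1}(0)=V_0$. That $\rho_{d,0}$ is an isomorphism is then immediate from the construction: by (\ref{jj10})--(\ref{jj11}) the image of $\partial/\partial t_r$ is the class of $\{-\partial S_i/\partial t_r|_{t=0}\}=\{\beta_{ri}\}=\beta_r$, and since $\{\beta_1,\dots,\beta_m\}$ is a basis of $\mathbb{H}^0(V_0,\mathcal{N}_0^\bullet|_{V_0})$ with $\dim_{\mathbb{C}} M=m$, the characteristic map is bijective.
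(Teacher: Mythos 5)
Your formal construction and obstruction analysis coincide with the paper's (Kodaira-style) argument: the degree-$(\mu+1)$ errors in the gluing and Poisson relations form a $1$-cocycle $(\psi_{\mu+1}(t),W_{\mu+1}(t))$ in the \v{C}ech resolution of $\mathcal{N}_0^\bullet|_{V_0}$, and Poisson semi-regularity kills it because it is $r_0^*$ of a class on $W$. Note, however, that the lift to $\mathbb{H}^1(W,\mathcal{N}_0^\bullet)$ is not simply ``the errors of the truncations, read over $U_i\times N$'': those naive errors satisfy the cocycle identities only as congruences modulo degree $\mu$, not exactly, so they do not by themselves define a hypercohomology class on $W$. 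The paper repairs this with Kodaira's exponential lemma, producing $g^\mu_{ik}$ with $f_{ik|0}\exp g^\mu_{ik}\equiv_\mu f^\mu_{ik}$ and replacing $f^\mu_{ik}$ by $\hat f^{\mu+1}_{ik}=f_{ik|0}\exp g^\mu_{ik}$, so that the class $(\eta(t),\omega(t))$ satisfies the exact identities $(\ref{23f})$, $(\ref{23e})$, $(\ref{23g})$ on $W$. This part of your sketch is fixable along those lines, so it is a gap of detail rather than of concept.

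The genuine gap is in your convergence step. You propose to resolve $\mathcal{N}_0^\bullet|_{V_0}$ by Dolbeault complexes, form the total Laplacian of the resulting double complex, and choose the degree-$\mu$ solutions via its Green operator so as to get a priori estimates. But the total complex with differential $\bar\partial\pm\nabla_0$ is \emph{not} an elliptic complex: $\nabla_0$ is a first-order operator whose principal symbol is, up to sign, wedging with the vector field obtained by contracting the covector into $\Lambda_0$, and this symbol degenerates exactly where the Poisson structure does. In the admissible extreme case $\Lambda_0=0$ every differential $\nabla_0$ vanishes identically, and more generally Poisson submanifolds typically lie in the degeneracy locus of $\Lambda_0$; in these situations the symbol sequence is not exact, there is no harmonic projector or Green operator for the total differential, and the H\"older estimates you invoke are simply not available. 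You flag this compatibility as ``the main obstacle,'' but it is not a technical check to be threaded through --- it is the point at which the elliptic route breaks down. The paper avoids elliptic theory altogether: the needed uniform estimate is Lemma~\ref{1lemma}, asserting that every coboundary $(\psi,W)$ admits a primitive $\phi$ with $\|\phi\|\le c\,\|(\psi,W)\|$ for a constant $c$ independent of $\mu$, and it is proved by contradiction, using a normal-families compactness argument on a bounded sequence of primitives together with the identity theorem (to promote vanishing on $U_i^\delta$ to $U_i$); convergence then follows from the classical majorant-series induction $S_i(t)-S_{i|0}\ll A(t)$, $f_{ik}(t)-f_{ik|0}\ll c_1A(t)$, $T_i(t)-T_{i|0}\ll d_1A(t)$ of subsection~\ref{sectionp}. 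To complete your proof you must either establish a substitute estimate for $\bar\partial+\nabla_0$ (false in general, for the reason above) or replace that step by a soft uniform-bound lemma of the paper's type.
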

\begin{proof}
Let $N$ be a spherical neighborhood of $0$ on the space of $m$ complex variables $t_1,...,t_m$, where $m=\dim \mathbb{H}^0(V_0,\mathcal{N}_0^\bullet |_{V_0})$. In order to prove Theorem \ref{jj20}, it suffices to construct a system $\{S_i(p,t)\}$ of holomorphic functions $S_i(p,t)$ defined on $U_i\times N$ and a system $\{f_{ik}(p,t)\}$ of non-vanishing holomorphic functions $f_{ik}(p,t)$ defined on $(U_i\cap U_k)\times N$ and a system $\{T_i(p,t)\}$ of  holomorphic vector fields $T_i(p,t)$ defined on $U_i\times N$ such that
\begin{align}
&S_i(p,t)=f_{ik}(p,t)S_k(p,t),\,\,\,\,\,\text{for $p\in U_i\cap U_k$},\label{22a}\\
&[\Lambda_0, S_i(p,t)]=S_i(p,t)T_i(p,t),\,\,\,\,\,\text{for $p\in U_i$}\label{22b},\\
&S_i(p,0)=S_{i|0}(p),\,\,\,f_{ik}(p,0)=f_{ik|0}(p),\,\,\,T_i(p,0)=T_{i|0}(p)\label{23i}\\
&S_i(p,t)\ne 0,\,\,\,\,\,\text{if $V_0\cap U_i=\emptyset$}\label{23j}\\
&\frac{\partial S_i(p,t)}{\partial t_r}|_{t=0}=\beta_{ri}(p),\,\,\,r=1,...,m.\label{23k}
\end{align}

We write $S_i(p,t),f_{ik}(p,t)$ and $T_i(p,t)$ in the forms

\begin{align*}
S_i(p,t)=S_{i|0}(p)+\sum_{\mu=1}^\infty S_{i|\mu}(p,t),\,\,\,\,\,
f_{ik}(p,t)=f_{ik|0}(p)+\sum_{\mu=1}^\infty f_{ik|\mu}(p,t),\,\,\,\,\,
T_i(p,t)=T_{i|0}(p)+\sum_{\mu=1}^\infty T_{i|\mu}(p,t)\,\,\,\,\,
\end{align*}
where $S_{i|\mu}(p,t),f_{ik|\mu}(p,t)$ and $T_{i|\mu}(p,t)$ are homogenous polynomials in $t=(t_1,...,t_m)$ of degree $\mu$ whose coefficients are holomorphic functions on $U_i$, on $U_i\cap U_k$, and holomorphic vector fields on $U_i$, respectively. Let
\begin{align}
S_i^\mu(p,t)&=S_{i|0}(p)+\sum_{\lambda=1}^\mu S_{i|\lambda}(p,t)\label{22f}\\
f_{ik}^\mu(p,t)&=f_{ik|0}(p)+\sum_{\lambda=1}^\mu f_{ik|\lambda}(p,t)\label{22x}\\
T_i^\mu(p,t)&=T_{i|0}(p)+\sum_{\lambda=1}^\mu T_{i|\lambda}(p,t)\label{22y}
\end{align}

\begin{notation}\label{notation1}
We write the power series expansion of a holomorphic function $P(t)$ in $t_1,...,t_m$ defined on a neighborhood of the origin $0$ in the form : $P(t)=P(t)+P_1(t)+\cdots+P_\mu(t)+\cdots$, where each $P_\mu(t)$ denotes a homogenous polynomial of degree $\mu$ in $t_1,...,t_m$. We set $P^\mu(t):=P(0)+P_1(t)+\cdots+P_\mu(t)$.  We write $[P(t)]_\mu$ for $P_\mu(t)$ when we substitute a complicated expression for $P(t)$. For any power series $P(t)$, and $Q(t)$ in $t=(t_1,...,t_m)$, we indicate $P(t)\equiv_\mu Q(t)$ that $P(t)-Q(t)$ contains no term of degree $\leq \mu$ in $t$.
\end{notation}
By Notation \ref{notation1}, $(\ref{22a})$ and $(\ref{22b})$ are equivalent to the system of congruences
\begin{align}
S_i^\mu(p,t)&\equiv_\mu f^\mu_{ik}(p,t)S_k^\mu(p,t)\label{22c}\\
[\Lambda_0, S_i^\mu(p,t)]&\equiv_\mu S_i^\mu(p,t)T_i^\mu(p,t),\,\,\,\,\,\mu=1,2,3,\cdots\label{22d}
\end{align}

We will construct $S_{i|\mu}(p,t),f_{ik|\mu}(p,t)$ by induction on $\mu$ satisfying $(\ref{22c})$ and $(\ref{22d})$. We assume the following special forms for $S_{i|\mu}(p,t),\mu\geq 1$:
\begin{equation}\label{22e}
S_{i|\mu}(p,t)=\begin{cases}
\psi_{i|\mu}(z_i(p),t),\,\,\,\,\,\text{if $V_0\cap U_i\ne \emptyset$}\\
0,\,\,\,\,\,\,\,\,\,\,\,\,\,\,\,\,\,\,\,\,\,\,\,\,\,\,\,\,\,\,\,\,\,\,\text{if $V_0\cap U_i= \emptyset$}
\end{cases}
\end{equation}
where $z_i(p)=(z_i^1(p),...,z_i^n(p))$ and $\psi_{i|\mu}(z_i,t)$ is a homogeneous polynomial of degree $\mu$ in $t$ whose coefficients are holomorphic functions of $z_i=(z_i^1,...,z_i^n)$ defined on the polycylinder: $|z_i^1|<1,...,|z_i^n|<1$.

We define $\psi_{i|1}(z_i,t)$ by 
\begin{align}
\psi_{i|1}(z_i(p),t)=\sum_{r=1}^m t_r \beta_{ri}(p),\,\,\,\,p\in V_0\cap U_i.
\end{align}
and determine $S_{i|0}(p,t),S_i^1(p,t)$ by $(\ref{22e})$ and $(\ref{22f})$. By letting $f_{ik|1}(p,t)=\frac{S_{i|1}(p,t)-f_{ik|0}(p)S_{k|1}(p,t)}{S_{k|0}(p)}$, $f_{ik}^1(p,t)=f_{ik|0}(p)+f_{ik|1}(p,t)$ is holomorphic in $p$ and satisfies $(\ref{22c})_1$ (for the detail, see \cite{Kod59} p.486).

On the other hand, we note that $[\Lambda_0, S_i^1(p,t)]\equiv_1 S_i^1 (p,t)T_i^1(p,t)$ is equivalent to $[\Lambda_0, S_{i|1}(p,t)]=S_{i|1}(p,t)T_{i|0}(p)+S_{i|0}(p)T_{i|1}(p,t)$.
By letting
\begin{align*}
T_{i|1}(p,t)=\frac{[\Lambda_0, S_{i|1}(p,t)]-S_{i|1}(p,t) T_{i|0}(p)}{S_{i|0}(p)},
\end{align*}
$T_i^1(p,t)=T_{i|0}(p)+T_{i|1}(p,t)$ is holomorphic in $p$ by $(\ref{22g})$ and satisfies $(\ref{22d})_1$.

Now suppose that we have already constructed $S_i^\mu(p,t),f_{ik}^\mu(p,t), T_i^\mu(p,t)$ satisfying $(\ref{22c})_\mu$ and $(\ref{22d})_\mu$ which imply that
\begin{align}
& f_{ik}^\mu(p,t)\equiv_\mu f_{ij}^\mu(p,t) f_{jk}^\mu (p,t)\label{22i} \\
&[\Lambda_0, T_i^\mu(p,t)]=0 \label{22j}\\
&[\Lambda_0 ,f_{ik}^{\mu}(p,t)]+f_{ik}^\mu(p,t)T_k^{\mu}(p,t)-f_{ik}^{\mu}(p,t)T_i^{\mu}(p,t)\equiv_\mu 0\label{22k}
\end{align}

We define homogenous polynomials $\psi_{ik|\mu+1}(p,t)$ in $t$ of degree $\mu+1$ whose coefficients are in $\Gamma(V_0\cap U_i\cap U_k, \mathcal{O}_{V_0})$  and homogenous polynomials $W_{i|\mu+1}(p,t)$ of degree $\mu+1$ whose coefficient are in $\Gamma(U_i\cap V_0, T_W|_{V_0})$ by
\begin{align}
\psi_{ik|\mu+1}(p,t)&\equiv_{\mu+1} f_{ik}^\mu(p,t)S_k^\mu(p,t)-S_i^\mu(p,t)\,\,\,\,\,\text{for $p\in V_0\cap U_i\cap U_k$}\\
W_{i|\mu+1}(p,t)&\equiv_{\mu+1} [\Lambda_0, S_i^\mu(p,t)]|_{V_0}-S_i^\mu(p,t)T_i^\mu(p,t),\,\,\,\,\,\text{for $p\in V_0\cap U_i$}\label{22h}
\end{align}

Then we have (for the detail, see \cite{Kod59} p.487)
\begin{align}\label{22m}
\psi_{ik|\mu+1}(p,t)=\psi_{ij|\mu+1}(p,t)+f_{ij|0}(p)\psi_{jk|\mu+1}(p,t),\,\,\,\,\,\text{for $p\in V_0\cap U_i\cap U_j\cap U_k$}
\end{align}

On the other hand, let $\tilde{W}_{i|\mu+1}(p,t)\equiv_{\mu+1} [\Lambda_0, S_i^\mu(p,t)]-S_i^\mu(p,t)T_i^\mu(p,t)$ for $p\in U_i$. Then from (\ref{22j}), we obtain\begin{align*}
[\Lambda_0, \tilde{W}_{i|\mu+1}(p,t)]&\equiv_{\mu+1}-[\Lambda_0, S_i^\mu(p,t) T_i^\mu(p,t)]\equiv_{\mu+1}[\Lambda_0, S_i^\mu(p,t)]\wedge T_i^\mu(p,t)-S_i^\mu(p,t)[\Lambda_0, T_i^\mu(p,t)]\\
&\equiv_{\mu+1}\tilde{W}_{i|\mu+1}(p,t)\wedge T_i^\mu(p,t)+S_i^\mu(p,t) T_i^\mu(p,t)\wedge T_i^\mu(p,t)-S_i^\mu(p,t)[\Lambda_0,T_i^\mu(p,t)]\\
&\equiv_{\mu+1}\tilde{W}_{i|\mu+1}(p,t)\wedge T_{i|0}(p)
\end{align*}
By restricting to $V_0$, equivalently by taking $S_{i|0}(p)=0$, we get
\begin{align}\label{22n}
-[ W_{i|\mu+1}(p,t), \Lambda_0]|_{V_0}+(-1)^1 W_{i|\mu+1}(p,t)\wedge T_{i|0}(p)=0
\end{align}

Lastly, let $G_{ik|\mu+1}(p,t)\equiv_{\mu+1} f_{ik}^\mu(p,t)(T_i^\mu(p,t)-T_k^\mu(p,t))-[\Lambda_0, f_{ik}^\mu(p,t)]$. We note that $G_{ik|\mu+1}(p,t)\equiv_\mu 0$ from (\ref{22k}). Then we have, by restricting to $V_0$, equivalently by taking $S_{i|0}(p)=0$,
\begin{align*}
&[\Lambda_0,\psi_{ik|\mu+1}(p,t)]|_{V_0}\equiv_{\mu+1} [\Lambda_0, f_{ik}^\mu(p,t)S_k^\mu(p,t)]|_{V_0}-[\Lambda_0,S_i^\mu(p,t)]|_{V_0}\\
&\equiv_{\mu+1} f_{ik}^\mu(p,t)[\Lambda_0, S_k^\mu(p,t)]|_{V_0}+S_k^\mu(p,t)[\Lambda_0, f_{ik}^\mu(p,t)]|_{V_0}-[\Lambda_0, S_i^\mu(p,t)]|_{V_0}\\
&\equiv_{\mu+1}f_{ik}^\mu(p,t) W_{k|\mu+1}(p,t) + f_{ik}^\mu(p,t) S_k^\mu(p,t) T_k^\mu(p,t) +S_k^\mu(p,t)[\Lambda_0, f_{ik}^\mu(p,t)]|_{V_0}-W_{i|\mu+1}(p,t)-S_i^\mu(p,t)T_i^\mu(p,t)\\
&\equiv_{\mu+1} f_{ik}^\mu(p,t) W_{k|\mu+1}(p,t) +f_{ik}^\mu(p,t) S_k^\mu(p,t) T_k^\mu(p,t) +S_k^\mu(p,t)f_{ik}^\mu(p,t)(T_i^\mu(p,t)-T_k^\mu(p,t))-S_k^\mu (p,t)G_{ik|\mu+1}(p,t)\\
&-W_{i|\mu+1}(p,t) -S_i^\mu(p,t) T_i^\mu(p,t)\\
&\equiv_{\mu+1} f_{ik}^\mu(p,t) W_{k|\mu+1}(p,t) -W_{i|\mu+1}(p,t)+(f_{ik}^\mu(p,t) S_k^\mu(p,t)-S_i^\mu(p,t)) T_i^\mu(p,t)-S_{k|0}(p)G_{ik|\mu+1}(p,t)\\
&\equiv_{\mu+1}  f_{ik|0}(p) W_{k|\mu+1}(p,t) -W_{i|\mu+1}(p,t)+\psi_{ik|\mu+1}(p,t)T_{i|0}(p)
\end{align*}
Hence we obtain
\begin{align}\label{22l}
-[\psi_{ik|\mu+1}(p,t),\Lambda_0]|_{V_0}+\psi_{ik|\mu+1}(p,t)T_{i|0}(p)+f_{ik|0}(p) W_{k|\mu+1}(p,t) -W_{i|\mu+1}(p,t)=0
\end{align}

Hence from $(\ref{22m})$, $(\ref{22n})$ and $(\ref{22l})$, $(\psi_{\mu+1}(t),W_{\mu+1}(t)):=(\{\psi_{ik|\mu+1}(p,t)\}, \{W_{i|\mu+1}(p,t) \})$ defines a $1$-cocycle in the following \v{C}ech resolution of $\mathcal{N}_0^\bullet|_{V_0}$:
\\
\\
\begin{center}
$\begin{CD}
C^0(\mathcal{U}\cap V_0,\mathcal{N}_0|_{V_0}\otimes \wedge^2 T_W|_{V_0})\\
@A\nabla_0|_{V_0}AA \\
C^0(\mathcal{U}\cap V_0,\mathcal{N}_0|_{V_0}\otimes T_W|_{V_0})@>\delta>> C^1(\mathcal{U}\cap V_0,\mathcal{N}_0|_{V_0}\otimes T_W|_{V_0})\\
@A\nabla_0|_{V_0}AA @A\nabla_0|_{V_0}AA\\
C^0(\mathcal{U}\cap V_0,\mathcal{N}_0|_{V_0})@>-\delta>> C^1(\mathcal{U}\cap V_0,\mathcal{N}_0|_{V_0})@>\delta>> C^2(\mathcal{U}\cap V_0, \mathcal{N}_0|_{V_0})
\end{CD}$
\end{center}\
\\

Next we show that if $(\psi_{\mu+1}(t), W_{\mu+1}(t))$ vanishes identically, we can construct $S_i^{\mu+1}(p,t), f_{ik}^{\mu+1}(p,t)$ and $T_i^{\mu+1}(p,t)$ satisfying $(\ref{22c})_{\mu+1}$ and $(\ref{22d})_{\mu+1}$. Indeed, let us assume that $(\psi_{\mu+1}(t),W_{\mu+1}(t))$ vanishes identically. Then there exists homogenous polynomials $\phi_{i|\mu+1}(p,t)$ in $t$ of degree $\mu+1$ whose coefficients are holomorphic functions on $V_0\cap U_i$ such that
\begin{align}
\psi_{ik|\mu+1}(p,t)&=\phi_{i|\mu+1}(p,t)-f_{ik|0}(p)\phi_{k|\mu+1}(p,t),\,\,\,p\in V_0\cap U_i\cap U_k, \label{ii62}\\
W_{i|\mu+1}(p,t)&=-[\Lambda_0,\phi_{i|\mu+1}(p,t)]|_{V_0}+\phi_{i|\mu+1}(p,t)T_{i|0}(p),\,\,\,p\in V_0\cap U_i.\label{22z}
\end{align}
We define $\psi_{i|\mu+1}(z_i,t):=\psi_{i|\mu+1}(z_i(p),t)=\phi_{i|\mu+1}(p,t),p\in V_0\cap U_i$ and determine $S_{i|\mu+1}(p,t)$ by (\ref{22e}), and $S_i^{\mu+1}(p,t)$ by (\ref{22f}). We define $f_{ik|\mu+1}(p,t)\equiv_{\mu+1}\frac{S_i^\mu(p,t)-f_{ik}^\mu(p,t)S_k^\mu(p,t)+S_{i|\mu+1}(p,t)-f_{ik|0}(p)S_{k|\mu+1}(p,t)}{S_{k|0}(p)}$ and determine $f_{ik}^{\mu+1}(p,t)$ by (\ref{22x}). Then $S_i^{\mu+1}(p,t)$ and $f_{ik}^{\mu+1}(p,t)$ satisfy $(\ref{22c})_{\mu+1}$ (for the detail, see \cite{Kod59} p.488).

On the other hand, we note that from (\ref{22z}), we have
\begin{align}\label{23a}
W_{i|\mu+1}(p,t)=-[\Lambda_0, S_{i|\mu+1}(p,t)]|_{V_0}+S_{i|\mu+1}(p,t)T_{i|0}(p),\,\,\,p\in V_0\cap U_i.
\end{align}
We set
\begin{align}\label{23b}
T_{i|\mu+1}(p,t) :\equiv_{\mu+1} \frac{\Phi^\mu(p,t)}{S_{i|0}(p)}= \frac{[\Lambda_0, S_i^\mu(p,t)]-S_i^{\mu}(p,t)T_i^\mu(p,t) +[\Lambda_0, S_{i|\mu+1}(p,t)]-S_{i|\mu+1}(p,t)T_{i|0}(p) }{S_{i|0}(p)}
\end{align}
Then $\Phi^\mu(p,t)\equiv_{\mu+1} 0$ for $p\in V_0\cap U_i$ from $(\ref{22h})$ and $(\ref{23a})$ so that $T_{i|\mu+1}(p,t)$ are holomorphic in $p$ and we have, from (\ref{23b}),
\begin{align*}
[\Lambda_0, S_i^\mu(p,t)+S_{i|\mu+1}(p,t)]&\equiv_{\mu+1} S_i^\mu(p,t) T_i^\mu(p,t)+ S_{i|\mu+1}(p,t)T_{i|0}(p)+S_{i|0}(p) T_{i|\mu+1}(p,t)\\
&\equiv_{\mu+1} (S_i^\mu(p,t)+S_{i|\mu+1}(p,t))(T_i^\mu(p,t)+T_{i|\mu+1}(p,t))
\end{align*}
so that $S_i^{\mu+1}(p,t)$ and $T_i^{\mu+1}(p,t)$ satisfy $(\ref{22d})_{\mu+1}$.

Now we prove that $(\psi_{\mu+1}(t),W_{\mu+1}(t))$ vanishes identically if $V_0$ is Poisson semi-regular. For this purposes it suffices to construct a polynomial $(\{\eta_{ki}(p,t)\}, \{\omega_i(p,t)\})$ in $t$ of degree $\mu+1$ with coefficients in $\mathbb{H}^1(W,\mathcal{N}_0^\bullet)$  such that
\begin{align}
\psi_{ik|\mu+1}(p,t)&=\eta_{ik}(p,t)\,\,\,\,\,\text{for $p\in V_0\cap U_i\cap U_k$},\label{our1}\\
W_{i|\mu+1}(p,t)&=\omega_{i}(p,t)\,\,\,\,\,\,\text{for $p\in V_0 \cap U_i$}.\label{our2}
\end{align}

In fact, $\{\eta_{ik}(p,t),\omega_i(p,t)\}$ represents a polynomial $(\eta(t),\omega(t))$ in $t$ with coefficients in $\mathbb{H}^1(W,\mathcal{N}_0^\bullet)$, and $(\ref{our1})$ and $(\ref{our2})$ imply that $(\psi_{\mu+1}(t), W_{\mu+1}(t))=r_0^* (\eta(t),\omega(t))$. Hence we obtain $(\psi_{\mu+1}(t), W_{\mu+1}(t))$ vanishes if $V_0$ is Poisson semi-regular.

\begin{lemma}[see \cite{Kod59} Lemma $1$ p.488]
For each integer $\lambda\leq \mu$, there exist polynomials $g_{ik}:=g_{ik}^\lambda(p,t)$ in $t$ of degree $\lambda$ whose coefficients are holomorphic functions in $p$ defined on $U_i\cap U_k$ such that
\begin{align}
f_{ik|0}(p)\exp g_{ik}^\lambda(p,t)\equiv_\lambda f_{ik}^\lambda(p,t)\label{23c}
\end{align}
\end{lemma}

We define polynomials $\hat{f}_{ik}^{\mu+1}:=\hat{f}_{ik}^{\mu+1}(p,t)\equiv_{\mu+1} f_{ik|0}(p)\exp g_{ik}^\mu(p,t)$ in $t$ of degree $\mu+1$. Then $\hat{f}_{ik}^{\mu+1}(p,t)\equiv_\mu f_{ik}^\mu(p,t)$ and $S_i^{\mu}(p,t)\equiv_\mu \hat{f}^{\mu+1}(p,t)S_k^\mu(p,t)$. By letting $\eta_{ik}(p,t)\equiv_{\mu+1} \hat{f}_{ik}^{\mu+1}(p,t)S_k^\mu(p,t)-S_i^\mu(p,t)$, we get (for the detail, see \cite{Kod59} p.489)
\begin{align}\label{23f}
\eta_{ik}(p,t)=\eta_{ij}(p,t)+f_{ij|0}(p)\eta_{jk}(p,t)
\end{align}

On the other hand, writing $f_{ik}:=f_{ik}(p,t)$ and $T_i:=T_i(p,t)$, we note that from $(\ref{22k})$, we have $[\Lambda_0, f_{ik}^\mu]+f_{ik}^\mu(T_k^\mu-T_i^\mu)\equiv_{\mu} 0$. Since $f_{ik|0}\exp(g_{ik}^\mu)\equiv_\mu f_{ik}^\mu$ by (\ref{23c}), we have
\begin{align}
&[\Lambda_0, f_{ik|0} \exp g^\mu_{ik}]+f_{ik|0}\exp(g_{ik}^\mu)(T_k^\mu-T_i^\mu)\equiv_\mu 0\notag\\
&\iff [\Lambda_0, \log (f_{ik|0} \exp g_{ik}^\mu)]+(T_k^\mu-T_i^\mu)\equiv_\mu 0\notag\\
&\iff [\Lambda_0, \log f_{ik|0}]+[\Lambda_0, g_{ik}^\mu]+(T_k^\mu-T_i^\mu)=0\notag\\
&\iff [\Lambda_0, \hat{f}_{ik}^{\mu+1}]+\hat{f}_{ik}^{\mu+1}(T_k^\mu-T_i^\mu)= 0 \label{23d}
\end{align}

From $(\ref{22c})$, we have $[\Lambda_0, S_i^{\mu}(p,t)]\equiv_{\mu} S_i^\mu(p,t)T_i^\mu(p,t)$. We define $\omega_i:=\omega_i(p,t)\equiv_{\mu+1} [\Lambda_0, S_i^{\mu}(p,t)]-S_i^\mu(p,t)T_i^\mu(p,t)$. Then we have from (\ref{23d})
\begin{align*}
[\Lambda_0, \eta_{ik}]&\equiv_{\mu+1} [\Lambda_0, \hat{f}_{ik}^{\mu+1}S_k^\mu]-[\Lambda_0, S_i^\mu]\equiv_{\mu+1} S_k^\mu[\Lambda_0, f_{ik}^{\mu+1}]+\hat{f}_{ik}^{\mu+1}[\Lambda_0, S_k^\mu]-[\Lambda_0, S_i^\mu]\\
&\equiv_{\mu+1} S_k^\mu[\Lambda_0, \hat{f}_{ik}^{\mu+1}]+\hat{f}_{ik}^{\mu+1}S_k^\mu T_k^\mu+ f_{ik|0}\omega_k-S_i^\mu T_i^\mu-\omega_i\\
&\equiv_{\mu+1} S_k^\mu[\Lambda_0, \hat{f}_{ik}^{\mu+1}]+\hat{f}_{ik}^{\mu+1}S_k^\mu T_k^\mu+ f_{ik|0}\omega_k+(\eta_{ik}- \hat{f}_{ik}^{\mu+1} S_k^\mu)T_i^\mu-\omega_i\\
&\equiv_{\mu+1} S_k^\mu([\Lambda_0,\hat{f}_{ik}^{\mu+1}]+  \hat{f}_{ik}^{\mu+1}T_k^\mu- \hat{f}_{ik}^{\mu+1}T_i^\mu)+ \eta_{ik} T_{i|0} +f_{ik|0}\omega_k -\omega_i\\
&\equiv_{\mu+1}\eta_{ik}T_{i|0}+ f_{ik|0} \omega_k -\omega_i
\end{align*}
so that we obtain the equality
\begin{align}\label{23e}
-[\eta_{ik}(p,t),\Lambda_0]+ \eta_{ik}(p,t)T_{i|0}(p) +f_{ik|0}(p)\omega_k(p,t) -\omega_i(p,t)=0,\,\,\,p\in U_i\cap U_k.
\end{align}

Lastly, we have from (\ref{22j}),
\begin{align*}
[\Lambda_0,\omega_i]\equiv_{\mu+1} -[\Lambda_0, S_i^\mu T_i^\mu]&\equiv_{\mu+1} [\Lambda_0, S_i^\mu]\wedge T_i^\mu-S_i^\mu[\Lambda_0, T_i^\mu]\equiv_{\mu+1} \omega_i\wedge T_i^\mu+S_i^\mu T_i^\mu\wedge T_i^\mu-S_i^\mu[\Lambda_0, T_i^\mu]\equiv_{\mu+1} \omega_i\wedge T_{i|0}
\end{align*}
so that we obtain the equality
\begin{align}\label{23g}
-[\omega_i(p,t),\Lambda_0]+(-1)^1 \omega_i(p,t)\wedge T_{i|0}(p)=0,\,\,\,p\in U_i.
\end{align}

Form (\ref{23f}), (\ref{23e}), and (\ref{23g}), $(\{\eta_{ik}(p,t)\},\{\omega_i(p,t)\})$ is a polynomial in $t$ whose coefficients in $\mathbb{H}^1(W,\mathcal{N}_0)$. Then since $S_k^\mu(p,t)\equiv_0 0$ for $p\in V_0\cap U_k$, and $\hat{f}_{ik}^{\mu+1}(p,t)\equiv_\mu f_{ik}^\mu(p,t)$, we obtain
\begin{align*}
\eta_{ik}(p,t)&\equiv_{\mu+1} f_{ik}^\mu(p,t)S_k^\mu(p,t)-S_i^\mu(p,t)\equiv_{\mu+1} \psi_{ik|\mu+1}(p,t),\,\,\,p\in V_0\cap U_k,\\
\omega_i(p,t)&\equiv_{\mu+1} [\Lambda_0,S_i^\mu(p,t)]|_{V_0}-S_i^{\mu}(p,t)T_{i}^\mu(p,t)\equiv_{\mu+1} W_{i|\mu+1}(p,t),\,\,\, p\in V_0\cap U_i.
\end{align*}
Hence when $V_0$ is Poisson semi-regular, we can construct $S_i^\mu(p,t),f_{ik}^\mu(p,t)$ and $T_i^\mu(p,t)$ satisfying $(\ref{22c})_\mu$ and $(\ref{22d})_\mu$ by induction on $\mu$, and therefore we obtain formal power sereis $S_i(p,t),f_{ik}(p,t)$ and $T_i(p,t)$ satisfying (\ref{22a}),(\ref{22b}),(\ref{23i}),(\ref{23j}) and (\ref{23k}).

\subsection{Proof of convergence}\label{sectionp}\
\begin{notation}\label{notation2}
Consider a formal power series $f(t)=f(p,t)=\sum f_{h_1h_2\cdots h_m}(p)(t_1)^{h_1}(t_2)^{h_2}\cdots (t_m)^{h_m}$ whose coefficients $f_{h_1h_2\cdots h_m}(p)$ are vector-valued holomorphic functions in $p$ defined on a domain and a power series $a(t)=\sum a_{h_1h_2\cdots h_m}(t_1)^{h_1}(t_2)^{h_2}\cdots (t_m)^{h_m},a_{h_1h_1\cdots h_m}\geq 0$. We indicate by $f(p,t)\ll a(t)$ that $|f_{h_1\cdots h_m}(p)|<a_{h_1\cdots h_m}$. Let
\begin{align*}
A(t)=\frac{b}{64c}\sum_{\mu=1}^\infty \frac{c^\mu(t_1+\cdots +t_m)^\mu}{\mu^2},
\end{align*}
where $b,c$ are positive constants. Then we have
\begin{align}\label{ii0}
A(t)^v\ll \left(\frac{b}{c}\right)^{v-1}A(t),\,\,\,\,v=2,3,\cdots.
\end{align}
\end{notation}

We will show that the formal power series $S_i(t):=S_i(p,t),f_{ik}(t):=f_{ik}(p,t)$ and $T_i(t):=T_i(p,t)$ constructed in the previous subsection satisfy
\begin{align}
S_i(t)-S_{i|0}&\ll A(t),\,\,\,\,\,p\in U_i\cap U_k, \label{our5}\\
f_{ik}(t)-f_{ik|0}& \ll c_1 A(t),\,\,\,\,\,p\in U_i,\label{our6}\\
T_i(t)-T_{i|0}&\ll d_1A(t),\,\,\,\,\,p=(z_i(p),w_i(p))\in U_i\,\,\textnormal{with}\,\,|w_i(p)|<1,|z_i(p)|<1-\delta\iff p\in U^\delta_i, \label{our7}
\end{align}
for some constants $b,c,c_1,d$ from Notation \ref{notation2} and a sufficiently small number $\delta>0$ in the beginning of subsection $\ref{mm18}$. Here we write $T_i(t)=T_i(p,t)$ by the form $T_i(p,t)=T_i^1(p,t)\frac{\partial}{\partial z_i^1}+\cdots T_i^n (p,t)\frac{\partial}{\partial z_i^n}+T_i^{n+1}(p,t)\frac{\partial}{\partial w_i}$ by which we consider $T_i(p,t)$ a power series in $t$ whose coefficients are vector-valued holomorphic functions on $U_i$.

We may assume that $|f_{ik}(p)|<c_2$, $p\in U_i\cap U_k$ for some constant $c_2>0$. Then $S_i^1(t)-S_{i|0}\ll A(t)$ if $b$ is sufficiently large.

Suppose that 
\begin{align}
f^{\mu-1}_{ik}(t)-f_{ik|0}&\ll c_1 A(t),\,\,\,\,\, p\in U_i\cap U_k \label{ii1}\\
S_i^\mu(t)-S_{i|0}&\ll A(t),\,\,\,\,\,p\in U_i \label{ii2}\\
T_i^{\mu-1}(t)-T_{i|0}&\ll d_1A(t),\,\,\,\,\,p=(z_i(p),w_i(p))\in U_i\,\,\textnormal{with}\,\,|w_i(p)|<1,|z_i(p)|<1-\delta\iff p\in U^\delta_i \label{ii3}
\end{align}

First we show that $f_{ik}^\mu(t)-f_{ik|0}\ll c_1A(t),p\in U_i\cap U_k$ for some constant $c_1>0$. We briefly summarize Kodaira's result in the following (see \cite{Kod59} p.491-492): by setting $c_1=\frac{2(c_2+2)(c_2+c_2^3)}{\epsilon}$ for some sufficiently small constant $0<\epsilon<1$, and assuming 
\begin{align}\label{ik1}
c>2bc_1(1+c_2^2), 
\end{align}
we get $f_{ik}^\mu(t)-f_{ik|0}\ll c_1A(t),p\in U_i\cap U_k$.

Next we show that 
\begin{align}\label{ij1}
T_i^\mu(t)-T_{i|0}\ll d_1 A(t),\,\,\,\,\,p\in U_i^\delta
\end{align}
for some constant $d_1>0$. We may assume that 
\begin{align}\label{ii33}
|T_{i|0}(p)|<d_2, \,\,\,\,\, p\in U_i
\end{align}
for some constant $d_2>0$. We recall from (\ref{23b}) that
\begin{align}\label{ii55}
T_{i|\mu}(p,t)\equiv_\mu \frac{[\Lambda_0, S_i^{\mu-1}] -T_i^{\mu-1} S_i^{\mu-1}+[\Lambda_0, S_{i|\mu}]-T_{i|0} S_{i|\mu}}{S_{i|0}}\equiv_{\mu} \frac{[\Lambda_0,S_i^\mu]-T_i^{\mu-1} S_i^\mu}{S_{i|0}}
\end{align}

We estimate $[\Lambda_0, S_i^\mu]-T_i^{\mu-1}S_i^\mu$. We note that
\begin{align}
[\Lambda_0,S_i^\mu]-T_i^{\mu-1}S_i^\mu=[\Lambda_0,&S_i^\mu -S_{i|0}]-(T_i^{\mu-1}-T_{i|0})(S_i^\mu-S_{i|0})+[\Lambda_0, S_{i|0}]-T_{i|0}(S_i^\mu-S_{i|0})-(T_i^{\mu-1} -T_{i|0})S_{i|0}-T_{i|0}S_{i|0}\notag\\
&=[\Lambda_0,S_i^\mu -S_{i|0}]-(T_i^{\mu-1}-T_{i|0})(S_i^\mu-S_{i|0})-T_{i|0}(S_i^\mu-S_{i|0})-(T_i^{\mu-1} -T_{i|0})S_{i|0} \label{ii4}
\end{align}

We note that since $[\Lambda_0,S_i^\mu]-T_i^{\mu-1}S_i^\mu\equiv_{\mu-1} 0$, $(T_i^{\mu-1} -T_{i|0})S_{i|0}$ contributes nothing to $[\Lambda_0,S_i^{\mu}]-S_i^{\mu}T_i^{\mu-1}$. Let us estimate $[\Lambda_0, S_i^\mu-S_{i|0}]$ in $(\ref{ii4})$. Let $\Lambda_0=\sum_{\alpha,\beta=1}^{n+1} \Lambda^i_{\alpha\beta}(x_i)\frac{\partial}{\partial x_i^\alpha}\wedge \frac{\partial}{\partial x_i^\beta}$ with $\Lambda_{\alpha\beta}^i(x_i)=-\Lambda_{\beta\alpha}^i(x_i)$, where $x_i=(w_i,z_i)$. We may assume that $|\Lambda^i_{\alpha\beta}(x_i)|< M$ for some positive constant $M>0$. $[\Lambda_0, S_i^\mu-S_{i|0}]=\sum_{\alpha,\beta=1}^{n+1} 2\Lambda_{\alpha\beta}^i(x_i)\frac{\partial (S_i^\mu-S_{i|0})}{\partial x_i^\alpha}\frac{\partial}{\partial x_i^\beta}$. Let $B_i(z_i):=S_i^\mu-S_{i|0}$. Then $\frac{\partial B_i(z_i)}{\partial w_i}=0$ and $\frac{\partial B_i}{\partial z_i^\alpha}=\frac{1}{2\pi i}\int_{|\xi-z_i^\alpha|=\delta} \frac{B_i(z_i^1,...,\overset{\alpha-th}{\xi},...,z_i^n)}{(\xi-z_i^\alpha)^2}d\xi\ll \frac{A(t)}{\delta}$ for $(z_i,w_i)$ for $|z_i|<1-\delta, |w_i|<1$. Then we obtain
\begin{align}\label{ii5}
[\Lambda_0,S_i^\mu-S_{i|0}]=\sum_{\alpha,\beta=1}^{n+1} 2 \Lambda_{\alpha\beta}^i(x_i) \frac{\partial B_i(z_i)}{\partial x_i^\alpha}\frac{\partial}{\partial x_i^\beta}\ll 2(n+1)^2M\frac{A(t)}{\delta}
\end{align}
Hence we have, from $(\ref{ii4})$, $(\ref{ii3})$, $(\ref{ii5})$, $(\ref{ii33})$ and $(\ref{ii0})$,
\begin{align}\label{ii77}
 [\Lambda_0, S_i^\mu]-T_i^{\mu-1} S_i^\mu\ll \frac{2(n+1)^2 M}{\delta} A(t) +d_1A(t)^2+d_2 A(t)\ll d_3A(t),\,\,\,\,\,p\in U_i^\delta,
 \end{align}
where 
\begin{align}\label{ii56}
d_3:=\frac{2(n+1)^2M}{\delta}+\frac{d_1b}{c}+d_2=d_4+\frac{d_1 b}{c},\,\,\, \text{where} \,\,\,\,d_4:=\frac{2(n+1)^2M}{\delta}+d_2.
\end{align}
 We claim that
\begin{align}\label{ii57}
T_{i|\mu}(t)\ll \frac{d_3}{\epsilon} A(t),\,\,\,\,\,p\in U_i^\delta
\end{align}
Indeed, from $S_{i|0}(p)=w_i(p)$, $(\ref{ii55})$ and $(\ref{ii77})$, if $|w_i(p)|\geq\epsilon $, $T_{i|\mu}(t)\ll \frac{d_3}{\epsilon}A(t)$. If $|w_i(p)|<\epsilon$, we get $T_{i|\mu}(t)\ll \frac{d_3}{\epsilon}A(t)$ by the maximum principle. 

On the other hand, from $(\ref{ii56})$, we have $\frac{d_3}{\epsilon}=\frac{d_4}{\epsilon}+\frac{d_1b}{\epsilon c}$. Now we set $d_1=\frac{2d_4}{\epsilon}$. If we take 
\begin{align}\label{ik2}
c>\frac{2b}{\epsilon}, 
\end{align}
then we get $\frac{d_3}{\epsilon}< \frac{d_4}{\epsilon}+\frac{d_4}{\epsilon}=d_1$ so that we obtain $(\ref{ij1})$ from $(\ref{ii3})$ and $(\ref{ii57})$.

Lastly we show that 
\begin{align}\label{ij2}
S_i^{\mu+1}(t)-S_{i|0}\ll A(t),\,\,\,\,\,p\in U_i
\end{align}
We note that (for the detail, see \cite{Kod59} p.493)
\begin{align}
\psi_{ik|\mu+1}(t)\ll \frac{bc_1}{c}A(t),\,\,\,\,\,p\in V_0\cap U_i\cap U_k
\end{align}

Recall from $(\ref{22z})$ that $W_{i|\mu+1}(p,t)\equiv_{\mu+1} [\Lambda_0, S_i^{\mu}(p,t)]|_{V_0}-S_i^\mu(p,t)T_i^\mu(p,t)$. Since $[\Lambda_0,S_i^\mu]-S_i^\mu T_i^\mu\equiv_\mu 0$, we get $[\Lambda_0, S_i^\mu]-S_i^{\mu}T_i^\mu\equiv_{\mu+1} [\Lambda_0, S_i^{\mu}]-(S_i^\mu-S_{i|0})(T_i^\mu-T_{i|0})-S_i^\mu T_{i|0}-S_{i|0}(T_i^\mu-T_{i|0})$. Since $[\Lambda_0,S_i^\mu]-S_i^\mu T_i^\mu\equiv_\mu 0$, we obtain, from $(\ref{ii2})$ and $(\ref{ij1})$,
\begin{align}
 W_{i|\mu+1}(p,t) \ll \frac{bd_1}{c}A(t),\,\,\,\,\,\,p\in V_0\cap U_i^\delta.
\end{align}

\begin{lemma}[compare \cite{Kod59} p.499]\label{1lemma}
We can choose $\phi_{i|\mu+1}(p,t)$ satisfying 
\begin{align*}
\psi_{ik|\mu+1}(p,t)&=\phi_{i|\mu+1}(p,t)-f_{ik|0}(p)\phi_{k|\mu+1}(p,t)\\
W_{i|\mu+1}(p,t)&= -[\Lambda_0, \phi_{i\mu+1}(p,t)]|_{V_0}+\phi_{i|\mu+1}T_{i|0}(p)
\end{align*}
such that $\phi_{i|\mu+1}\ll c_4\left(\frac{bc_1}{c}+\frac{bd_1}{c} \right)A(t)$, where the constant $c_4$ is independent of $\mu$.
\end{lemma}

\begin{proof}
For simplicity, we write $U_i$ for $V_0\cap U_i$, $U_i^\delta$ for $V_0\cap U_i^\delta$, and let $\mathcal{U}=\{U_i\}$ be the covering of $V_0$. For any $0$-cochain $\phi=\{\phi_i(p)\}$, $1$-cochain $\psi=\{\psi_{ik}(p), W_i(p) \}$ on $\mathcal{U}$, we define the norms of $\phi,(\psi,W)$ by
\begin{align*}
&||\phi ||:=\max_i \sup_{p\in U_i} |\phi_i(p)|,\\
&||(\psi,W)||:=\max_{i,k} \sup_{p\in U_i\cap U_k} |\psi_{ik}(p)|+\max_i \sup_{p\in U_i^\delta} |W_i(p)|
\end{align*}
The coboundary $\phi$ is defined by
\begin{align*}
f_{ik}(p)\phi_k(p)-\phi_i(p), \,\,\,\,\,\, p\in U_i\cap U_k,\,\,\,\,\, -[\phi_i(p),\Lambda_0]|_{V_0}+\phi_i(p) T_{i|0}(p),\,\,\, p\in U_i
\end{align*}

For any $(\psi,W)$, we define 
\begin{align*}
\iota(\psi,W)=\inf_{\delta(\phi)=(\psi,W)} ||\phi||
\end{align*}
It suffices to prove the existence of constant $c$ such that $\iota(\psi,W)\leq c||(\psi,W)||$. Assume that such a constant $c$ does not exist. Then we can find a sequence $(\psi',W'),(\psi'',W''),\cdots,(\psi^{\mu}, W^{\mu}),\cdots$ such that
\begin{align*}
\iota(\psi^{(\mu)},W^{(\mu)})=1,\,\,\,\,\,\,\,\,||(\psi^{(\mu)},W^{(\mu)})||<\frac{1}{\mu}
\end{align*}

We take a covering $\{\bar{U}_i^\delta\}$ of $V_0$. Since $\phi_k^{\mu}(p)<2$ for $p\in U_k$, there exists a subsequence $\phi^{(\mu_1)},\phi^{(\mu_2)},\cdots, \phi^{(\mu_v)}$ of $\phi',\phi'',\cdots$ such that $\phi_k^{(\mu_v)}$ converges absolutely and uniformly on $\bar{U}_k^\delta$ for each $k$. Since $V_0$ is compact, we can choose a subsequence that works for all $k$. On the other hand, since $||(\psi,W)||<\frac{1}{\mu}$, we have in particular
\begin{align}\label{ii65}
|f_{ik|0}(p)\phi_k^{(\mu)}(p)-\phi_i^{(\mu)}(p)|<\frac{1}{\mu},\,\,\,\,\,p\in U_i \cap U_k,\,\,\,\,\,\,|-[\phi_i^{(\mu)}( p),\Lambda_0]|_{V_0}+\phi_i^{(\mu)}(p) T_{i|0}(p)|<\frac{1}{\mu},\,\,\, p\in U_i^\delta
\end{align}

Then $\phi_i^{(\mu_v)}$ converges absolutely and uniformly on the whole $U_i$. Let $\phi_i(p)=\lim_v \phi_i^{(\mu_v)}(p)$ and let $\phi=\{\phi_i(p)\}$. Then we have $||\phi^{(\mu_v)}-\phi ||\to 0$ as $n\to \infty$. On the other hand, from $(\ref{ii65})$, we have $\delta \phi=(0,W_\phi)$, where $W_\phi(p)=0$ for $p\in U_i^\delta$. By identity theorem, $W_\phi(p)=0$ for $p\in U_i$. Hence we have $\delta(\phi^{(\mu_v)}-\phi)=(\psi^{(\mu_v)},W^{(\mu_v)})$ which contradicts to $\iota(\psi^{(\mu_v)},W^{(\mu_v)})=1$.
\end{proof}

By Lemma \ref{1lemma}, we can choose $S_{i|\mu+1}(t)\ll c_4\left(\frac{bc_1}{c}+\frac{bd_1}{c}\right)A(t)$. We note $(\ref{ik1})$ and $(\ref{ik2})$. By setting $c>\max\{2bc_1(1+c_2^2),\frac{2b}{\epsilon} ,c_4bc_1+c_4bd_1  \}$, we get $(\ref{ij2})$. Since the constants $b,c,c_1,d_1$ are independent of $\mu$, we have $(\ref{our5}),(\ref{our6})$, and $(\ref{our7})$.
By letting $N=\{t|\sum_{r=1}^m |t_r|^2<\frac{c^2}{m}\}$, the power series $S_i(p,t),f_{ik}(p,t)$ and $T_i(p,t)$ converges absolutely and uniformly for $t\in N$ so that $S_i(p,t),T_i(p,t)$ and $f_{ik}(p,t)$ are holomorphic on $U_i^\delta\times N$ and $U_i^\delta\cap U_k^\delta\times N$, respectively, and satisfy $(\ref{22a}), (\ref{22b}),(\ref{23i}),(\ref{23j})$ and $(\ref{23k})$ by replacing $U_i$ by $U_i^\delta$. This completes the proof of Theorem \ref{jj20}.

\end{proof} 

\subsection{Maximal families: Theorem of completeness }

\begin{definition}\label{005}
Let $\mathcal{V}\subset (W\times M,\Lambda_0)\xrightarrow{\omega} M$ be a Poisson analytic family of compact holomorphic Poisson submanifolds of $(W,\Lambda)$ of codimension $1$ and let $t_0$ be a point on $M$. We say that $\mathcal{V}\xrightarrow{\omega} M$ is maximal at $t_0$ if, for any Poisson analytic family $\mathcal{V}'\subset (W\times M',\Lambda)\xrightarrow{\omega'} M'$ of compact holomorphic Poisson submanifolds of $(W,\Lambda_0)$ of codimension $1$ such that $\omega^{-1}(t_0)=\omega'^{-1}(t_0'),t_0'\in M'$, there exists a holomorphic map $h$ of a neighborhood $N'$ of $t_0'$ on $M'$ into $M$ which maps $t_0'$ to $t_0$ such that $\omega'^{-1}(t')=\omega^{-1}(h(t'))$ for $t'\in N'$. We note that if we set a Poisson map $\hat{h}:(W\times N',\Lambda_0)\to (W\times M ,\Lambda_0)$ defined by $(w,t')\to (w,h(t'))$, then the restriction map of $\hat{h}$ to $\mathcal{V}'|_{N'}=\omega'^{-1}(N')\subset (W\times N',\Lambda)$ defines a Poisson map $\mathcal{V}'|_{N'}\to \mathcal{V}$ so that $\mathcal{V}'|_{N'}$ is the family induced from $\mathcal{V}$ by $h$, which means $\mathcal{V}\xrightarrow{\omega} M$ is complete at $t_0$.  
\end{definition}

\begin{theorem}[Theorem of completeness]\label{012}
Let $\mathcal{V}\subset (W\times M,\Lambda_0)\xrightarrow{\omega} M$ be a Poisson analytic family of compact holomorphic Poisson submanifolds of $(W,\Lambda_0)$ of codimension $1$. If the characteristic map 
\begin{align*}
\rho_{d,0}:T_0 M\to\mathbb{H}^0(V_0,\mathcal{N}_0^\bullet |_{V_0})
\end{align*}
 is an isomorphism, then the family $\mathcal{V}\xrightarrow{\omega} M$ is maximal at the point $t=0$.
\end{theorem}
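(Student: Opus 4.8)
The plan is to follow Kodaira--Spencer's completeness argument in \cite{Kod59}, carrying the Poisson connection data along at every stage. Let $\mathcal{V}' \subset (W \times M', \Lambda_0) \xrightarrow{\omega'} M'$ be an arbitrary Poisson analytic family of compact holomorphic Poisson submanifolds of codimension $1$ with $\omega'^{-1}(0') = V_0 = \omega^{-1}(0)$; I must produce a holomorphic map $h$ of a neighborhood $N'$ of $0'$ into $M$, with $h(0') = 0$, such that $\omega'^{-1}(t') = \omega^{-1}(h(t'))$ for all $t' \in N'$. Over a common sufficiently fine covering $\{U_i\}$ of $W$ as in subsection \ref{mm18}, I describe $\mathcal{V}$ by $S_i(w,t) = 0$ with $[\Lambda_0, S_i] = S_i T_i$ and cocycle $S_i = f_{ik} S_k$, and $\mathcal{V}'$ by $S'_i(w,t') = 0$ with $[\Lambda_0, S'_i] = S'_i T'_i$ and $S'_i = f'_{ik} S'_k$, normalized so that $S_i(w,0) = S'_i(w,0') = S_{i|0}(w)$. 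Since $V_{h(t')}$ is cut out locally by $S_i(w, h(t')) = 0$, the equality of fibres is equivalent to the existence of non-vanishing holomorphic functions $g_i(w,t')$ with $g_i(w,0') = 1$ and
\begin{align*}
S'_i(w,t') = g_i(w,t')\, S_i(w, h(t')).
\end{align*}
Differentiating the gluing and Poisson relations shows that this single equation forces the companion identities $f'_{ik} = (g_i/g_k)\, f_{ik}(\cdot, h)$ and $T'_i = T_i(\cdot, h) + [\Lambda_0, \log g_i]$, so that solving it automatically matches the Poisson line bundle data; the Poisson map $\hat{h}(w,t') = (w, h(t'))$ of Definition \ref{005} then restricts to a Poisson map $\mathcal{V}'|_{N'} \to \mathcal{V}$.

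I construct $h(t') = \sum_{\mu \geq 1} h_\mu(t')$ and $g_i(w,t') = 1 + \sum_{\mu \geq 1} g_{i|\mu}(w,t')$ as formal power series by induction on the degree $\mu$ in $t'$. Differentiating the displayed equation at $t' = 0'$ and restricting to $V_0$ (where $S_{i|0} = 0$) yields in $\mathbb{H}^0(V_0, \mathcal{N}_0^\bullet|_{V_0})$ the identity $\rho'_{d,0'} = \rho_{d,0} \circ (dh)_{0'}$, where $\rho'_{d,0'}$ denotes the characteristic map of $\mathcal{V}'$; as $\rho_{d,0}$ is an isomorphism, the differential $(dh)_{0'} = \rho_{d,0}^{-1} \circ \rho'_{d,0'}$ is uniquely determined. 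Assume inductively that $h$ and $g_i$ have been found through degree $\mu$ so that the displayed equation holds modulo degree $\mu+1$. Restricting the degree-$(\mu+1)$ discrepancy to $V_0$ produces a $0$-cochain $\{\psi_i\}$ of $\mathcal{N}_0|_{V_0}$; using the gluing relations $f'_{ik} = (g_i/g_k) f_{ik}(\cdot, h)$ and the Poisson relations $T'_i = T_i(\cdot, h) + [\Lambda_0, \log g_i]$ of the two families together with the induction hypothesis, I check that $\{\psi_i\}$ glues to a global section and is annihilated by $\nabla_0|_{V_0}$, so that it represents a class $\theta_{\mu+1} \in \mathbb{H}^0(V_0, \mathcal{N}_0^\bullet|_{V_0})$. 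Since $\rho_{d,0}$ is an isomorphism I solve $\rho_{d,0}(h_{\mu+1}) = \theta_{\mu+1}$ for a unique homogeneous degree-$(\mu+1)$ term $h_{\mu+1}$, and then determine $g_{i|\mu+1}$ and $T_{i|\mu+1}$ off $V_0$ by dividing the full equation by $S_{i|0}$, the quotient being holomorphic by the polycylinder argument used for $(\ref{23b})$.

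Finally I prove convergence of the formal series by the majorant method of subsection \ref{sectionp}. With $A(t')$ as in Notation \ref{notation2}, the inductive estimates $h(t') \ll c\, A(t')$ and $g_i(w,t') - 1 \ll c\, A(t')$ are propagated exactly as in $(\ref{our5})$, $(\ref{our6})$ and $(\ref{our7})$: the inverse of the fixed finite-dimensional isomorphism $\rho_{d,0}$ is bounded by a constant independent of $\mu$, and the off-$V_0$ extension producing $g_{i|\mu+1}$ and $T_{i|\mu+1}$ is controlled by the division estimates of subsection \ref{sectionp} (as for $(\ref{ii57})$) together with a uniform cochain bound of the type in Lemma \ref{1lemma}. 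Choosing $c$ large enough as in subsection \ref{sectionp}, the series converge on a neighborhood $N'$ of $0'$ and define a holomorphic map $h : N' \to M$ with $h(0') = 0$ and $\omega'^{-1}(t') = \omega^{-1}(h(t'))$, which is the asserted maximality at $t = 0$.

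I expect the main obstacle to be the Poisson-specific content of the inductive step. In \cite{Kod59} the degree-$(\mu+1)$ discrepancy is an ordinary holomorphic section of $\mathcal{N}_0|_{V_0}$ with no further constraint, and its matching against a tangent vector of $M$ is governed by $H^0(V_0, \mathcal{N}_0|_{V_0})$. Here I must additionally verify that the discrepancy $\{\psi_i\}$ is flat for the Poisson connection, $\nabla_0|_{V_0}\psi = 0$, so that it defines a class in the hypercohomology $\mathbb{H}^0(V_0, \mathcal{N}_0^\bullet|_{V_0})$ rather than merely in $H^0(V_0, \mathcal{N}_0|_{V_0})$. This flatness is the $\mathbb{H}^0$-analogue, one cohomological degree lower, of the cocycle computation carried out in $(\ref{22m})$, $(\ref{22n})$ and $(\ref{22l})$ for the existence proof; establishing it --- while keeping $g_i$ and $T_i$ holomorphic across the division by $S_{i|0}$ --- is the crux, the remaining estimates being routine adaptations of \cite{Kod59}.
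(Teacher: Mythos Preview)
Your proposal is correct and follows essentially the same approach as the paper: the paper writes the matching equation as $f_i R_i = S_i(\cdot, h)$ (so its $f_i$ is your $g_i^{-1}$), derives the intermediate Poisson congruence $[\Lambda_0, f_i^{\mu-1}] + Q_i f_i^{\mu-1} - f_i^{\mu-1} T_i(\cdot, h^{\mu-1}) \equiv_{\mu-1} 0$ (exactly your relation $T'_i = T_i(\cdot, h) + [\Lambda_0, \log g_i]$ to order $\mu-1$), and then uses it in a direct computation to verify $[\Lambda_0, \Gamma_{i|\mu}]|_{V_0} = \Gamma_{i|\mu} T_{i|0}$, i.e.\ $\nabla_0|_{V_0}\Gamma_{i|\mu} = 0$. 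For convergence the paper simply cites \cite{Kod59} p.497--498 without modification, since no Poisson adjustment is needed at that stage.
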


\begin{proof}
We extend the arguments in \cite{Kod59} p.494-496 in the context of holomorphic Poisson deformations. We tried to maintain notational consistency with \cite{Kod59}. 

Suppose that $M=\{t|\sum_{r=1}^m |t_r|^2<1\}$ and that $\rho_{d,0}:T_0 M\to \mathbb{H}^0(V_0, \mathcal{N}_0^\bullet |_{V_0})$ is an isomorphism. Let $\mathcal{V}'\subset(W\times M, \Lambda_0)\xrightarrow{\omega'} M'$ be an arbitrary Poisson analytic family of holomorphic Poisson submanifolds of $(W,\Lambda_0)$ of codimension $1$ such that $\omega'^{-1}(0)=V_0$, where $M'=\{s|\sum_{r=1}^l |s_r|^2<1\}$. We will construct a holomorphic map $h:s\to t=h(s)$ of $N'$ into $M$ with $h(0)=0$ such that $\omega'^{-1}(s)=\omega^{-1}(h(s))$ where $N'=\{s|\sum_{r=1}^l |s_r|^2 <\delta\} \subset M'$ for a sufficiently small number $\delta>0$.

We keep the notations of subsection \ref{mm18} so that $\{S_i(p,t)\},\{f_{ik}(p,t)\}$, and $\{T_i(p,t)\}$ determine the Poisson analytics family $\mathcal{V}$. Let $\{R_i(p,s)\},\{e_{ik}(p,s)\}$ and $\{Q_i(p,t)\}$ and  be the corresponding system defining $\mathcal{V}'\subset (W\times M',\Lambda_0)$ so that we have
\begin{align}
S_i(p,t)&=f_{ik}(p,t) S_k(p,t),\,\,\,\,\, \,\,[\Lambda_0, S_i(p,t)]=S_i(p,t)T_i(p,t) \label{ab1}\\
R_i(p,s)&=e_{ik}(p,s)R_k(p,s),\,\,\,\,\, [\Lambda_0, R_i(p,s)]=R_i(p,s)Q_i(p,s)\label{ab2}
\end{align}
We may assume that 
\begin{align}\label{55c}
S_i(p,0)=R_i(p,0)=w_i(p), \,\,\,\,\, f_{ik}(p,0)=e_{ik}(p,0)=f_{ik|0}(p),\,\,\,\,\, T_i(p,0)=R_i(p,0)=T_{i|0}(p).
\end{align}
We expand $S_i(p,t)=w_i(p)+S_{i|0}(p,t)+S_{i|2}(p,t)+\cdots$ and let $S_{i|1}(p,t)=\sum_{r=1}^m B_{ir}(p)t_r$. Then the restriction $\beta_{ir}(p)$ of $B_{ir}(p)$ to $V_0$ satisfy
\begin{align}
\beta_{ir}(p)=f_{ik|0}(p)\beta_{kr}(p), \,\,\,\,\,p\in V_0\cap U_i\cap U_k,\\
-[\beta_{ir}(p), \Lambda_0]|_{V_0}+\beta_{ir}(p)T_{i|0}(p)=0,\,\,\,\,\,p\in V_0\cap U_i
\end{align}
and $\{\beta_1,...,\beta_m\}$ forms a basis of $\mathbb{H}^0(V_0, \mathcal{N}_0^\bullet |_{V_0})$ by the hypothesis.

If there exist non-vanishing holomorphic functions $f_i(p,s)$ defined on $U_i\times N'$ satisfying
\begin{align}\label{55a}
f_i(p,s)R_i(p,s)=S_i(p,h(s)),
\end{align}
we get $\omega'^{-1}(s)=\omega^{-1}(h(s))$. Recall Notation \ref{notation1} and let us write $h(s)$ and $f_i(p,s)$ in the following form:
\begin{align*}
h(s)=(h_1(s)=\sum_{\mu=1}^\infty h_{r|\mu}(s),...,h_m(s)=\sum_{\mu=1}^\infty h_{r|\mu}(s)),\,\,\,\,\,f_i(p,s)=1+\sum_{\mu=1}^\infty f_{i|\mu}(p,s),
\end{align*}
 We will construct such $f_i(p,s)$ and $h(s)$ satisfying $(\ref{55a})$ by solving the system of congruences by induction on $\mu$
\begin{align}\label{55b}
f_i^\mu(p,s)R_i(p,s)\equiv_\mu S_i(p,h^\mu(s)),\,\,\,\,\,\,\,\,\mu=0,1,2,\cdots.
\end{align}
$(\ref{55b})_0$ follows from $(\ref{55c})$. Now assume that $h^{\mu-1}(s)$ and $f_i^{\mu-1}(p,s)$ satisfying $(\ref{55b})_{\mu-1}$ are already determined. We will find $h_\mu(s)$ and $f_{i|\mu}(p,s)$ such that $h^\mu:=h^{\mu-1}(s)+h_\mu(s)$ and $f_i^\mu(p,s):=f_i^{\mu-1}(p,s)+f_{i|\mu}(p,s)$ satisfy $(\ref{55b})_\mu$. We can define homogenous polynomials $\Gamma_{i|\mu}(p,s)$ of degree $\mu$ in $s$ by
\begin{align}\label{ab3}
\Gamma_{i|\mu}(p,s)\equiv_\mu f_i^{\mu-1}(p,s)R_i(p,s)-S_i(p,h^{\mu-1}(s)),\,\,\,\,\,p\in U_i
\end{align}
Then we claim that
\begin{align}
\Gamma_{i|\mu}(p,s)=f_{ik|0}(p)\Gamma_{k|\mu}(p,s),\,\,\,\,\,p\in V_0\cap U_i\cap U_k \label{55d}\\
[\Lambda_0, \Gamma_{i|\mu}(p,s)]|_{w_i(p)=0}=\Gamma_{i|\mu}(p,s)T_{i|0}(p),\,\,\,\,\,\,p\in V_0\cap U_i \label{55e}
\end{align}
Indeed, (\ref{55d}) follows from \cite{Kod59} p.496. On the other hand, to prove $(\ref{55e})$, we remark that
\begin{align}\label{55g}
[\Lambda_0, f_i^{\mu-1}(p,s)]+Q_i(p,s) f_i^{\mu-1}(p,s)-f_i^{\mu-1}(p,s) T_i(p, h^{\mu-1}(p,s))\equiv_{\mu-1} 0
\end{align}
Indeed, by applying $[\Lambda_0,-]$ on $f_i^{\mu-1}(p,s)R_i(p,s)\equiv_{\mu-1} S_i(p,h^{\mu-1}(s))$ in $(\ref{55b})_{\mu-1}$, we get, from $(\ref{ab1})$, $(\ref{ab2})$, and $(\ref{55b})_{\mu-1}$,
\begin{align*}
&[\Lambda_0, f_i^{\mu-1}(p,s)]R_i(p,s)+[\Lambda_0, R_i(p,s)]f_i^{\mu-1}(p,s)\equiv_{\mu-1} [\Lambda_0, S_i(p,h^{\mu-1}(s))] \iff \\
&[\Lambda_0, f_i^{\mu-1}(p,s)]R_i(p,s)+R_i(p,s) Q_i(p,s) f_i^{\mu-1}(p,s)\equiv_{\mu-1} S_i(p,h^{\mu-1}(s)) T_i(p,h^{\mu-1}(s))\equiv_{\mu-1} f_i^{\mu-1}(p,s) R_i(p,s) T_i(p,h^{\mu-1}(s))\\
&\iff ([\Lambda_0, f_i^{\mu-1}(p,s)]+Q_i(p,s) f_i^{\mu-1}(p,s)-f_i^{\mu-1}(p,s) T_i(p, h^{\mu-1}(p,s)))R_i(p,s)\equiv_{\mu-1} 0
\end{align*}
which proves $(\ref{55g})$. By using $(\ref{55g})$, we get, from $(\ref{ab3})_\mu$, $(\ref{ab1})$, and $(\ref{ab2})$,
\begin{align}\label{ab5}
&[\Lambda_0, \Gamma_{i|\mu}(p,s)]\equiv_\mu[\Lambda_0, f_i^{\mu-1}(p,s)R_i(p,s)]-[\Lambda_0, S_i(p,h^{\mu-1}(s))]\\ 
&\equiv_\mu [\Lambda_0, f_i^{\mu-1}(p,s)]R_i(p,s)+[\Lambda_0, R_i(p,s)] f_i^{\mu-1}(p,s)-[\Lambda_0, S_i(p,h^{\mu-1}(s))]\notag\\
&\equiv_\mu [\Lambda_0, f_i^{\mu-1}(p,s)]R_i(p,s)+R_i(p,s) Q_i(p,s)f_i^{\mu-1}(p,s)-S_i(p,h^{\mu-1}(s))T_i(p,h^{\mu-1}(s)) \notag \\
&\equiv_\mu [\Lambda_0, f_i^{\mu-1}(p,s)]R_i(p,s)+R_i(p,s) Q_i(p,s)f_i^{\mu-1}(p,s)+\Gamma_{i|\mu}(p,s) T_{i|0}(p)-f_i^{\mu-1}(p,s) R_i(p,s) T_i(p,h^{\mu-1}(s))\notag\\
&\equiv_\mu ([\Lambda_0, f_i^{\mu-1}(p,s)]+Q_i(p,s) f_i^{\mu-1}(p,s)-f_i^{\mu-1}(p,s) T_i(p, h^{\mu-1}(p,s)))R_i(p,0)+\Gamma_{i|\mu}(p,s) T_{i|0}(p)\notag
\end{align}
By restricting $(\ref{ab5})$ to $V_0$ (i.e by setting $S_i(p,0)=R_i(p,0)=w_i(p)$), we obtain
\begin{align}
[\Lambda_0, \Gamma_{i|\mu}(p,s)]|_{w_i(p)=0}\equiv_\mu \Gamma_{i|\mu}(p,s)T_{i|0}(p),\,\,\,\,\,p\in V_0\cap U_i.
\end{align}
This proves $(\ref{55e})$. From $(\ref{55d})$ and $(\ref{55e})$, there exist homogenous polynomials $h_{r|\mu}$ of degree $\mu$ in $s$ such that
\begin{align}
\sum_{r=1}^m \beta_{ir}(p)h_{r|\mu}(s)=\Gamma_{i|\mu}(p,s),\,\,\,\,\, p\in V_0\cap U_i
\end{align}
 From $h_r^\mu(s)=h_r^{\mu-1}(s)+h_{r|\mu}(s)$, the congruence $(\ref{55b})_\mu$ is equivalent to (for the detail, see \cite{Kod59} p.496)
\begin{align}\label{55f}
\sum_{r=1}^m B_{ir}(p)h_{r|\mu}(s)=w_i(p)f_{i|\mu}(p,s)+\Gamma_{i|\mu}(p,s)
\end{align}
By setting $f_{i|\mu}(p,s):=\frac{\sum_{r=1}^m B_{ir}(p)h_{r|\mu}(s)-\Gamma_{i|\mu}(p,s)}{w_i(p)}$, we get $(\ref{55f})$, which completes the inductive construction of $h^\mu(s)$ and $f_i^\mu(p,s)$ satisfying $(\ref{55b})_\mu$.
\end{proof}

\subsection{Proof of convergence}\

The convergence of $h_r(s), f_i(p,s)$ follows from the same arguments in \cite{Kod59} p.497-498, which completes Theorem \ref{012}.

\section{Deformations of compact holomorphic Poisson submanifolds of arbitrary codimensions }\label{section3}

We extend Definition $\ref{2d}$ to arbitrary codimensions.
\begin{definition}[compare \cite{Kod62}]\label{3a}
Let $(W,\Lambda_0)$ be a holomorphic Poisson manifold of dimension $d+r$. We denote a point in $W$ by $w$ and a local coordinate of $w$ by $(w^1,...,w^{r+d})$. By a Poisson analytic family of compact holomorphic Poisson submanifolds of $(W,\Lambda_0)$ we mean a holomorphic Poisson submanifold $\mathcal{V}\subset(W\times M, \Lambda_0)$ of codimension $r$, where $M$ is a complex manifold and $\Lambda_0$ is the holomorphic Poisson structure on $W\times M$ induced from $(W,\Lambda_0)$, such that
\begin{enumerate}
\item for each point $t\in M$, $V_t\times t:=\omega^{-1}(t)=\mathcal{V}\cap \pi^{-1}(t)$ is a connected compact holomorphic Poisson submanifold of $(W\times t,\Lambda_0)$ of dimension $d$, where $\omega:\mathcal{V}\to M$ is the map induced from the canonical projection $\pi:W\times M\to M$.
\item for each point $p\in \mathcal{V}$, there exist $r$ holomorphic functions $f_\alpha(w,t),\alpha=1,...,r$ defined on a neighborhood $\mathcal{U}_p$ of $p$ in $W\times M$ such that $\textnormal{rank} \frac{\partial ( f_1,...,f_r)}{\partial (w^1,...w^{r+d})}=r$, and $\mathcal{U}_p\cap \mathcal{V}$ is defined by the simultaneous equations $f_\alpha(w,t)=0,\alpha=1,...,r$.
\end{enumerate}
We call $\mathcal{V}\subset (W\times M, \Lambda_0)$ a Poisson analytic family of compact holomorphic Poisson submanifolds $V_t,t\in M$ of $(W,\Lambda_0)$. We also call $\mathcal{V}\subset( W\times M,\Lambda_0)$ a Poisson analytic family of deformations of a compact holomorphic Poisson submanifold $V_{t_0}$ of $(W,\Lambda_0)$ for each fixed point $t_0\in M$.
\end{definition}

\subsection{The complex associated with the normal bundle of a holomorphic Poisson submanifold in a holomorphic Poisson manifold}\label{subsection3.1}\

Let $(W,\Lambda_0)$ be a holomorphic Poisson manifold and $V$ be a holomorphic Poisson submanifold of $(W,\Lambda_0)$. Let $\mathcal{U}=\{W_i\}$ be a open covering such that $W_i$ is a polycylinder with a local coordinate $(w_i,z_i)=(w_i^1,...,w_i^r,z_i^1,...,z_i^d)$ such that $W_i=\{(w_i,z_i)||w_i|<1,|z_i|<1\}$, where $|w_i|=\max_\lambda |w_i^\lambda|,|z_i|=\max_\alpha |z_i^\alpha|$, the local coordinate $(w_i,z_i)$ can be extended to a domain containing the closure of $W_i$ and on each neighborhood $W_i$, $V\cap W_i$ coincides with the subspace of $W_i$ determined by $w_i^1=\cdots=w_i^r=0$. On the intersection $W_i\cap W_k$, the coordinates $w_i^1,...,w_i^r,z_i^1,...,z_i^d$ are holomorphic functions of $w_k$ and $z_k$: $w_i^\alpha=f_{ik}^\alpha(w_k,z_k),\alpha=1,...,r,z_i^\lambda=g_{ik}^\lambda(w_k,z_k),\lambda=1,...,d$. We set $f_{ik}(w_k,z_k):=(f_{ik}^1(w_k,z_k),\cdots f_{ik}^r(w_k,z_k))$ and $g_{ik}(w_k,z_k)=(g_{ik}^1(w_k,z_k),\cdots,g_{ik}^d(w_k,z_k))$ so that we write the formula in the form $w_i=f_{ik}(w_k,z_k),z_i=g_{ik}(w_k,z_k)$. Then we have $f_{ik}(0,z_k)=0$ and so $w_i^\alpha=f_{ik}^\alpha(w_k,z_k)$ has the following form:
\begin{align}\label{3b}
w_i^\alpha=f_{ik}^\alpha(w_k,z_k)=\sum_{\beta=1}^r w_k^\beta F_{ik\beta}^\alpha(w_k,z_k)
\end{align}

We set $U_i=V\cap W_i=\{(0,z_i)||z_i|<1\}$. We denote a point of $V$ by $z$ and if $z=(0,z_i)\in U_i$, we consider $z_i=(z_i^1,...,z_i^d)$ as the coordinate of $z$ on $U_i$. We indicate by writing $z=(0,z_k)\in U_k\cap U_i$ that $z$ is a point in $U_k\cap U_i$ whose coordinate on $U_k$ is $z_k$. We note that
\begin{align}\label{3c}
F_{ik\beta}^\alpha(0,z_k)= \frac{\partial f_{ik}^\alpha(w_k,z_k)}{\partial w_k^\beta}|_{w_k=0},\,\,\,\,\,\,\,\,\,z=(0,z_k)\in U_i\cap U_k,
\end{align}
and let $F_{ik}(z):=(F_{ik\beta}^\alpha(0,z))_{\alpha, \beta=1,...,r}$. Then the matrix valued functions $F_{ik}(z)$ satisfy $F_{ik}(z)=F_{ij}(z)F_{jk}(z)$ for $z\in U_i\cap U_j\cap U_k$ so that they define the normal bundle $\mathcal{N}_{V/W}$.

On the other hand, since $V$ is a holomorphic Poisson submanifold of $(W,\Lambda_0)$, $[\Lambda_0,w_i^\alpha]$ is of the form
\begin{align}\label{3d}
[\Lambda_0,w_i^\alpha]=\sum_{\beta=1}^r w_i^\beta T_{i\alpha}^\beta(w_i,z_i)
\end{align}
where $T_{i\alpha}^\beta(w_i,z_i)=\sum_{\gamma=1}^r P_{i\alpha\beta}^1(w_i,z_i)\frac{\partial}{\partial w_i^1}+\cdots +P_{i\alpha\beta}^r(w_i,z_i)\frac{\partial}{\partial w_i^r}+Q_{i\alpha\beta}^1(w_i,z_i)\frac{\partial}{\partial z_i^1}+\cdots +Q_{i\alpha\beta}^d(w_i,z_i) \frac{\partial}{\partial z_i^d} \in \Gamma(W_i, T_W)$ by which we consider $T_{i\alpha}^\beta(w_i,z_i)$ a vector-valued holomorphic function on $W_i$.
Then on $W_i\cap W_k$, we have

\begin{align}\label{a123}
[\Lambda_0,w_i^\alpha]=\sum_{\beta=1}^r w_i^\beta T_{i\alpha}^\beta (w_i,z_i)=\sum_{\beta=1}^r f_{ik}^\beta(w_k,z_k) T_{i\alpha}^\beta(w_i,z_i)
\end{align}

On the other hand, from $(\ref{3b})$ and $(\ref{3d})$,
\begin{align}\label{3e}
[\Lambda_0, w_i^\alpha]&=\sum_{\beta=1}^r w_k^\beta[\Lambda_0,F_{ik\beta}^\alpha(w_k,z_k)]+\sum_{\beta=1}^r F_{ik\beta}^\alpha(w_k,z_k)[\Lambda_0,w_k^\beta]\\
&=\sum_{\beta=1}^r w_k^\beta[\Lambda_0, F_{ik\beta}^\alpha(w_k,z_k)]+\sum_{\beta,\gamma=1}^rF_{ik\beta}^\alpha (w_k,z_k)w_k^\gamma  T_{k\beta}^\gamma(w_k,z_k)\notag
\end{align}
By taking the derivative of  $(\ref{a123})$ and $(\ref{3e})$ with respect to $w_k^\gamma$ and setting $w_k=0$, we get from $(\ref{3c})$, on $\Gamma(U_i\cap U_k, T_W|_V)$,
\begin{align}\label{3i}
 \sum_{\beta=1}^r F_{ik\gamma }^\beta(0,z_k) T_{i\alpha}^\beta(0,z_i)=[\Lambda_0,F_{ik\gamma }^\alpha(0,z_k)]|_{w_k=0}+\sum_{\beta=1}^rF_{ik\beta}^\alpha(0,z_k)T_{k\beta}^\gamma(0,z_k)
\end{align}

On the other hand, from $(\ref{3d})$, we have
\begin{align}\label{3f}
0&=[\Lambda_0,[\Lambda_0, w_i^\alpha]]=\sum_{\beta=1}^r [\Lambda_0,w_i^\beta T_{i\alpha}^\beta(w_i,z_i)]=\sum_{\beta=1}^r w_i^\beta [\Lambda_0, T_{i\alpha}^\beta(w_i,z_i)]-[\Lambda_0, w_i^\beta]\wedge T_{i\alpha}^\beta(w_i,z_i)\notag\\
&=\sum_{\beta=1}^r w_i^\beta[\Lambda_0, T_{i\alpha}^\beta(w_i,z_i)]-\sum_{\beta,\gamma=1}^r w_i^\gamma T_{i\beta}^\gamma(w_i,z_i)\wedge T_{i\alpha}^\beta(w_i,z_i)\notag\\
&=\sum_{\beta=1}^r w_i^\beta\left( [\Lambda_0, T_{i\alpha}^\beta(w_i,z_i)]-\sum_{\gamma=1}^r T_{i\gamma}^\beta(w_i,z_i)\wedge T_{i\alpha}^\gamma(w_i,z_i)\right)
\end{align}
By taking the derivative of $(\ref{3f})$ with respect to $w_i^\beta$ and setting $w_i=0$,  we get, on $\Gamma(U_i, T_W|_V)$,
\begin{align}\label{3g}
[\Lambda_0,T_{i\alpha}^\beta(0,z_i)]|_{w_i=0}-\sum_{\gamma=1}^r T_{i\gamma}^\beta(0,z_i)\wedge T_{i\alpha}^\gamma(0,z_i)=0
\end{align}

Now we define a complex of sheaves associated with the normal bundle $\mathcal{N}_{V/W}$:
\begin{align*}
\mathcal{N}_{V/W}\xrightarrow{\nabla}\mathcal{N}_{V/W}\otimes T_W|_V\xrightarrow{\nabla} \mathcal{N}_{V/W}\otimes \wedge^2 T_W|_V\xrightarrow{\nabla}\mathcal{N}_{V/W}\otimes \wedge^3 T_W|_V\xrightarrow{\nabla}\cdots
\end{align*}

First we define $\nabla:\mathcal{N}_{V/W}\to \mathcal{N}_{V/W}\otimes T_V|_W$ and then extend to $\nabla:\mathcal{N}_{V/W}\otimes \wedge^{p}T_W|_V\to \mathcal{N}_{V/W}\otimes \wedge^{p+1}T_W|_V$ in the following. We note that $\Gamma(U_i,\mathcal{N}_{W/V})\cong \oplus^r \Gamma(U_i,\mathcal{O}_V)$ and $\Gamma(U_i,\mathcal{N}_{V/W}\otimes T_W|_V)\cong \oplus^r \Gamma(U_i, T_W|_V)$. Using these isomorphism, we define $\nabla$ on $\mathcal{N}_{V/W}$ by the rule\\
\begin{align*}
\nabla(e_i^\alpha):=\sum_{\beta=1}^rT_{i\beta}^\alpha (0,z_i)e_i^\beta
\end{align*}
where $e_i^\alpha=(0,\cdots,\overset{\alpha-th}{1},\cdots,0)\in \oplus_{i=1}^r \Gamma(U_i,\mathcal{O}_V)$. In general, we define
\begin{align*}
\nabla:\oplus^r \Gamma(U_i,\mathcal{O}_V)&\to \oplus^r\Gamma(U_i,T_W|_V)\\
\sum_{\alpha=1}^r g_i^\alpha e_i^\alpha &\mapsto \sum_{\alpha=1}^r-[g_i^\alpha,\Lambda_0]|_{w_i=0}\cdot e_i^\alpha+\sum_{\alpha=1}^rg_i^\alpha \nabla(e_i^\alpha)=\sum_{\alpha=1}^r  \left(-[g_i^\alpha,\Lambda_0]|_{w_i=0}+\sum_{\beta=1}^r g_i^\beta  T_{i\alpha}^\beta(0,z_i)  \right) e_i^\alpha
\end{align*}
where $g_i^\alpha\in \Gamma(U_i,\mathcal{O}_V)$. 

We extend $\nabla$ on $\mathcal{N}_{V/W}\otimes \wedge^p T_W|_V$.
$\mathcal{N}_{V/W}\otimes \wedge^p T_W|_V\xrightarrow{\nabla}\mathcal{N}_{V/W}\otimes \wedge^{p+1}T_W|_V$ is locally defined in the following way: we note that $\Gamma(U_i,\mathcal{N}_{V/W}\otimes \wedge^p T_W|_V)\cong \oplus^r\Gamma(U_i, \wedge^p T_W|_V)$ and $\Gamma(U_i,\mathcal{N}_{V/W}\otimes \wedge^{p+1} T_W|_V)\cong \oplus^r\Gamma(U_i, \wedge^{p+1} T_W|_V)$. From these isomorphism, we define $\nabla$ by the rule
\begin{align*}
\nabla:\oplus^r \Gamma(U_i,\wedge^p T_W|_V)&\to \oplus^r\Gamma(U_i,\wedge^{p+1}T_W|_V)\\
\sum_{\alpha=1}^r g_i^\alpha e_i^\alpha &\mapsto \sum_{\alpha=1}^r -[g_i^\alpha, \Lambda_0]|_{w_i=0}\cdot e_i^\alpha+(-1)^p\sum_{\alpha=1}^rg_i^\alpha\wedge \nabla(e_i^\alpha)\\&=\sum_{\alpha=1}^r  \left(-[g_i^\alpha,\Lambda_0]|_{w_i=0}+(-1)^p\sum_{\beta=1}^r g_i^\beta \wedge T_{i\alpha}^\beta(0,z_i)  \right) e_i^\alpha
\end{align*}
where $g_i^\alpha\in \Gamma(U_i,\wedge^p T_W|_V)$.

First we show that $\nabla$ defines a complex, i.e. $\nabla\circ \nabla=0$. Indeed, 
\begin{align*}
&\nabla\circ \nabla(\sum_{\alpha=1}^r g_i^\alpha e_i^\alpha) =\nabla(\sum_{\alpha=1}^r  \left(-[g_i^\alpha,\Lambda_0]|_{w_i=0}+(-1)^p\sum_{\beta=1}^r g_i^\beta \wedge T_{i\alpha}^\beta(0,z_i)  \right) e_i^\alpha)\\
&=-[ (-1)^p\sum_{\beta=1}^r g_i^\beta\wedge T_{i\alpha}^\beta(0,z_i),\Lambda_0]|_{w_i=0}\cdot e_i^\alpha+ (-1)^{p+1}\sum_{\alpha=1}^r \left(-[g_i^\alpha,\Lambda_0]|_{w_i=0}+(-1)^p\sum_{\beta=1}^r g_i^\beta \wedge T_{i\alpha}^\beta(0,z_i)  \right)\nabla(e_i^\alpha)\\
&=(-1)^{p+1}\sum_{\alpha,\beta=1}^r [g_i^\beta\wedge T_{i\alpha}^\beta(0,z_i),\Lambda_0]|_{w_i=0} e_i^\alpha+(-1)^p\sum_{\alpha,\beta=1}^r[g_i^\beta,\Lambda_0]|_{w_i=0} \wedge T_{i\alpha}^\beta(0,z_i) e_i^\alpha-\sum_{\alpha,\beta,\gamma=1}^r g_i^\beta \wedge T_{i\gamma}^\beta(0,z_i)\wedge T_\alpha^\gamma(0,z_i) e_i^\alpha
\end{align*}
Hence in order to show $\nabla\circ\nabla=0$, we have to show that
\begin{align}\label{3h}
(-1)^{p+1}\sum_{\beta=1}^r [g_i^\beta\wedge T_{i\alpha}^\beta(0,z_i),\Lambda_0]|_{w_i=0}+(-1)^p\sum_{\beta=1}^r [g_i^\beta,\Lambda_0]|_{w_i=0}\wedge T_{i\alpha}^\beta(0,z_i)-\sum_{\beta,\gamma=1}^r g_i^\beta \wedge T_{i\gamma}^\beta(0,z_i)\wedge T_{i\alpha}^\gamma(0,z_i)=0
\end{align}
Indeed, from (\ref{3g}), (\ref{3h}) becomes
\begin{align*}
(-1)^{p+1}\sum_{\beta=1}^r [g_i^\beta,\Lambda_0]|_{w_i=0}\wedge T_{i\alpha}^\beta(0,z_i)+(-1)^{p+1+p}\sum_{\beta=1}^r g_i^\beta\wedge [T_{i\alpha}^\beta(0,z_i),\Lambda_0]|_{w_i=0}\\
+(-1)^p\sum_{\beta=1}^r [g_i^\beta,\Lambda_0]|_{w_i=0}\wedge T_{i\alpha}^\beta(0,z_i)-\sum_{\beta,\gamma=1}^r g_i^\beta\wedge T_{i\gamma}^\beta(0,z_i)\wedge T_{i\alpha}^\gamma(0,z_i)\\
= \sum_{\beta=1}^rg_i^\beta\wedge [\Lambda_0, T_{i\alpha}^\beta(0,z_i)]|_{w_i=0}-\sum_{\beta,\gamma=1}^r g_i^\beta \wedge T_{i\gamma}^\beta(0,z_i)\wedge T_{i\alpha}^\gamma(0,z_i)\\
=\sum_{\beta=1}^rg_i^\beta \left( [\Lambda_0, T_{i\alpha}^\beta(0,z_i)]|_{w_i=0}- \sum_{\gamma=1}^r T_{i\gamma}^\beta(0,z_i) \wedge T_{i\alpha}^\gamma (0,z_i)\right)=0
\end{align*}
Hence $\nabla\circ \nabla=0$.

Next we show $\nabla$ is well-defined. In other words, on $U_i\cap U_k$, the following diagram commutes
\begin{center}
\begin{equation}\label{mm16}
\begin{CD}
\Gamma(U_k,\mathcal{N}_{W/V}\otimes \wedge^p T_{W/V})\cong \oplus^r \Gamma(U_k,\wedge^pT_W|_V)@>\text{on $U_i\cap U_k$}>\cong> \Gamma(U_i,\mathcal{N}_{W/V}\otimes \wedge^p T_{W/V})\cong \oplus^r \Gamma(U_i, \wedge^p T_W|_V)\\
@V\nabla VV @VV\nabla V\\
\Gamma(U_k,\mathcal{N}_{W/V}\otimes \wedge^{p+1} T_{W/V})\cong \oplus^r \Gamma(U_k, \wedge^{p+1}T_{W/V})@>\text{on $U_i\cap U_k$}>\cong> \Gamma(U_i,\mathcal{N}_{W/V}\otimes \wedge^{p+1} T_{W/V})\cong \oplus^r \Gamma(U_i, \wedge^{p+1}T_{W/V})
\end{CD}
\end{equation}
\end{center}
Let $\sum_{\alpha=1}^r g_k^\alpha e_k^\alpha\in \oplus^r \Gamma(U_k,\wedge^p T_W|_V)$. Then $\nabla(\sum_{\alpha=1}^r g_k^\alpha e_k^\alpha)=\sum_{\alpha=1}^r(-[g_k^\alpha,\Lambda_0]|_{w_k=0}+(-1)^p\sum_{\beta=1}^r g_k^\beta \wedge T_{k\alpha}^\beta(0,z_k))e_k^\alpha$ is identified on $U_i\cap U_k$ with
\begin{align}\label{mm10}
\sum_{\gamma=1}^r \left(\sum_{\alpha=1}^r -F_{ik\alpha }^\gamma(0,z_k)[g_k^\alpha,\Lambda_0]|_{w_k=0}+(-1)^p\sum_{\alpha,\beta=1}^r F_{ik \alpha }^\gamma(0,z_k)g_k^\beta\wedge T_{k\alpha}^\beta(0,z_k)\right)e_i^\gamma
\end{align}
On the other hand, $\sum_{\alpha=1}^r g_k^\alpha e_k^\alpha$ is identified on $U_i\cap U_k$ with $\sum_{\alpha,\gamma=1}^r(F_{ik\alpha}^\gamma(0,z_k) g_k^\alpha) e_i^\gamma\in\oplus^r \Gamma(U_i,\wedge^pT_W|_V) $ and
\begin{align}\label{mm11}
\nabla(\sum_{\alpha,\gamma=1}^r (F_{ik\alpha}^\gamma(0,z_k) g_k^\alpha) e_i^\gamma)=\sum_{\gamma=1}^r\left(\sum_{\alpha=1}^r-[F_{ik\alpha}^\gamma(0,z_k) g_k^\alpha,\Lambda_0]|_{w_k=0} +(-1)^p \sum_{\alpha,\beta=1}^r F_{ik\alpha}^\beta(0,z_k)g_k^\alpha \wedge T_{i\gamma}^\beta(0,z_i)\right)e_i^\gamma
\end{align}
Hence in order for the diagram (\ref{mm16}) to commute, we have to show that (\ref{mm10}) coincides with (\ref{mm11}):
\begin{align*}
(-1)^p\sum_{\alpha,\beta=1}^r F_{ik\alpha}^\gamma(0,z_k)g_k^\beta\wedge T_{k\alpha}^\beta(0,z_k)&=\sum_{\alpha=1}^r -[ F_{ik\alpha}^\gamma(0,z_k),\Lambda_0]|_{w_k=0}\wedge g_k^\alpha+(-1)^p \sum_{\alpha,\beta=1}^r F_{ik\alpha}^\beta(0,z_k) g_k^\alpha \wedge T_{i\gamma}^\beta(0,z_i)=0\\
\iff \sum_{\alpha,\beta=1}^r F_{ik\beta}^\gamma(0,z_k)T_{k\beta}^\alpha(0,z_k)\wedge g_k^\alpha&=\sum_{\alpha=1}^r -[ F_{ik\alpha}^\gamma(0,z_k),\Lambda_0]|_{w_k=0}\wedge g_k^\alpha+\sum_{\alpha,\beta=1}^r F_{ik\alpha}^\beta(0,z_k)  T_{i\gamma}^\beta(0,z_i) \wedge g_k^\alpha=0\\
\end{align*}
which follows from $(\ref{3i})$. Therefore $\nabla$ is well-defined.

\begin{definition}\label{332a}
We call the complex defined as above
\begin{align*}
\mathcal{N}_{V/W}^\bullet:\mathcal{N}_{V/W}\xrightarrow{\nabla}\mathcal{N}_{V/W}\otimes T_W|_V\xrightarrow{\nabla}\mathcal{N}_{V/W}\otimes \wedge^2 T_W|_V\xrightarrow{\nabla}\mathcal{N}_{V/W}\otimes \wedge^3 T_W|_V\xrightarrow{\nabla}\cdots
\end{align*}
the complex associated with the normal bundle $\mathcal{N}_{V/W}$ of a holomorphic Poisson submanifold $V$ of a holomorphic Poisson manifold $W$ and denote its $i$-th hypercohomology group by $\mathbb{H}^i(V,\mathcal{N}_{V/W}^\bullet)$.
\end{definition}

\begin{example}
On $W=\mathbb{C}^3$, let $(z,w_1,w_2)$ be a coordinate and $\Lambda_0=w_1z\frac{\partial}{\partial w_1}\wedge \frac{\partial}{\partial w_2}$ be a holomorphic Poisson structure on $\mathbb{C}^3$. Then $w_1=w_2=0$ is a holomorphic Poisson submanifold $V=\mathbb{C}$ with coordinate $z$ so that the normal bundle is $\mathcal{N}_{V/W}\cong \mathcal{O}_\mathbb{C}\oplus \mathcal{O}_\mathbb{C}$. Then $[\Lambda_0,w_1]=w_1z\frac{\partial}{\partial w_2}$ and $[\Lambda_0, w_2]=w_1(-z\frac{\partial}{\partial w_1})$ so that we get $T_1^1(z)=z\frac{\partial}{\partial w_2}, T_1^2(z)=0$ and $T_2^1(z)=-z\frac{\partial}{\partial w_1}, T_2^2(z)=0$. Since $[\Lambda_0, f_i(z)]|_{w_1=w_2=0}=0$ for entire functions $f_i(z),i=1,2$, we have $\nabla(f_1e_1+f_2e_2)=(f_1 T_1^1(z)+f_2T_1^2(z))e_1+(f_1T_2^1(z)+f_2T_2^2(z))e_2=(f_1z\frac{\partial}{\partial w_2},-f_1z\frac{\partial}{\partial w_1})$ so that $\mathbb{H}^0(V,\mathcal{N}_{V/W}^\bullet)=\{(0,f_2(z))|\text{$f_2(z)$ is an entire function}\}$.
\end{example}

\subsection{Infinitesimal deformations}\label{3infinitesimal}\

Let $M_1=\{t=(t_1,...,t_l)\in \mathbb{C}^l ||t|<1\}$. Consider a Poisson analytic family $\mathcal{V}\subset(W\times M_1,\Lambda_0)$ of compact holomorphic Poisson submanifolds $V_t,t \in M_1$ of $(W,\Lambda_0)$ and let $V=V_0$ as in Definition $\ref{3a}$. We keep the notations in subsection \ref{subsection3.1}. Let $\epsilon$ be a sufficiently small positive number. Then for $|t|<\epsilon$, the holomorphic Poisson submanifold $V_t$ of $W$ is defined in each neighborhood $W_i$ by simultaneous equations of the form $w_i^\lambda=\varphi_i^\lambda(z_i,t),\lambda=1,...,r$ where the $\varphi_i^\lambda(z_i,t)$ are holomorphic functions of $z_i,|z_i|<1$, depending holomorphically on $t$, $|t|<\epsilon$, and satisfying the boundary conditions $\varphi_i^\lambda(z_i,0)=0,\lambda=1,...,r.$ By setting $\varphi_i(z_i,t)=(\varphi_i^1(z_i,t),\cdots,\varphi_i^r(z_i,t))$, we write the simultaneous equation as $w_i=\varphi_i(z_i,t)$. Then we have $\varphi_i(g_{ik}(\varphi_k(z_k,t),z_k),t)=f_{ik}(\varphi_k(z_k,t),z_k)$. For each $t$, $|t|<\epsilon$, we set $w_{ti}^\lambda=w_i^\lambda-\varphi_i^\lambda(z_i,t),\lambda=1,..,r$ so that $(w_{ti}^1,...,w_{ti}^r,z_i^1,...,z_i^d)$ form a local coordinate defined on $W_i$. We define $F_{tik}(z_t):=\left(\frac{\partial w_{ti}^\lambda}{\partial w_{tk}^\mu}(z_t)\right)_{\lambda,\mu=1,...,r}$ $\text{for $z_t\in V_t\cap W_i\cap W_k$}$, where we denote by $(\frac{\partial w_{ti}^\lambda}{\partial w_{tk}^\mu}(z_t))$ the value of the partial derivative $\frac{\partial w_{ti}^\lambda}{\partial w_{tk}^\mu}$ at a point $z_t$ on $V_t$. Then the collection $\{F_{tik}(z_t)\}$ of $F_{tik}(z_t)$ forms a system of transition matrices defining the normal bundle $F_t$ of $V_t$ in $W$. Note that $F_{0ik}(z)=F_{ik}(z)$ $\text{for $z\in U_i\cap U_k$}$ from $(\ref{3c})$. 

Take an arbitrary tangent vector $\frac{\partial}{\partial t}=\sum_\rho \gamma_\rho \frac{\partial}{\partial t_\rho}$ of $M_1$ at $t$, $|t|<\epsilon$, and let $\psi_i(z_t,t)=\frac{\partial \varphi_i(z_i,t)}{\partial t}$ $\text{for $z_t=(\varphi_i(z_i,t),z_i)$}$.
Then we obtain the equality
\begin{align}\label{ss3}
\psi_i(z_t,t)=F_{tik}(z_t)\cdot \psi_k(z_t,t),\,\,\,\,\,\text{for $z_t\in V_t\cap W_i\cap W_k$}.
\end{align}

On the other hand, $w_i-\varphi_i(z_i,t)=0$ define a holomorphic Poisson submanifold, we have
\begin{align}\label{ss1}
[\Lambda_0,w_i^\lambda -\varphi_i^\lambda(z_i,t)]=\sum_{\mu=1}^r (w_i^\mu-\varphi_i^\mu(z_i,t))T_{i\lambda}^\mu(w,z_i,t) 
\end{align}
for some $T_i^\mu(w_i,z_i,t)$ which is of the form
\begin{align*}
T_{i\lambda}^\mu(w_i,z_i,t)=P_{i1}^\mu(w_i,z_i,t)\frac{\partial}{\partial w_i^1} +\cdots +P_{ir}^\mu(w_i,z_i,t)\frac{\partial}{\partial w_i^r}+Q_{i1}^\mu(w_i,z_i,t)\frac{\partial}{\partial z_i^1}+\cdots+Q_{id}^\mu(w_i,z_i,t)\frac{\partial}{\partial z_i^d}
\end{align*}
by which we consider $T_{i\lambda}^\mu(w_i,z_i,t)$ as a vector valued holomorphic function of $(w_i,z_i,t)$.

By taking the derivative of (\ref{ss1}) with respect to $t$, we get $[\Lambda_0, -\frac{\partial \varphi_i^\lambda(z_i,t)}{\partial t}]=\sum_{\mu=1}^r -\frac{\partial \varphi_i^\mu(z_i,t)}{\partial t}T_{i\lambda}^\mu(w,z_i,t)+\sum_{\mu=1}^r (w_i^\lambda- \varphi_i^\mu(z_i,t))\frac{\partial T_{i\lambda}^\mu(w,z_i,t)}{\partial t}$. By restricting to $V_t$, equivalently setting $w_i-\varphi_i(z_i,t)=0$, we get, on $\Gamma(W_i\cap V_t, T_W|_{V_t})$,
\begin{align}\label{ss2}
-[\Lambda_0,\frac{\partial \varphi_i^\lambda(z_i,t)}{\partial t}]|_{V_t}+\sum_{\mu=1}^r \frac{\partial \varphi_i^\mu(z_i,t)}{\partial t}T_{i\lambda}^\mu(\varphi_i(z_i,t),z_i,t)=0\iff-[\psi_i^\lambda(z_i,t),\Lambda_0]|_{V_t}+\sum_{\mu=1}^n \psi_i^\mu(z_i,t)T_{i\lambda}^\mu(\varphi_i(z_i,t),z_i,t)=0
\end{align}
From (\ref{ss3}) and (\ref{ss1}), $\{\psi_i(z_i,t)\}$ defines an element in $\mathbb{H}^0(V_t,\mathcal{N}_{V/W_t}^\bullet)$ so that we have a linear map
\begin{align*}
\sigma_t:T_t(M_1)&\to \mathbb{H}^0(V_t,\mathcal{N}_{V_t/W}^\bullet)\\
\frac{\partial}{\partial t}&\mapsto \frac{\partial V_t}{\partial t}:=\{\psi_i(z_t,t)\}
\end{align*}
We call $\sigma_t$ the characteristic map.

\begin{example}
Let $[\xi_0,\xi_1,\xi_2,\xi_3]$ be the homogenous coordinate on $\mathbb{P}^3_\mathbb{C}$. Let $[1,z_1,z_2,z_2]=[1,\frac{\xi_1}{\xi_0},\frac{\xi_2}{\xi_0},\frac{\xi_3}{\xi_0}]$. Then $\mathcal{V}\subset (\mathbb{P}_\mathbb{C}^3\times \mathbb{C},\Lambda_0=z_1\frac{\partial}{\partial z_1}\wedge\frac{\partial}{\partial z_2})$ defined by $\xi_1=\xi_3-t\xi_0=0$ is a Poisson analytic family of deformations of a holomorphic Poisson submanifold $\mathbb{P}_\mathbb{C}^1\cong V:\xi_1=\xi_3=0$ of $(\mathbb{P}^3, \Lambda_0)$. We note that $\mathcal{N}_{\mathbb{P}^1_\mathbb{C}/\mathbb{P}^3_\mathbb{C}}\cong \mathcal{O}_{\mathbb{P}_\mathbb{C}^1} (1)\oplus \mathcal{O}_{\mathbb{P}_\mathbb{C}^1}(1)$ so that we have the characteristic map
\begin{align*}
T_0\mathbb{C}&\to \mathbb{H}^0(V,\mathcal{N}_{V/\mathbb{P}^3_\mathbb{C}}^\bullet)=\mathbb{H}^0(\mathbb{P}^1_\mathbb{C}, (\mathcal{O}_{\mathbb{P}_\mathbb{C}^1}(1)\oplus \mathcal{O}_{\mathbb{P}_\mathbb{C}^1}(1))^\bullet)\\
a&\mapsto (0,a)=(0+0\cdot z_2, a+0\cdot z_2)
\end{align*}
\end{example}

\subsection{Theorem of existence}

\begin{theorem}[theorem of existence]\label{33a}
Let $V$ be a compact holomorphic Poisson submanifold of a holomorphic Poisson manifold $(W,\Lambda_0)$. If $\mathbb{H}^1(V, \mathcal{N}_{V/W}^\bullet)=0$, then there exists a Poisson analytic family $\mathcal{V}$ of compact holomorphic Poisson submanifolds $V_t$, $t\in M_1$ of $(W,\Lambda_0)$ such that $V_0=V$ and the characteristic map 
\begin{align*}
\sigma_0:T_0(M_1)&\to \mathbb{H}^0(V,\mathcal{N}_{V/W}^\bullet)\\
\frac{\partial}{\partial t}&\mapsto \left(\frac{\partial V_t}{\partial t}\right)_{t=0}
\end{align*}
is an isomorphism.
\end{theorem}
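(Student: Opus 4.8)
The plan is to mimic the power--series construction of Theorem \ref{jj20} and of Kodaira \cite{Kod62}, now carried out for the $r$ defining functions $\varphi_i^\lambda(z_i,t)$ of $V_t$ rather than for a single $S_i$. Fix a basis $\{\beta_1,\dots,\beta_m\}$ of $\mathbb{H}^0(V,\mathcal{N}_{V/W}^\bullet)$, where $m=\dim_{\mathbb{C}}\mathbb{H}^0(V,\mathcal{N}_{V/W}^\bullet)$, take $M_1$ to be a small ball in $\mathbb{C}^m$, and write each $\beta_r$ through its normal--bundle components $\beta_{ri}^\lambda\in\Gamma(U_i,\mathcal{O}_V)$ under the isomorphism $\Gamma(U_i,\mathcal{N}_{V/W})\cong\oplus^r\Gamma(U_i,\mathcal{O}_V)$. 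I would seek formal expansions $\varphi_i^\lambda(z_i,t)=\sum_{\mu\ge 1}\varphi_{i|\mu}^\lambda(z_i,t)$ into $t$--homogeneous pieces, with the first order term prescribed by $\varphi_{i|1}^\lambda(z_i,t)=\sum_{r=1}^m t_r\,\beta_{ri}^\lambda$; this choice forces $\sigma_0(\partial/\partial t_r)=\beta_r$, so that the characteristic map is an isomorphism once convergence is secured. The two structural equations to be solved order by order are the gluing relation $\varphi_i(g_{ik}(\varphi_k(z_k,t),z_k),t)=f_{ik}(\varphi_k(z_k,t),z_k)$, whose linearization is the normal--bundle cocycle identity (\ref{ss3}), and the Poisson compatibility (\ref{ss1}), whose linearization is (\ref{ss2}).

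Assume inductively that $\varphi_i^{\lambda,\mu}$ has been built so that both equations hold modulo degree $\mu+1$. I would then package the degree--$(\mu+1)$ discrepancies into a pair $(\{\psi_{ik|\mu+1}\},\{W_{i|\mu+1}\})$, where $\psi_{ik|\mu+1}\in\Gamma(U_i\cap U_k,\mathcal{N}_{V/W})$ records the failure of the gluing relation and $W_{i|\mu+1}\in\Gamma(U_i,\mathcal{N}_{V/W}\otimes T_W|_V)$ records the failure of the Poisson relation, in exact parallel with (\ref{22h}) and the $\psi_{ik|\mu+1}$ of the codimension one proof. Using the integrability identities (\ref{3g}) and (\ref{3i}) together with the induction hypotheses, I expect to verify the three cocycle identities $\delta\psi_{\mu+1}=0$, $\nabla\psi_{\mu+1}=\delta W_{\mu+1}$ and $\nabla W_{\mu+1}=0$ of the \v{C}ech resolution of $\mathcal{N}_{V/W}^\bullet$, so that the pair represents a class in $\mathbb{H}^1(V,\mathcal{N}_{V/W}^\bullet)$. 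Since this group vanishes by hypothesis, the pair is a coboundary: there are $\phi_{i|\mu+1}\in\Gamma(U_i,\mathcal{N}_{V/W})$ with $\psi_{ik|\mu+1}=\phi_{i|\mu+1}-F_{ik}\phi_{k|\mu+1}$ and $W_{i|\mu+1}=\nabla\phi_{i|\mu+1}$. Reading off $\varphi_{i|\mu+1}^\lambda$ from the components of $\phi_{i|\mu+1}$ then repairs both equations to degree $\mu+1$ and completes the inductive step, yielding a formal solution.

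For convergence I would run the majorant argument of subsection \ref{sectionp}: introduce the comparison series $A(t)$ and prove by induction bounds of the form $\varphi_i^\lambda(t)\ll A(t)$, using the multiplicative property (\ref{ii0}) to absorb the quadratic and transition terms. The decisive ingredient is a bounded solving operator for the coboundary equation, that is, an analogue of Lemma \ref{1lemma} producing the correction $\phi_{\mu+1}$ with $\|\phi_{\mu+1}\|\le c\,\|(\psi_{\mu+1},W_{\mu+1})\|$ for a constant $c$ independent of $\mu$, where $\|(\psi,W)\|$ controls the \v{C}ech part and the $\nabla$ part simultaneously. I expect this to be the main obstacle: in codimension one Lemma \ref{1lemma} was obtained by a normal--families compactness argument on the fibre $V_0$, and here the same scheme must be carried out for pairs in the two--term truncation $\mathcal{N}_{V/W}\to\mathcal{N}_{V/W}\otimes T_W|_V$. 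The vanishing $\mathbb{H}^1(V,\mathcal{N}_{V/W}^\bullet)=0$ together with compactness of $V$ should guarantee that the relevant coboundary map is surjective with closed range, whence an open mapping argument supplies the uniform bound; alternatively one may invoke harmonic theory for the elliptic complex $\mathcal{N}_{V/W}^\bullet$ as in \cite{Kod62} to produce a Green's operator with the required H\"older estimates. Granting the uniform bound, the induction closes, the series converge on a neighbourhood of $0$, and the resulting $\{\varphi_i(z_i,t)\}$ define a genuine Poisson analytic family in the sense of Definition \ref{3a} whose characteristic map $\sigma_0$ is, by the first order normalization, an isomorphism onto $\mathbb{H}^0(V,\mathcal{N}_{V/W}^\bullet)$.
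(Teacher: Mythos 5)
Your proposal follows essentially the same route as the paper's proof: the same first-order normalization by a basis of $\mathbb{H}^0(V,\mathcal{N}_{V/W}^\bullet)$, the same order-by-order construction of the obstruction pair (the paper's $(\psi_{m+1}(t),G_{m+1}(t))$) as a $1$-cocycle in the \v{C}ech resolution of $\mathcal{N}_{V/W}^\bullet$ killed by the hypothesis $\mathbb{H}^1(V,\mathcal{N}_{V/W}^\bullet)=0$, and the same convergence scheme via the majorant $A(t)$ together with a uniform solving bound (the paper's Lemma \ref{nb2}), which the paper proves by exactly the normal-families contradiction argument you anticipate. One caveat only: your fallback suggestion of harmonic theory with a Green's operator is not available in general, since the differential $\nabla$ built from $[\,\cdot\,,\Lambda_0]$ need not give an elliptic complex when the Poisson structure degenerates, but your primary route is precisely the one the paper takes.
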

\begin{proof}
We extend the arguments in \cite{Kod62} p.150-158 in the context of holomorphic Poisson deformations. We tried to keep notational consistency with \cite{Kod62}. We also keep the notations in subsection \ref{subsection3.1}.

Let $\{\gamma_1,...,\gamma_\rho,...,\gamma_l\}$ be a basis of $\mathbb{H}^0(V,\mathcal{N}_{V/W}^\bullet)$, where $l=\dim \mathbb{H}^0(V,\mathcal{N}_{V/W}^\bullet)$. On each neighborhood $U_i=V\cap W_i$, $\gamma_\rho$ is represented as a vector-valued holomorphic function 
\begin{align*}
\gamma_{\rho i}=(\gamma_{\rho i}^1(z_i),...,\gamma_{\rho i}^\alpha(z_i),...,\gamma_{\rho i}^r(z_i) )\in \oplus^r\Gamma(U_i,\mathcal{O}_V)
\end{align*}
such that 
\begin{align}
&\gamma_{\rho i}(z)=F_{ik}(z)\cdot \gamma_{\rho k}(z),\,\,\,\,\, z\in U_i\cap U_k\label{33d}\\
-[\Lambda_0,&\gamma_{\rho i}^\alpha(z_i)]|_{w_i=0}+\sum_{\beta=1}^r \gamma_{\rho i}^\beta (z_i) T_{i\alpha}^\beta(0,z_i)=0,\,\,\,\,\,z_i\in U_i,\,\,\,\alpha=1,...,r.\label{33e}
\end{align}
Let $\epsilon$ be a small positive number. In order to prove Theorem \ref{33a}, it suffices to construct vector-valued holomorphic functions
\begin{align*}
\varphi_i(z_i,t)=(\varphi_i^1(z_i,t),...,\varphi_i^r(z_i,t))
\end{align*}
in $z_i$, and $t$ with $|z_i|<1,|t|<\epsilon$, with $|\varphi_i(z_i,t)|<1$ satisfying the boundary condition
\begin{align*}
\varphi_i(z_i,0)=0,\,\,\,\,\,
\frac{\partial \varphi_i(z_i,t)}{\partial t_\rho}|_{t=0}=\gamma_{\rho i}(z)
\end{align*}
such that 
\begin{align}
\varphi_i(g_{ik}(\varphi_k(z_k,t),z_k),t)&=f_{ik}(\varphi(z_k,t),z_k),\,\,\,\,\,(\varphi_k(z_k,t),z_k)\in W_k\cap W_i, \label{ppp1}\\
[\Lambda_0, w_i^\alpha-\varphi_i^\alpha(z_i,t)]&|_{w_i=\varphi_i(z_i,t)}=0,\,\,\,\,\,\alpha=1,...,r\label{ppp2}
\end{align}

Recall Notation \ref{notation1}. Then the equalities (\ref{ppp1}) and (\ref{ppp2}) are equivalent to the system of congruences
\begin{align}
\varphi_i^m(g_{ik}(\varphi_k^m(z_k,t),z_k),t)&\equiv_m f_{ik}(\varphi_k^m(z_k,t),z_k),\,\,\,\,\,m=1,2,3,\cdots \label{33b}\\
[\Lambda_0,w_i^\alpha-\varphi_i^{\alpha m}(z_i,t)]&|_{w_i=\varphi_i^m(z_i,t)}\equiv_m 0,\,\,\,\,\,\,\,\,\,\,\,\,\,\,\,\,\,m=1,2,3,\cdots,\,\,\,\alpha=1,...,r. \label{33c}
\end{align}

We will construct the formal power series $\varphi_i^m(z_i,t)$ satisfying $(\ref{33b})_m$ and $(\ref{33c})_m$ by induction on $m$. 

We define $\varphi_{i|1}^\alpha(z_i,t)=\sum_{\rho=1}^l t_\rho \gamma^\alpha_{\rho i}(z_i),\alpha=1,...,r$. Then from $(\ref{33d})$, $\varphi_{i|1}(z_i,t)$ satisfies $(\ref{33b})_1$. On the other hand, since $[\Lambda_0, \gamma^\alpha_{\rho i}(z_i)]|_{w_i=0}=\sum_{\beta=1}^r \gamma_{\rho i}^\beta(z_i) T_{i \alpha}^\beta(z_i,0),\alpha=1,...,r$ from $(\ref{33e})$, we get
\begin{align*}
[\Lambda_0, \varphi_{i|1}^\alpha(z_i,t)]|_{w_i=0}=[\Lambda_0, \sum_{\rho=1}^l t_\rho \gamma^\alpha_{\rho i}(z_i)]|_{w_i=0}=\sum_{\rho=1}^l\sum_{\beta=1}^r t_{\rho}\gamma_{\rho i}^\beta(z_i) T_{i\alpha}^\beta(z_i,0)=\sum_{\beta=1}^r\varphi_{i|1}^\beta(z_i,t) T_{i\alpha}^\beta(z_i,0)
\end{align*}
Then we have $[\Lambda_0,\varphi_{i|1}^\alpha(z_i,t)]=\sum_{\beta=1}^r \varphi_{i|1}^\beta(z_i,t) T_{i\alpha}^\beta(w_i,z_i) -\sum_{\beta=1}^r w_i^\beta P_{i\alpha}^\beta(w_i,z_i,t)$ for some $P_{i\alpha}^\beta(w_i,z_i,t)$ which are homogenous polynomials in $t_1,...,t_l$ of degree $1$ with coefficients in $\Gamma(W_i, T_W)$. Hence from $(\ref{3d})$, we have
\begin{align*}
[\Lambda_0, w_i^\alpha-\varphi^\alpha_{i|1}(z_i,t)]=\sum_{\beta=1}^r (w_i^\beta-\varphi_{i|1}^\beta (z_i,t)) T_{i\alpha}^\beta(w_i,z_i)+\sum_{\beta=1}^r w_i^\beta P_{i\alpha}^\beta(w_i,z_i,t)
\end{align*}
so that we obtain $[\Lambda_0,w_i^\alpha-\varphi_{i|1}(z_i,t)]|_{w_i-\varphi_{i|1}(z_i,t)}\equiv_1 0$ and so $\varphi_{i|1}(z_i,t)$ satisfies $(\ref{33c})_1$. Hence the induction holds for $m=1$.

Now we assume that we have already constructed $\varphi_i^m(z_i,t)=(\varphi_i^{1m}(z_i,t),\cdots, \varphi_i^{\alpha m}(z_i,t),\cdots,\varphi_i^{rm}(z_i,t))$ satisfying $(\ref{33b})_m$ and $(\ref{33c})_m$ such that $[\Lambda_0,w_i^\alpha- \varphi_i^{\alpha m}(z_i,t)]$ is of the form
\begin{align}\label{33f}
[\Lambda_0,w_i^\alpha-\varphi_i^{\alpha m}(z_i,t)]=\sum_{\beta=1}^r (w_i^\beta-\varphi_i^{\beta m}(z_i,t))T_{i\alpha}^\beta(w_i,z_i)+Q_i^{\alpha m}(z_i,t)+\sum_{\beta=1}^r w_i^\beta P_{i\alpha}^{\beta m}(w_i,z_i,t)
\end{align}
such that the degree of $P_{i\alpha}^{\beta m}(w_i,z_i,t)$ is at least $1$ in $t_1,...,t_l$. We note that we can rewrite (\ref{33f})  in the following way.
\begin{align*}
\sum_{\beta=1}^r(w_i^\beta-\varphi_i^{\beta m}(z_i,t))T_{i\alpha}^\beta(w_i,z_i)+Q_i^{\alpha m}(z_i,t)+\sum_{\beta=1}^r \phi_i^{\beta m}(z_i,t)P_{i\alpha}^{\beta m}(\varphi_i^{ m}(z_i,t),z_i,t)+\sum_{\beta=1}^r (w_i^\beta-\varphi_i^{\beta m}(z_i,t) )L_{i\alpha}^\beta(w_i,z_i,t)
\end{align*}
such that the degree of $L_{i\alpha}^\beta(w_i,z_i,t)$ in $t_1,...,t_l$ is at least $1$ so that (\ref{33f}) becomes the following form:
\begin{align}\label{33g}
[\Lambda_0, w_i^\alpha-\varphi_i^{\alpha m}(z_i,t)]=\sum_{\beta=1}^r(w_i^\beta-\varphi_i^{\beta m}(z_i,t))T_{i\alpha}^\beta(w_i,z_i)+K_i^{\alpha m}(z_i,t)+\sum_{\beta=1}^r (w_i^\beta-\varphi_i^{\beta m}(z_i,t) )L_{i\alpha}^\beta(w_i,z_i,t)
\end{align}
where $K_i^{\alpha m}(z_i,t):=Q_i^{\alpha m}(z_i,t)+\sum_{\beta=1}^r \phi_i^{\beta m}(z_i,t)P_{i\alpha}^{\beta m}(\varphi_i^{ m}(z_i,t),z_i,t)$. 

We set
\begin{align}
\psi_{ik}(z_k,t)&:=[\varphi_i^m(g_{ik}(\varphi_k^m(z_k,t),z_k),t)-f_{ik}(\varphi_k^m(z_k,t),z_k)]_{m+1}\label{33o}\\
G_i^\alpha(z_i,t)&:=[[\Lambda_0,w_i^\alpha-\varphi_i^{\alpha m}(z_i,t)]|_{w_i=\varphi_i^m(z_i,t)}]_{m+1},\,\,\,\,\,\alpha=1,...,r.\label{33p}
\end{align}

We claim that $\{(\psi_{ik}^1(z_k,t),...,\psi_{ik}^r(z_k,t))\}\oplus \{ (-G_i^1(z_i,t),...,-G_i^r(z_i,t)) \}$ defines a $1$-cocycle in the following \v{C}ech reolution of $\mathcal{N}_{V/W}^\bullet$:
\begin{center}
$\begin{CD}
C^0(\mathcal{U}\cap V,\mathcal{N}_{W/V}\otimes\wedge^2 T_W|_V)\\
@A\nabla AA\\
C^0(\mathcal{U}\cap V,\mathcal{N}_{W/V}\otimes T_W|_V)@>\delta>> C^1(\mathcal{U}\cap V,\mathcal{N}_{W/V}\otimes T_W|_V)\\
@A\nabla AA @A\nabla AA\\
C^0(\mathcal{U}\cap V,\mathcal{N}_{W/V})@>-\delta>>C^1(\mathcal{U}\cap V,\mathcal{N}_{W/V})@>\delta>> C^2(\mathcal{U}\cap V,\mathcal{N}_{W/V})
\end{CD}$
\end{center}

By defining $\psi_{ik}(z,t)=\psi_{ik}(z_k,t)$ for $(0,z_k)\in U_k\cap U_i$, we have the equality (see \cite{Kod62} p.152-153)
\begin{align}\label{33n}
\psi_{ik}(z,t)=\psi_{ij}(z,t)+F_{ij}\cdot \psi_{jk}(z,t),\,\,\,\,\,\text{for $z\in U_i\cap U_j\cap U_k$}
\end{align}
On the other hand, by applying $[\Lambda_0,-]$ on $(\ref{33g})$, we have
\begin{align}\label{33h}
0=[\Lambda_0,[\Lambda_0, w_i^\alpha-\varphi_i^{\alpha m}]]=\sum_{\beta=1}^r-[\Lambda_0,w_i^{\beta}-\varphi_i^{\beta m} (z_i,t)]\wedge T_{i\alpha}^\beta(w_i,z_i)+\sum_{\beta=1}^r(w_i^\beta-\varphi_i^{\beta m}(z_i,t))[\Lambda_0,T_{i\alpha}^\beta(w_i,z_i)]\\
+[\Lambda_0,K_i^{\alpha m} (z_i,t)]+\sum_{\beta=1}^r-[\Lambda_0,w_i^\beta-\varphi_i^\beta(z_i,t)]\wedge L_{i\alpha}^\beta(w_i,z_i,t)+\sum_{\beta=1}^r (w_i^\beta-\varphi_i^\beta(z_i,t))[\Lambda_0,L_{i\alpha}^\beta(w_i,z_i,t)]\notag
\end{align}
By restricting $(\ref{33h})$ to $w_i=\varphi_i^m(z_i,t)$, since $G_i^\alpha(z_i,t)\equiv_m 0,\alpha=1,...,r$, we get
\begin{align*}
&0\equiv_{m+1}\sum_{\beta=1}^r -G_i^\beta(z_i,t)\wedge T_{i\alpha}^\beta(0,z_i)+[\Lambda_0,K_i^{\alpha m}(z_i,t)]|_{w_i=\varphi_i^m(z_i,t)}+\sum_{\beta=1}^r -G_i^\beta(z_i,t)\wedge L_{i\alpha}^\beta(\varphi_i^m(z_i,t),z_i,t)
\end{align*}
Since the degree $L(w_i,z_i,t)$ is at least $1$ in $t_1,...,t_l$ and we have, from $(\ref{33g})$,
\begin{align}\label{iii1}
G_i^\alpha(z_i,t)\equiv_{m+1}K_i^{\alpha m}(z_i,t)=Q_i^{\alpha m}(z_i,t)+\sum_{\beta=1}^r \phi_i^{\beta m}(z_i,t)P_{i\alpha}^{\beta m}(\varphi_i^{\beta m}(z_i,t),z_i,t),
\end{align}
we obtain
\begin{align}
0\equiv_{m+1}\sum_{\beta=1}^r -G_i^\beta(z_i,t)\wedge T_{i\alpha}^\beta(0,z_i)+[\Lambda_0, G_i^\alpha(z_i,t)]|_{w_i=0}
\end{align}

Next, since $f_{ik}^\alpha(w_k,z_k)-\varphi_i^{\alpha m}(g_{ik}(w_k,z_k),t)-[f_{ik}^\alpha(\varphi_k^m(z_k,t),z_k)-\varphi_i^{\alpha m}(g_{ik}(\varphi_k^m(z_k,t),z_k),t)]=\sum_{\beta=1}^r(w_k^\beta-\varphi_k^{\beta m}(z_k,t))\cdot S_{k\alpha}^\beta(w_k,z_k,t)$ for some $S_{k\alpha}^\beta(w_k,z_k,t)$. By setting $t=0$, we get $f_{ik}^\alpha(w_k,z_k)-f_{ik}^\alpha(0,z_k)=\sum_{\beta=1}^r w_k^\beta\cdot S_{k\alpha}^\beta(w_k,z_k,0)$, and then by taking the derivative with respect to $w_k^\gamma$ and setting $w_k=0$, we obtain 
\begin{align}\label{ijk1}
\frac{\partial f_{ik}^\alpha(w_k,z_k)}{\partial w_k^\gamma}|_{w_k=0}=S_{k\alpha}^\gamma(0,z_k,0). 
\end{align}
Then we have
\begin{align*}
&[\Lambda_0,f_{ik}^\alpha(w_k,z_k)-\varphi_i^{\alpha m}(g_{ik}(w_k,z_k),t)]|_{w_k=\varphi_k^m(z_k,t)}+[\Lambda_0, \psi_{ik}^\alpha(z_k,t)]|_{w_k=0}\\
&\equiv_{m+1}[\Lambda_0,f_{ik}^\alpha(w_k,z_k)-\varphi_i^{\alpha m}(g_{ik}(w_k,z_k),t)]|_{w_k=\varphi_k^m(z_k,t)}+[\Lambda_0, \psi_{ik}^\alpha(z_k,t)]|_{w_k=\varphi_k^m(z_k,t)}\\
&\equiv_{m+1}[\Lambda_0, f_{ik}^\alpha(w_k,z_k)-\varphi_i^{\alpha m}(g_{ik}(w_k,z_k),t)+\psi_{ik}^\alpha(z_k,t)]|_{w_k=\varphi_k^m(z_k,t)}\\
&\equiv_{m+1}[\Lambda_0,\sum_{\beta=1}^r (w_k^\beta-\varphi_k^{\beta m}(z_k,t))S_{k\alpha}^\beta(w_k,z_k,t)]|_{w_k=\varphi_k^m(z_k,t)}\\
&\equiv_{m+1}\sum_{\beta=1}^r[\Lambda_0, w_k^\beta-\varphi_k^{\beta m}(z_k,t)]|_{w_k=\varphi_k^m(z_k,t)}\cdot S_{k\alpha}^\beta(\varphi_k^m(z_k,t),z_k,t)\\
&\equiv_{m+1}\sum_{\beta=1}^r G_k^\beta(z_k,t)\cdot S_{k\alpha}^\beta(0,z_k,0)\equiv_{m+1}\sum_{\beta=1}^r G_k^\beta(z_k,t)\cdot \frac{\partial f_{ik}^\alpha(w_k,z_k)}{\partial w_k^\beta}|_{w_k=0}
\end{align*}
Hence we obtain the equality
\begin{align}\label{33k}
[\Lambda_0,f_{ik}^\alpha(w_k,z_k)-\varphi_i^{\alpha m}(g_{ik}(w_k,z_k),t)]|_{w_k=\varphi_k^m(z_k,t)}+[\Lambda_0, \psi_{ik}^\alpha(z_k,t)]|_{w_k=0}\\
\equiv_{m+1}\sum_{\beta=1}^r G_k^\beta(z_k,t)\cdot S_{k\alpha}^\beta(0,z_k,0)=\sum_{\beta=1}^r G_k^\beta(z_k,t)\cdot \frac{\partial f_{ik}^\alpha(w_k,z_k)}{\partial w_k^\beta}|_{w_k=0}\notag
\end{align}

On the other hand, from (\ref{33g}), we have
\begin{align}\label{33i}
[\Lambda_0,f_{ik}^\alpha(w_k,z_k) -\varphi_i^{\alpha m}(g_{ik}(w_k,z_k),t)]=\sum_{\beta=1}^r(f_{ik}^\beta(w_k,z_k)-\varphi_i^{\beta m}(g_{ik}(w_k,z_k),t))T_{i\alpha}^\beta(w_i,z_i)\\+K_i^{\alpha m}(z_i,t)+\sum_{\beta=1}^r (f_{ik}^\beta(w_k,z_k)-\varphi_i^{\beta m}(g_{ik}(w_k,z_k),t) )L_{i\alpha}^\beta(w_i,z_i,t)\notag
\end{align}
By restricting (\ref{33i}) to $w_k=\varphi_k^m(z_k,t)$, we get, from $(\ref{iii1})$,
\begin{align}\label{33j}
[\Lambda_0,f_{ik}^\alpha(w_k,z_k)-\varphi_i^{\alpha m}(g_{ik}(w_k,z_k),t)]|_{w_k=\varphi_k^m(z_k,t)}\equiv_{m+1} \sum_{\beta=1}^r -\psi_{ik}^\beta(z_k,t)T_{i\alpha}^\beta(0,z_i)+G_i^\alpha(z_i,t)
\end{align}
Hence from (\ref{33k}), (\ref{33j}) and $(\ref{ijk1})$, we obtain
\begin{align}\label{33l}
\sum_{\beta=1}^r -\psi_{ik}^\beta(z_k,t)T_{i\alpha}^\beta(0,z_i)+G_i^\alpha(z_i,t)+[\Lambda_0, \psi_{ik}^\alpha(z_k,t)]|_{w_k=0}=\sum_{\beta=1}^r G_k^\beta(z_k,t)\cdot \frac{\partial f_{ik}^\alpha(w_k,z_k)}{\partial w_k^\beta}|_{w_k=0}
\end{align}

Hence from (\ref{33n}),(\ref{33k}),(\ref{33l}), $\{(\psi_{ik}^1(z_k,t),...,\psi_{ik}^r(z_k,t))\}\oplus \{ (G_i^1(z_i,t),...,G_i^r(z_i,t)) \}$ defines a $1$-cocycle in the above complex so that we get the claim. We call $\psi_{m+1}(t):=\{(\psi_{ik}^1(z_k,t),...,\psi_{ik}^r(z_k,t))\}$ and $G_{m+1}(t):=\{(G_i^1(z_i,t),...,G_i^r(z_i,t))\}$ the $m$-th obstruction so that the coefficients of $(\psi_{m+1}(t),G_{m+1}(t))$ in $t_1,...,t_l$ lies in $\mathbb{H}^1(V,\mathcal{N}_{V/W}^\bullet)$.

On the other hand, by hypothesis, the cohomology group $\mathbb{H}^1(V,\mathcal{N}_{V/W}^\bullet)$ vanishes. Therefore there exists $\varphi_{i|m+1}^\alpha(z_i,t)$ such that $\psi_{ik}(z,t)=F_{ik}(z)\varphi_{k|m+1}(z,t)-\varphi_{i|m+1}(z,t)$ and $\sum_{\beta=1}^r \varphi^\beta_{i|m+1}(z_i,t) T_{i\alpha}^\beta(0,z_i)-[\Lambda_0,\varphi_{i|m+1}^\alpha(z_i,t)]|_{w_i=0}=-G_i^\alpha(z_i,t)$. Then we can show $(\ref{33b})_{m+1}$ (for the detail, see \cite{Kod62} p.154). On the other hand,
\begin{align*}
-[\Lambda_0,\varphi_{i|m+1}^\alpha(z_i,t)]=-\sum_{\beta=1}^r \varphi^\beta_{i|m+1}(z_i,t) T_{i\alpha}^\beta(w_i,z_i)-G_i^\alpha(z_i,t)+\sum_{\beta=1}^r w_i^\beta R_{i\alpha}^\beta(w_i,z_i,t)
\end{align*}
where the degree of $R_{i\alpha}^\beta(w_i,z_i,t)$ is $m+1$ in $t_1,...,t_l$. Then from $(\ref{33f})$, we have
\begin{align}\label{ijk2}
&[\Lambda_0,w_i^\alpha-\varphi_i^{\alpha m}(z_i,t)-\varphi^\alpha_{i|m+1}(z_i,t)]=[\Lambda_0,w_i^\alpha-\varphi_i^{\alpha m}(z_i,t)]+[\Lambda_0, -\varphi_{i|m+1}^\alpha(z_i,t)]\\
&=\sum_{\beta=1}^r(w_i^\beta-\varphi_i^{\beta m}(z_i,t)-\varphi^\beta_{i|m+1}(z_i,t))T_{i\alpha}^\beta(w_i,z_i)+Q_i^{\alpha m}(z_i,t)-G_i^\alpha(z_i,t)+\sum_{\beta=1}^rw_i^\alpha( P_{i\alpha}^{\beta m}(w_i,z_i,t)+ R_{i\alpha}^\beta(w_i,z_i,t))\notag
\end{align}
By setting $\varphi_i^{\alpha(m+1)}(z_i,t):=\varphi_i^{\alpha m}(z_i,t)+\varphi_{i|m+1}^\alpha(z_i,t)$, we show that $[\Lambda_0, w_i^\alpha-\varphi_i^{\alpha (m+1)}(z_i,t)]|_{w_i=\varphi_i^{ m+1}(z_i,t)}\equiv_{m+1} 0$. Indeed, from $(\ref{ijk2})$ and $(\ref{iii1})$,
\begin{align*}
&[\Lambda_0, w_i^\alpha-\varphi_i^{\alpha (m+1)}(z_i,t)]|_{w_i=\varphi_i^{ m+1}(z_i,t)}\\
&\equiv_{m+1} Q_i^{\alpha m}(z_i,t)-G_i^\alpha(z_i,t)+\sum_{\beta=1}^r (\varphi_i^{\alpha m}(z_i,t)+\varphi_{i|m+1}(z_i,t))P_{i\alpha}^{\beta m}(\varphi_i^{ m}(z_i,t)+\varphi_{i|m+1}(z_i,t),z_i,t)\\
&\equiv_{m+1} Q_i^{\alpha m}(z_i,t)-G_i^\alpha(z_i,t)+\sum_{\beta=1}^r \varphi_i^{\alpha m}(z_i,t)P_{i\alpha}^{\beta m}(\varphi_i^{ m}(z_i,t),z_i,t)\equiv_{m+1} K_i^{\alpha m}(z_i,t)-G_i^{\alpha}(z_i,t)\equiv_{m+1} 0
\end{align*}
which shows $(\ref{33c})_{m+1}$.
This completes the inductive construction of the polynomials $\varphi_i^m(z_i,t),i\in I$. 

\subsection{Proof of convergence}\label{3convergence}\

We will show that we can choose $\varphi_{i|m}(z_i,t)$ in each inductive step so that the formal power series $\varphi_i(z_i,t),i\in I$ constructed in the previous subsection, converges absolutely for $|t|<\epsilon$ for a sufficiently small number $\epsilon>0$.

\begin{notation}\label{notation3}
Recall Notation \ref{notation2}. We write 
\begin{align}
A(t)=\frac{a}{16b}\sum_{n=1}^\infty \frac{b^n(t_1+\cdots +t_l)^n}{n^2} 
\end{align}
instead $A(t)$ in Notation \ref{notation2} to keep the notational consistency with \cite{Kod62}. We also note that 
\begin{align}\label{gh5}
A(t)^v\ll \left(\frac{a}{b}\right)^{v-1}A(t) \,\,\,\,\,\text{ for $v=2,3,...$}.
\end{align}
\end{notation}

We may assume that $|F_{ik\mu}^\lambda(0,z)|<c_0$ with $c_0>1$. Then $\varphi_{i|1}(z_i,t)\ll A(t)$ if $b$ is sufficiently large.

We assume that 
\begin{align}\label{yy1}
\varphi_i^m(z_i,t)\ll A(t)
\end{align}
for an integer $m\geq 1$, we shall estimate the coefficients of the homogenous polynomials $\psi_{ik}(z,t)$ and $G_i^\alpha(z,t)$ from $(\ref{33o})$ and $(\ref{33p})$.

Let $W_i^\delta$ be the subdomain of $W_i$ consisting of all points $(w_i,z_i)$, $|w_i|<1-\delta, |z_i|<1-\delta$ for a sufficiently small number $\delta>0$ such that $\{W_i^\delta|i\in I\}$ forms a covering of $W$, and $\{U_i^\delta=W_i^\delta\cap V|i\in I\}$ forms a covering of $V$. 

First we estimate the coefficients of the homogeneous polynomials $\psi_{ik}(z,t)$. We briefly summarize Kodaira's result in the following: we expand $f_{ik}(w_k)=f_{ik}(w_k,z_k)$ and $g_{ik}(w_k)=g_{ik}(w_k,z_k)$ into power series in $w_k^1,...,w_k^r$ whose coefficients are vector-valued holomorphic functions of $z=(0,z_k)$ defined on $U_k\cap U_i$. We assume that $f_{ik}(w_k)\ll \sum_{n=1}^\infty c_1^n(w_k^1+\cdots +w_k^r)^n$ and $g_{ik}(w_k)=\sum_{n=0}^\infty c_1^n(w_k^1+\cdots+w_k^r)^n$. Then we can estimate 
\begin{align}\label{3.6a}
\psi_{ik}(z_k,t)\ll c_3A(t),\,\,\, z_k \in U_k\cap U_i,
\end{align} 
where $c_3=2rc_0\frac{4c_1ra}{b}\left(\frac{2^d}{\delta}+rc_1\right)$ with 
\begin{align}\label{yy5}
b>\max\{2c_1ra, \frac{4c_1 ra}{\delta}\}.
\end{align}
Second we estimate the coefficients of the homogeneous polynomials $G_i^\alpha(z,t),\alpha=1,...,r$. Let $\Lambda_0=\Lambda_{i}(w_i,z_i)=\sum_{p,q=1}^{r+d}\Lambda_{pq}^i(w_i,z_i)\frac{\partial}{\partial x_i^p}\wedge\frac{\partial}{\partial x_i^q}$ with $\Lambda_{pq}^i(w_i,z_i)=-\Lambda_{pq}^i(w_i,z_i)$, where $x_i=(w_i,z_i)$ on $W_i$. By considering coefficients $\Lambda_{pq}^i(w_i,z_i)$ of $\frac{\partial}{\partial x_i^p}\wedge\frac{\partial}{\partial x_i^q}$, we can consider $\Lambda_{i}(w_i,z_i)$ a vector-valued holomorphic function on $W_i$. We expand $\Lambda_i(w_i)=\Lambda_i(w_i,z_i)$ into power series in $w_i^1,...,w_i^r$ whose coefficients are vector valued holomorphic functions of $z=(0,z_i)$ defined on $U_i$ and we may assume, for any $p,q$,
\begin{align}\label{gh1}
\Lambda_{pq}^i(w_i)=\Lambda_{pq}^i(w_i,z_i)\ll \sum_{n=0}^\infty e_1^n(w_i^1+\cdots +w_i^r)^n
\end{align}
for some constant $e_1>0$. Now we estimate 
\begin{align}\label{gh2}
G_i^\alpha(z_i,t)=[[\Lambda_0,w_i^\alpha-\varphi_i^{\alpha m}(z_i,t)]|_{w_i=\varphi_i^m(z_i,t)}]_{m+1}\,\,\,\text{ for $z_i\in U_i^\delta$}.
\end{align}
 First we estimate $[[\Lambda_0, w_i^\alpha]|_{w_i=\varphi_i^m(z_i,t)}]_{m+1}$ in $(\ref{gh2})$. We note that
\begin{align}\label{gh3}
[\Lambda_0,w_i^\alpha]|_{w_i=\varphi_i^m(z_i,t)}=\sum_{p,q=1}^{d+r}2\Lambda_{pq}^i(\varphi_i^m(z_i,t),z_i)\frac{\partial w_i^\alpha}{\partial x_i^p}\frac{\partial}{\partial x_i^q}
\end{align}
Since constant terms and linear terms of $\Lambda_{pq}^i(w_i,z_i)$ with respect to $w_i^1,\cdots,w_i^r$ does not contribute to $[\Lambda_{pq}^i(\varphi_i^m(z_i,t),z_i)]_{m+1}$, we get, from $(\ref{gh1})$ and $(\ref{gh5})$,
\begin{align}\label{gh4}
[\Lambda_{pq}^i(\varphi_i^m(z_i,t),z_i)]_{m+1}\ll\sum_{n=2}^\infty e_1^nr^n A(t)^n= e_1r\sum_{n=1}^\infty e_1^nr^nA(t)^{n+1}\ll e_1rA(t)\sum_{n=1}^\infty  \left( \frac{e_1 ra}{b}\right)^n=\frac{e_1^2r^2a}{b} A(t) \sum_{n=0}^\infty  \left( \frac{e_1 ra}{b}\right)^n
\end{align}
 Assuming that 
 \begin{align}\label{yy6}
 b>2e_1ra, 
 \end{align}
 we obtain, from $(\ref{gh3})$, and $(\ref{gh4})$
\begin{align}\label{yy3}
[[\Lambda_0, w_i^\alpha]|_{w_i=\varphi_i^m(z_i,t)}]_{m+1}\ll 2(d+r)^2\frac{2e_1^2r^2a}{b}A(t)
\end{align}

Second, we estimate $[[\Lambda_0, \varphi_i^{\alpha m}(z_i,t)]|_{w_i=\varphi_i^m(z_i,t)}]_{m+1}$ in $(\ref{gh2})$. We note that  by Cauchy's integral formula and $(\ref{yy1})$,
\begin{align}\label{rr1}
\frac{\partial \varphi_i^{\alpha m}(z_i,t)}{\partial w_i^\beta}=0,\,\,\,\,\,\frac{\varphi_i^{\alpha m}(z_i,t)}{\partial z_i^\gamma}=\frac{1}{2\pi i}\int_{|\xi-z_i^\gamma|=\delta}\frac{\varphi_i^{\alpha m}(z_i^1,\cdots, \xi, \cdots,z_i^d,t)}{(\xi-z_i^\gamma)^2}d\xi\ll \frac{A(t)}{\delta}\,\,\,\text{ for $|z_i|<1-\delta$}.
\end{align}
Since constant term of $\Lambda_{pq}^i(w_i,z_i)$ with respect to $w_i^1,...,w_i^r$ does not contribute to $[[\Lambda_0, \varphi_i^{\alpha m}(z_i,t)]|_{w_i=\varphi_i^m(z_i,t)}]_{m+1}$, we get, from $(\ref{gh1})$, $(\ref{gh5})$ and $(\ref{rr1})$,
\begin{align}
[[\Lambda_0,  \varphi_i^{\alpha m}(z_i,t)]|_{w_i=\varphi_i^{\alpha m}(z_i,t)}]_{m+1}&=\sum_{p,q=1}^{r+d} 2\Lambda_{pq}^i(\varphi_i^{\alpha m}(z_i,t),z_i)\frac{\partial \varphi_i^{\alpha m}(z_i,t)}{\partial x_i^p}\frac{\partial}{\partial x_i^q}\\
&\ll 2(r+d)^2\frac{A(t)}{\delta}\sum_{n=1}^\infty e_1^nr^n A(t)^n,\,\,\,\,\,\,\, z_i\in U_i^\delta \notag\\
&\ll2(r+d)^2\frac{1}{\delta} \sum_{n=1}^\infty e_1^nr^nA(t)^{n+1}\ll \frac{2(r+d)^2}{\delta}\sum_{n=1}^\infty \left(\frac{e_1ra}{b}\right )^nA(t)\notag\\
&\ll  \frac{2(r+d)^2}{\delta}\left(\frac{e_1ra}{b}\right )A(t)\sum_{n=0}^\infty \left(\frac{e_1ra}{b} \right)^n\notag
\end{align}
Assuming that $b> 2e_1ra$, we get
\begin{align}\label{yy2}
[[\Lambda_0,\varphi_i^{\alpha m}(z_i,t)]|_{w_i=\varphi_i^{\alpha m}}]_{m+1}\ll \frac{4(r+d)^2e_1ra}{\delta b}A(t),\,\,\,\,\,z_i\in U_i^\delta.
\end{align}

Hence from (\ref{yy3}), and (\ref{yy2}), we obtain
\begin{align}\label{bc1}
G_i^\alpha(z_i,t)=[[\Lambda_0, w_i^\alpha-\varphi_i^{m\alpha}(z_i,t)]|_{w_i=\varphi_i^m(z_i,t)}]_{m+1} \ll e_2A(t),\,\,\,z_i\in U_i^\delta,
\end{align}
where 
\begin{align}\label{yy7}
e_2=\frac{4(d+r)^2e_1^2r^2a}{b}+\frac{4(r+d)^2e_1ra}{\delta b}.
\end{align}

\begin{lemma}\label{nb2}
We can choose the homogenous polynomials $\varphi_{i|m+1}(z,t),i\in I$ satisfying
\begin{align*}
\psi_{ik}(z,k)&=F_{ik}(z)\varphi_{k|m+1}(z,t)-\varphi_{i|m+1}(z,t)\\
-G_i^\alpha(z,t)&=-[\varphi_{i|m+1}^\alpha(z,t),\Lambda_0]|_{w_i=0} +\sum_{\beta=1}^r \varphi_{i|m+1}^\beta(z,t)T_{i\alpha}^\beta(0,z)
\end{align*}
in such a way that $\varphi_{i|m+1}(z,t)\ll c_4(e_2+c_3)A(t)$, where $c_4$ is independent of $m$.
\end{lemma}
\begin{proof}
For any $0$-cochain $\varphi=\{\varphi_i(z)\}$, $1$-cochain $(\psi,G)=(\{\psi_{ik}(z)\},\{\sum_{\alpha=1}^rG_i^\alpha(z)e_i^\alpha\})$, we define the norms of $\varphi$ and $(\psi, G)$ by
\begin{align*}
||\varphi ||&:=\max_i \sup_{z\in U_i} |\varphi_i(z)|,\\
||(\psi,G)||&:=\max_{i,k} \sup_{z\in U_i\cap U_k} |\psi_{ik}(z)|+\max_{i,\alpha} \sup_{z\in U_i^\delta}|G_i^\alpha(z)|
\end{align*}
The coboundary $\varphi$ is defined by
\begin{align*}
\tilde{\delta}(\varphi):=(-F_{ik}(z)\varphi_k(z,t)+\varphi_i(z), -[\varphi_i^\alpha(z_i),\Lambda_0]|_{w_i=0}+\sum_{\beta=1}^r \varphi_i^\beta(z)T_{i\alpha}^\beta(0,z_i))
\end{align*}

For any coboundary $(\psi,G)$, we define 
\begin{align*}
\iota(\psi)=\inf_{\tilde{\delta} \varphi=(\psi, G)} ||\varphi ||.
\end{align*}
To prove Lemma \ref{nb2}, it suffices to show the existence of a constant $c$ such that $\iota(\psi,G)\leq c||(\psi,G)||$. Assume that such a constant $c$ does not exist. Then we can find a sequence $(\psi',G'),(\psi'',G''),\cdots , (\psi^{(\mu)},G^{(\mu)}),\cdots$ such that there exists $\varphi^{(\mu)}$ with $\delta \varphi^{(\mu)}=(\psi^{(\mu)},G^{(\mu)})$ satisfying $||\varphi^{(\mu)}||<2$. Then we can show that there is a subsequence $\varphi_i^{\mu_1},\varphi_i^{\mu_2},\cdots$ such that $\varphi_i^{(\mu_v)}(z_i)$ converges absolutely and uniformly on $U_i$. Let $\varphi_i(z_i)=\lim_v \varphi_i^{(\mu_v)}(z_i)$ and let $\varphi=\{\varphi_i(z_i)\}$. Then we have $||\varphi^{\mu_v}-\varphi||\to 0$. On the other hand $\tilde{\delta}(\varphi)=(0,G_\varphi)$, where $G_\varphi(z)=0$ for $z\in U_i^\delta$. By identity theorem $G_\varphi(z)=0$ for $z\in U_i$ so that $\tilde{\delta}(\varphi)=(0,0)$. Therefore $\tilde{\delta}(\varphi^{(\mu_v)}-\varphi)=(\varphi^{(\mu_v)},G^{(\mu_v)})$ which contradict to $\iota(\varphi^{(\mu_v)})=1$.
\end{proof}

From $(\ref{3.6a})$ and $(\ref{yy7})$, we have 
\begin{align}\label{yy8}
c_4(c_3+e_2)=c_4c_3+c_4e_2=\frac{8c_4c_0c_1r^2a}{b}\left(\frac{2^d}{\delta}+rc_1\right)+c_4\left(\frac{4(d+r)^2e_1^2r^2a}{b}+\frac{4(r+d)e_1ra}{\delta b}\right)
\end{align}
From $(\ref{yy5}),(\ref{yy6})$, $(\ref{yy8})$ and Lemma \ref{nb2}, by assuming
\begin{align*}
b>\max\{8c_4c_0c_1r^2a\left( \frac{2^d}{\delta}+rc_1  \right)+c_4\left(4(d+r)^2 e_1^2 r^2 a+\frac{4(r+d)e_1ra}{\delta}\right), 2c_1ra,\frac{ 4c_1ra}{\delta},2e_1ra\}
\end{align*}
we can choose $\varphi_{i|m+1}(z_i,t)\ll A(t)$ and so $\varphi_i(z_i,t)\ll A(t)$ so that the power series $\varphi_i(z_i,t)$ converges for $|t|<\frac{1}{lb}$. Then by the argument of \cite{Kod62} p.158, we obtain the equality
\begin{align*}
\varphi_i(g_{ik}&(\varphi_k(z_k,t),z_k),t)=f_{ik}(\varphi_k(z_k,t),z_k),\,\,\,\,\text{for}\,\,|t|<\epsilon,\,\,\,(\varphi_k(z_k,t),z_k)\in W_i^\delta\cap W_k^\delta\\
&[\Lambda_0,w_i^\alpha-\varphi_i^\alpha(z_i,t)]|_{w_i=\varphi_i(z_i,t)}=0
\end{align*}
for a sufficiently small number $\epsilon>0$, which proves Theorem \ref{33a}.
\end{proof}

In the case $\mathbb{H}^1(V,\mathcal{N}_{V/W}^\bullet)\ne 0$, our proof of Theorem $\ref{33a}$ proves the following:
\begin{theorem}
If the obstruction $(\psi_{m+1}(t),G_{m+1}(t))$ vanishes for each integer $m\geq 1$, then there exists a Poisson analytic family $\mathcal{V}$ of compact holomorphic Poisson submanifolds $V_t,t\in M_1$, of $(W,\Lambda_0)$ such that $V_0=V$ and the characteristic map
\begin{align*}
 \sigma_0:T_0(M_1)&\to \mathbb{H}^0(V,\mathcal{N}_{V/W}^\bullet)\\
 \frac{\partial}{\partial t}&\mapsto (\frac{\partial V_t}{\partial t})_{t=0}
\end{align*}
is an isomorphism.
\end{theorem}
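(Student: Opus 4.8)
The plan is to prove this as a direct refinement of Theorem \ref{33a}, by auditing that proof to isolate the single place where the hypothesis $\mathbb{H}^1(V,\mathcal{N}_{V/W}^\bullet)=0$ was actually used. Recall that in the inductive construction of the formal power series $\varphi_i(z_i,t)$, once an $m$-th order approximation $\varphi_i^m(z_i,t)$ satisfying $(\ref{33b})_m$ and $(\ref{33c})_m$ has been produced, the degree-$(m+1)$ failure of the next two congruences is recorded by the pair $(\psi_{m+1}(t),G_{m+1}(t))$ defined in $(\ref{33o})$ and $(\ref{33p})$, and this pair was shown to be a $1$-cocycle in the \v{C}ech resolution of $\mathcal{N}_{V/W}^\bullet$, hence to represent a class in $\mathbb{H}^1(V,\mathcal{N}_{V/W}^\bullet)$. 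First I would point out that the condition $\mathbb{H}^1=0$ entered the earlier proof exclusively to make this class vanish.

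Next I would run the identical induction under the present hypothesis. The vanishing of $\mathbb{H}^1$ was invoked only to guarantee solvability of the coboundary equations $\psi_{ik}=F_{ik}\varphi_{k|m+1}-\varphi_{i|m+1}$ and $-G_i^\alpha=-[\varphi_{i|m+1}^\alpha,\Lambda_0]|_{w_i=0}+\sum_{\beta=1}^r\varphi_{i|m+1}^\beta T_{i\alpha}^\beta(0,z)$ for $\varphi_{i|m+1}(z_i,t)$, which then extends the approximation to order $m+1$. Since we now assume the obstruction $(\psi_{m+1}(t),G_{m+1}(t))$ vanishes at every stage $m\geq 1$, this solvability is available by assumption (when the cocycle is cohomologically trivial one solves as before, and when it vanishes identically one may simply take $\varphi_{i|m+1}=0$). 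The formal verification $(\ref{ijk2})$ that the resulting $\varphi_i^{m+1}$ satisfies $(\ref{33c})_{m+1}$ is purely algebraic and carries over verbatim, so the inductive construction of the $\varphi_i(z_i,t)$ goes through.

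For convergence I would invoke subsection \ref{3convergence} without change. The a priori estimates $(\ref{3.6a})$ and $(\ref{bc1})$ bound the obstruction $(\psi_{m+1},G_{m+1})$ uniformly in $m$, and Lemma \ref{nb2} — whose proof is a normal-families argument for the coboundary operator and never uses the vanishing of $\mathbb{H}^1$, only that the given $1$-cochain is a coboundary — then supplies a solution with $\varphi_{i|m+1}\ll c_4(e_2+c_3)A(t)$ for a constant $c_4$ independent of $m$. Choosing $b$ large exactly as in the proof of Theorem \ref{33a} yields $\varphi_i(z_i,t)\ll A(t)$ and convergence for $|t|<\epsilon$. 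Because the first-order term $\varphi_{i|1}(z_i,t)=\sum_\rho t_\rho\gamma_{\rho i}(z_i)$ is chosen precisely as before with $\{\gamma_\rho\}$ a basis of $\mathbb{H}^0(V,\mathcal{N}_{V/W}^\bullet)$, the characteristic map $\sigma_0$ is an isomorphism by construction, which completes the argument.

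The one point that genuinely requires care, and which I regard as the main conceptual obstacle, is that the obstruction $(\psi_{m+1}(t),G_{m+1}(t))$ at stage $m+1$ depends on all earlier choices $\varphi_{i|j}$, $j\leq m$. The vanishing hypothesis must therefore be read as a statement about the successive obstructions that actually arise along this specific inductive construction, rather than about a fixed family of cohomology classes. I would make explicit that adding to any $\varphi_{i|m+1}$ an ambiguity valued in $\mathbb{H}^0(V,\mathcal{N}_{V/W}^\bullet)$ changes neither the convergence estimate nor the solvability at the next stage, so the choices can be organized consistently with the hypothesis; once this is observed, the remainder of the proof is a verbatim specialization of Theorem \ref{33a}.
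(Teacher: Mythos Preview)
Your proposal is correct and follows exactly the paper's approach: the paper simply states that ``our proof of Theorem \ref{33a} proves the following'' and gives no further argument, so the theorem is intended as a direct corollary of that proof, with the vanishing of $\mathbb{H}^1$ replaced by the assumed vanishing of each successive obstruction class. Your elaboration on the dependence of $(\psi_{m+1},G_{m+1})$ on the earlier choices $\varphi_{i|j}$ is a genuine subtlety that the paper leaves implicit, but it does not change the method.
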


\subsection{Maximal families: Theorem of completeness}\

We note that Definition \ref{005} can be extended to arbitrary codimensions.

\begin{theorem}[theorem of completeness]\label{3.5a}
Let $\mathcal{V}$ be a Poisson analytic family of compact holomorphic Poisson submanifolds $V_t$, $t\in M_1$, of $(W,\Lambda_0)$. If the characteristic map
\begin{align*}
\sigma_0:T_0(M_1)&\to \mathbb{H}^0(V_0, \mathcal{N}_{V_0/W}^\bullet)\\
\frac{\partial}{\partial t}&\mapsto \left(\frac{\partial V_t}{\partial t}\right)_{t=0}
\end{align*}
is an isomorphism, then the family $\mathcal{V}$ is maximal at $t=0$.
\end{theorem}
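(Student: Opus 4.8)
The plan is to extend Kodaira's completeness argument in \cite{Kod62} to the Poisson setting, in parallel with the codimension $1$ proof of Theorem \ref{012}. Keep the notation of subsection \ref{subsection3.1}, and after reparametrizing $\mathcal{V}$ assume $\partial\varphi_i/\partial t_\rho|_{t=0}=\gamma_{\rho i}$ for a basis $\{\gamma_1,\dots,\gamma_l\}$ of $\mathbb{H}^0(V_0,\mathcal{N}_{V_0/W}^\bullet)$ (this is possible since $\sigma_0$ is an isomorphism, so $\dim M_1=l$). Let $\mathcal{V}'\subset(W\times M',\Lambda_0)\xrightarrow{\omega'}M'$ be an arbitrary Poisson analytic family with $\omega'^{-1}(0)=V_0$, and write its fibre $V'_s$ locally as the graph $w_i=\chi_i(z_i,s)$ with $\chi_i(z_i,0)=0$. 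For $s$ near $0$ the fibres of both families are graphs over the $z_i$-coordinates on each $W_i$, and two such graphs coincide if and only if their defining functions agree; hence $\mathcal{V}'$ is induced from $\mathcal{V}$ by a holomorphic map $h:s\mapsto t=h(s)$ with $h(0)=0$ precisely when
\[
\chi_i(z_i,s)=\varphi_i(z_i,h(s)),\qquad z_i\in U_i .
\]
It therefore suffices to construct such an $h$.

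First I would build $h(s)=\sum_{\mu\ge 1}h_\mu(s)$ as a formal power series by solving the congruences $\chi_i(z_i,s)\equiv_\mu\varphi_i(z_i,h^\mu(s))$ inductively on $\mu$. At first order this reads $\chi_{i|1}(z_i,s)=\sum_\rho\gamma_{\rho i}(z_i)h_{\rho|1}(s)$; since the first-order part of $\mathcal{V}'$ represents a class in $\mathbb{H}^0(V_0,\mathcal{N}_{V_0/W}^\bullet)$ (subsection \ref{3infinitesimal}) and $\{\gamma_\rho\}$ is a basis, the isomorphism $\sigma_0$ determines $h_{|1}$ uniquely. For the inductive step, having found $h^{\mu-1}$, set the $\mu$-th obstruction
\[
\Gamma_{i|\mu}(z_i,s):=[\,\chi_i(z_i,s)-\varphi_i(z_i,h^{\mu-1}(s))\,]_\mu .
\]
Using $\varphi_i(z_i,h^{\mu-1}(s)+h_\mu(s))\equiv_\mu\varphi_i(z_i,h^{\mu-1}(s))+\sum_\rho\gamma_{\rho i}(z_i)h_{\rho|\mu}(s)$, the congruence at level $\mu$ reduces to $\Gamma_{i|\mu}=\sum_\rho\gamma_{\rho i}h_{\rho|\mu}$, so the step succeeds as soon as $\{\Gamma_{i|\mu}\}$ represents a class in $\mathbb{H}^0(V_0,\mathcal{N}_{V_0/W}^\bullet)$: surjectivity of $\sigma_0$ then yields the homogeneous polynomials $h_{\rho|\mu}(s)$.

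The two conditions to verify are, on $U_i\cap U_k$, the normal-bundle transition law $\Gamma_{i|\mu}(z,s)=F_{ik}(z)\,\Gamma_{k|\mu}(z,s)$, which follows as in \cite{Kod62} from the graph-compatibility $\varphi_i(g_{ik}(\varphi_k,z_k),t)=f_{ik}(\varphi_k,z_k)$ of both families, and the Poisson flatness condition on $U_i$,
\[
-[\Lambda_0,\Gamma_{i|\mu}^\alpha]|_{w_i=0}+\sum_{\beta=1}^r\Gamma_{i|\mu}^\beta\,T_{i\alpha}^\beta(0,z_i)=0,\qquad \alpha=1,\dots,r,
\]
which is exactly $\nabla\Gamma_{i|\mu}=0$ in the sense of Definition \ref{332a}. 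I expect the flatness condition to be the main obstacle, since it is the genuinely new, Poisson-theoretic ingredient with no counterpart in \cite{Kod62}. Its proof should mirror the passage $(\ref{55g})$--$(\ref{55e})$ of the codimension $1$ case: apply $[\Lambda_0,-]$ to the relation defining $\Gamma_{i|\mu}$, insert the Poisson conditions $[\Lambda_0,w_i^\alpha-\varphi_i^\alpha]|_{w_i=\varphi_i}=0$ and $[\Lambda_0,w_i^\alpha-\chi_i^\alpha]|_{w_i=\chi_i}=0$ for the two families together with the induction hypothesis, and restrict to $w_i=0$; the difference is that each defining function is now an $r$-tuple, so the scalar brackets of Theorem \ref{012} are replaced by the matrix relations involving $T_{i\alpha}^\beta(0,z_i)$ established in subsection \ref{subsection3.1}, and the bookkeeping is correspondingly heavier. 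Once the formal series $h(s)$ is in hand, its convergence for $|s|$ small follows verbatim from the estimates in \cite{Kod62}, completing the proof.
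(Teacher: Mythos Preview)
Your proposal is correct and follows essentially the same approach as the paper: the paper likewise writes $V_s'$ as a graph $w_i=\theta_i(z_i,s)$, constructs $h(s)$ by solving $\theta_i(z_i,s)\equiv_m\varphi_i(z_i,h^m(s))$ inductively, and at each step shows the obstruction $\omega_i=[\theta_i-\varphi_i(\cdot,h^m)]_{m+1}$ lies in $\mathbb{H}^0(V_0,\mathcal{N}_{V_0/W}^\bullet)$ by combining the normal-bundle transition law (from \cite{Kod62}) with the Poisson flatness condition, the latter proved exactly as you outline by inserting the two Poisson relations $(\ref{uu5})$, $(\ref{uu6})$ and restricting; convergence is deferred to \cite{Kod62}.
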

\begin{proof}
We extend the arguments in \cite{Kod62} p.158-160 in the context of holomorphic Poisson deformations.

Consider an arbitrary Poisson analytic family $\mathcal{V}'$ of compact holomorphic Poisson submanifolds $V_s',s\in M'$ of $(W,\Lambda_0)$ such that $V_0'=V_0$, where $M'=\{s=(s_1,...,s_q)\in \mathbb{C}^q||s|<1\}$. We shall construct a holomorphic map $h:s\to t=h(s)$ of a neighborhood $N'$ of $0$ into $M_1$ such that $h(0)=0$ and $V_s'=V_{h(s)}$.

We keep the notations in \ref{3infinitesimal} so that the holomorphic Poisson submanifold $V_t$ is defined in each domain $W_i,i\in I$ by $w_i=\varphi_i(z_i,t)$ and satisfy
\begin{align}\label{uu5}
[\Lambda_0,w_i^\alpha-\varphi_i^\alpha(z_i,t)]=\sum_{\beta=1}^r (w_i^\beta-\varphi_i^\beta(z_i,t))T_{i\alpha}^\beta(w_i,z_i,t).
\end{align}
We may assume that $V_s'$ is defined in each domain $W_i,i\in I$, by $w_i=\theta_i(z_i,t)$, where $\theta_i(z_i,t)$ is a vector-valued holomorphic function of $z_i, s$ with $|z_i|<1,|s|<1$, and satisfy
\begin{align}\label{uu6}
[\Lambda_0, w_i^\alpha-\theta_i^\alpha(z_i,s)]=\sum_{\beta=1}^r (w_i^\beta-\theta_i^\beta(z_i,s))P_{i\alpha}^\beta(w_i,z_i,s)
\end{align}
for some $P_{i\alpha}^\beta(w_i,z_i,s)$ which are power series in $s$ with coefficients in $\Gamma(W_i, T_W)$ and $P_{i\alpha}^\beta(0,z_i,0)=T_{i\alpha}^\beta(0,z_i)$. Then $V_s'=V_{h(s)}$ is equivalent to 
\begin{align}\label{uu3}
\theta_i(z_i,s)=\varphi_i(z_i,h(s))
\end{align}
Recall Notation \ref{notation1} and let us write $h(s)=h_1(s)+h_2(s)+\cdots, \varphi_i(z_i,t)=\varphi_{i|1}(z_i,t)+\varphi_{i|2}(z_i,t)+\cdots$, and $\theta_i(z_i,s)=\theta_{i|1}(z_i,t)+\theta_{i|2}(z_i,t)+\cdots$. We will construct $h(s)$ satisfying $(\ref{uu3})$ by solving the system of congruences by induction on $m$
\begin{align}\label{uu8}
\theta_i(z_i,s)\equiv_m \varphi_i(z_i,h^m(s)),\,\,\,i\in I,\,\,\,m=1,2,3,...
\end{align}
Since $\sigma_0: T_0(M_1)\to \mathbb{H}^0(V_0,\mathcal{N}_{V_0/W}^\bullet)$ is an isomorphism by hypothesis, $\{\frac{\partial \varphi_i(z_i,t)}{\partial t_1}|_{t=0},....,\frac{\partial \varphi_i(z_i,t) }{\partial t_l}|_{t=0}\}$ is a basis of $\mathbb{H}^0(V_0, \mathcal{N}_{V_0/W}^\bullet)$. Since $\{\theta_{i|1}(z_i,s)\}$ of $\theta_{i|1}(z_i,s), i\in I$, represents a linear form in $s$ whose coefficients are in $\mathbb{H}^0(V_0, \mathcal{N}_{V_0/W}^\bullet)$, there exists a linear vector-valued function $h^1(s)$ such that $\theta_{i|1}(z_i,s)=\varphi_{i|1}(z_i, h^1(s))$ which proves $(\ref{uu8})_1$.
Now assume that we have already constructed $h^m(s)$ satisfying $(\ref{uu8})_m$. We will find $h_{m+1}(s)$ such that $h^{m+1}(s)=h^m(s)+h_{m+1}(s)$ satisfy $(\ref{uu8})_{m+1}$. Let $\omega_i(z_i,s)=[\theta_i(z_i,s)-\varphi_i(z_i, h^m(s))]_{m+1}$. We claim that 
\begin{align}
&\omega_i(z_i,s)=F_{ik}(z)\cdot \omega_k(z_k,s),\,\,\,z\in U_i\cap U_k \label{uu1}\\
&-[\omega_i^\alpha(z_i,s),\Lambda_0]|_{w_i=0}+\sum_{\beta=1}^r \omega_i^\beta(z_i,s) T_{i\alpha}^\beta(0,z_i)=0, \label{uu2}
\end{align}
For the proof of $(\ref{uu1})$, see \cite{Kod62} p.160. Let us show $(\ref{uu2})$. From $(\ref{uu5})$ and $(\ref{uu6})$, we have
\begin{align*}
&[\Lambda_0, \omega_i^\alpha(z_i,s)]|_{w_i=0}\equiv_{m+1}[\Lambda_0,\omega_i^\alpha(z_i,s)]|_{w_i=\theta_i(z_i,t)}\equiv_{m+1} [\Lambda_0,\theta_i^\alpha(z_i,s)-w_i^\alpha+w_i^\alpha-\varphi_i^\alpha(z_i, h^m(s))]|_{w_i=\theta_i(z_i,t)}\\
&\equiv_{m+1} -[\Lambda_0,w_i^\alpha-\theta_i^\alpha(z_i,s)]|_{w_i=\theta_i(z_i,t)}+[\Lambda_0, w_i^\alpha- \varphi_i^\alpha(z_i,h^m(s))]|_{w_i=\theta_i(z_i,t)}\equiv_{m+1} [\Lambda_0,w_i^\alpha-\varphi_i^\alpha(z_i,h^m(s))]|_{w_i=\theta_i(z_i,t)}\\
&\equiv_{m+1} \sum_{\beta=1}^r (\theta_i^\beta(z_i,s)-\varphi_i^\beta(z_i,h(s)))T_{i\alpha}^\beta(w_i,z_i,h(s))\equiv_{m+1} \sum_{\beta=1}^r \omega_i^\beta(z_i,s)T_{i\alpha}^\beta(0,z_i)
\end{align*}
This proves $(\ref{uu2})$. From $(\ref{uu1})$ and $(\ref{uu2})$, $\{\omega_i(z_i,s)\}$ is a homogenous polynomial of degree $m+1$ in $s$ with coefficents in $ \mathbb{H}^0(V, \mathcal{N}_{V_0/W}^\bullet)$ so that there exists a homogenous polynomial $h_{m+1}(s)$ of degree $1$ in $s$ such that $\omega_i(z_i,s)=\varphi_{i|1}(z_i,h_{m+1}(s))$. Therefore we have $\varphi_i(z_i, h^{m+1}(s))\equiv_{m+1} \varphi_i(z_i,h^m(s))+\omega_i(z_i,s)\equiv_{m+1} \theta_i(z_i,s)$ which completes the inductive construction of $h^{m+1}(s)$ satisfying $(\ref{uu8})_{m+1}$.

\subsection{Proof of convergence}\

The convergence of the power series $h(s)$ follows from the same arguments in \cite{Kod62} p.160-161. This completes the proof of Theorem \ref{3.5a}.

\end{proof}

\begin{example}\label{mp1}
Let $[\xi_0,\xi_1,\xi_2,\xi_3]$ be the homogenous coordinate on $\mathbb{P}^3_\mathbb{C}$ and a hyperplane $V$ defined by $\xi_3=0$ so that $\mathcal{N}_{V/\mathbb{P}^3_\mathbb{C}}\cong \mathcal{O}_V(1)$. Let $[1,z_1,z_2,z_3]=[1,\frac{\xi_1}{\xi_0},\frac{\xi_2}{\xi_0},\frac{\xi_3}{\xi_0}]$. Consider a Poisson structure $\Lambda_0=z_1\frac{\partial}{\partial z_1}\wedge \frac{\partial}{\partial z_2}$ on $\mathbb{P}^3_\mathbb{C}$. Then $V\cong \mathbb{P}_\mathbb{C}^2$ is a holomorphic Poisson submanifold. We compute $\mathbb{H}^0(V,\mathcal{N}_{V/\mathbb{P}^3_\mathbb{C}}^\bullet)$ which is the kernel $\nabla:H^0(V,\mathcal{O}_V(1))\to H^0(V,T_{\mathbb{P}^3_\mathbb{C}}|_V(1))$. Since $[\Lambda_0,z_3]=0$,
\begin{align*}
\nabla(az_1+bz_2+c)=-[\Lambda_0, a z_1+bz_2+c]|_{z_3=0}= -az_1\frac{\partial}{\partial z_2}+bz_1\frac{\partial}{\partial z_1}=0 \iff a=b=0
\end{align*}
so that $\dim_\mathbb{C} \mathbb{H}^0(V,\mathcal{O}_V(1))=1$. Since $[\Lambda_0, z_3-t]|_{z_3=t}=0$,  holomorphic Poisson deformations of $V$ in $(\mathbb{P}_\mathbb{C}^3,\Lambda_0)$ is unobstructed, and explicitly we have a Poisson analytic family of holomorphic Poisson submanifolds $\mathcal{V}\subset(\mathbb{P}_\mathbb{C}^3\times \mathbb{C},\Lambda_0)$ defined by $\xi_3-t\xi_0=0$ whose characteristic map
\begin{align*}
T_0\mathbb{C}&\to \mathbb{H}^0(V,\mathcal{N}_{V/\mathbb{P}_\mathbb{C}^3}^\bullet)=\mathbb{H}^0(\mathbb{P}_\mathbb{C}^2,\mathcal{O}_{\mathbb{P}_\mathbb{C}^2}(1)^\bullet)\\
a&\mapsto a= 0\cdot z_1+0\cdot z_2+a
\end{align*}
 is an isomorphism so that $\mathcal{V}$ is complete.
\end{example}

\begin{example}
Let us consider the Poisson structure $\Lambda_0=z_1\frac{\partial}{\partial z_1}\wedge \frac{\partial}{\partial z_2}$ on $\mathbb{P}^3_\mathbb{C}$ as in $\textnormal{Example}$ $\ref{mp1}$ and a holomorphic Poisson submanifold $V$ defined by $\xi_1=\xi_3=0$ which is a nonsingular rational curve $\cong \mathbb{P}^1_\mathbb{C}$ and the normal bundle $\mathcal{N}_{V/\mathbb{P}^3_\mathbb{C}}$ is isomorphic to $\mathcal{O}_{\mathbb{P}_\mathbb{C}^1}(1)\oplus \mathcal{O}_{\mathbb{P}_\mathbb{C}^1}(1)$. We compute $\mathbb{H}^0(V,\mathcal{N}_{V/\mathbb{P}_\mathbb{C}^3}^\bullet)$ which is the kernel of $\nabla:H^0(\mathbb{P}_\mathbb{C}^1,\mathcal{O}_{\mathbb{P}_\mathbb{C}^1}(1)\oplus \mathcal{O}_{\mathbb{P}_\mathbb{C}^1}(1))\to H^0(\mathbb{P}_\mathbb{C}^1,T_{\mathbb{P}_\mathbb{C}^3}|_{\mathbb{P}_\mathbb{C}^1}(1)\oplus T_{\mathbb{P}_\mathbb{C}^3}|_{\mathbb{P}_\mathbb{C}^1}(1))$. Since $[\Lambda_0, z_1]=z_1\frac{\partial}{\partial z_2}$, and $[\Lambda_0, z_3]=0$,
\begin{align*}
\nabla((az_2+b)\oplus (cz_2+d))=(-[\Lambda_0, az_2+b]|_{z_1=z_3=0}+(az_2+b)\frac{\partial}{\partial z_2})\oplus (-[\Lambda_0,cz_2+d]|_{z_1=z_3=0})=0 \\
\iff a=b=0.
\end{align*}
so that $\dim_\mathbb{C} \mathbb{H}^0(V, \mathcal{N}_{V/\mathbb{P}_\mathbb{C}^3}^\bullet)=2$. Since $[\Lambda_0, z_1]|_{z_1=z_3-t_1z_2-t_2=0}=0$ and $[\Lambda_0, z_3-t_1z_2-t_2]|_{z_1=z_3-t_1z_2-t_2}=0$, holomorphic Poisson deformations of $V$ in $(\mathbb{P}_\mathbb{C}^3,\Lambda_0)$ is unobstructed, and explicitly we have a Poisson analytic family of holomorphic Poisson submanifolds $\mathcal{V}\subset(\mathbb{P}_\mathbb{C}^3\times \mathbb{C}^2, \Lambda_0))$ defined by $\xi_1=\xi_3-t_1\xi_2-t_2\xi_0=0$ whose characteristic map is
\begin{align*}
T_0\mathbb{C}^2&\to \mathbb{H}^0(V,\mathcal{N}_{V/\mathbb{P}_\mathbb{C}^3}^\bullet)=\mathbb{H}^0(\mathbb{P}_\mathbb{C}^1,(\mathcal{O}_{\mathbb{P}_\mathbb{C}^1}(1)\oplus \mathcal{O}_{\mathbb{P}^1_\mathbb{C}}(1))^\bullet)\\
(a_1,a_2)&\mapsto (0, a_1z_2+a_2)
\end{align*}
which is an isomorphism so that $\mathcal{V}$ is complete.
\end{example}

\begin{example}
Let $(X,\Lambda_0)$ be a non-degenerate Poisson $K3$ surface. Consider $(X\times \mathbb{C}^q,\Lambda_0)$ and let $(w_1,...,w_q)$ be the coordinate of $\mathbb{C}^q$. Then $w_1=\cdots=w_q=0$ defines a holomorphic Poisson submanifold which is $(X,\Lambda_0)$ and the normal bundle $\mathcal{N}_{X/X\times \mathbb{C}^q}$ is $\oplus^q \mathcal{O}_X$. Since $\Lambda_0|_{(x,a)}\in \wedge^2 T_{X}|_x\subset \wedge^2 T_{X\times \mathbb{C}}|_{(x,a)}$ for each $(x,a)\in X\times \mathbb{C}^q$, and $[\Lambda_0, w_i]=0$ for $i=1,...,q$, the obstructions lies in the first cohomology group of the following complex of sheaves
\begin{align*}
\oplus^q \mathcal{O}_X^\bullet:\oplus^q \mathcal{O}_X\xrightarrow{-[-,\Lambda_0]} \oplus^q T_X\xrightarrow{-[-,\Lambda_0]} \oplus^q\wedge^2 T_X\xrightarrow{-[-\Lambda_0]} \cdots
\end{align*}
whose the $0$-th cohomology group is isomorphic to $\oplus^q H^0(X,\mathbb{C})\cong \mathbb{C}^q$ and the first cohomology group is isomorphic to $\oplus^q H^1(X,\mathbb{C})=0$ so that deformations of $X$ in $(X\times \mathbb{C}^q,\Lambda_0)$ is unobstructed. Explicitly we have a Poisson analytic family of holomorphic Poisson submanifolds $\mathcal{V}\subset ((X\times \mathbb{C}^q)\times \mathbb{C}^q,\Lambda_0)$ with $(t_1,...,t_q)$ the last coordinate of $\mathbb{C}^q$ which is defined by $w_1-t_1=\cdots=w_q-t_q=0$ whose characteristic map
\begin{align*}
T_0\mathbb{C}^q&\to \mathbb{H}^0(X,\mathcal{N}_{X/X\times \mathbb{C}}^\bullet)\cong \oplus^q H^0(X,\mathbb{C})\cong \oplus^q \mathbb{C}\\
(a_1,...,a_q)&\mapsto (a_1,...,a_q)
\end{align*}
 is an isomorphism so that $\mathcal{V}$ is complete.
\end{example}

\section{Simultaneous deformations of holomorphic Poisson structures and compact holomorphic Poisson submanifolds}\label{section4}\

We extend the definition of a Poisson analytic family of compact holomorphic Poisson submanifolds in Definition \ref{3a} by deforming holomorphic Poisson structures as well on a fixed complex manifold $W$.
\begin{definition}\label{4a}
Let $W$ be a complex manifold of dimension $d+r$. We denote a point in $W$ by $w$ and a local coordinate of $w$ by $(w^1,...,w^{r+d})$. By an extended Poisson analytic family of compact holomorphic Poisson submanifolds of dimension $d$ of $W$, we mean a holomorphic Poisson submanifold $\mathcal{V}\subset(W\times M, \Lambda)$ of codimension $r$, where $M$ is a complex manifold and $\Lambda$ is a holomorphic Poisson structure on $W\times M$, such that 
\begin{enumerate}
\item the canonical projection $\pi:(W\times M,\Lambda)\to M$ is a holomorphic Poisson fibre manifold as in Definition $\ref{fibre}$ so that $\Lambda\in H^0(W\times M, \wedge^2 T_{W\times M/M})$ and $\pi^{-1}(t):=(W,\Lambda_t)$ is a holomorphic Poisson submanifold of $(W\times M, \Lambda)$ for each point $t\in M$.
\item for each point $t\in M$, $V_t\times t:=\omega^{-1}(t)=\mathcal{V}\cap \pi^{-1}(t)$ is a connected compact holomorphic Poisson submanifold of $(W,\Lambda_t)$ of dimension $d$, where $\omega:\mathcal{V}\to M$ is the map induced from $\pi$.
\item for each point $p\in \mathcal{V}$, there exist $r$ holomorphic functions $f_\alpha(w,t),\alpha=1,...,r$ defined on a neighborhood $\mathcal{U}_p$ of $p$ in $W\times M$ such that $\textnormal{rank} \frac{\partial ( f_1,...,f_r)}{\partial (w^1,...w^{r+d})}=r$, and $\mathcal{U}_p\cap \mathcal{V}$ is defined by the simultaneous equations $f_\alpha(w,t)=0,\alpha=1,...,r$.
\end{enumerate}
We call $\mathcal{V}\subset (W\times M, \Lambda)$ an extended Poisson analytic family of compact holomorphic Poisson submanifolds $V_t,t\in M$ of $(W,\Lambda_t)$. We also call $\mathcal{V}\subset (W\times M,\Lambda)$ an extended Poisson analytic family of simultaneous deformations of a holomorphic Poisson submanifold $V_{t_0}$ of $(W,\Lambda_{t_0})$ for each fixed point $t_0\in M$.
\end{definition}

\subsection{The extended complex associated with the normal bundle of a holomorphic Poisson submanifold of a holomorphic Poisson manifold}\label{43a}\

Let $V$ be a holomorphic Poisson submanifold of a holomorphic Poisson manifold $(W,\Lambda_0)$. We will describe a complex of sheaves to control simultaneous deformations of holomorphic Poisson structures and holomorphic Poisson submanifolds. We recall that the complex associated with the normal bundle (see Definition \ref{332a})
\begin{align}\label{4b}
\mathcal{N}_{V/W}^\bullet:\mathcal{N}_{V/W}\xrightarrow{\nabla} \mathcal{N}_{V/W}\otimes T_W|_V\xrightarrow{\nabla}\mathcal{N}_{V/W}\otimes \wedge^2 T_W|_V\xrightarrow{\nabla}\cdots
\end{align}
controls holomorphic Poisson deformations of $V$ in $(W,\Lambda_0)$, and the complex
\begin{align}\label{4c}
\wedge^2 T_W^\bullet:\wedge^2 T_W\xrightarrow{-[-,\Lambda_0]} \wedge^3 T_W \xrightarrow{-[-,\Lambda_0]}\wedge^4 T_W \xrightarrow{-[-,\Lambda_0]}\cdots
\end{align}
controls deformations of the holomorphic Poisson structure $\Lambda_0$ on the fixed underlying complex manifold $W$ (see Appendix \ref{appendixa}).
By combining two complexes $(\ref{4b})$ and $(\ref{4c})$, we shall define a complex of sheaves on $W$:
\begin{align*}
(\wedge^2 T_W\oplus i_*\mathcal{N}_{V/W})^\bullet:\wedge^2 T_W\oplus i_* \mathcal{N}_{V/W}\xrightarrow{\tilde{\nabla}}\wedge^3 T_W\oplus i_*(\mathcal{N}_{V/W}\otimes T_W|_V)\xrightarrow{\tilde{\nabla}}\wedge^4 T_W\oplus i_*(\mathcal{N}_{V/W}\otimes \wedge^2 T_W|_V)\xrightarrow{\tilde{\nabla}}\cdots
\end{align*}
which controls simultaneous deformations of the holomorphic Poisson structure $\Lambda_0$ and the holomorphic Poisson submanifold $V$ of $(W,\Lambda_0)$, where $i:V\hookrightarrow W$ is the embedding.
We keep the notations in subsection \ref{subsection3.1}.

We note that $\Gamma(W_i,\wedge^{p+2} T_Y\oplus i_*(\mathcal{N}_{V/W}\otimes \wedge^p T_W|_V))=\Gamma(W_i,\wedge^{p+2} T_Y)\oplus \Gamma(U_i,\mathcal{N}_{V/W}\otimes \wedge^p T_W|_V)\cong \Gamma(W_i,\wedge^{p+2}T_W)\oplus (\oplus^r \Gamma(U_i,\wedge^p T_W|_V))$. From these isomorphisms, we define 
\begin{align*}
\tilde{\nabla}:\wedge^{p+2} T_W\oplus i_*(\mathcal{N}_{V/W}\otimes \wedge^p T_W|_V)\to \wedge^{p+3} T_W\oplus i_*( \mathcal{N}_{V/W} \otimes \wedge^{p+1} T_W|_V)
\end{align*}
locally in the following way:
\begin{align*}
\Gamma(W_i,\wedge^{p+2} T_W)\oplus (\oplus^r\Gamma(U_i, \wedge^p T_W|_V))&\xrightarrow{\tilde{\nabla}} \Gamma(W_i,\wedge^{p+3} T_W)\oplus (\oplus^r \Gamma(U_i,\wedge^{p+1} T_W|_V)\\
(\Pi_i,\sum_{\alpha=1}^r g_i^\alpha e_i^\alpha) &\mapsto (-[ \Pi_i,\Lambda_0], \sum_{\alpha=1}^r [\Pi_i,w_i^\alpha]|_{w_i=0}e_i^\alpha+\nabla(\sum_{\alpha=1}^r g_i^\alpha e_i^\alpha))
\end{align*}
In other words,
\begin{center}
\tiny{\begin{align*}
(\Pi_i,(g_i^1,...,g_i^r))\mapsto (-[\Pi_i, \Lambda_0],\left([\Pi_i, w_i^1]|_{w_i=0}-[g_i^1, \Lambda_0]|_{w_i=0}+(-1)^p\sum_{\beta=1}^r g_i^\beta\wedge T_{i1}^\beta(0,z_i),\cdots, [\Pi_i,w_i^r]|_{w_i=0}-[g_i^r,\Lambda_0]|_{w_i=0}+(-1)^p \sum_{\beta=1}^r g_i^\beta \wedge T_{ir}^\beta(0,z_i)\right))
\end{align*}}
\end{center}
First we show that $\tilde{\nabla}$ defines a complex, i.e $\tilde{\nabla}\circ \tilde{\nabla}=0$. Since $\nabla\circ \nabla=0$, we have
\begin{align*}
&\tilde{\nabla}(\tilde{\nabla}(\Pi_i,\sum_{\alpha=1}^r g_i^\alpha e_i^\alpha))=(0,\sum_{\alpha=1}^r -[[ \Pi_i,\Lambda_0],w_i^\alpha]|_{w_i=0}e_i^\alpha+\nabla(\sum_{\alpha=1}^r [\Pi_i,w_i^\alpha]|_{w_i=0}e_i^\alpha))\\
&=(0,\sum_{\alpha=1}^r-[[\Pi_i,\Lambda_0],w_i^\alpha ]|_{w_i=0}e_i^\alpha+\sum_{\alpha=1}^r -[[\Pi_i, w_i^\alpha],\Lambda_0]|_{w_i=0}e_i^\alpha +(-1)^{p+1}\sum_{\alpha,\beta=1}^r [\Pi_i,w_i^\alpha]|_{w_i=0} \wedge T_{i\beta}^\alpha(0,z_i) e_i^\beta)
\end{align*}

Hence $\tilde{\nabla}\circ \tilde{\nabla}=0$ is equivalent to
\begin{align}\label{mm1}
-[[\Pi_i,\Lambda_0],w_i^\alpha]|_{w_i=0}-[[\Pi_i,w_i^\alpha],\Lambda_0]|_{w_i=0}+(-1)^{p+1}\sum_{\beta=1}^r [\Pi_i,w_i^\beta]|_{w_i=0} \wedge T_{i\alpha}^\beta(0,z_i)=0
\end{align}
Let us show (\ref{mm1}). We note that from $(\ref{3d})$,
\begin{align*}
[\Pi_i,[\Lambda_0, w_i^\alpha]]=\sum_{\beta=1}^r[\Pi_i, w_i^\beta T_{i\alpha}^\beta (w_i,z_i) ]=\sum_{\beta=1}^r w_i^\beta[\Pi_i, T_{i\alpha}^\beta(w_i,z_i)]+(-1)^{p+1}\sum_{\beta=1}^r [\Pi_i, w_i^\beta]\wedge T_{i\alpha}^\beta(w_i,z_i)\\
\Longrightarrow [\Pi_i,[\Lambda_0, w_i^\alpha]]|_{w_i=0}=(-1)^{p+1}\sum_{\beta=1}^r [\Pi_i, w_i^\beta]|_{w_i=0}\wedge T_{i\alpha}^\beta(0,z_i)
\end{align*}
Then $(\ref{mm1})$ is equivalent to
\begin{align}\label{mm2}
-[[\Pi_i,\Lambda_0],w_i^\alpha]|_{w_i=0}-[[\Pi_i, w_i^\alpha], \Lambda_0]_{w_i=0}+[\Pi_i,[\Lambda_0, w_i^\alpha]]_{w_i=0}=0
\end{align}
which follows from $-[[\Pi_i,\Lambda_0],w_i^\alpha]-[[\Pi_i, w_i^\alpha], \Lambda_0]+[\Pi_i,[\Lambda_0, w_i^\alpha]]=0$ by the graded Jacobi identity. This proves $\tilde{\nabla}\circ\tilde{\nabla}=0$.

Next we show that $\tilde{\nabla}$ is well-defined. In other words, on $U_i\cap U_k$, the following diagram commutes
\begin{center}
\begin{equation}\label{mm15}
\begin{CD}
\Gamma(W_k, \wedge^{p+2} T_W)\oplus (\oplus^r \Gamma(U_k,\wedge^p T_W|_V))@>\text{on $U_i\cap U_k$}>\cong> \Gamma(W_i, \wedge^{p+2} T_W)\oplus (\oplus^r \Gamma(U_i,\wedge^p T_W|_V))\\
@V\tilde{\nabla}VV @VV\tilde{\nabla}V\\
\Gamma(W_k, \wedge^{p+3} T_X)\oplus (\oplus^r \Gamma(U_k,\wedge^{p+1} T_W|_V))@>\text{on $U_i\cap U_k$}>\cong>\Gamma(W_i, \wedge^{p+3} T_W)\oplus (\oplus^r \Gamma(U_i,\wedge^{p+1} T_W|_V))
\end{CD}
\end{equation}
\end{center}
Let $(\Pi,\sum_{\alpha=1}^r g_k^\alpha e_k^\alpha)\in \Gamma(W_k,\wedge^{p+2}T_W)\oplus (\oplus^r \Gamma(U_k,\wedge^p T_W|_V))$ and let $\nabla(\sum_{\alpha=1}^r g_k^\alpha e_k^\alpha)=\sum_{\alpha=1}^r G_k^\alpha e_k^\alpha$, where $G_k^\alpha =-[g_k^\alpha,\Lambda_0]|_{w_k=0}+(-1)^p\sum_{\beta=1}^r g_k^\beta \wedge T_{k\alpha}^\beta(0,z_k)\in \Gamma(U_k,\wedge^{p+1}T_W|_V)$.

 Then $\tilde{\nabla}((\Pi,\sum_{\alpha=1}^r g_k^\alpha e_k^\alpha))=(-[\Pi,\Lambda_0],\sum_{\alpha=1}^r  [\Pi,w_k^\alpha]|_{w_k=0}e_k^\alpha+\sum_{\alpha=1}^r G_k^\alpha e_k^\alpha)$ is identified on $U_i\cap U_k$ with 
\begin{align}\label{mm5}
(-[\Pi,\Lambda_0],\sum_{\beta=1}^r \left( \sum_{\alpha=1}^r F_{ik \alpha }^\beta(0,z_k) [\Pi, w_k^\alpha]|_{w_k=0}\right)e_i^\beta+\sum_{\beta=1}^r(\sum_{\alpha=1}^r F_{ik\alpha}^\beta(0,z_k) G_k^\alpha) e_i^\beta)
\end{align}
On the other hand, $(\Pi, \sum_{\alpha=1}^r g_k^\alpha e_k^\alpha)$ is identified on $U_i\cap U_k$ with $(\Pi,\sum_{\beta=1}^r \left( \sum_{\alpha=1}^r F_{ik \alpha}^\beta(0,z_k) g_k^\alpha \right)e_i^\beta)\in \Gamma(W_i, \wedge^{p+2} T_W)\oplus (\oplus^r \Gamma(U_i,\wedge^p T_W|_V))$. Then we have 
\begin{align}\label{mm6}
\tilde{\nabla}((\Pi,\sum_{\beta=1}^r \left( \sum_{\alpha=1}^r F_{ik \alpha }^\beta(0,z_k) g_k^\alpha \right)e_i^\beta)=(-[\Pi, \Lambda_0],\sum_{\beta=1}^r [\Pi, w_i^\beta]|_{w_i=0}e_i^\beta+\nabla(\sum_{\alpha,\beta=1}^r F_{ik\alpha }^\beta(0,z_k) g_k^\alpha) e_i^\beta))
\end{align}
Hence in order for the diagram (\ref{mm15}) to commute, we have to show that $(\ref{mm5})$ coincides with $(\ref{mm6})$. By the equality of $(\ref{mm10})$ and (\ref{mm11}), it is enough to show $[\Pi,w_i^\beta]|_{w_i=0}=\sum_{\alpha=1}^r F_{\alpha ik}^\beta(0,z_k)[\Pi, w_k^\alpha]|_{w_k=0}$ which comes from (\ref{3b}), and 
\begin{align*}
w_i^\beta= \sum_{\alpha=1}^r F_{ik\alpha}^\beta(w_k,z_k) w_k^\alpha \Longrightarrow [\Pi,w_i^\beta]=\sum_{\alpha=1}^r (F_{ik\alpha}^\beta(w_k,z_k)[\Pi, w_k^\alpha]+w_k^\alpha[\Pi, F_{ik\alpha}^\beta]).
\end{align*}
Hence $\tilde{\nabla}$ is well-defined.

\begin{definition}\label{033}
We call the complex defined as above
\begin{align*}
(\wedge^2 T_W\oplus i_*\mathcal{N}_{V/W})^\bullet:\wedge^2 T_W\oplus i_*\mathcal{N}_{V/W} \xrightarrow{\tilde{\nabla}} \wedge^3 T_W\oplus i_*(\mathcal{N}_{V/W}\otimes T_W|_V)\to\wedge^4 T_W\oplus i_*(\mathcal{N}_{V/W}\otimes \wedge^2 T_W|_V)\to \cdots
\end{align*}
the extended complex associated with the normal bundle $\mathcal{N}_{V/W}$ of a holomorphic Poisson submanifold $V$ of a holomorphic Poisson manifold $W$ and denote its $i$-th hypercohomology group by $\mathbb{H}^i(W,(\wedge^2 T_W\oplus i_*\mathcal{N}_{V/W})^\bullet)$.
\end{definition}

\subsection{Infinitesimal deformations}\label{027}\

Let $M_1=\{t=(t_1,...,t_l)\in \mathbb{C}^l||t|<1\}$. Consider an extended Poisson analytic family $\mathcal{V}\subset (W\times M_1,\Lambda)$ of compact holomorphic Poisson submanifolds $V_t,t\in M_1$ of $(W,\Lambda_t)$ and let $V=V_0$ as in Definition \ref{4a}. We keep the notations in subsection \ref{3infinitesimal} and subsection \ref{43a} so that for $|t|<\epsilon$ for a sufficiently small number $\epsilon>0$, $V_t$ is defined by $w_i=\varphi_i(z_i,t)$ on each neighborhood $W_i$ and, by setting $w_{ti}^\lambda=w_i^\lambda-\varphi_i^\lambda(z_i,t),\lambda=1,...,r$, $F_{tik}(z_t):=\left(\frac{\partial w_{ti}^\lambda}{\partial w_{tk}^\mu}(z_t)\right)_{\lambda,\mu=1,...,r}$ $\text{for $z_t\in V_t\cap W_i\cap W_k$}$ defines the normal bundle $\mathcal{N}_{V_t/W}$ of $V_t$ in $W$. For an arbitrary tangent vector $\frac{\partial}{\partial t}=\sum_\rho \gamma_\rho \frac{\partial}{\partial t_\rho}$ of $M_1$ at $t,|t|<\epsilon$, we let $\psi_i(z_t,t)=\frac{\partial \varphi_i(z_i,t)}{\partial t}$ for $z_t=(\varphi_i(z_i,t),z_i)$. Then we obtain the equality
\begin{align}\label{ss7}
\psi_i(z_t,t)=F_{tik}(z_t)\cdot \psi_k(z_t,t),\,\,\,\,\text{for $z_t\in V_t\cap W_i\cap W_k$}.
\end{align}

On the other hand, let $\Lambda_i(w_i,z_i,t)$ be the holomorphic Poisson structure $\Lambda$ on $W_i\times M_1$. Then
\begin{align}\label{ss4}
[\Lambda_i(w_i,z_i,t),\Lambda_i(w_i,z_i,t)]=0
\end{align}
and $\Lambda_i(w_i,z_i,t)$ is of the form
\begin{align*}
\Lambda_i(w_i,z_i,t):=\Lambda_i(x_i,t)=\sum_{\alpha,\beta=1}^{r+d}\Lambda_{\alpha\beta}^i(x_i,t)\frac{\partial}{\partial x_i^\alpha}\wedge \frac{\partial}{\partial x_i^\beta},\,\,\,\text{with $\Lambda_{\alpha\beta}^i(x_i,t)=-\Lambda_{\beta\alpha}^i(x_i,t)$},\,\,\,\,\,x_i=(w_i,z_i),
\end{align*}
by which we consider $\Lambda_i(w_i,z_i,t)$ as a vector-valued holomorphic function of $(w_i,z_i,t)$. Let $\pi_i(w_i,z_i,t)=\frac{\partial \Lambda_i(w_i,z_i,t)}{\partial t}$. Then
\begin{align}\label{ss6}
\{\pi_i(w_i,z_i,t)\}\in H^0(W,\wedge^2 T_W)
\end{align}
By taking the derivative of (\ref{ss4}) with respect to $t$, we get 
\begin{align}\label{ss9}
[\Lambda_i(w_i,z_i,t),\frac{\partial \Lambda_i(w_i,z_i,t)}{\partial t}]=0 \iff -[\pi_i(w_i,z_i,t),\Lambda_i(w_i,z_i,t)]=0
\end{align}
Lastly, since $w_i^\lambda-\varphi_i^\lambda(z_i,t)=0,\lambda=1,...,r$ define a holomorphic Poisson submanifold, we have
\begin{align}\label{ss5}
[\Lambda_i(w_i,z_i,t),w_i^\lambda-\varphi_i^\lambda(z_i,t)]=\sum_{\mu=1}^r (w_i^\mu-\varphi_i^\mu(z_i,t))T_{i\lambda}^{\mu}(w_i,z_i,t)
\end{align}
for some $T_{i\lambda}^{\mu}(w_i,z_i,t)$ which is of the form
\begin{align*}
T_{i\lambda}^\mu(w_i,z_i,t)=P_{i1}^\mu(w_i,z_i,t)\frac{\partial}{\partial w_i^1} +\cdots +P_{ir}^\mu(w_i,z_i,t)\frac{\partial}{\partial w_i^r}+Q_{i1}^\mu(w_i,z_i,t)\frac{\partial}{\partial z_i^1}+\cdots+Q_{id}^\mu(w_i,z_i,t)\frac{\partial}{\partial z_i^d}
\end{align*}
by which we consider $T_{i\lambda}^\mu(w_i,z_i,t)$ as a vector valued holomorphic function of $(w_i,z_i,t)$.  

By taking the derivative of (\ref{ss5}) with respect to $t$, we get $[\frac{\partial \Lambda_i(w_i,z_i,t)}{\partial t},w_i^\lambda-\varphi_i^\lambda(z_i,t)]+[\Lambda_i(w_i,z_i,t),-\frac{\partial \varphi_i^\lambda(z_i,t)}{\partial t} ]=\sum_{\mu=1}^r -\frac{\partial \varphi_i^\mu(z_i,t)}{\partial t}T_{i\lambda}^{\mu}(w_i,z_i,t)+\sum_{\mu=1}^n (w_i^\lambda-\varphi_i^\mu(z_i,t))\frac{\partial T_{i\lambda}^{\mu}(w_i,z_i,t)}{\partial t}$. By restricting to $V_t$, equivalently, by setting $w_i=\varphi_i(z_i,t)$, we get, on $\Gamma(W_i\cap V_t,T_W|_{V_t})$,
\begin{align}
[\frac{\partial \Lambda_i(w_i,z_i,t)}{\partial t},w_i^\lambda-\varphi_i^\lambda(z_i,t)]|_{V_t}-[\Lambda_i(w_i,z_i,t),\frac{\partial \varphi_i^\lambda(z_i,t)}{\partial t} ]|_{V_t}=\sum_{\mu=1}^r -\frac{\partial \varphi_i^\mu(z_i,t)}{\partial t}T_{i\lambda}^{\mu}(\varphi_i(z_i,t),z_i,t)\notag\\
\iff [\pi_i(w_i,z_i,t), w_{ti}^\lambda]|_{V_t}-[\psi_i^\lambda(z_t,t),\Lambda_i(w_i,z_i,t)]|_{V_t}+\sum_{\mu=1}^r \psi_i^\mu(z_t,t)T_{i\lambda}^\mu(\varphi_i(z_i,t),z_i,t)=0 \label{ss8}
\end{align}
Hence from (\ref{ss7}), (\ref{ss6}), (\ref{ss9}) and (\ref{ss8}), $\left(\{\pi_i(w_i,z_i,t)\},\{\psi_i(z_t,t)\}\right)$ defines an element of $\mathbb{H}^0(W, (\wedge^2 T_W \oplus i_*\mathcal{N}_{V_t/W})^\bullet)$ so that we have a linear map
\begin{align*}
\sigma_t:T_t(M_1)&\to \mathbb{H}^0(W, (\wedge^2 T_W \oplus i_*\mathcal{N}_{V_t/W})^\bullet)\\
\frac{\partial}{\partial t}&\mapsto \frac{\partial(\Lambda_t,V_t)}{\partial t}:=(\{\pi_i(w_i,z_i,t)\},\{\psi_i(z_t,t)\})
\end{align*}
We call $\sigma_t$ the characteristic map.

\begin{example}
Let $[\xi_0,\xi_1,\xi_2]$ be the homogeneous coordinate on $\mathbb{P}^2_\mathbb{C}$. Let $[1,z_1,z_2]=[1,\frac{\xi_1}{\xi_0},\frac{\xi_2}{\xi_1}]$. Then  $\mathcal{V}\subset(\mathbb{P}_\mathbb{C}^2\times \mathbb{C}^2, (z_1+t_1z_2+t_2)\frac{\partial}{\partial z_1}\wedge \frac{\partial}{\partial z_2}$ defined by $\xi_1+t_1\xi_2+t_2\xi_0=0$ is an extended Poisson analytic family of simultaneous deformations of a holomorphic Poisson submanifold $\xi_1=0$ of $(\mathbb{P}_\mathbb{C}^2, \Lambda_0=z_1\frac{\partial}{\partial z_1}\wedge \frac{\partial}{\partial z_2})$ and we have the characteristic map
\begin{align*}
 T_0\mathbb{C}^2&\to \mathbb{H}^0(\mathbb{P}_\mathbb{C}^2,(\wedge^2 T_{\mathbb{P}_\mathbb{C}^2}\oplus i_*\mathcal{O}_{\mathbb{P}_\mathbb{C}^1}(1))^\bullet)\\
 (a,b)&\mapsto a\left(z_2\frac{\partial}{\partial z_1}\wedge \frac{\partial}{\partial z_2}, -z_2\right)+b\left(\frac{\partial}{\partial z_1}\wedge \frac{\partial}{\partial z_2},-1\right)
\end{align*}

\end{example}

\begin{example}
Let $[\xi_0,\xi_1,\xi_2,\xi_3]$ be the homogenous coordinate on $\mathbb{P}^3_\mathbb{C}$. Let $[1,z_1,z_2,z_3]=[1,\frac{\xi_1}{\xi_0},\frac{\xi_2}{\xi_0},\frac{\xi_3}{\xi_0}]$. Then $\mathcal{V}\subset (\mathbb{P}_\mathbb{C}^3\times \mathbb{C}^2,(z_1+t_1)\frac{\partial}{\partial z_1}\wedge \frac{\partial}{\partial z_2})$ defined by $\xi_1+t_1\xi_0=\xi_3+t_2\xi_0=0$ is an extended Poisson analytic family of simultaneous deformations of a holomorphic Poisson submanifold $\xi_1=\xi_3=0$ of $(\mathbb{P}_\mathbb{C}^3,\Lambda_0=z_1\frac{\partial}{\partial z_1}\wedge \frac{\partial}{\partial z_2})$ and we have the characteristic map
\begin{align*}
T_0\mathbb{C}^2 &\to \mathbb{H}^0(\mathbb{P}_\mathbb{C}^2, (\wedge^2 T_{\mathbb{P}_\mathbb{C}^3}\oplus i_*(\mathcal{O}_{\mathbb{P}_\mathbb{C}^1}(1)\oplus \mathcal{O}_{\mathbb{P}_\mathbb{C}^1}(1))^\bullet)\\
(a,b)&\mapsto a(\frac{\partial}{\partial z_1}\wedge\frac{\partial}{\partial z_2},(-1,0))\oplus b(0,(0,-1))
\end{align*}
\end{example}

\begin{example}
We construct an extended Poisson analytic family of simultaneous deformations of holomorphic Poisson submanifolds of a stable elliptic surface. As in \cite{Bar04} $p.202$, let $z(s)$ be an arbitrary holomorphic function on the unit disk $\Delta=\{s\in \mathbb{C}||s|<1\}$ with $\textnormal{Im}\,z(s)>0 $. Let $G=\mathbb{Z}\times \mathbb{Z}$ act on $\mathbb{C}\times \Delta$ by $(m,n)(c,s)=(c+m+nz(s),s)$. The quotient $X=(\mathbb{C}\times \Delta)/(\mathbb{Z}\times \mathbb{Z})$ is a nonsingular surface fibered over $\Delta$ such that $X_s$ is an elliptic curve with period $1,z(s)$. Let $c'=c+m+nz(s), s'=s$. Then we have $\frac{\partial}{\partial c}=\frac{\partial}{\partial c'}$, and $\frac{\partial}{\partial s}=nz'(s)\frac{\partial}{\partial c'}+\frac{\partial}{\partial s'}$ so that we get $\frac{\partial}{\partial c}\wedge \frac{\partial}{\partial s}=\frac{\partial}{\partial c'}\wedge \frac{\partial }{\partial s'}$ and so $\Lambda=(s-t)\frac{\partial}{\partial c}\wedge \frac{\partial}{\partial s}$ is a $G$-invariant bivector field on $X\times \Delta$ which defines a holomorphic Poisson structure $\Lambda_t $ on $X$ for each $t\in \Delta$, and $X_t:s=t$ is a holomorphic Poisson submanifold of $(X,\Lambda_t)$ since the holomorphic Poisson structure $\Lambda_t$ degenerates along $s=t$. Then $\mathcal{V}\subset (X\times \Delta, \Lambda=(s-t)\frac{\partial}{\partial c}\wedge \frac{\partial}{\partial s})$ defined by $s=t$ is an extended Poisson analytic family of simultaneous deformations of the holomorphic Poisson submanifold $X_0:s=0$ of $(X,s\frac{\partial}{\partial c}\wedge \frac{\partial}{\partial s})$, and we have the characteristic map
\begin{align*}
T_0\mathbb{C}&\to \mathbb{H}^0(X, (\wedge^2 T_X\oplus i_*\mathcal{N}_{X_0/X})^\bullet)\\
a&\mapsto (-a\frac{\partial}{\partial c}\wedge \frac{\partial}{\partial s}, a)
\end{align*}
\end{example}

\subsection{Theorem of existence}

\begin{theorem}[theorem of existence]\label{43c}
Let $V$ be a holomorphic Poisson submanifold of a compact holomorphic Poisson manifold $(W,\Lambda_0)$. If $\mathbb{H}^1(W, (\wedge^2 T_W\oplus i_* \mathcal{N}_{W/V})^\bullet ) =0$, then  there exists an extended Poisson analytic family $\mathcal{V}\subset (W\times M_1,\Lambda)$ of compact holomorphic Poisson submanifolds $V_t,t\in M_1,$ of $(W,\Lambda_t)$ such that $V=V_0\subset (W,\Lambda_0)$ and the characteristic map 
\begin{align*}
\sigma_0:T_0(M_1)&\to \mathbb{H}^0(W, (\wedge^2 T_W\oplus i_*\mathcal{N}_{V/W})^\bullet)\\
\frac{\partial}{\partial t}&\mapsto \left(\frac{\partial (\Lambda_t,V_t)}{\partial t}\right)_{t=0}
\end{align*}
is an isomorphism.
\end{theorem}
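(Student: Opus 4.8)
The plan is to run the order-by-order construction of the proof of Theorem \ref{33a} \emph{simultaneously} for the bivector field and for the submanifold, reading off the obstruction at each stage in the extended complex $(\wedge^2 T_W\oplus i_*\mathcal{N}_{V/W})^\bullet$ of Definition \ref{033}. Fix a basis $\{\eta_1,\dots,\eta_l\}$ of $\mathbb{H}^0(W,(\wedge^2 T_W\oplus i_*\mathcal{N}_{V/W})^\bullet)$, each $\eta_\rho=(\{\pi_{\rho i}\},\{\gamma_{\rho i}\})$ with $\{\pi_{\rho i}\}\in H^0(W,\wedge^2 T_W)$ and $\{\gamma_{\rho i}\}$ a $\nabla$-closed section of $\mathcal{N}_{V/W}$, linked by the compatibility relation of subsection \ref{027}. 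I seek a global bivector $\Lambda=\Lambda_0+\sum_{m\ge 1}\Gamma_m(t)$ on the fixed $W$ and vector-valued functions $\varphi_i(z_i,t)=\sum_{m\ge 1}\varphi_{i|m}(z_i,t)$ satisfying $[\Lambda,\Lambda]=0$, the glueing relation $\varphi_i(g_{ik}(\varphi_k,z_k),t)=f_{ik}(\varphi_k,z_k)$, and the Poisson-submanifold relation $[\Lambda,w_i^\alpha-\varphi_i^\alpha]|_{w_i=\varphi_i}=0$, with $\Gamma_1=\sum_\rho t_\rho\pi_\rho$ and $\varphi_{i|1}=\sum_\rho t_\rho\gamma_{\rho i}$ prescribed by the basis so that $\sigma_0$ is the identity on $\mathbb{H}^0$, hence an isomorphism.

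Assuming the three congruences $[\Lambda^m,\Lambda^m]\equiv_m 0$, $\varphi_i^m(g_{ik}(\varphi_k^m,z_k),t)\equiv_m f_{ik}(\varphi_k^m,z_k)$, and $[\Lambda^m,w_i^\alpha-\varphi_i^{\alpha m}]|_{w_i=\varphi_i^m}\equiv_m 0$ at order $m$, I define the obstruction triple $\beta:=\tfrac12[[\Lambda^m,\Lambda^m]]_{m+1}\in H^0(W,\wedge^3 T_W)$, $\psi_{ik}:=[\varphi_i^m(g_{ik}(\varphi_k^m,z_k),t)-f_{ik}(\varphi_k^m,z_k)]_{m+1}$, and $G_i^\alpha:=[[\Lambda^m,w_i^\alpha-\varphi_i^{\alpha m}]|_{w_i=\varphi_i^m}]_{m+1}$, and I claim $(\{\psi_{ik}\};\{\beta\},\{G_i\})$ is a $1$-cocycle in the \v{C}ech resolution of $(\wedge^2 T_W\oplus i_*\mathcal{N}_{V/W})^\bullet$; note its $\wedge^2 T_W$ \v{C}ech-$1$ component vanishes because $\Lambda$ stays global. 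The \v{C}ech relation $\psi_{ik}=\psi_{ij}+F_{ij}\psi_{jk}$ and the relation linking $\psi_{ik}$ to $G_i$ are obtained exactly as (\ref{33n}) and (\ref{33l}), and $-[\beta,\Lambda_0]=0$ follows from $[\Lambda^m,[\Lambda^m,\Lambda^m]]=0$. The genuinely new identity is the coupling $[\beta,w_i^\alpha]|_{w_i=0}+(\nabla G)_i^\alpha=0$ in the $\mathcal{N}_{V/W}\otimes\wedge^2 T_W|_V$ slot, which records the off-diagonal term of $\tilde\nabla$; I would derive it from $0=[\Lambda^m,[\Lambda^m,w_i^\alpha-\varphi_i^{\alpha m}]]=\tfrac12[[\Lambda^m,\Lambda^m],w_i^\alpha-\varphi_i^{\alpha m}]$ (the Schouten identity $[\Lambda,[\Lambda,F]]=\tfrac12[[\Lambda,\Lambda],F]$), extracting order $m+1$ and restricting to $w_i=\varphi_i^m$, so that $\tfrac12[[\Lambda^m,\Lambda^m]]_{m+1}$ enters as $[\beta,w_i^\alpha]|_{w_i=0}$.

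Once the cocycle property holds, $\mathbb{H}^1(W,(\wedge^2 T_W\oplus i_*\mathcal{N}_{V/W})^\bullet)=0$ yields a $0$-cochain $(\Gamma_{m+1},\{\varphi_{i|m+1}\})$ with $\tilde\delta(\Gamma_{m+1},\{\varphi_{i|m+1}\})=(\{\psi_{ik}\};\{\beta\},\{-G_i\})$, i.e. $-[\Gamma_{m+1},\Lambda_0]=\beta$, $F_{ik}\varphi_{k|m+1}-\varphi_{i|m+1}=\psi_{ik}$, and the coupled equation $[\Gamma_{m+1},w_i^\alpha]|_{w_i=0}-[\varphi_{i|m+1}^\alpha,\Lambda_0]|_{w_i=0}+\sum_{\beta=1}^r\varphi_{i|m+1}^\beta T_{i\alpha}^\beta(0,z_i)=-G_i^\alpha$; here the deforming Poisson structure contributes the off-diagonal term $[\Gamma_{m+1},w_i^\alpha]|_{w_i=0}$, which is what couples the two deformation problems. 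Setting $\Lambda^{m+1}=\Lambda^m+\Gamma_{m+1}$ and $\varphi_i^{m+1}=\varphi_i^m+\varphi_{i|m+1}$, a computation parallel to (\ref{ijk2}) closes all three congruences at order $m+1$, completing the induction.

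For convergence I would run Kodaira's majorant method of subsection \ref{3convergence} with a single majorant $A(t)$ controlling $\Gamma_m$ and $\varphi_{i|m}$ together: the estimates of (\ref{3.6a})--(\ref{bc1}) bound $\psi_{ik}$ and the normal-bundle part of $G_i$, while $\beta=\tfrac12[[\Lambda^m,\Lambda^m]]_{m+1}$ and the Poisson part of $G_i$ are bounded by the same Cauchy-integral estimate (\ref{rr1}) applied to $\Gamma_m$ and the quadratic inequality (\ref{gh5}). The remaining ingredient is the exact analogue of Lemma \ref{nb2}: a bounded right inverse for $\tilde\delta$ on the extended complex, proved by the same normal-family and compactness argument applied to the norm $\|(\Gamma,\{\varphi_i\})\|=\sup_W|\Gamma|+\max_i\sup_{U_i}|\varphi_i|$ on the compact $W$, using the identity theorem to pass from vanishing on $U_i^\delta$ to vanishing on $U_i$. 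I expect the main obstacle to be the new coupling relation $[\beta,w_i^\alpha]|_{w_i=0}+(\nabla G)_i^\alpha=0$ in the cocycle verification---correctly bookkeeping, via the Schouten identity, the contribution of the deforming bivector to the Poisson-submanifold condition through the off-diagonal part of $\tilde\nabla$---and then confirming that this term is majorized by $A(t)$ so that the single series controls the coupled system; granting these, the isomorphism statement for $\sigma_0$ is immediate from the choice of $\Gamma_1$ and $\varphi_{i|1}$.
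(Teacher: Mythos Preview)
Your proposal is correct and follows essentially the same approach as the paper's proof: the same inductive construction of $\Lambda^m(t)$ and $\varphi_i^m(z_i,t)$, the same obstruction cocycle $(\{0\},\{\psi_{ik}\})\oplus(\{-\tfrac12\Pi\},\{G_i\})$ in the \v{C}ech resolution of $(\wedge^2 T_W\oplus i_*\mathcal{N}_{V/W})^\bullet$, the same derivation of the coupling identity by applying $[\Lambda^m,-]$ to $[\Lambda^m,w_i^\alpha-\varphi_i^{\alpha m}]$ (the paper writes this out via the auxiliary form (\ref{43s}) rather than invoking the Schouten identity abstractly, but it is the same computation), and the same convergence scheme culminating in Lemma~\ref{nb1}, the extended-complex analogue of Lemma~\ref{nb2}. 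Up to routine sign bookkeeping (your $\beta$ is the paper's $\tfrac12\Pi$, and the paper packages the cocycle with $-\tfrac12\Pi$ so that $\tilde\nabla$ annihilates it), your outline matches the paper step for step.
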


\begin{proof}
We extend the argument in the proof of Theorem \ref{33a} in the context of simultaneous deformations. We keep the notations in subsection \ref{43a}.

Let $\{\eta_1,...,\eta_\rho,...,\eta_l\}$ be a basis of $\mathbb{H}^0(W,(\wedge^2 T_W\oplus i_*\mathcal{N}_{W/V})^\bullet)$. On each neighborhood $W_i$ (we recall $U_i=W_i\cap V$), $\eta_\rho$ is represented as
\begin{align*}
\eta_{\rho i}=(\lambda_i^\rho(w_i,z_i))\oplus(\gamma_{\rho i}^1(z_i),...,\gamma_{\rho i}^r(z_i))\in \Gamma(W_i,\wedge^2 T_W)\oplus (\oplus^r \Gamma(U_i,\mathcal{O}_V))
\end{align*}
such that
\begin{align}
&\lambda_i^\rho(w_i,z_i)=\lambda_j^\rho (w_j,z_j)\label{43b}\\
&-[\lambda_i^\rho(w_i,z_i),\Lambda_0]=0\label{43n}\\
&\gamma_{\rho i}(z)=F_{ik}(z)\cdot\gamma_{\rho k}(z),\,\,\,z\in U_i\cap U_k \label{43g}\\
& [\lambda_i^\rho(w_i,z_i),w_i^\alpha]|_{w_i=0}-[\Lambda_0, \gamma_{\rho i}^\alpha(z_i)]|_{w_i=0}+\sum_{\beta=1}^r \gamma_{\rho i}^\beta T_{i\alpha}^\beta(0,z_i)=0,\,\,\,z_i\in U_i,\,\,\,\alpha=1,...,r.\label{43h}
\end{align}
We note that (\ref{43b}) implies $\lambda^\rho:=\{\lambda_i^{\rho}(w_i,z_i)\}\in H^0(W,\wedge^2 T_W)$.

Let $\epsilon$ be a small positive number. In order to prove Theorem \ref{43c}, it suffices to construct vector-valued holomorphic functions $\varphi_i(z_i,t)=(\varphi_i^1(z_i,t),...,\varphi_i^r(z_i,t))$ in $z_i$ and $t$ with $|z_i|<1,|t|<\epsilon$ with $|\varphi_i(z_i,t)|<1$, and $\Lambda(t)$ which is a convergent power series in $t$ with coefficients in $\in H^0(W,\wedge^2 T_W)$ satisfying the boundary condition
\begin{align*}
\varphi_i(z_i,0)&=0,\\
\frac{\partial \varphi_i(z_i,t)}{\partial t_\rho}|_{t=0}&=\gamma_{\rho i}(z)\\
\Lambda(0)&=\Lambda_0\\
\frac{\partial \Lambda(t)}{\partial t_\rho}|_{t=0}&=\lambda^\rho
\end{align*}
such that
\begin{align}
\varphi_i(g_{ik}(\varphi_k(z_k,t),z_k),t)&=f_{ik}(\varphi(z_k,t),z_k),\,\,\,(\varphi_k(z_k,t),z_k)\in W_k\cap W_i, \label{pa1}\\
[\Lambda(t),&w_i^\alpha-\varphi_i^\alpha(z_i,t)]|_{w_i=\varphi_i(z_i,t)}=0, \,\,\,\alpha=1,...,r\label{pa2}\\
[\Lambda(t)&,\Lambda(t)]=0\label{pa3}
\end{align}
Recall Notation \ref{notation1}. Then the equalities $(\ref{pa1}),(\ref{pa2})$, and $(\ref{pa3})$ are equivalent to the system of congruences 
\begin{align}
\varphi_i^m(g_{ik}(\varphi_k^m&(z_k,t),z_k),t)\equiv_m f_{ik}(\varphi_k^m(z_k,t),z_k),\,\,\,\,\,m=1,2,3,\cdots\label{43d}\\
[\Lambda^m(t),&w_i^\alpha-\varphi_i^{\alpha m}(z_i,t)]|_{w_i=\varphi_i^m(z_i,t)}\equiv_m 0,\,\,\,\,\,\,\,m=1,2,3,\cdots,\,\,\,\alpha=1,...,r,\label{43e}\\
[\Lambda^m&(t),\Lambda^m(t)]\equiv_m 0,\,\,\,\,\,\,\,\,\,\,\,\,\,\,\,\,\,\,\,\,\,\,\,\,\,\,\,\,\,\,\,\,\,\,\,\,\,\,\,\,\,\,\,\,\,\,\,\,\,\,\,\,\,m=1,2,3,\cdots.\label{43f}
\end{align}

We will construct the formal power series $\varphi_i^m(z_i,t)$ and $\Lambda^m(t)$ satisfying $(\ref{43d})_m,(\ref{43e})_m$, and $(\ref{43f})_m$ by induction on $m$.

We define $\varphi_{i|1}^\alpha(z_i,t)=\sum_{\rho=1}^l t_\rho \gamma_{\rho i}^\alpha(z_i)$, and $\Lambda_{1}(t)=\sum_{\rho=1}^l t_\rho \lambda^\rho$. Then from $(\ref{43g})$, $\varphi_{i|1}(z_i,t)$ satisfies $(\ref{43d})_1$. On the other hand, from $(\ref{43h})$, $[\Lambda_1(t),w_i^\alpha]|_{w_i=0}-[\Lambda_0,\varphi_{i|1}^\alpha(z_i,t)]|_{w_i=0}+\sum_{\beta=1}^r \varphi_{i|1}^\beta(z_i,t)T_{i\alpha}^\beta(0,z_i)=0$.
Then we get, from $(\ref{3d})$, 
\begin{align*}
[\Lambda_0+\Lambda_1(t),w_i^\alpha-\varphi_{i|1}^\alpha(z_i,t)]=\sum_{\beta=1}^r (w_i^\beta-\varphi_{i|1}^\beta(z_i,t))T_{i\alpha}^\beta(w_i,z_i)-[\Lambda_1(t),\varphi_{i|1}^\alpha(z_i,t)]+\sum_{\beta=1}^r w_i^\beta P_{i\alpha}^\beta(w_i,z_i,t)
\end{align*}
for some $P_{i\alpha}^\beta(w_i,z_i,t)$ which are homogenous polynomials of degree $1$ in $t_1,...,t_l$ with coefficients in $\Gamma(U_i,T_W)$ so that we obtain $[\Lambda^1(t),w_i^\alpha-\varphi_{i|1}^{\alpha}(z_i,t)]|_{w_i=\varphi_{i|1}(z_i,t)}\equiv_1 0$, which implies $(\ref{43e})_1$. Lastly from $(\ref{43n})$,  we have $-[\Lambda_1(t),\Lambda_0]=0$ so that $[\Lambda_0+\Lambda_1(t),\Lambda_0+\Lambda_1(t)]\equiv_1 2[\Lambda_0, \Lambda_1(t)]=0$, which implies $(\ref{43f})_1$. Hence the induction holds for $m=1$.

Now we assume that we have already constructed $\varphi_i^m(z_i,t)=(\varphi_i^{1m}(z_i,t),\cdots, \varphi_i^{\alpha m}(z_i,t),\cdots,\varphi_i^{rm}(z_i,t))$ satisfying $(\ref{43d})_m,(\ref{43e})_m$ and $(\ref{43f})_m$ such that for $\alpha=1,...r$,
\begin{align}\label{43o}
[\Lambda^m(t),w_i^\alpha-\varphi_i^{\alpha m}(z_i,t)]=\sum_{\beta=1}^r(w_i^\beta-\varphi_i^{\beta m}(z_i,t))T_{i\alpha}^\beta(w_i,z_i)+Q_i^{\alpha m}(z_i,t)+\sum_{\beta=1}^r w_i^\beta P_{i\alpha}^{\beta m}(w_i,z_i,t)
\end{align}
such that the degree of $P_{i\alpha}^{\beta m}(w_i,z_i,t)$ is at least $1$ in $t_1,...,t_l$. We note that we can rewrite (\ref{43o})  in the following way.
\begin{align*}
\sum_{\beta=1}^r(w_i^\beta-\varphi_i^{\beta m}(z_i,t))T_{i\alpha}^\beta(w_i,z_i)+Q_i^{\alpha m}(z_i,t)+\sum_{\beta=1}^r \phi_i^{\beta m}(z_i,t)P_{i\alpha}^{\beta m}(\varphi_i^{ m}(z_i,t),z_i,t)+\sum_{\beta=1}^r (w_i^\beta-\varphi_i^{\beta m}(z_i,t) )L_{i\alpha}^\beta(w_i,z_i,t)
\end{align*}
such that the degree of $L_{i\alpha}^\beta(w_i,z_i,t)$ in $t_1,...,t_l$ is at least $1$ so that (\ref{43o}) becomes the following form:
\begin{align}\label{43s}
[\Lambda^m(t), w_i^\alpha-\varphi_i^{\alpha m}(z_i,t)]=\sum_{\beta=1}^r(w_i^\beta-\varphi_i^{\beta m}(z_i,t))T_{i\alpha}^\beta(w_i,z_i)+K_i^{\alpha m}(z_i,t)+\sum_{\beta=1}^r (w_i^\beta-\varphi_i^{\beta m}(z_i,t) )L_{i\alpha}^\beta(w_i,z_i,t)
\end{align}
where $K_i^{\alpha m}(z_i,t):=Q_i^{\alpha m}(z_i,t)+\sum_{\beta=1}^r \phi_i^{\beta m}(z_i,t)P_{i\alpha}^{\beta m}(\varphi_i^{ m}(z_i,t),z_i,t)$. 

We set
\begin{align}
\psi_{ik}(z_k,t)&:=[\varphi_i^m(g_{ik}(\varphi_k^m(z_k,t),z_k),t)-f_{ik}(\varphi_k^m(z_k,t),z_k)]_{m+1}\label{43p}\\
G_i^\alpha(z_i,t)&:=[[\Lambda^m(t),w_i^\alpha-\varphi_i^{\alpha m}(z_i,t)]|_{w_i=\varphi_i^m(z_i,t)}]_{m+1},\,\,\,\,\,\alpha=1,...,r.\label{43q}\\
\Pi(t)&:=[[\Lambda^m(t),\Lambda^m(t)]]_{m+1}\label{433a}
\end{align}

We claim that$(\{(0)\}, (\psi_{ik}^1(z_k,t),...,\psi_{ik}^r(z_k,t))\})\oplus (-\frac{1}{2}\Pi(t),\{(G_i^1(z_i,t),...,G_i^r(z_i,t))\})$ defines a $1$-cocycle in the following \v{C}ech resolution of $(\wedge^2 T_W\oplus i_*\mathcal{N}_{V/W})^\bullet$\

\begin{center}
$\begin{CD}
C^0(\mathcal{U},\wedge^4 T_W\oplus i_*(\mathcal{N}_{W/V}\otimes\wedge^2 T_W|_V))\\
@A\tilde{\nabla}AA\\
C^0(\mathcal{U},\wedge^3 T_W\oplus i_*(\mathcal{N}_{W/V}\otimes T_W|_V))@>\delta>> C^1(\mathcal{U},\wedge^4 T_W\oplus i_*(\mathcal{N}_{W/V}\otimes T_W|_V))\\
@A\tilde{\nabla}AA @A\tilde{\nabla}AA\\
C^0(\mathcal{U},\wedge^2 T_W\oplus i_*\mathcal{N}_{W/V})@>-\delta>>C^1(\mathcal{U},\wedge^2 T_W\oplus i_*\mathcal{N}_{W/V})@>\delta>> C^2(\mathcal{U},\wedge^4 T_W\oplus i_*\mathcal{N}_{W/V})
\end{CD}$
\end{center}\

By defining $\psi_{ik}(z,t)=\psi_{ik}(z_k,t)$ for $(0,z_k)\in U_k\cap U_i$, we have the equality (see \cite{Kod62} p.152-153)
\begin{align}\label{43z}
\psi_{ik}(z,t)=\psi_{ij}(z,t)+F_{ij}\cdot \psi_{jk}(z,t),\,\,\,\,\,\text{for $z\in U_i\cap U_j\cap U_k$}
\end{align}

On the other hand, by applying $[\Lambda^m(t),-]$ on $(\ref{43s})$, we have
\begin{align}\label{43t}
&[\frac{1}{2}[\Lambda^m(t),\Lambda^m(t)],w_i^\alpha-\varphi_i^{\alpha m}(z_i,t)]=\sum_{\beta=1}^r-[\Lambda^m(t),w_i^{\beta}-\varphi_i^{\beta m}(z_i,t)]\wedge T_{i\alpha}^\beta(w_i,z_i)+\sum_{\beta=1}^r(w_i^\beta-\varphi_i^{\beta m}(z_i,t))[\Lambda^m(t),T_{i\alpha}^\beta(w_i,z_i)]\\
&+[\Lambda^m(t),K_i^{\alpha m} (z_i,t)]+\sum_{\beta=1}^r-[\Lambda^m(t),w_i^\beta-\varphi_i^{\beta m}(z_i,t)]\wedge L_{i\alpha}^\beta(w_i,z_i,t)+\sum_{\beta=1}^r (w_i^\beta-\varphi_i^{\beta m}(z_i,t))[\Lambda^m(t),L_{i\alpha}^\beta(w_i,z_i,t)]\notag
\end{align}
By restricting (\ref{43t}) to $w_i=\varphi_i^m(z_i,t)$, since $G_i^\alpha(z_i,t)\equiv_m 0,\alpha=1,...,r$, $\Pi(t)\equiv_m 0$, the degree $L_{i\alpha}^\beta(w_i,z_i,t)$ is at least $1$ in $t_1,...,t_l$ and we have, from $(\ref{43s})$,
\begin{align}\label{iii100}
G_i^\alpha(z_i,t)\equiv_{m+1}K_i^{\alpha m}(z_i,t)=Q_i^{\alpha m}(z_i,t)+\sum_{\beta=1}^r \phi_i^{\beta m}(z_i,t)P_{i\alpha}^{\beta m}(\varphi_i^{ m}(z_i,t),z_i,t),
\end{align}
 we obtain
\begin{align}\label{43u}
[\frac{1}{2}\Pi(t),w_i^\alpha]|_{w_i=0}=\sum_{\beta=1}^r -G_i^\beta(z_i,t)\wedge T_{i\alpha}^\beta(0,z_i)+[\Lambda_0, G_i^\alpha(z_i,t)]|_{w_i=0}
\end{align}

Next, since $f_{ik}^\alpha(w_k,z_k)-\varphi_i^{\alpha m}(g_{ik}(w_k,z_k),t)-(f_{ik}^\alpha(\varphi_k^m(z_k,t),z_k)-\varphi_i^{\alpha m}(g_{ik}(\varphi_k^m(z_k,t),z_k),t)=\sum_{\beta=1}^r(w_k^\beta-\varphi_k^{\beta m}(z_k,t))\cdot S_{k\alpha}^\beta(w_k,z_k,t)$ for some $S_{k\alpha}^\beta(w_k,z_k,t)$. By setting $t=0$, we get $f_{ik}^\alpha(w_k,z_k)-f_{ik}^\alpha(0,z_k)=\sum_{\beta=1}^r w_k^\beta\cdot S_{k\alpha}^\beta(w_k,z_k,0)$, and then by taking the derivative with respect to $w_k^\gamma$ ans setting $w_k=0$, we obtain  $\frac{\partial f_{ik}^\alpha(w_k,z_k)}{\partial w_k^\gamma}|_{w_i=0}=S_{k\alpha}^\gamma(0,z_k,0)$. Then we have
\begin{align*}
&[\Lambda^m(t),f_{ik}^\alpha(w_k,z_k)-\varphi_i^{\alpha m}(g_{ik}(w_k,z_k),t)]|_{w_k=\varphi_k^m(z_k,t)}+[\Lambda_0, \psi_{ik}^\alpha(z_k,t)]|_{w_k=0}\\
&\equiv_{m+1}[\Lambda^m(t),f_{ik}^\alpha(w_k,z_k)-\varphi_i^{\alpha m}(g_{ik}(w_k,z_k),t)]|_{w_k=\varphi_k^m(z_k,t)}+[\Lambda^m(t), \psi_{ik}^\alpha(z_k,t)]|_{w_k=\varphi_k^m(z_k,t)}\\
&\equiv_{m+1}[\Lambda^m(t), f_{ik}^\alpha(w_k,z_k)-\varphi_i^{\alpha m}(g_{ik}(w_k,z_k),t)+\psi_{ik}^\alpha(z_k,t)]|_{w_k=\varphi_k^m(z_k,t)}\\
&\equiv_{m+1}[\Lambda^m(t) ,\sum_{\beta=1}^r (w_k^\beta-\varphi_k^{\beta m}(z_k,t))S_{k\alpha}^\beta(w_k,z_k,t)]|_{w_k=\varphi_k^m(z_k,t)}\\
&\equiv_{m+1}\sum_{\beta=1}^r[\Lambda^m(t), w_k^\beta-\varphi_k^{\beta m}(z_k,t)]|_{w_k=\varphi_k^m(z_k,t)}\cdot S_{k\alpha}^\beta(\varphi_k^m(z_k,t),z_k,t)\\
&\equiv_{m+1}\sum_{\beta=1}^r G_k^\beta(z_k,t)\cdot S_{k\alpha}^\beta(0,z_k,0)\equiv_{m+1}\sum_{\beta=1}^r G_k^\beta(z_k,t)\cdot \frac{\partial f_{ik}^\alpha(w_k,z_k)}{\partial w_k^\beta}|_{w_k=0}
\end{align*}
Hence we obtain the equality
\begin{align}\label{43x}
[\Lambda^m(t),f_{ik}^\alpha(w_k,z_k)-\varphi_i^{\alpha m}(g_{ik}(w_k,z_k),t)]|_{w_k=\varphi_k^m(z_k,t)}+[\Lambda_0, \psi_{ik}^\alpha(z_k,t)]|_{w_k=0}=\sum_{\beta=1}^r G_k^\beta(z_k,t)\cdot \frac{\partial f_{ik}^\alpha(w_k,z_k)}{\partial w_k^\beta}|_{w_k=0}
\end{align}
On the other hand, from $(\ref{43s})$, we have 
\begin{align}\label{43v}
[\Lambda^m(t),f_{ik}^\alpha(w_k,z_k) -\varphi_i^{\alpha m}(g_{ik}(w_k,z_k),t)]=\sum_{\beta=1}^r(f_{ik}^\beta(w_k,z_k)-\varphi_i^{\beta m}(g_{ik}(w_k,z_k),t))T_{i\alpha}^\beta(w_i,z_i)\\+K_i^{\alpha m}(z_i,t)+\sum_{\beta=1}^r (f_{ik}^\beta(w_k,z_k)-\varphi_i^{\beta m}(g_{ik}(w_k,z_k),t) )L_{i\alpha}^\beta(w_i,z_i,t)\notag
\end{align}
By restricting (\ref{43v}) to $w_k=\varphi_k^m(z_i,t)$, we get
\begin{align}\label{43w}
[\Lambda^m(t),f_{ik}^\alpha(w_k,z_k)-\varphi_i^{\alpha m}(g_{ik}(w_k,z_k),t)]|_{w_k=\varphi_k^m(z_k,t)}\equiv_{m+1} \sum_{\beta=1}^r -\psi_{ik}^\beta(z_k,t)T_{i\alpha}^\beta(0,z_i)+G_i^\alpha(z_i,t)
\end{align}
Hence from $(\ref{43x})$ and $(\ref{43w})$, we get
\begin{align*}
\sum_{\beta=1}^r -\psi_{ik}^\beta(z_k,t)T_{i\alpha}^\beta(0,z_i)+G_i^\alpha(z_i,t)+[\Lambda_0, \psi_{ik}^\alpha(z_k,t)]|_{w_k=0}=\sum_{\beta=1}^r G_k^\beta(z_k,t)\cdot \frac{\partial f_{ik}^\alpha(w_k,z_k)}{\partial w_k^\beta}|_{w_k=0}
\end{align*}

Lastly, $[\Lambda_0,\Pi(t)]\equiv_{m+1}[\Lambda_0, [\Lambda^m(t),\Lambda^m(t)]]\equiv_{m+1}[\Lambda^m(t),[\Lambda^m(t),\Lambda^m(t)]]=0$ so that we get
\begin{align}\label{43y}
-[-\frac{1}{2}\Pi(t),\Lambda_0]=0
\end{align}
Hence from $(\ref{43z}),(\ref{43u}),(\ref{43x})$ and $(\ref{43y})$,
\begin{align*}
(\{(0)\}, (\psi_{ik}^1(z_k,t),...,\psi_{ik}^r(z_k,t))\})\oplus (-\frac{1}{2}\Pi(t),\{(G_i^1(z_i,t),...,G_i^r(z_i,t)\}))
\end{align*}
defines a $1$-cocycle in the above complex so that we get the claim. We call $\psi_{m+1}(t):=\{(\psi_{ik}^1(z_k,t),...,\psi_{ik}^r(z_k,t))\}$, $G_{m+1}(t):=\{(G_i^1(z_i,t),...,G_i^r(z_i,t))\}$ and $\Pi_{m+1}(t):=-\frac{1}{2}\Pi(t)$ the $m$-th obstruction so that the coefficients of $(0,\psi_{m+1}(t))\oplus(\frac{1}{2}\Pi_{m+1}(t),G_{m+1}(t))$ in $t_1,...,t_l$ lies in $\mathbb{H}^1(W,(\wedge^2 T_W\oplus i_*\mathcal{N}_{V/W})^\bullet)$.

On the other hand, by hypothesis, the cohomology group $\mathbb{H}^1(W,(\wedge^2 T_W\oplus i_*\mathcal{N}_{V/W})^\bullet)$ vanishes. Therefore there exists $\Lambda_{i|m+1}(t),\varphi_{i|m+1}^\alpha(z_i,t),\alpha=1,...,r$ such that $\psi_{ik}(z,t)=F_{ik}(z)\varphi_{k|m+1}(z,t)-\varphi_{i|m+1}(z,t)$, $[\Lambda_{m+1}(t),w_i^\alpha]|_{w_i=0}+\sum_{\beta=1}^r \varphi^\beta_{i|m+1}(z_i,t)T_{i\alpha}^\beta(0,z_i)-[\Lambda_0,\varphi_{i|m+1}^\alpha(z_i,t)]|_{w_i=0}=-G_i^\alpha(z_i,t)$ and $-[\Lambda_{m+1}(t),\Lambda_0]=\frac{1}{2}\Pi(t)$. Then we can show $(\ref{43d})_{m+1}$ (for the detail, see \cite{Kod62} p.154). On the other hand,
\begin{align}\label{42b}
[\Lambda_{m+1}(t),w_i^\alpha]-[\Lambda_0,\varphi_{i|m+1}^\alpha(z_i,t)]=-\sum_{\beta=1}^r \varphi^\beta_{i|m+1}(z_i,t) T_{i\alpha}^\beta(w_i,z_i)-G_i^\alpha(z_i,t)+\sum_{\beta=1}^r w_i^\beta R_{i\alpha}^\beta(w_i,z_i,t)
\end{align}
where the degree of $R_{i\alpha}^\beta(w_i,z_i,t)$ is $m+1$ in $t$. Let $[\Lambda^m(t)-\Lambda_0,-\varphi_{i|m+1}(z_i,t)]+[\Lambda_{|m+1}(t), -\varphi_i^{\alpha m}(z_i,t)-\varphi_{i|m+1}(z_i,t)]=H_i^{\alpha m}(z_i,t)+\sum_{\beta=1}^r w_i^\beta M_{i\alpha}^\beta(w_i,z_i,t)$, where the degree of $H_i^{\alpha m}(z_i,t)$ and $M_{i\alpha}^\beta(w_i,z_i,t)$ is at least $m+2$. Then from (\ref{43o}) and (\ref{42b}), we have
\begin{align}\label{mmmn1}
&[\Lambda^m(t)+\Lambda_{m+1}(t),w_i^\alpha-\varphi_i^{\alpha m}(z_i,t)-\varphi^\alpha_{i|m+1}(z_i,t)]\\
&=[\Lambda^m(t),w_i^\alpha-\varphi_i^{\alpha m}(z_i,t)]+[\Lambda^m(t)-\Lambda_0,-\varphi_{i|m+1}^\alpha(z_i,t)]+[\Lambda_0, -\varphi_{i|m+1}^\alpha(z_i,t)]\notag\\
&+[\Lambda_{m+1}(t),w_i^\alpha]+[\Lambda_{m+1}(t),-\varphi_i^{\alpha m}(z_i,t)-\varphi_{i|m+1}(z_i,t)]\notag\\
&=\sum_{\beta=1}^r(w_i^\beta-\varphi_i^{\beta m}-\varphi^\beta_{i|m+1})T_{i\alpha}^\beta(w_i,z_i)+Q_i^{\alpha m}(z_i,t)-G_i^\alpha(z_i,t)+H_i^{\alpha m}(z_i,t)\notag\\
&+\sum_{\beta=1}^rw_i^\alpha( P_{i\alpha}^{\beta m}(w_i,z_i,t)+ R_{i\alpha}^\beta(w_i,z_i,t)+M_{i\alpha}^\beta(w_i,z_i,t))\notag
\end{align}
We show that $[\Lambda^{m+1}(t), w_i^\alpha-\varphi_i^{\alpha m+1}]|_{w_i=\varphi_i^{\alpha m+1}}\equiv_{m+1} 0$. Indeed, by restricting $(\ref{mmmn1})$ to $w_i=\varphi_i^{m+1}(z_i,t)$, we get, from $(\ref{iii100})$.
\begin{align*}
&[\Lambda^{m+1}(t), w_i^\alpha-\varphi_i^{\alpha (m+1)}(z_i,t)]|_{w_i=\varphi_i^{ m+1}(z_i,t)}\\
&\equiv_{m+1} Q_i^{\alpha m}(z_i,t)-G_i^\alpha(z_i,t)+\sum_{\beta=1}^r (\varphi_i^{\alpha m}(z_i,t)+\varphi^\alpha_{i|m+1}(z_i,t))P_{i\alpha}^{\beta m}(\varphi_i^{ m}(z_i,t)+\varphi_{i|m+1}(z_i,t),z_i,t)\\
&\equiv_{m+1} Q_i^{\alpha m}(z_i,t)-G_i^\alpha(z_i,t)+\sum_{\beta=1}^r \varphi_i^{\alpha m}(z_i,t)P_{i\alpha}^{\beta m}(\varphi_i^{ m}(z_i,t),z_i,t) \equiv_{m+1} K_i^{\alpha m}(z_i,t)-G_i^\alpha(z_i,t)\equiv_{m+1}0
\end{align*}
which shows $(\ref{43e})_{m+1}$. Lastly $[\Lambda^m(t)+\Lambda_{m+1}(t),\Lambda^m(t)+\Lambda_{m+1}(t)]\equiv_{m+1} [\Lambda^m(t),\Lambda^m(t)]+2[\Lambda_0, \Lambda_{m+1}(t)]\equiv_{m+1} [\Lambda^m(t),\Lambda^m(t)]-\Pi(t)\equiv_{m+1} 0$ which shows $(\ref{43f})_{m+1}$. This completes the inductive constructions of $\varphi_i^m(z_i,t),i\in I$, and $\Lambda^m(t)$.

\subsection{Proof of convergence}\

We will show that we can choose $\varphi_{i|m}(z_i,t)$ and $\Lambda_m(t)$ in each inductive step so that the formal power series $\varphi_i(z_i,t),i\in I$ and $\Lambda(t)$ constructed in the previous subsection, converges absolutely for $|t|<\epsilon$ for a sufficiently small number $\epsilon>0$.

We keep the notations in subsection \ref{3convergence}. For instance, $\Lambda_0=\Lambda_{i}(w_i,z_i)=\sum_{p,q=1}^{r+d}\Lambda_{pq}^i(w_i,z_i)\frac{\partial}{\partial x_i^p}\wedge\frac{\partial}{\partial x_i^q}$ with $\Lambda_{pq}^i(w_i,z_i)=-\Lambda_{pq}^i(w_i,z_i)$, where $x_i=(w_i,z_i)$ on $W_i$, and $W_i^\delta$ is the subdomain of $W_i$ consisting of all points $(w_i,z_i)$, $|w_i|<1-\delta, |z_i|<1-\delta$ for a sufficiently small number $\delta>0$ such that $\{W_i^\delta|i\in I\}$ forms a covering of $W$, and $\{U_i^\delta=W_i^\delta\cap V|i\in I\}$ forms a covering of $V$. Recall Notation \ref{notation3}. We denote $\Lambda^m(t)$ on $W_i$ by $\Lambda_i^m(w_i,z_i,t)$ and $\Pi^m(t)$ on $W_i$ by $\Pi_i^m(w_i,z_i,t)$. Then $\Lambda_i^m(w_i,z_i,t)$ is of the form
\begin{align}
\Lambda_i^m(w_i,z_i,t)=\sum_{p,q=1}^{r+d}\Lambda_{pq}^{im}(w_i,z_i,t)\frac{\partial}{\partial x_i^p}\wedge \frac{\partial}{\partial x_i^q},\,\,\,\,\,\Lambda_{pq}^{im}(w_i,z_i,t)=-\Lambda_{qp}^{im}(w_i,z_i,t)
\end{align}
such that $\Lambda_{pq}^{i0}(w_i,z_i,t)=\Lambda_{pq}^i(w_i,z_i)$. We may assume that $|F_{ik\mu}^\lambda(0,z)|<c_0$ with $c_0>1$. Then $\varphi_{i|1}(z_i,t)\ll A(t)$ and $\Lambda_{i|1}(w_i,z_i,t)\ll A(t)$ if $b$ is sufficiently large. Now, assuming the inequalities
\begin{align}\label{hh2}
\varphi_i^m(z_i,t)\ll A(t),\,\,\,\,\,\,\,\Lambda_i^m(w_i,z_i,t)-\Lambda_i(w_i,z_i)\ll A(t),\,\,\,(w_i,z_i)\in W_i^\delta
\end{align}
for an integer $m\geq 1$, we will estimate the coefficients of the homogenous polynomials $\psi_{ik}(z,t), \Pi_i(w_i,z_i,t)$, and $G_i^\alpha(z_i,t)$ from $(\ref{43p}),(\ref{43q})$, and $(\ref{433a})$.

First we estimate $\psi_{ik}(z,t)$. As in (\ref{3.6a}), we have
\begin{align}\label{zz1}
\psi_{ik}(z_k,t)\ll c_3A(t),\,\,\,z_k \in U_k\cap U_i, 
\end{align}
where $c_3=2rc_0\frac{4c_1ra}{b}\left(\frac{2^d}{\delta}+rc_1\right)$
\begin{align}\label{zz5}
b>\max\{2c_1ra, \frac{4c_1 ra}{\delta}\}.
\end{align}

Next we estimate $G_i^\alpha(z_i,t)$. We note that 
\begin{align}\label{hh1}
G_i^\alpha(z_i,t)&\equiv_{m+1} [\Lambda_i^m(w_i,z_i,t),w_i^\alpha-\varphi_i^{\alpha m}(z_i,t)]|_{w_i=\varphi_i^m(z_i,t)}\\
&\equiv_{m+1}[\Lambda_i^m(w_i,z_i,t)-\Lambda_i(w_i,z_i),w_i^\alpha]|_{w_i=\varphi_i^m(z_i,t)}-[\Lambda_i^m(w_i,z_i,t)-\Lambda_i(w_i,z_i),\varphi_i^{\alpha m}(z_i,t)]|_{w_i=\varphi_i^m(z_i,t)}\notag\\
&\,\,\,\,\,\,\,\,\,\,\,\,\,\,\,\,\,+[\Lambda_i(w_i,z_i),w_i^\alpha-\varphi_i^{\alpha m}(z_i,t)]|_{w_i=\varphi_i^{ m}(z_i,t)}\notag
\end{align}

We estimate each term in $(\ref{hh1})$. First we estimate $[[\Lambda_i^m(w_i,z_i,t)-\Lambda_i(w_i,z_i),w_i^\alpha]|_{w_i=\varphi_i^m(z_i,t)}]_{m+1}$ in $(\ref{hh1})$. We note that
\begin{align}\label{hh20}
[\sum_{p,q=1}^{d+r}(\Lambda_{pq}^{im}(w_i,z_i,t)-\Lambda^i_{pq}(w_i,z_i))\frac{\partial}{\partial x_i^p}\wedge \frac{\partial}{\partial x_i^q},w_i^\alpha]|_{w_i=\varphi_i^m(z_i,t)}=\sum_{p,q=1}^{d+r}2(\Lambda_{pq}^{im}(\varphi_i^m(z_i,t),z_i,t)-\Lambda_{pq}^i(\varphi_i^m(z_i,t),z_i))\frac{\partial w_i^\alpha}{\partial x_i^p}\frac{\partial}{\partial x_i^q}
\end{align}

On the other hand, $\Phi_{pq}^{i m}(w_i,z_i,t):=\Lambda_{pq}^{im}(w_i,z_i,t)-\Lambda_{pq}^i(w_i,z_i)\ll A(t)$ from $(\ref{hh2})$ and has the degree $\leq m$ in $t$. For any $p,q$, we expand $\Phi_{pq}^{im}(w_i,z_i,t)$ into power series in $w_i^1,...w_i^r$ whose coefficients are holomorphic functions of $z=(0,z_i)$ defined on $U_i$:
\begin{align}\label{yy31}
\Phi_{pq}^{im}(w_i,z_i,t)=\sum_{\mu_1,...,\mu_r\geq 0} \Phi_{pq,\mu_1,...,\mu_r}^{im}(z_i,t)w_i^{1\mu_1}\cdots w_i^{r\mu_r}
\end{align}

If $(w_i,z_i)\in W_i^\delta$, we have, by Cauchy's integral formula,
\begin{align*}
\Phi_{pq,\mu_1,...,\mu_r}^{im}(z_i,t)=\left(\frac{1}{2\pi i}\right)^r\int_{|\xi_1-w_i^1|=\delta}\cdots\int_{|\xi_r-w_i^r|=\delta}\frac{\Phi_{pq}^{im}(w_i,z_i,t)}{(\xi_1-w_i^1)^{\mu_1+1}\cdots (\xi_r-w_i^r)^{\mu_r+1}}d\xi_1\cdots d \xi_r
\end{align*}
so that we get
\begin{align}\label{yy30}
\Phi_{pq,\mu_1,...,\mu_r}^{im}(z_i,t)\ll A(t)\frac{1}{\delta^{\mu_1+\cdots+\mu_r}}
\end{align}

Since constant terms of $\Phi_{pq}^{im}(w_i,z_i,t)$ with respect to $w_i^1,...,w_i^r$ does not contribute to $[[\Lambda_i^m(w_i,z_i,t)-\Lambda_i(w_i,z_i),w_i^\alpha]|_{w_i=\varphi_i^m(z_i,t)}]_{m+1}$, from $(\ref{yy31})$ and $(\ref{yy30})$, we have, assuming $\frac{a}{b\delta}<\frac{1}{2}$, (for the detail, see \cite{Kod05} p.300)
\begin{align}\label{hh21}
\Phi_{pq}^{im}(\varphi_i^m(z_i,t),z_i,t)\ll A(t)\sum_{\mu_1+...+\mu_r\geq 1} \left(\frac{A(t)}{\delta}\right)^{\mu_1+\cdots +\mu_r}\ll \frac{2^{r+1}}{\delta}A(t)^2\ll \frac{2^{r+1}a}{b\delta}A(t)
\end{align}

Then from $(\ref{hh20})$ and $(\ref{hh21})$, we obtain
\begin{align}\label{hh22}
[[\Lambda_i^m(w_i,z_i,t)-\Lambda_i(w_i,z_i),w_i^\alpha]|_{w_i=\varphi_i^m(z_i,t)}]_{m+1}\ll2(d+r)^2\frac{2^{r+1}a}{b\delta}A(t)
\end{align}

Next we estimate $[[\Lambda_i^m(w_i,z_i,t)-\Lambda_i(w_i,z_i),\varphi_i^{\alpha m}(z_i,t)]|_{w_i=\varphi_i^m(z_i,t)}]_{m+1}$ in $(\ref{hh1})$. From $(\ref{hh2})$ and $(\ref{rr1})$, we have
\begin{align}
[\sum_{p,q=1}^{d+r}(\Lambda_{pq}^{im}(w_i,z_i,t)-\Lambda_{pq}^i(w_i,z_i))\frac{\partial}{\partial x_i^p}\wedge \frac{\partial}{\partial x_i^q},\varphi_i^{\alpha m}(z_i,t)]|_{w_i=\varphi_i^m(z_i,t)}\\
=\sum_{p,q=1}^{d+r}2(\Lambda_{pq}^{im}(\varphi_i^m(z_i,t),z_i,t)-\Lambda_{pq}^i(\varphi_i^m(z_i,t),z_i))\frac{\partial \varphi_i^{\alpha m}(z_i,t)}{\partial x_i^p}\frac{\partial}{\partial x_i^q}\notag\\
\ll 2(d+r)^2A(t)\frac{A(t)}{\delta}\ll 2(d+r)^2\frac{a}{b\delta}A(t)\notag
\end{align}
Hence we get
\begin{align}\label{bc3}
[[\Lambda_i^m(w_i,z_i,t)-\Lambda_i(w_i,z_i),\varphi_i^{\alpha m}(z_i,t)]|_{w_i=\varphi_i^m(z_i,t)}]_{m+1}\ll 2(d+r)^2\frac{a}{b\delta}A(t)
\end{align}

We estimate $[\Lambda_i(w_i,z_i),w_i^\alpha-\varphi_i^{\alpha m}(z_i,t)]|_{w_i=\varphi_i^m(z_i,t)}$ in $(\ref{hh1})$. This comes from $(\ref{bc1})$:
\begin{align}\label{bc2}
[[\Lambda_i(w_i,z_i),w_i^\alpha-\varphi_i^{\alpha m}(z_i,t)]|_{w_i=\varphi_i^m(z_i,t)}]_{m+1}\ll e_2A(t),\,\,\,z\in U_i^\delta
\end{align}
where $e_2=\frac{4(d+r)^2e_1^2r^2a}{b}+\frac{4(r+d)e_1ra}{\delta b}$ with $b>2e_1ra$.

Hence from $(\ref{hh22}),(\ref{bc3}),(\ref{bc2})$, we get
\begin{align}\label{zz2}
G_i^\alpha(z_i,t)=[[\Lambda_i^m(w_i,z_i,t),w_i^\alpha-\varphi_i^{\alpha m}(z_i,t)]|_{w_i=\varphi_i^m(z_i,t)}]_{m+1}\ll e_3A(t)
\end{align}
where $e_3=\frac{(d+r)^2 2^{r+2}a}{b\delta}+\frac{2(d+r)^2a}{b\delta}+e_2$ with
\begin{align}\label{zz3}
b>\max \{\frac{2a}{\delta}, 2e_1ra \}
\end{align}

Next we estimate $\Pi_i(w_i,z_i,t)=[[\Lambda_i^m(w_i,z_i,t),\Lambda_i^m(w_i,z_i,t)]]_{m+1}$. Since $\Pi_i(w_i,z_i,t)\equiv_m 0$ and
\begin{align*}
\Pi_i(w_i,z_i,t)&\equiv_{m+1} [\Lambda_i^m(w_i,z_i,t),\Lambda_i^m(w_i,z_i,t)]\\
&\equiv_{m+1}[\Lambda_i^m(w_i,z_i,t)-\Lambda_i(w_i,z_i),\Lambda_i^m(w_i,z_i,t)-\Lambda_i(w_i,z_i)]+2[\Lambda_i(w_i,z_i),\Lambda_i^m(w_i,z_i)-\Lambda_i(w_i,z_i)],
\end{align*}
we have
\begin{align}\label{bc10}
\Pi_i(w_i,z_i,t)=[[\Lambda_i^m(w_i,z_i,t)-\Lambda_i(w_i,z_i),\Lambda_i^m(w_i,z_i,t)-\Lambda_i(w_i,z_i)]]_{m+1}
\end{align}

We note the following two remarks.
\begin{remark}\label{4remark1}
Let $\sigma=\sum_{p,q}\sigma^{pq}\frac{\partial}{\partial x_p}\wedge \frac{\partial}{\partial x_q}$ with $\sigma_{pq}=-\sigma_{qp}$ and $\phi=\sum_{l,k}\phi^{lk}\frac{\partial}{\partial x_l}\wedge \frac{\partial}{\partial x_k}$ with $\phi_{lk}=-\phi_{kl}$. Then
$[\sigma,\phi]=\sum_{p,q,l,k} [\sigma^{pq}\frac{\partial}{\partial x_p}\wedge\frac{\partial}{\partial x_q},\phi^{lk}\frac{\partial}{\partial x_l}\wedge \frac{\partial}{\partial x_k}]=\sum_{p,q,l,k} [\sigma^{pq}\frac{\partial}{\partial x_p},\phi^{lk}\frac{\partial}{\partial x_l}]\frac{\partial}{\partial x_q}\wedge \frac{\partial}{\partial x_k}-\phi^{lk}[\sigma^{pq}\frac{\partial}{\partial x_p}, \frac{\partial}{\partial x_k}]\frac{\partial}{\partial x_q}\wedge \frac{\partial}{\partial x_l}-\sigma^{pq}[\frac{\partial}{\partial x_q},\phi^{lk}\frac{\partial}{\partial x_l}]\frac{\partial}{\partial x_p}\wedge \frac{\partial}{\partial x_k}=\sum_{p,q,l,k} \sigma^{pq}\frac{\partial \phi^{lk}}{\partial x_p}\frac{\partial}{\partial x_l}\wedge \frac{\partial}{\partial x_q}\wedge \frac{\partial}{\partial x_k}-\phi^{lk}\frac{\partial \sigma^{pq}}{\partial x_l}\frac{\partial}{\partial x_p}\wedge\frac{\partial}{\partial x_q}\wedge \frac{\partial}{\partial x_k}+\phi^{lk}\frac{\partial \sigma^{pq}}{\partial x_k}\frac{\partial}{\partial x_p}\wedge\frac{\partial}{\partial x_q}\wedge \frac{\partial}{\partial x_l}-\sigma^{pq}\frac{\partial \phi^{lk}}{\partial x_q}\frac{\partial}{\partial x_l}\wedge \frac{\partial}{\partial x_p}\wedge \frac{\partial}{\partial x_k}$
\end{remark}

\begin{remark}\label{4remark2}
\begin{align*}
\frac{\partial (\Lambda_{pq}^{im}(x_i,t)-\Lambda^i_{pq}(x_i))}{\partial x_i^s}=\frac{1}{2\pi i}\int_{|\xi-x_i^s|=\delta} \frac{\Lambda_{pq}^{im}(x_i^1,... \overset{s-th}{\xi},...,x_i^{d+r},t)-\Lambda^i_{pq}(x_i^1,...,\overset{s-th}{\xi},...,x_i^{d+r})}{(\xi-x_i^s)^2}d\xi\ll \frac{A(t)}{\delta},x_i\in W_i^\delta
\end{align*}
\end{remark}

By Remark \ref{4remark1}, Remark \ref{4remark2} and $(\ref{hh2})$, we have
\begin{align}\label{bc11}
&[\Lambda_i^m(w_i,z_i,t)-\Lambda_i(w_i,z_i),\Lambda_i^m(w_i,z_i,t)-\Lambda_i(w_i,z_i)]\\
&=[\sum_{p,q=1}^{d+r}(\Lambda_{pq}^{im}(w_i,z_i,t)-\Lambda_{pq}^i(w_i,z_i))\frac{\partial}{\partial x_i^p}\wedge \frac{\partial}{\partial x_i^q},\sum_{p,q=1}^{d+r}(\Lambda_{pq}^{im}(w_i,z_i,t)-\Lambda_{pq}^i(w_i,z_i))\frac{\partial}{\partial x_i^p}\wedge \frac{\partial}{\partial x_i^q}] \notag\\
&\ll 4(d+r)^4 A(t)\cdot \frac{A(t)}{\delta}\ll  \frac{4(d+r)^4 a}{\delta b}A(t)\notag
\end{align}
Hence from $(\ref{bc10})$ and $(\ref{bc11})$, we obtain
\begin{align}\label{zz4}
\frac{1}{2}\Pi_i(w_i,z_i,t)\ll 4(d+r)^4 A(t)\cdot \frac{A(t)}{\delta}\ll  \frac{2(d+r)^4 a}{\delta b}A(t)=e_4A(t)
\end{align}
where $e_4=\frac{2(d+r)^4 a}{\delta b}$.

\begin{notation}
We consider $P^k=\sum_{\alpha,\beta=1}^{d+r} P_{\alpha\beta}^k(x_k)\frac{\partial}{\partial x_k^\alpha}\wedge \frac{\partial}{\partial x_k^\beta}\in \Gamma(W_i,\wedge^2 T_W)$ to be a vector-valued holomorphic function $P_k(x)=(P_{\alpha\beta}^k(x_k))_{\alpha,\beta=1,...,d+r}$ on $W_k$.  On $W_i\cap W_k$, $P^k$ is translated to $\sum_{\alpha,\beta,p,q=1}^{d+r} P_{\alpha\beta}^k(x_k)\frac{\partial h_{ik}^p}{\partial x_k^\alpha} \frac{\partial h_{ik}^q}{\partial x_k^\beta}\frac{\partial}{\partial x_i^p}\wedge \frac{\partial}{\partial x_i^q}$ which corresponds to a vector-valued holomorphic function $(\sum_{\alpha,\beta=1}^{d+r}P_{\alpha\beta}^k(x_k)\frac{\partial h_{ik}^p}{\partial x_k^\alpha} \frac{\partial h_{ik}^q}{\partial x_k^\beta})_{p,q=1,...,d+r}$ on $W_i$ denoted by $H_{ik}(x)P^k(x)$.
\end{notation}

\begin{lemma}\label{nb1}
We can choose the homogenous polynomials $\varphi_{i|m+1}(z,t),\Lambda_{i|m+1}(w,z,t),i\in I$, satisfying
\begin{align*}
\psi_{ik}(z,t)&=F_{ik}(z)\varphi_{k|m+1}(z,t)-\varphi_{i|m+1}(z,t)\\
-G_i^\alpha(z,t)&=[\Lambda_{i|m+1}(w,z,t),w_i^\alpha]|_{w_i=0}-[\varphi_{i|m+1}^\alpha(z,t),\Lambda_0]|_{w_i=0}+\sum_{\beta=1}^r \varphi_{i|m+1}(z,t)T_{i\alpha}^\beta(0,z)\\
\frac{1}{2}\Pi_i(w_i,z_i,t)&=-[\Lambda_{i|m+1}(w,z,t),\Lambda_0]\\
(\Theta_{ik}(w,z,t)&=H_{ik}(w,z)\Lambda_{k|m+1}(w,z,t)-\Lambda_{i|m+1}(w,z,t)=0)
\end{align*}
in such a way that $\varphi_{i|m+1}(z,t)\ll c_4(c_3+e_3+e_4)A(t)$ and $\Lambda_{i|m+1}(w,z,t)\ll c_4(c_3+e_3+e_4)A(t)$ for $(w,z)\in W_i^\delta$, where $c_4$ is independent of $m$.
\end{lemma}

\begin{proof}
For any $0$-cochain $(\pi,\varphi)=(\{\pi_i\},\{\varphi_i\})$, and  $1$-cochain $(\Theta,\psi)\oplus (\Pi,G)=(\{\Theta_{ik}(w,z)\},\{\psi_{ik}(z)\})\oplus (\{\Pi_i(w,z)\},\{\sum_{\alpha=1}^r G_i^\alpha (z)e_i^\alpha\})$, we define the norms of $(\pi, \varphi)$ and $(\Theta, \psi)\oplus (\Pi, G)$ by
\begin{align*}
||(\pi,\varphi)||&:=\max_i\sup_{x=(w,z)\in W_i} |\pi_i(w,z) |+\max_i\sup_{z\in U_i}|\varphi_i(z)|,\\
||(\Theta,\psi)\oplus (\Pi,G)||&:=\max_{i,k}\sup_{x=(w,z)\in W_i\cap W_k} |\Theta_{ik}(w,z)|+\max_i\sup_{(w,z)\in W_i^\delta} |\Pi_i(w,z)|+\max_{i,k}\sup_{z\in U_i\cap U_k}|\psi_{ik}(z)|+\max_{i,\alpha}\sup_{z\in U_i^\delta}|G_i^\alpha(z)|
\end{align*}

The coboundary of $(\pi,\varphi)$ is defined by
\begin{align*}
\tilde{\delta}(\pi, \varphi):=(\{-H_{ik}(x)&\pi_k(x) +\pi_i(x)\},\{-F_{ik}(z)\varphi_k(z)+\varphi_i(z)\})\\
&\oplus (\{-[\pi_i(x),\Lambda_0]\},\{\oplus_{\alpha=1}^r\left([\pi_i(x),w_i^\alpha]|_{w_i=0}-[\varphi_i^\alpha(z),\Lambda_0]|_{w_i=0}+\sum_{\beta=1}^r \varphi_i^\beta(z)T_{i\alpha}^\beta(0,z)\right)e_i^\alpha\})
\end{align*}

For any coboundary of the form $(0,\psi)\oplus (\Pi, G)$, we define
\begin{align*}
\iota((0, \psi)\oplus (\Pi,G))=\inf_{\tilde{\delta}(\pi,\varphi)=(0,\psi)\oplus (\Pi,G)} ||(\pi,\varphi)||
\end{align*}

To prove Lemma \ref{nb1}, it suffices to show the existence of a constant $c$ such that $\iota((0,\psi)\oplus (\Pi,G))\leq c||(0,\psi)\oplus (\Pi,G)||$. Assume that such a constant does not exist. Then we can find a sequence $(0,\psi^{(\mu)})\oplus(\Pi^{(\mu)},G^{(\mu)})$ such that
\begin{align*}
\iota((0,\psi^{(\mu)})\oplus (\Pi^{(\mu)},G^{(\mu)}))=1,\,\,\,\,\,\,||(0,\psi^{(\mu)})\oplus (\Pi^{(\mu)},G^{(\mu)})||<\frac{1}{\mu}
\end{align*}
Then there exists $(\pi^{(\mu)},\varphi^{(\mu)})$ with $\delta(\pi^{(\mu)},\varphi^{(\mu)})=(0,\psi^{(\mu)})\oplus(\Pi^{(\mu)},G^{(\mu)})$ satisfying $||\pi^{(\mu)}||<2$ and $||\varphi^{(\mu)}||<2$. We note that $\pi^{(\mu)}$ is a global bivector field in $H^0(W,\wedge^2 T_W)$. We take a covering $\{\bar{W}_i^\delta\}$ of $W$ and a covering $\{\bar{U}_i^\delta=\bar{W}_i^\delta\cap U_i\}$ of $V$. Since $|\pi_k^\mu(x)|<2$ for $x\in W_k$ and $|\phi_k^{\mu}(z)|<2$ for $z\in U_k=W_k\cap V$, there exists a subsequence $(\pi^{(\mu_1)},\varphi^{(\mu_1)}),(\pi^{(\mu_2)},\varphi^{(\mu_2)}),\cdots, (\pi^{(\mu_v)},\varphi^{(\mu_v)}),\cdots$ of $(\pi^{(\mu)},\varphi^{(\mu)})$ such that $\pi_k^{(\mu_v)}$ converges absolutely and uniformly on $\bar{W}_k^\delta$ for each $k$, and $\varphi_k^{(\mu_v)}$ converges absolutely and uniformly on $\bar{U}_k^\delta$. Since $W$ is compact, we can choose a subsequence that works for all $k$. On the other hand, since $||(0,\psi^{(\mu)})\oplus(\Pi^{(\mu)},G^{(\mu)})||<\frac{1}{\mu}$, we have
\begin{align}
H_{ik}(x)\pi_k^{(\mu)}(x)=\pi_i^{(\mu)}(x),\,\,\,\,x\in W_i\cap W_k,\,\,\,\,\,|F_{ik}(z)\varphi_k^{(\mu)}(z)-\varphi_i^{(\mu)}(z)|<\frac{1}{\mu},\,\,\,z\in U_i\cap U_k \label{zz11}\\
|-[\pi_i^{(\mu)}(x),\Lambda_0]|<\frac{1}{\mu},\,\,\,\,\,x\in W_i^\delta,\,\,\,\,\,|[\pi_i^{(\mu_v)},w_i^\alpha]|_{w_i=0}-[\varphi_i^{\alpha(\mu_v)},\Lambda_0]|_{w_i=0}+\sum_{\beta=1}^r \varphi_i^{\beta(\mu)}T_{i\alpha}^\beta(0,z)|<\frac{1}{\mu},\,\,\,z\in U_i^\delta\notag
\end{align}
Let $\pi_i(x)=\lim_v \pi_i^{(\mu_v)}(x)$ and $\varphi_k(z)=\lim_v \varphi_i^{(\mu_v)}(z)$. Since $\pi_i^{(\mu_v)}$ converges absolutely and uniformly on $W_i^\delta$, $\pi_i$ is holomorphic on $W_i^\delta$. Since $\{W_i^\delta\}$ covers $W$, and  $\pi_i(x)=H_{ik}(x)\pi_k(x)$ for $x\in W_i\cap W_k$, we get $\{\pi_i(x)\}\in H^0(W,\wedge^2 T_W)$. On the other hand, $\varphi_i^{\mu_v}(z)$ converges absolutely and uniformly on $U_i$. Let $\pi:=\{\pi_i(x)\}$ and $\varphi:=\{\varphi_i(z)\}$. Then we have $||(\pi^{(\mu_v)}-\pi,\varphi^{(\mu_v)}-\varphi)||\to 0$ as $n\to \infty$. On the other hand, by $(\ref{zz11})$, $\tilde{\delta}(\pi,\varphi)=(0,0)\oplus (-[\pi,\Lambda_0],\{G_{i,\pi,\varphi}\})$, where $-[\pi,\Lambda_0](x)=0$ for $x\in W_i^\delta$ (hence $-[\pi,\Lambda_0]=0$) and $G_{i,\pi,\varphi}(z)=0$ for $z\in U_i^\delta$ (hence $G_{i,\pi,\varphi}=0$ by identity theorem) so that $\tilde{\delta}(\pi,\varphi)=(0,0)\oplus (0,0)$. Hence we have $\tilde{\delta}(\pi^{(\mu_v)},\varphi^{(\mu_v)})=\tilde{\delta}(\pi^{(\mu_v)}-\pi,\varphi^{(\mu_v)}-\phi)=(0,\psi^{(\mu_v)})\oplus(\Pi^{(\mu_v)},G^{(\mu_v)})$ which contradicts to $\iota((0,\psi^{(\mu_v)})\oplus(\Pi^{(\mu_v)},G^{(\mu_v)}))=1$.
\end{proof}

From $(\ref{zz1}),(\ref{zz2})$ and $(\ref{zz4})$, we have
\begin{align}
 &c_4(c_3+e_3+e_4)=c_4c_3+c_4e_3+c_4e_4\label{zz10}\\
 &=\frac{8c_4c_0c_1r^2a}{b}\left(\frac{2^d}{\delta}+rc_1\right)+c_4\left(\frac{(d+r)^2 2^{r+2}a}{b\delta}+\frac{2(d+r)^2a}{b\delta}\right)+c_4\left(\frac{4(d+r)^2e_1^2r^2a}{b}+\frac{4(r+d)e_1ra}{\delta b}\right)+c_4\frac{2(d+r)^4 a}{\delta b}.\notag
 \end{align}
  From $(\ref{zz5})$, $(\ref{zz3})$, $(\ref{zz10})$ and Lemma \ref{nb1}, by assuming
\begin{align*}
b>8c_4c_0c_1r^2a\left(\frac{2^d}{\delta}+rc_1\right)+c_4\left(\frac{(d+r)^2 2^{r+2}a}{\delta}+\frac{2(d+r)^2a}{\delta}\right)+c_4\left(4(d+r)^2e_1^2r^2a+\frac{4(r+d)e_1ra}{\delta }\right)+c_4\frac{2(d+r)^4 a}{\delta }\\
+\max\{ 2c_1ra,\frac{4c_1ra}{\delta},\frac{2a}{\delta},2e_1ra\},
\end{align*} 
we can choose $\varphi_{i|m+1}(z_i,t)\ll A(t)$ and $\Lambda_{i|m+1}(w_i,z_i,t)\ll A(t)$ and so $\varphi_i(z_i,t)\ll A(t)$, and $\Lambda_i(w_i,z_i,t)-\Lambda_i(w_i,z_i)\ll A(t)$ so that $\varphi_i(z_i,t)$ and $\Lambda_i(w_i,z_i,t)$ converges for $|t|<\frac{1}{lb}$. Then we obtain the equality
\begin{align*}
\varphi_i(g_{ik}(\varphi_k(z_k,t),z_k),t)&=f_{ik}(\varphi_k(z_k,t),z_k),\,\,\,\,\text{for}\,\,|t|<\epsilon,(\varphi_k(z_k,t),z_k)\in W_i^\delta\cap W_k^\delta\\
&[\Lambda_i(w_i,z_i,t),w_i^\alpha-\varphi_i^\alpha(z_i,t)]|_{w_i=\varphi_i(z_i,t)}=0\\
&[\Lambda_i(w_i,z_i,t),\Lambda_i(w_i,z_i,t)]=0
\end{align*}
for a sufficiently small number $\epsilon>0$. This completes the proof of Theorem \ref{43c}.

\end{proof}

In the case $\mathbb{H}^1(W, (\wedge^2 T_W\oplus i_*\mathcal{N}_{V/W})^\bullet)\ne 0$, our proof of Theorem \ref{43c} also proves the following:
\begin{theorem}
If the obstruction $(0,\psi_{m+1}(t))\oplus( \frac{1}{2}\Pi_{m+1}(t),G_{m+1}(t))$ vanishes for each integer $m\geq 1$, then there exists an extended Poisson analytic family $\mathcal{V}$ of compact holomorphic Poisson submanifolds $V_t,t\in M_1$, of $(W,\Lambda_t)$ such that $V_0=V\subset (W,\Lambda_0)$ and the characteristic map
\begin{align*}
\sigma_0:T_0(M_1)&\to \mathbb{H}^0(W,(\wedge^2 T_W\oplus i_* \mathcal{N}_{V/W})^\bullet)\\
\frac{\partial}{\partial t}&\mapsto \left(\frac{\partial (\Lambda_t, V_t)}{\partial t}\right)_{t=0}
\end{align*}
is an isomorphism.
\end{theorem}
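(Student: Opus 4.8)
The plan is to observe that the entire argument proving Theorem \ref{43c} carries over verbatim, with the single global hypothesis $\mathbb{H}^1(W,(\wedge^2 T_W\oplus i_*\mathcal{N}_{V/W})^\bullet)=0$ replaced, order by order, by the assumed vanishing of the obstruction. First I would run the same inductive construction of the formal power series $\varphi_i^m(z_i,t)$ and $\Lambda^m(t)$. In the base case I choose the first-order terms $\varphi_{i|1}^\alpha(z_i,t)=\sum_\rho t_\rho\gamma_{\rho i}^\alpha(z_i)$ and $\Lambda_1(t)=\sum_\rho t_\rho\lambda^\rho$ from a basis $\{\eta_\rho\}$ of $\mathbb{H}^0(W,(\wedge^2 T_W\oplus i_*\mathcal{N}_{V/W})^\bullet)$, exactly as in the step $m=1$ of Theorem \ref{43c}. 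Since this construction forces $\frac{\partial\varphi_i}{\partial t_\rho}|_{t=0}=\gamma_{\rho i}$ and $\frac{\partial\Lambda}{\partial t_\rho}|_{t=0}=\lambda^\rho$, the characteristic map $\sigma_0$ sends $\frac{\partial}{\partial t_\rho}$ to $\eta_\rho$ and is therefore an isomorphism; this is identical to the corresponding conclusion in Theorem \ref{43c}.

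At the inductive step I would recall that the cochain $(\{(0)\},\{\psi_{ik}\})\oplus(-\tfrac12\Pi(t),\{G_i^\alpha\})$ built from $(\ref{43p})$, $(\ref{43q})$, $(\ref{433a})$ is a $1$-cocycle in the \v{C}ech resolution of $(\wedge^2 T_W\oplus i_*\mathcal{N}_{V/W})^\bullet$. The verification of this cocycle condition, namely $(\ref{43z})$, $(\ref{43u})$, $(\ref{43x})$, $(\ref{43y})$, uses only the graded Jacobi identity together with $(\ref{3d})$ and $(\ref{43s})$, and is completely independent of any cohomological assumption. In Theorem \ref{43c} the vanishing of $\mathbb{H}^1$ was invoked precisely to guarantee that this cocycle is a coboundary; here I would instead apply the hypothesis directly, that the $m$-th obstruction $(0,\psi_{m+1}(t))\oplus(\tfrac12\Pi_{m+1}(t),G_{m+1}(t))$ vanishes as a class in $\mathbb{H}^1$, so the cocycle is a coboundary. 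This yields $\varphi_{i|m+1}^\alpha(z_i,t)$ and $\Lambda_{i|m+1}(t)$ solving the coboundary equations, and the same algebra leading to $(\ref{mmmn1})$ and the display after it shows that $\varphi_i^{m+1}$ and $\Lambda^{m+1}$ satisfy $(\ref{43d})_{m+1}$, $(\ref{43e})_{m+1}$ and $(\ref{43f})_{m+1}$.

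For convergence I would reuse the estimates of the convergence subsection without change. The crucial point is that Lemma \ref{nb1} supplies a constant $c_4$, independent of $m$, bounding $\iota$ by the norm for \emph{every} coboundary of the shape $(0,\psi)\oplus(\Pi,G)$; its proof is a normal-families/compactness argument over the compact $W$ and $V$ that never uses $\mathbb{H}^1=0$, only that the input is a coboundary. Since by hypothesis each obstruction is a coboundary, Lemma \ref{nb1} applies at every order with the same $c_4$, giving $\varphi_{i|m+1}(z_i,t)\ll c_4(c_3+e_3+e_4)A(t)$ and $\Lambda_{i|m+1}(w_i,z_i,t)\ll c_4(c_3+e_3+e_4)A(t)$. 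Choosing $b$ large as dictated by $(\ref{zz10})$ then produces the majorants $\varphi_i(z_i,t)\ll A(t)$ and $\Lambda_i(w_i,z_i,t)-\Lambda_i(w_i,z_i)\ll A(t)$, so the series converge for $|t|<\tfrac{1}{lb}$ and furnish the desired extended Poisson analytic family.

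The only genuine subtlety, and the step I expect to require the most care, is confirming that trading the global hypothesis $\mathbb{H}^1=0$ for the step-by-step vanishing does not disturb the \emph{uniformity} of the convergence estimates. This is exactly why it matters that Lemma \ref{nb1} is stated for coboundaries with a constant independent of $m$: the uniform bound is what allows the majorant recursion to close identically, and it remains available at every order precisely because each obstruction is, by assumption, a coboundary. Once this is noted, the remainder of the convergence argument is word-for-word that of Theorem \ref{43c}, and the proof is complete.
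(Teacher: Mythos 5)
Your proposal is correct and is essentially the paper's own argument: the paper proves this statement simply by remarking that the proof of Theorem \ref{43c} goes through verbatim, since the hypothesis $\mathbb{H}^1(W,(\wedge^2 T_W\oplus i_*\mathcal{N}_{V/W})^\bullet)=0$ was used only to make each obstruction cocycle a coboundary, which your order-by-order vanishing hypothesis supplies directly. Your added observation that Lemma \ref{nb1} applies to any coboundary of the form $(0,\psi)\oplus(\Pi,G)$ with a constant $c_4$ independent of $m$, so the convergence estimates remain uniform, is exactly the point that makes the paper's remark legitimate.
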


\subsection{Maximal families: Theorem of completeness}

\begin{definition}\label{025}
Let $\mathcal{V}\subset (W\times M,\Lambda)\xrightarrow{\omega} M$ be an extended Poisson analytic family of compact holomorphic Poisson submanifolds of $W$  so that $\omega^{-1}(t)=V_t$ is a compact holomorphic Poisson submanifold of $(W,\Lambda_t),t\in M$ and let $t_0$ be a point on $M$. We say that $\mathcal{V}\xrightarrow{\omega} M$ is maximal at $t_0$ if, for any extended Poisson analytic family $\mathcal{V}'\subset (W\times M',\Lambda')\xrightarrow{\omega'} M'$ of compact holomorphic Poisson submanifolds of $W$ such that $\Lambda_{t_0}=\Lambda'_{t_0'}$ and $\omega^{-1}(t_0)=\omega'^{-1}(t_0'),t_0'\in M'$, there exists a holomorphic map $h$ of a neighborhood $N'$ of $t_0'$ on $M'$ into $M$ which maps $t_0'$ to $t_0$ such that $\omega'^{-1}(t')=\omega^{-1}(h(t'))$ and $\Lambda'_{t'}=\Lambda_{h(t')}$ for $t'\in N'$. We note that if we set a holomorphic map $\hat{h}:W\times N'\to W\times M$ defined by $(w,t')\to (w,h(t'))$, then $\hat{h}$ is a Poisson map $(W\times N',\Lambda')\to (W\times M ,\Lambda)$ and the restriction map of $\hat{h}$ to $\mathcal{V}'|_{N'}=\omega'^{-1}(N')\subset (W\times N',\Lambda')$ defines a Poisson map $\mathcal{V}'|_{N'}\to \mathcal{V}$ so that $\mathcal{V}'|_{N'}$ is the family induced from $\mathcal{V}$ by $h$, which means $\mathcal{V}\xrightarrow{\omega} M$ is complete at $t_0$.  \end{definition}

\begin{theorem}[theorem of completeness]\label{3.5b}
Let $\mathcal{V}\subset(W\times M_1,\Lambda)$ be an extended Poisson analytic family of compact holomorphic Poisson submanifolds $V_t$ of $(W,\Lambda_t)$. If the characteristic map
\begin{align*}
\rho_0:T_0(M_1)&\to \mathbb{H}^0(W, (\wedge^2 T_W\oplus i_*\mathcal{N}_{W/V_0})^\bullet)\\
\frac{\partial}{\partial t}&\mapsto \left(\frac{\partial (\Lambda_t,V_t)}{\partial t}\right)_{t=0}
\end{align*}
is bijective, then the family $\mathcal{V}$ is maximal at $t=0$.
\end{theorem}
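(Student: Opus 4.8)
The plan is to mirror the proof of Theorem \ref{3.5a}, carrying the deformation of the Poisson structure alongside the deformation of the submanifold at every step. Let $\mathcal{V}'\subset(W\times M',\Lambda')$ be an arbitrary extended Poisson analytic family of compact holomorphic Poisson submanifolds $V_s'$ of $(W,\Lambda_s')$ with $M'=\{s\in\mathbb{C}^q\mid |s|<1\}$, $\Lambda_0'=\Lambda_0$ and $V_0'=V_0$. Keeping the notation of subsection \ref{027}, I write $V_t$ as $w_i=\varphi_i(z_i,t)$ with local Poisson structure $\Lambda_i(w_i,z_i,t)$, and $V_s'$ as $w_i=\theta_i(z_i,s)$ with local Poisson structure $\Lambda_i'(w_i,z_i,s)$. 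I would construct a holomorphic map $h\colon s\mapsto t=h(s)$ on a neighborhood $N'$ of $0$ with $h(0)=0$ realizing both $V_s'=V_{h(s)}$ and $\Lambda_s'=\Lambda_{h(s)}$; these are equivalent to the pair of identities
\begin{align*}
\theta_i(z_i,s)=\varphi_i(z_i,h(s)),\qquad \Lambda'(s)=\Lambda(h(s)).
\end{align*}
Following Notation \ref{notation1}, I solve the system of congruences $\theta_i(z_i,s)\equiv_m\varphi_i(z_i,h^m(s))$ and $\Lambda'(s)\equiv_m\Lambda(h^m(s))$ by induction on $m$, at each stage producing a homogeneous increment $h_{m+1}(s)$ of degree $m+1$. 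For $m=1$ the infinitesimal deformation of $\mathcal{V}'$ at $s=0$ furnishes a linear form in $s$ with coefficients in $\mathbb{H}^0(W,(\wedge^2 T_W\oplus i_*\mathcal{N}_{V_0/W})^\bullet)$, and since $\rho_0$ is bijective there is a unique linear $h^1(s)$ matching it.

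For the inductive step, assuming $h^m(s)$ solves the congruences to order $m$, I form the degree-$(m+1)$ coefficients
\begin{align*}
\omega_i(z_i,s):=[\theta_i(z_i,s)-\varphi_i(z_i,h^m(s))]_{m+1},\qquad \pi_i(w_i,z_i,s):=[\Lambda_i'(w_i,z_i,s)-\Lambda_i(w_i,z_i,h^m(s))]_{m+1}.
\end{align*}
The central claim is that $(\{\pi_i\},\{\omega_i\})$ is a homogeneous polynomial of degree $m+1$ in $s$ whose coefficients lie in $\mathbb{H}^0(W,(\wedge^2 T_W\oplus i_*\mathcal{N}_{V_0/W})^\bullet)$. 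Verifying this claim is the main obstacle, since it is precisely where the bivector-field part and the normal-bundle part must be glued into a single $\tilde{\nabla}$-closed global section. The four conditions to check are: $\pi_i=H_{ik}\pi_k$, because $\Lambda'(s)$ and $\Lambda(h^m(s))$ are both globally defined so their difference patches by the transition of $\wedge^2 T_W$; $\omega_i=F_{ik}\cdot\omega_k$ on $U_i\cap U_k$, exactly as in \cite{Kod62} p.160 and the proof of Theorem \ref{3.5a}; $-[\pi_i,\Lambda_0]=0$, obtained by subtracting the two integrability identities $[\Lambda'(s),\Lambda'(s)]=0$ and $[\Lambda(h^m(s)),\Lambda(h^m(s))]=0$ and reading off the degree-$(m+1)$ part, which yields $2[\pi_i,\Lambda_0]\equiv_{m+1}0$; and finally the coupled relation
\begin{align*}
[\pi_i,w_i^\alpha]|_{w_i=0}-[\omega_i^\alpha,\Lambda_0]|_{w_i=0}+\sum_{\beta=1}^r\omega_i^\beta T_{i\alpha}^\beta(0,z_i)=0.
\end{align*}

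This last identity is the crux of the argument. I would obtain it by differentiating the Poisson-submanifold defining relations $[\Lambda'(s),w_i^\alpha-\theta_i^\alpha(z_i,s)]=\sum_\beta(w_i^\beta-\theta_i^\beta)P_{i\alpha}^\beta$ and $[\Lambda(t),w_i^\alpha-\varphi_i^\alpha(z_i,t)]=\sum_\beta(w_i^\beta-\varphi_i^\beta)T_{i\alpha}^\beta$, subtracting, restricting to $w_i=\theta_i(z_i,s)$, and extracting the degree-$(m+1)$ coefficient; since $\theta_i$ and $\omega_i$ vanish in the relevant lower orders, the restriction to $w_i=\theta_i$ agrees with the restriction to $w_i=0$ at order $m+1$, and the bracket with $\Lambda(h^m(s))$ collapses to a bracket with $\Lambda_0$. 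This computation is the finite-order analogue of the identities $(\ref{ss4})$--$(\ref{ss9})$ that define the characteristic map $\sigma_t$ in subsection \ref{027}, so no new phenomenon beyond careful order bookkeeping arises.

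Once the claim is established, surjectivity of $\rho_0$ finishes the induction: since $\{(\lambda^\rho,\gamma_{\rho\bullet})\}_{\rho=1}^l$ form a basis of $\mathbb{H}^0(W,(\wedge^2 T_W\oplus i_*\mathcal{N}_{V_0/W})^\bullet)$, there is a unique homogeneous $h_{m+1}(s)$ of degree $m+1$ with $\omega_i(z_i,s)=\varphi_{i|1}(z_i,h_{m+1}(s))$ and $\{\pi_i\}=\Lambda_1(h_{m+1}(s))$, where $\varphi_{i|1}$ and $\Lambda_1$ are the first-order parts introduced in the proof of Theorem \ref{43c}. Setting $h^{m+1}=h^m+h_{m+1}$ then gives $\varphi_i(z_i,h^{m+1}(s))\equiv_{m+1}\theta_i(z_i,s)$ and $\Lambda(h^{m+1}(s))\equiv_{m+1}\Lambda'(s)$, completing the inductive construction. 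The convergence of the resulting power series $h(s)$ follows from the same majorant estimates as in \cite{Kod62} p.160--161 invoked for Theorem \ref{3.5a}, so that $h$ is holomorphic on a neighborhood $N'$ of $0$; the associated $\hat{h}$ then induces the desired Poisson map $\mathcal{V}'|_{N'}\to\mathcal{V}$, which establishes maximality of $\mathcal{V}$ at $t=0$.
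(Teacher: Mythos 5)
Your proposal is correct and follows essentially the same route as the paper's own proof of Theorem \ref{3.5b}: the same induction on the congruences $\theta_i(z_i,s)\equiv_m\varphi_i(z_i,h^m(s))$ and $\Lambda'(s)\equiv_m\Lambda(h^m(s))$, the same degree-$(m+1)$ residues $(\{\pi_i\},\{\omega_i\})$ (the paper's $(\{B_i\},\{\omega_i\})$), and the same four identities $(\ref{pp3})$--$(\ref{pp6})$ obtained exactly as you describe, i.e.\ gluing of the two global objects, subtraction of the integrability identities giving $2[\pi_i,\Lambda_0]\equiv_{m+1}0$, and the coupled relation extracted from the two Poisson-submanifold defining relations restricted to $w_i=\theta_i(z_i,s)$ at order $m+1$. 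The conclusion via bijectivity of the characteristic map and Kodaira's convergence estimates also matches the paper, so there is no gap.
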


\begin{proof}
Consider an arbitrary extended Poisson analytic family $\mathcal{V}'\subset(W\times M, \Lambda')$ of compact holomorphic Poisson submanifolds $V_s',s\in M'$ of $(W,\Lambda_s')$, where $M'=\{s=(s_1,...,s_q)\in \mathbb{C}^q||s|<1\}$. We will construct a holomorphic map $h:s\to t=h(s)$ of a neighborhood $N'$ of $0$ in $M'$ into $M_1$ with $h(0)=0$, $V_s'=V_{h(s)}$ and $\Lambda_s'=\Lambda_{h(s)}$.

We keep the notations in subsection \ref{027} so that the holomorphic Poisson submanifold $V_t$ of $(W,\Lambda_t)$ is defined on each domain $W_i,i\in I$  by the equation $w_i=\varphi_i(z_i,t)$ and satisfy
\begin{align}\label{mm30}
[\Lambda_i(w_i,z_i,t),w_i^\alpha-\varphi_i^\alpha(z_i,t)]=\sum_{\beta=1}^r (w_i^\beta-\varphi_i^\beta(z_i,t))T_{i\alpha}^\beta(w_i,z_i,t)
\end{align}
We may assume that $V_s'$ is defined in each domain $W_i, i\in I$ by $w_i=\theta_i(z_i,s)$ where $\theta_i(z_i,s)$ is a vector-valued holomorphic function of $z_i$ and $s, |z_i|<1,|s|<1$, and let $\Lambda_i'(w_i,z_i,s)$ be the Poisson structure on $W_i\times M'$ induced from $\Lambda'$. Then we have
\begin{align}\label{mm31}
[\Lambda_i'(w_i,z_i,s), w_i^\alpha-\theta_i^\alpha(z_i,s)]=\sum_{\beta=1}^r (w_i^\beta-\theta_i^\beta(z_i,s))P_{i\alpha}^\beta(w_i,z_i,s)
\end{align}
for some $P_{i\alpha}^\beta(w_i,z_i,s)$ which are power series in $s$ with coefficients in $\Gamma(W_i, T_W)$ and $P_{i\alpha}^\beta(0,z_i,0)=T_{i\alpha}^\beta(0,z_i)$. Then $V_s'=V_{h(s)}$ and $\Lambda_s'=\Lambda_{h(s)}$ are equivalent to the simultaneous equations
\begin{align}\label{pp1}
\theta_i(z_i,s)=\varphi_i(z_i, h(s)),\,\,\,\,\, \Lambda_i'(w_i,z_i,s)=\Lambda_i(w_i,z_i,h(s)),\,\,\,\,\,i\in I, \,\,\,m=1,2,3,...
\end{align}
Recall Notation \ref{notation1} and let us write $h(s)=h_1(s)+h_2(s)+\cdots, \varphi_i(z_i,t)=\varphi_{i|1}(z_i,t)+\varphi_{i|2}(z_i,t)+\cdots,\theta_i(z_i,s)=\theta_{i|1}(z_i,s)+\theta_{i|2}(z_i,s)+\cdots, \Lambda_i(w_i,z_i,t)=\Lambda_i(w_i,z_i)+\Lambda_{i|1}(w_i,z_i,t)+\Lambda_{i|2}(w_i,z_i,t)+\cdots$, and $\Lambda_i'(w_i,z_i,s)=\Lambda_i(w_i,z_i)+\Lambda_{i|1}'(w_i,z_i,s)+\Lambda_{i|2}'(w_i,z_i,s)+\cdots$. We will construct $h(s)$ satisfying $(\ref{pp1})$ by solving the system of congruences by induction on $m$
\begin{align}\label{pp2}
\theta_i(z_i,s)\equiv_m \varphi_i(z_i, h^m(s)),\,\,\,\,\, \Lambda_i'(w_i,z_i,s)\equiv_m \Lambda_i(w_i,z_i, h^m(s)),\,\,\,\,\,i\in I, \,\,\,m=1,2,3,\cdots
\end{align}
Since $\sigma_0:T_0(M_1)\to \mathbb{H}^0(W,(\wedge^2 T_W\oplus i_*\mathcal{N}_{V_0/W})^\bullet)$ is an isomorphism by the hypothesis, any element $(\{B_i(w_i,z_i)\},\{\omega_i(z_i)\})\in \mathbb{H}^0(W, (\wedge^2 T_W\oplus i_*\mathcal{N}_{V_0/W})^\bullet)$ can be written uniquely in the form
\begin{align*}
\omega_i(z)=\varphi_{i|1}(z_i,u)=\sum_{\alpha=1}^l \frac{\partial \varphi_i (z_i,t)}{\partial t_\alpha}|_{t=0}u^\alpha,\,\,\,\,\, B_i(w_i,z_i)=\Lambda_{i|1}(w_i,z_i,u)=\sum_{\alpha=1}^l\frac{\partial \Lambda_i(w_i,z_i,t)}{\partial t_\alpha}|_{t=0}u^\alpha
\end{align*}
for some constant $u=(u^1,...,u^l)$. Hence since $(\{\Lambda_{i|1}'(w_i,z_i,s)\},\{\theta_{i|1}(z_i,s)\}),i\in I$ represents a linear form in $s$ whose coefficients are in $\mathbb{H}^0(W, (\wedge^2 T_W\oplus i_*\mathcal{N}_{V_0/W})^\bullet)$, there exists a linear vector-valued function $h^1(s)$ of $s$ such that $\theta_{i|1}(z_i,s)=\varphi_{i|1}(z_i, h^1(s))$, and $\Lambda'_{i|1}(w_i,z_i,s)=\Lambda_{i|1}(w_i,z_i, h^1(s))$. This shows $(\ref{pp2})_1$. Now suppose that we have already constructed $h^m(s)$ satisfying $(\ref{pp2})_m$. We will find $h_{m+1}(s)$ such that $h^{m+1}(s)=h^m(s)+h_{m+1}(s)$ satisfy $(\ref{pp2})_{m+1}$. Let $\omega_i(z_i,s)=[\theta_i(z_i,s)-\varphi_i(z_i, h^m(s))]_{m+1}$, and $B_i(w_i,z_i,s)=[\Lambda_i'(w_i,z_i,s)-\Lambda_i(w_i,z_i, h^m(s))]_{m+1}$. We claim that 
\begin{align}
&\omega_i(z_i,s)=F_{ik}(z)\cdot \omega_k(z_k,s) \label{pp3}\\
&[ B_i(w_i,z_i,s),w_i^\alpha]|_{w_i=0}-[\omega_i^\alpha(z_i,s),\Lambda_0]|_{w_i=0}+\sum_{\beta=1}^r \omega_i^\beta(z_i,s) T_{i\alpha}^\beta(z_i)=0 \label{pp4}\\
&B_i(w_i,z_i,s)-B_j(w_j,z_j,s)=0 \label{pp5}\\
&-[B_i(w_i,z_i,s),\Lambda_0]=0\label{pp6}
\end{align}
(\ref{pp3}) follows from \cite{Kod62} p.160. Since $\Lambda_i'(w_i,z_i,s)=\Lambda_j'(w_j,z_j,s)$ and $\Lambda_i(w_i,z_i,t)=\Lambda_j(w_j,z_j,t)$, we get $(\ref{pp5})$. Since $2[\Lambda_0, B_i(w_i,z_i,s)]\equiv_{m+1} [\Lambda_i'(w_i,z_i,s)+\Lambda_i(w_i,z_i,h^m(s)), \Lambda_i'(w_i,z_i,s)-\Lambda_i(w_i,z_i,h^m(s))]=0$, we get $(\ref{pp6})$. 
It remains to show (\ref{pp4}). From $(\ref{mm30})$ and (\ref{mm31}), we have
\begin{align*}
&[\Lambda_0, \omega_i^\alpha(z_i,s)]|_{w_i=0}\equiv_{m+1}[\Lambda_i'(w_i,z_i,s),\omega_i^\alpha(z_i,s)]|_{w_i=\theta_i(z_i,s)}\\
&\equiv_{m+1} [\Lambda_i'(w_i,z_i,s),\theta_i^\alpha(z_i,s)-w_i^\alpha+w_i^\alpha-\varphi_i^\alpha(z_i, h^m(s))]|_{w_i=\theta_i(z_i,s)}\\
&\equiv_{m+1} -[\Lambda_i'(w_i,z_i,s),w_i^\alpha-\theta_i^\alpha(z_i,s)]|_{w_i=\theta_i(z_i,s)}+[\Lambda_i'(w_i,z_i,s), w_i^\alpha- \varphi_i^\alpha(z_i,h^m(s))]|_{w_i=\theta_i(z_i,s)}\\
&\equiv_{m+1} [B_i(w_i,z_i,s), w_i^\alpha- \varphi_i^\alpha(z_i, h^m(s))]|_{w_i=\theta_i(z_i,s)}+[\Lambda_i(w_i,z_i,h^m(s)),w_i^\alpha-\varphi_i^\alpha(z_i,h^m(s))]|_{w_i=\theta_i(z_i,s)}\\
&\equiv_{m+1} [B_i(w_i,z_i,s), w_i^\alpha]|_{w_i=0}+\sum_{\beta=1}^r (\theta_i^\beta(z_i,s)-\varphi_i^\beta(z_i,h(s)))T_{i\alpha}^\beta(\theta_i(z_i,s),z_i,h(s))\\
&\equiv_{m+1} [B_i(w_i,z_i,s), w_i^\alpha]|_{w_i=0}+\sum_{\beta=1}^r \omega_i^\beta(z_i,s)T_{i\alpha}^\beta(0,z_i)
\end{align*}
This proves $(\ref{pp4})$. From $(\ref{pp3}),(\ref{pp4}),(\ref{pp5})$, and $(\ref{pp6})$, $(\{B_i(w_i,z_i,s)\},\{\omega_i(z_i,s) \})$ is a homogenous polynomial  of degree $m+1$ in $s$ with coefficients in $\mathbb{H}^0(W,(\wedge^2 T_W\oplus i_*\mathcal{N}_{V_0/W})^\bullet)$ so that there exists a homogenous polynomial $h_{m+1}(s)$ of degree $m+1$ in $s$ such that $\omega_i(z_i,s)=\varphi_{i|1}(z_i, h_{m+1}(s))$, and $B_i(w_i,z_i,s)=\Lambda_{i|1}(w_i,z_i,h_{m+1}(s))$ so that we have $\varphi_i(z_i, h^{m+1}(s))\equiv_{m+1} \varphi_i(z_i,h^m(s))+\omega_i(z_i,s)\equiv_{m+1} \theta_i(z_i,s)$, and $\Lambda_i(w_i,z_i,h^{m+1}(s))\equiv_{m+1}\Lambda_i(w_i,z_i, h^m(s))+B_i(w_i,z_i,s)\equiv_{m+1} \Lambda_i'(w_i,z_i,s)$, which completes the inductive construction of $h^{m+1}(s)$ satisfying $(\ref{pp2})_{m+1}$.

\subsection{Proof of convergence}\

The convergence of the power series $h(s)$ follows from the same arguments in \cite{Kod62}. This completes the proof of Theorem \ref{3.5b}.

\end{proof}

\begin{example}\label{example51}
We describe holomorphic Poisson structures on rational ruled surfaces $F_m=\mathbb{P}(\mathcal{O}_{\mathbb{P}_\mathbb{C}^1}(m)\oplus \mathcal{O}_{ \mathbb{P}_\mathbb{C}^1}),m\geq0$ explictly.
$F_m$ can be represented in the following way. Take two copies of $U_i\times \mathbb{P}_\mathbb{C}^1,i=1,2$, where $U_i=\mathbb{C}$ and write the coordinates as $(z,[\xi_0,\xi_1])$ and $(z',[\xi_0',\xi_1'])$. Patch $U_i\times \mathbb{P}_\mathbb{C}^1,i=1,2$ by the relation $z'=\frac{1}{z}$ and $[\xi_0',\xi_1']=[\xi_0,z^m \xi_1]$. We set $\xi=\frac{\xi_1}{\xi_0}$ and $\xi'=\frac{\xi_1'}{\xi_0'}$. Then we have $\frac{\partial}{\partial z'}=-z^2\frac{\partial}{\partial z}+mz\xi\frac{\partial}{\partial \xi}$ and $\frac{\partial}{\partial \xi'}=z^{-m}\frac{\partial}{\partial \xi}$ so that $\frac{\partial}{\partial z'}\wedge \frac{\partial}{\partial \xi'}=-z^{-m+2}\frac{\partial}{\partial z}\wedge \frac{\partial}{\partial \xi}$. We note that  a holomorphic bivector field on $U_1\times \mathbb{P}^1$ is of the form $(d(z)+e(z)\xi+f(z)\xi^2)\frac{\partial}{\partial z}\wedge \frac{\partial}{\partial \xi}$, and a holomorphic bivector field on $U_2\times \mathbb{P}_\mathbb{C}^1$ is of the form $(p(z')+q(z')\xi'+r(z')\xi'^2)\frac{\partial}{\partial z'}\wedge \frac{\partial}{\partial \xi'}$, where $d(z),e(z),f(z)$ are entire functions of $z$ and  $p(z'),q(z'),r(z')$ are entire functions of $z'$. For a holomorphic bivector field on $\mathbb{F}_m$ which has the form on each $U_i\times \mathbb{P}_\mathbb{C}^1$, we must have $d(z)+e(z)\xi+f(z)\xi^2=-(p(\frac{1}{z})+q(\frac{1}{z})z^m \xi+r(\frac{1}{z})z^{2m}\xi^2)z^{-m+2}=-p(\frac{1}{z})z^{-m+2}-q(\frac{1}{z})z^{2}\xi-r(\frac{1}{z})z^{m+2}\xi^2$
so that
\begin{align*}
d(z)=-p\left(\frac{1}{z}\right)z^{-m+2},\,\,\,e(z)=-q\left(\frac{1}{z}\right)z^{2},\,\,\, f(z)=-r\left(\frac{1}{z}\right)z^{m+2}
\end{align*}
\begin{enumerate}
\item In the case of $m=0$, we have $d(z)=a_0+a_1z+a_2z^2 ,e(z)=b_0+b_1z+b_2z^2 ,f(z)=c_0+c_1z+c_2z^2$ so that $H^0(F_0,\wedge^2 T_{F_0})\cong \mathbb{C}^9$.
\item In the case of $m=1$, we have $d(z)=a_0+a_1z, e(z)=b_0+b_1z+b_2z^2, f(z)=c_0+c_1z+c_2z^2+c_3z^3$ so that $H^0(F_1,\wedge^2 T_{F_1})\cong \mathbb{C}^9$.
\item In the case of $m=2$, we have $d(z)=a_0,e(z)=b_0+b_1z+b_2z^2,f(z)=c_0+c_1z+c_2z^2+c_3z^3+c_4z^4$ so that $H^0(F_2,\wedge^2 T_{F_2})\cong \mathbb{C}^9$.
\item In the case of $m\geq 3$, we have $d(z)=0,e(z)=b_0+b_1z+b_2z^2,f(z)=c_0+c_1z+\cdots c_{m+2}z^{m+2}$ so that $H^0(F_m,\wedge^2 T_{F_m})\cong \mathbb{C}^{m+6}$. 
\end{enumerate}

\end{example}
 In the sequel, we keep the notations in Example \ref{example51}.
\begin{example}
Let us consider a rational ruled surface $F_0\cong \mathbb{P}_\mathbb{C}^1\times \mathbb{P}_\mathbb{C}^1$. Let us consider the Poisson structure $\Lambda_0=\xi\frac{\partial}{\partial z}\wedge \frac{\partial}{\partial \xi}$. Then $\xi=0$ defines a holomorphic Poisson submanifold on $F_0$ which is a nonsingular rational curve $\cong \mathbb{P}_\mathbb{C}^1$ and the normal bundle is $\mathcal{N}_{\mathbb{P}_\mathbb{C}^1/F_0}\cong \mathcal{O}_{\mathbb{P}_\mathbb{C}^1}$. We compute $\mathbb{H}^0(F_0,(\wedge^2 T_{F_0}\oplus i_*\mathcal{N}_{\mathbb{P}_\mathbb{C}^1/F_0})^\bullet)$ which is the kernel of $\tilde{\nabla}:H^0(F_0,\wedge^2 T_{F_0})\oplus\mathbb{C}\to H^0(\mathbb{P}_\mathbb{C}^1, T_{F_0}|_{\mathbb{P}^1})$. Since $[\Lambda_0, \xi]=-\xi\frac{\partial}{\partial z}$, we have as the image of $\tilde{\nabla}$,
\begin{align*}
[\left(a_0+a_1z+a_2z^2+(b_0+b_1z+b_2z^2)\xi+(c_0+c_1z+c_2z^2)\xi^2\right)\frac{\partial}{\partial z}\wedge \frac{\partial}{\partial \xi},\xi]|_{\xi=0}-d\frac{\partial}{\partial z}\\
=-(a_0+a_1z+a_2z^2)\frac{\partial}{\partial z}-d\frac{\partial}{\partial z}=-(a_0+d+a_1z+a_2z^2)\frac{\partial}{\partial z}
\end{align*}
where $d$ is a constant.
Hence $\dim \mathbb{H}^0(F_0,(\wedge^2 T_{F_0}\oplus i_*(\mathcal{N}_{\mathbb{P}_\mathbb{C}^1/F_0}))=7$. 
\begin{align*}
&[(-d+(1+b_0+b_1z+b_2z^2)\xi+(c_0+c_1z+c_2z^2)\xi^2)\frac{\partial}{\partial z}\wedge \frac{\partial}{\partial \xi} ,\xi-d]|_{\xi=d}\\
&=-(b_0d+c_0d^2+(b_1d+c_1d^2)z+(b_2d+c_2d^2)z^2)\frac{\partial}{\partial z}=\tilde{\nabla}((b_0d+c_0d^2+(b_1d+c_1d^2)z+(b_2d+c_2d^2)z^2)\frac{\partial}{\partial z}\wedge \frac{\partial}{\partial \xi},0))
\end{align*}
so that obstruction vanishes and an extended Poisson analytic family
\begin{align*}
\mathcal{V}\subset (F_0\times \mathbb{C}^7, (-d-(b_0d+c_0d^2)-(b_1d+c_1d^2)z-(b_2d+c_2d^2)z^2+(1+b_0+b_1z+b_2z^2)\xi+(c_0+c_1z+c_2z^2)\xi^2)\frac{\partial}{\partial z}\wedge \frac{\partial}{\partial \xi})
\end{align*}
defined by $\xi=d$ has the characteristic map
\begin{align*}
T_0\mathbb{C}^7&\to \mathbb{H}^0(F_0,(\wedge^2 T_{F_0}\oplus i_*(\mathcal{N}_{\mathbb{P}_\mathbb{C}^1/F_0}))\\
(a_0,...,a_6)&\mapsto ((-a_0+(a_1+a_2z+a_3z^2)\xi+(a_4+a_5z+a_6z^2)\xi^2)\frac{\partial}{\partial z}\wedge \frac{\partial}{\partial \xi} ,a_0)
\end{align*}
which is an isomorphism so that $\mathcal{V}$ is complete.
\end{example}

\begin{example}
Let us consider a rational ruled surface $F_1$. Let us consider the Poisson structure $\Lambda_0=\xi\frac{\partial}{\partial z}\wedge \frac{\partial}{\partial \xi}$. Then $\xi=0$ defines a holomorphic Poisson submanifold on $F_1$ which is a nonsingular rational curve $\cong \mathbb{P}^1_\mathbb{C}$ and the normal bundle is $\mathcal{N}_{\mathbb{P}_\mathbb{C}^1/F_1}\cong \mathcal{O}_{\mathbb{P}_\mathbb{C}^1}(-1)$ so that $H^0(\mathbb{P}_\mathbb{C}^1,\mathcal{N}_{\mathbb{P}_\mathbb{C}^1/F_1})=0$. We compute $\mathbb{H}^0(F_1, (\wedge^2 T_{F_1}\oplus i_*\mathcal{N}_{\mathbb{P}_\mathbb{C}^1/F_1})^\bullet)$ which is the kernel of $\tilde{\nabla}:H^0(F_1,\wedge^2 T_{F_1})\to H^0(\mathbb{P}_\mathbb{C}^1, T_{F_1}|_{\mathbb{P}_\mathbb{C}^1}(-1))$. Since $[\Lambda_0, \xi]=-\xi\frac{\partial}{\partial z}$, we have as the image of $\tilde{\nabla}$,
\begin{align*}
[\left(a_0+a_1z+(b_0+b_1z+b_2z^2)\xi+(c_0+c_1z+c_2z^2+c_3z^3)\xi^2\right)\frac{\partial}{\partial z}\wedge \frac{\partial}{\partial \xi},\xi]|_{\xi=0}=-(a_0+a_1z)\frac{\partial}{\partial z}
\end{align*}
so that $\mathbb{H}^0(F_1, (\wedge^2 T_{F_1}\oplus i_*\mathcal{N}_{\mathbb{P}_\mathbb{C}^1/F_1})^\bullet)=7$ and an extended Poisson analytic family 
\begin{align*}
\mathcal{V}\subset (F_0\times \mathbb{C}^7, ((1+b_0+b_1z+b_2z^2)\xi+(c_0+c_1z+c_2z^2+c_3z^3)\xi^2)\frac{\partial}{\partial z}\wedge \frac{\partial}{\partial \xi})
\end{align*}
defined by $\xi=d$ has the characteristic map
\begin{align*}
T_0\mathbb{C}^7&\to \mathbb{H}^0(F_1,(\wedge^2 T_{F_1}\oplus i_*(\mathcal{N}_{\mathbb{P}_\mathbb{C}^1/F_1}))\\
(a_0,...,a_6)&\mapsto ((a_0+a_1z+a_2z^2)\xi+(a_3+a_4z+a_5z^2+a_6z^3)\xi^2)\frac{\partial}{\partial z}\wedge \frac{\partial}{\partial \xi},0)
\end{align*}
which is an isomorphism so that $\mathcal{V}$ is complete.
\end{example}

\begin{example}
Let us consider a rational ruled surface $F_m,m\geq 3$. Let us consider the trivial Poisson structure $\Lambda_0=0$ on $F_m$. Then $\xi=0$ defines a holomorphic Poisson submanifold which is a nonsingular rational curve $\cong \mathbb{P}_\mathbb{C}^1$ and the normal bundle is $\mathcal{N}_{\mathbb{P}_\mathbb{C}^1/F_m}\cong \mathcal{O}_{\mathbb{P}^1}(-m)$ so that $H^0(\mathbb{P}^1_\mathbb{C},\mathcal{N}_{\mathbb{P}^1_\mathbb{C}/F_m})=0$. Hence $\mathbb{H}^0(F_m,(\wedge^2 T_{F_m}\oplus i_*\mathcal{N}_{\mathbb{P}_\mathbb{C}^1/F_m})^\bullet)=H^0(F_m, \wedge^2 T_{F_m})=k^{m+6}$. Let us consider an extended Poisson analytic family $\mathcal{V}\subset( F_m\times \mathbb{C}^{m+6},\Lambda(t)=((t_0+t_1z+t_2z^2)\xi+(t_3+t_4z+\cdots+t_{m+5} z^{m+2})\xi^2)\frac{\partial}{\partial z}\wedge \frac{\partial}{\partial \xi})$ defined by $\xi=0$. Then the characteristic map
\begin{align*}
T_0\mathbb{C}^{m+6}&\xrightarrow{\cong}\mathbb{H}^0(F_m,(\wedge^2 T_{F_m}\oplus i_*\mathcal{N}_{\mathbb{P}_\mathbb{C}^1/F_m})^\bullet)\\
(a_0,...,a_{m+5})&\mapsto \left(((a_0+a_1z+a_2z^2)\xi+(a_3+a_4z+\cdots+a_{m+5} z^{m+2})\xi^2)\frac{\partial}{\partial z}\wedge \frac{\partial}{\partial \xi},0\right)
\end{align*}
\end{example}
is an isomorphism so that $\mathcal{V}$ is complete at $0$.
\section{Stability of compact holomorphic Poisson submanifolds}\label{section5}

We extend the definition of a fibre manifold in \cite{Kod63} in the context of the holomorphic Poisson category.
\begin{definition}\label{fibre}
By a holomorphic Poisson fibre manifold, we shall mean a holomorphic Poisson manifold $(\mathcal{W},\Lambda)$ together with a holomorphic map $p$ of $\mathcal{W}$ onto a complex manifold $B$ such that the rank of the  Jacobian of $p$ at each point of $\mathcal{W}$ is equal to the dimension of $B$ and $\Lambda\in H^0(\mathcal{W},\wedge^2 T_{\mathcal{W}/B})$ with $[\Lambda,\Lambda]=0$. For any point $u\in B$, the inverse image $p^{-1}(u)=(W_u, \Lambda_u)$ is a holomorphic Poisson submanifold of $(\mathcal{W},\Lambda)$ and call it the fibre of $(\mathcal{W},\Lambda)$ over $u$. We  will denote the holomorphic Poisson fibre manifold $(\mathcal{W},\Lambda)$ by the quadruple $(\mathcal{W},\Lambda, B, p)$. We note that a holomorphic Poisson fibre manifold is a Poisson analytic family in the sense of \cite{Kim15} when fibres are compact. For any subdomain $N$ of $B$, we call the holomorphic Poisson fibre manifold $(p^{-1}(N),\Lambda|_{p^{-1}(N)},N,p)$ the restriction of $(\mathcal{W},\Lambda)$ to $N$ and denote it by $(\mathcal{W},\Lambda)|_N$. Let $\mathcal{V}$ be a holomorphic  Poisson submanifold of $(\mathcal{W},\Lambda)|_N$ such that $p(\mathcal{V})=N$. We call $\mathcal{V}$ a holomorphic Poisson fibre submanifold of the holomorphic Poisson fibre manifold $(\mathcal{W},\Lambda)|_N$ if and only if $(\mathcal{V},\Lambda|_\mathcal{V},N,p)$ forms a holomorphic Poisson fibre manifold. If, moreover, each fibre $V_u=\mathcal{V}\cap W_u, u\in N$, of $\mathcal{V}$ is compact, we call $\mathcal{V}$ a holomorphic Poisson fibre submanifold with compact fibres of the holomorphic Poisson fibre manifold $(\mathcal{W},\Lambda)|_N$.
\end{definition}

We extend the definition of stability in \cite{Kod63} in the context of the holomorphic Poisson category.
\begin{definition}\label{029}
Let $V$ be a compact holomorphic Poisson submanifold of a holomorphic Poisson manifold $(W,\Lambda_0)$. We call $V$ a stable holomorphic Poisson submanifold of $(W,\Lambda_0)$ if and only if, for any holomorphic Poisson fibre manifold $(\mathcal{W},\Lambda,B,p)$ such that $p^{-1}(0)=(W,\Lambda_0)$ for a point $0\in B$, there exist a neighborhood $N$ of $0$ in $B$ and a holomorphic Poisson fibre submanifold $\mathcal{V}$ with compact fibres of the holomorphic Poisson fibre manifold $(\mathcal{W},\Lambda)|_N$ such that $\mathcal{V}\cap W=V$.
\end{definition}

\subsection{Stability of compact holomorphic Poisson submanifolds}

\begin{theorem}[compare \cite{Kod63} Theorem 1]\label{stability}
Let $V$ be a compact holomorphic Poisson submanifold of a holomorphic Poisson manifold $(W,\Lambda_0)$. Let $\mathcal{N}_{V/W}^\bullet$ be the complex associated with the normal bundle $\mathcal{N}_{V/W}$ as in Definition $\ref{332a}$. If the first cohomology group $\mathbb{H}^1(V,\mathcal{N}_{V/W}^\bullet)$ vanishes, then $V$ is a stable holomorphic Poisson submanifold of $(W,\Lambda_0)$.
\end{theorem}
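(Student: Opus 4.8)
The plan is to follow Kodaira's stability argument in \cite{Kod63}, transported to the holomorphic Poisson category by reusing the inductive machinery developed in the proof of Theorem \ref{33a}. The essential difference from Theorem \ref{33a} is that the ambient Poisson manifold is now allowed to vary with the base parameter. Given the holomorphic Poisson fibre manifold $(\mathcal{W},\Lambda,B,p)$ with $p^{-1}(0)=(W,\Lambda_0)$, I would first choose a locally finite covering $\{W_i\}$ of a neighborhood of $V$ in $\mathcal{W}$ by coordinate polycylinders with coordinates $(w_i,z_i,t)$, where $t$ is the pullback of a coordinate centered at $0\in B$, the projection $p$ is $(w_i,z_i,t)\mapsto t$, and $V\cap W_i=\{w_i=0,\,t=0\}$ as in subsection \ref{subsection3.1}. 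The transition functions $w_i=f_{ik}(w_k,z_k,t)$, $z_i=g_{ik}(w_k,z_k,t)$ and the fibrewise Poisson tensor $\Lambda_i(w_i,z_i,t)$ now depend holomorphically on $t$ and specialize at $t=0$ to the data of $(W,\Lambda_0)$; note that $[\Lambda,\Lambda]=0$ holds by hypothesis, so there is no Poisson-integrability equation to solve, which is precisely why the governing complex is $\mathcal{N}_{V/W}^\bullet$ rather than the extended complex of Section \ref{section4}. Proving stability then amounts to constructing vector-valued holomorphic functions $\varphi_i(z_i,t)=(\varphi_i^1(z_i,t),\dots,\varphi_i^r(z_i,t))$ for $|z_i|<1$, $|t|<\epsilon$, with $\varphi_i(z_i,0)=0$, satisfying the gluing condition $\varphi_i(g_{ik}(\varphi_k(z_k,t),z_k,t),t)=f_{ik}(\varphi_k(z_k,t),z_k,t)$ and the Poisson condition $[\Lambda_i(w_i,z_i,t),w_i^\alpha-\varphi_i^\alpha(z_i,t)]|_{w_i=\varphi_i(z_i,t)}=0$; the graphs $w_i=\varphi_i(z_i,t)$ then patch to the desired Poisson fibre submanifold $\mathcal{V}$ with $\mathcal{V}\cap W=V$, and its fibres $V_t$ are compact since $V_0=V$ is compact and they are graphs over the fixed compact base $V$.

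Next I would expand $\varphi_i=\sum_{m\ge 1}\varphi_{i|m}$ into homogeneous polynomials in $t$ and solve the two conditions by induction on $m$, exactly as in the inductive step of Theorem \ref{33a}. Assuming a solution $\varphi_i^m$ to order $m$ is in hand, I set
\begin{align*}
\psi_{ik}(z_k,t)&:=[\varphi_i^m(g_{ik}(\varphi_k^m(z_k,t),z_k,t),t)-f_{ik}(\varphi_k^m(z_k,t),z_k,t)]_{m+1},\\
G_i^\alpha(z_i,t)&:=[[\Lambda_i(w_i,z_i,t),w_i^\alpha-\varphi_i^{\alpha m}(z_i,t)]|_{w_i=\varphi_i^m(z_i,t)}]_{m+1}.
\end{align*}
The computations in the proof of Theorem \ref{33a} yielding (\ref{33n}), (\ref{33k}) and (\ref{33l}) go through at leading order, so that $\{(\psi_{ik}^1,\dots,\psi_{ik}^r)\}\oplus\{(G_i^1,\dots,G_i^r)\}$ represents a $1$-cocycle in the \v{C}ech resolution of $\mathcal{N}_{V/W}^\bullet$, hence a class in $\mathbb{H}^1(V,\mathcal{N}_{V/W}^\bullet)$. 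Since this group vanishes, the cocycle is a coboundary: there exist $\varphi_{i|m+1}$ with $\psi_{ik}=F_{ik}(z)\varphi_{k|m+1}-\varphi_{i|m+1}$ and $-G_i^\alpha=-[\varphi_{i|m+1}^\alpha,\Lambda_0]|_{w_i=0}+\sum_{\beta=1}^r\varphi_{i|m+1}^\beta T_{i\alpha}^\beta(0,z_i)$, and setting $\varphi_i^{m+1}:=\varphi_i^m+\varphi_{i|m+1}$ advances the induction.

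The hard part will be verifying that the $t$-dependence of $f_{ik}$, $g_{ik}$ and $\Lambda_i$ does not corrupt the cocycle identities, i.e. confirming that the degree-$(m+1)$ parts $\psi_{ik},G_i^\alpha$ only see the $t=0$ transition matrices $F_{ik}(z)$ and the $t=0$ connection $\nabla$, so that the obstruction genuinely lands in $\mathbb{H}^1(V,\mathcal{N}_{V/W}^\bullet)$ rather than in a larger group. This is the Poisson refinement of the corresponding point in \cite{Kod63}, and it rests on the compatibility identities (\ref{3i}) and (\ref{3g}), together with the observation that the $t$-derivatives of the ambient data contribute only terms already divisible by the defining functions $w_i^\beta$ and hence drop out upon restriction to $V$.

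Finally, I would establish convergence by the same majorant method as in subsection \ref{3convergence} and Lemma \ref{nb2}: choosing at each step the coboundary $\varphi_{i|m+1}$ of minimal norm and applying the Cauchy estimates, one obtains $\varphi_i(z_i,t)\ll A(t)$ for a suitable $A(t)$ as in Notation \ref{notation3}, so the series converges on a neighborhood $N=\{|t|<\epsilon\}$ of $0$ in $B$. The extra $t$-dependence of the ambient data contributes only uniformly bounded holomorphic coefficients, which are absorbed into the constants, so the estimates carry over unchanged. This produces $\mathcal{V}$ and completes the proof.
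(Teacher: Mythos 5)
Your proposal is correct, but it follows a genuinely different route from the paper's. You build the fibre submanifold $\mathcal{V}$ directly, keeping the fibration data explicit ($t$-dependent transitions $f_{ik},g_{ik}$ and vertical tensor $\Lambda_i$) and solving the gluing and Poisson conditions by induction on the degree in the base parameter, with the obstruction at every order --- already at order one, where it is assembled from $\partial f_{ik}/\partial u^\rho|_{w_k=u=0}$ and the $u$-derivatives of $\Lambda$ --- killed by $\mathbb{H}^1(V,\mathcal{N}_{V/W}^\bullet)=0$. The paper instead follows Kodaira's two-step scheme: it internalizes the base coordinates $u^1,\dots,u^q$ as extra normal coordinates of $V$ inside the fixed total space $(\mathcal{W},\Lambda)$, forms the exact sequence of complexes $0\to\mathcal{N}_{V/W}^\bullet\to R^\bullet\to\mathcal{Q}^\bullet\to 0$, and proves Theorem \ref{5theorem1}: a Poisson analytic family of deformations of $V$ inside $(\mathcal{W},\Lambda)$ whose members satisfy $u^\rho=t_{n-q+\rho}$ identically, the key point being that, since $[\Lambda,u^\rho]=0$ and the base components of the deformation functions are the global coordinates $t_{n-q+\rho}$, the obstruction cocycles lie in the subcomplex $\mathcal{N}_{V/W}^\bullet$, whose $\mathbb{H}^1$ vanishes; stability then follows by restricting that family to the slice $t=(0,\dots,0,u^1,\dots,u^q)$. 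After internalizing your parameter $t$ as a coordinate on $\mathcal{W}$, your inductive identities are exactly the paper's restricted to that slice, so the step you flag as ``the hard part'' is genuinely verifiable, and for the same reason the paper's obstructions stay in the subcomplex: the $t$-dependence of $F_{ik}$ and $T_{i\alpha}^\beta$ multiplies terms already of degree $m+1$ and hence only affects degree $\ge m+2$. Your route buys economy --- you construct only the $q$-parameter family that stability asks for, with no need for $R^\bullet$, $\mathcal{Q}^\bullet$, or the adapted basis $\{\beta_v\}$. The paper's route buys the stronger intermediate statement (Theorem \ref{5theorem1}, the Poisson analogue of Kodaira's Theorem 2 in \cite{Kod63}) and the exact sequence $0\to\mathbb{H}^0(V,\mathcal{N}_{V/W}^\bullet)\to\mathbb{H}^0(V,R^\bullet)\to\mathbb{C}^q\to 0$, both of which are reused immediately afterwards for the uniqueness theorem under $\mathbb{H}^0=\mathbb{H}^1=0$; it also lets the inductive identities of Theorem \ref{33a} be quoted essentially verbatim, since in the total-space formulation the ambient data carries no external parameter. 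One imprecision to correct in your sketch: the $t$-derivatives of the ambient data do not merely ``contribute terms divisible by $w_i^\beta$'' that drop out on restriction to $V$ --- at each order they are precisely the inhomogeneous terms constituting the obstruction cocycle; what is true is that they do not disturb the cocycle identities taken with respect to the $t=0$ transition matrices and connection.
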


To prove Theorem \ref{stability}, we extend the argument in \cite{Kod63} p.80-85 in the context of holomorphic Poisson deformations. We tried to maintain notational consistency with \cite{Kod63}.

Let $(\mathcal{W},\Lambda, B,p)$ be a holomorphic Poisson fibre manifold such that $p^{-1}(0)=(W,\Lambda_0)$ for a point $0\in B$ and let $(u^1,...,u^q)$ denote a local coordinate on $B$ with the center $0$. Considering $V\subset W \subset \mathcal{W}$ as a submanifold of $(\mathcal{W},\Lambda_0)$, we cover $V$ by a finite number of coordinate neighborhood $\mathcal{U}_i$ in $\mathcal{W}$ and choose a local coordinate $(z_i,w_i,u)=(z_i^1,...,z_i^d,w_i^1,...,w_i^r,u^1,...,u^q)$ such that the simultaneous equations $w_i^1=\cdots=w_i^r=u^1=\cdots =u^q=0$ define $V$. We assume that each neighborhood $\mathcal{U}_i$ is a polycylinder consisting of all points $(z_i,w_i,u),|z_i|<1,|w_i|<1,|u|<1$. On the intersection $\mathcal{U}_i\cap \mathcal{U}_k$ the local coordinates $z_i^\alpha,w_i^\lambda$ are holomorphic functions of $z_k,w_k,u$: $z_i^\alpha=g_{ik}^\alpha(z_k,w_k,u),\alpha=1,...,d, w_i^\lambda=f_{ik}^\lambda(z_k,w_k,u),\lambda=1,...,r$.
Note that $f_{ik}^\lambda(z_k,0,0)=0$. We set $U_k=V\cap\mathcal{U}_k$. We denote a point on $V$ by $z$ and, if $z=(z_k,0,0)\in U_k$, we call $z_k=(z_k^1,...,z_k^d)$ the local coordinate of $z$ on $U_k$. We define $a_{ik\mu}^\lambda(z)=\left( \frac{\partial f_{ik}^\lambda(z_k,w_k,u)}{\partial w_k^\mu}\right)_{w_k=u=0},b_{ik\rho}^\lambda(z)=\left(\frac{\partial f_{ik}^\lambda(z_k,w_k,u)}{\partial u^\rho}\right)_{w_k=w=0}$. Then the normal bundle of $V$ in $\mathcal{W}$ is defined by the system of transition matrices 
$\left(\begin{matrix}
a_{ik}(z) &b_{iz}(z)\\
0 & 1_q
\end{matrix}\right)$ so that we have the exact sequence $0\to \mathcal{N}_{V/W}\to \mathcal{N}_{V/\mathcal{W}}\to \oplus^q\mathcal{O}_V\to 0$. On the other hand, since $w_i^i=\cdots=w_i^r=u^1=\cdots=u^q=0$ defines a holomorphic Poisson submanifold, $[\Lambda,w_i^\alpha]=\sum_{\beta=1}^r w_i^\alpha T_{i\alpha}^\beta(z_i,w_i,u)+\sum_{\rho=1}^q u^\rho T_{i\alpha}^{q+\rho}(z_i,w_i,u)$ for some $T_{i\alpha}^\gamma(z_i,w_i,u)\in \Gamma(\mathcal{U}_i, T_\mathcal{W})$, $\gamma=1,...,r+q$, and $[\Lambda, u^\rho]=0$. We note that $T_{i\alpha}^\gamma(z_i,0,0)\in \Gamma(U_i, T_W|_V)$. Setting $T_{i(q+\rho)}^{\gamma}(z_i,w_i,u):=0$ for $\rho=1,...,q,\gamma=1,...,r+q$, we can write $[\Lambda, u^\eta]=\sum_{\beta=1}^r w_i^\alpha T_{i(r+\eta)}^\beta(z_i,w_i,u)+\sum_{\rho=1}^q u^\rho T_{i(r+\eta)}^{r+\rho}(z_i,w_i,u)$.

 We note that we can extend $0\to \mathcal{N}_{V/W}\to \mathcal{N}_{V/\mathcal{W}}\to \oplus^q\mathcal{O}_V\to 0$ to obtain an exact sequence of complex of sheaves 
 \begin{align}\label{5h}
 0\to \mathcal{N}_{V/W}^\bullet\to R^\bullet\to \mathcal{Q}^\bullet\to 0:
 \end{align}
\begin{center}
$\begin{CD}
@. \cdots @. \cdots @.\cdots \\
@. @A\nabla_W AA @A\nabla_\mathcal{W} AA @A-[-,\Lambda]|_VAA \\
0@>>>\mathcal{N}_{V/W}\otimes \wedge^2 T_W|_V@>>> R^2:=\mathcal{N}_{V/\mathcal{W}}\otimes \wedge^2 T_W|_V @>>>\mathcal{Q}^2:=\oplus^q\wedge^2 T_{W|V}@>>>0\\
@. @A\nabla_W AA @A\nabla_\mathcal{W} AA @A-[-,\Lambda]|_VAA @.\\
0@>>> \mathcal{N}_{V/W}\otimes T_W|_V@>>> R^1:=\mathcal{N}_{V/\mathcal{W}}\otimes T_W|_V@>>> \mathcal{Q}^1:=\oplus^q T_{W|V}@>>> 0\\
@. @A\nabla_WAA @A\nabla_\mathcal{W} AA @A-[-,\Lambda ]|_V AA @.\\
0@>>> \mathcal{N}_{V/W}@>>> R^0:=\mathcal{N}_{V/\mathcal{W}}@>>> \mathcal{Q}^0:=\oplus^q\mathcal{O}_V@>>>0
\end{CD}$
\end{center}
where the first vertical complex $\mathcal{N}_{V/W}^\bullet$ is the complex associated with the normal bundle $\mathcal{N}_{V/W}$ and the second vertical complex $R^\bullet$ is the subcomplex of the complex $\mathcal{N}_{V/\mathcal{W}}^\bullet$ associated with the normal bundle $\mathcal{N}_{V/\mathcal{W}}$ as in Definition \ref{332a}. First we show that the second vertical complex is well-defined. Indeed, we simply note that $T_{i (r+\rho)}^\beta(z_i,0,0)=0,\rho=1,...,q,\beta=1,..,r+q$, $T_{i\alpha}^\beta(z_i,0,0)\in \Gamma(U_i, T_W|_V),\alpha=1,...,r,\beta=1,...,r+q$, and $-[g,\Lambda]|_V\in \Gamma(U_i,\wedge^{p+1}T_W|_V)$ for $g\in \Gamma(U_i,\wedge^p T_W|_V)$.

Next we show that the sequence of complex of sheaves (\ref{5h}) is well-defined. In other words, the above diagram commutes. The commutativity of the first two complexes follows from the following local commutativity:
\begin{center}
\tiny{$\begin{CD}
(-[f_i^1,\Lambda]+(-1)^p\sum_{\beta=1}^r f_i^\beta T_{i1}^\beta(z_i),...,-[f_i^r,\Lambda]+(-1)^p\sum_{\beta=1}^r f_i^\beta T_{ir}^\beta(z_i)@>>> (-[f_i^1,\Lambda]+(-1)^p\sum_{\beta=1}^r f_i^\beta T_{i1}^\beta(z_i),...,-[f_i^r,\Lambda]+(-1)^p\sum_{\beta=1}^r f_i^\beta T_{ir}^\beta(z_i),0,...,0)\\
@AAA @AAA\\
(f_i^1,...,f_i^r)@>>>(f_i^1,...,f_i^r,0,...,0)\\
\end{CD}$}
\end{center}
where $(f_i^1,...,f_i^r)\in \oplus^r \Gamma(U_i, \wedge^p T_W|_V)$. On the other hand, the commutativity of the last two complexes follows from the following local commutativity:
\begin{center}
\tiny{$\begin{CD}
(-[f_i^1,\Lambda]|_V+(-1)^p\sum_{\beta=1}^{r+q} h_i^\beta T_{i1}^\beta(z_i),...,-[f_i^r,\Lambda]|_V+(-1)^p\sum_{\beta=1}^{r+q} h_i^\beta T_{ir}^\beta(z_i),-[g_i^1,\Lambda]|_V,...,-[g_i^q,\Lambda]|_V)@>>> (-[g_i^1,\Lambda]|_V,...,-[g_i^q,\Lambda]|_V)\\
@AAA @AAA\\
(h_i^1,...,h_i^{r+q}):=(f_i^1,...,f_i^r,g_i^1,...,g_i^q)@>>>(g_i^1,...,g_i^q)\\
\end{CD}$}
\end{center}
where $(f_i^1,...,f_i^r,g_i^1,...,g_i^q)\in \oplus^{r+q}\Gamma(U_i,\wedge^p T_W|_V)$.

Since $V$ is compact, $\mathbb{H}^0(V,\mathcal{Q}^\bullet)=H^0(V,\mathcal{O}_V^q)\cong \mathbb{C}^q$ so that from the hypothesis $\mathbb{H}^1(V,\mathcal{N}_{V/W}^\bullet)=0$ and the exact sequence $0\to \mathcal{N}_{V/W}^\bullet\to R^\bullet\to Q^\bullet\to 0$, we obtain an exact sequence 
\begin{align}\label{5g}
0\to \mathbb{H}^0(V,\mathcal{N}_{W/V}^\bullet)\to \mathbb{H}^0(V,R^\bullet)\xrightarrow{\kappa} \mathbb{C}^q\to 0
\end{align}

Note that any element of $\mathbb{H}^0(V,R^\bullet)$ is a collection $\{\psi_i(z)\}$ of vector-valued holomorphic functions
\begin{align*}
\psi_i(z)=(\psi_i^1(z),...,\psi_i^r(z),\psi^{r+1},...,\psi^{r+q})
\end{align*}
defined respectively on $U_i$ satisfying $\psi_i^\lambda(z)=\sum_{\mu=1}^r a_{ik\mu}^\lambda \psi_k^\mu(z)+\sum_{\rho=1}^q b_{ik\rho}^\lambda(z)\psi^{r+\rho}$ and $-[\psi_i^\lambda(z),\Lambda]|_V+\sum_{\beta=1}^r \psi_i^\beta T_{i\lambda}^\beta(z_i,0,0)+\sum_{\rho=1}^q\psi^{r+\rho} T_{i\lambda}^{r+\rho}(z_i,0,0)=0,\lambda=1,...,r
$
and we have $\kappa \psi=(\psi^{r+1},...,\psi^{r+q})$.

Let $M_\epsilon=\{t=(t_1,...,t_n)\in \mathbb{C}^n||t|<\epsilon\}$, where $\epsilon$ is a small positive number. Consider a Poisson analytic family $\mathcal{F}$ of compact holomorphic Poisson submanifolds $V_t,t\in M_\epsilon$, of $(\mathcal{W},\Lambda)$ such that $V_0=V$ (see Definition \ref{3a}). Then $\mathcal{F}$ is a holomorphic Poisson submanifold of $(\mathcal{W}\times M_\epsilon,\Lambda)$ such that $\mathcal{F}\cap (\mathcal{W}\times t)=V_t\times t$. We assume that $\mathcal{F}$ is covered by the neighborhoods $\mathcal{U}_i\times M_\epsilon$. On each neighborhood $\mathcal{U}_i\times M_\epsilon$, the holomorphic Poisson submanifold $\mathcal{F}$ is defined by simultaneous holomorphic equations of the form $w_i^\lambda=\theta_i^\lambda(z_i,t),\,\,\, \lambda=1,...,r,u^\rho=\theta^{r+\rho}(t),\,\,\, \rho=1,...,q$. For any tangent vector $\frac{\partial}{\partial t}=\sum_v c_v\frac{\partial}{\partial t_v}$ of $M_\epsilon $ at $t=0$, we set $\psi_i^\lambda(z)=\left(\frac{\partial \theta_i^\lambda(z_i,t)}{\partial t}\right)_{t=0},\lambda=1,...,r,\psi^{r+\rho}=\left(\frac{\partial \theta^{r+\rho}(t)}{\partial t}\right)_{t=0},\rho=1,...,q$ and let $\psi_i(z)=(\psi_i^1(z),...,\psi_i^r(z),\psi^{r+1},...,\psi^{r+q})$. Then the collection $\{\psi_i(z)\}$ of $\psi_i(z)$ represents an element of $\mathbb{H}^0(V,\mathcal{N}_{\mathcal{W}/V}^\bullet)=\mathbb{H}^0(V,R^\bullet)$ (see subsection \ref{3infinitesimal}). With this preparation, we prove
\begin{theorem}[compare \cite{Kod63} Theorem 2]\label{5theorem1}
There exists a Poisson analytic family $\mathcal{F}$ of compact holomorphic Poisson submanifolds $V_t$, where $t\in M_\epsilon,\epsilon>0$, of $(\mathcal{W},\Lambda)$ such that $V_0=V$ and the characteristic map: $\frac{\partial}{\partial t}\to \frac{\partial V_t}{\partial t}|_{t=0}$ maps the tangent space $T_0(M_\epsilon)$ isomorphically onto $\mathbb{H}^0(V,\mathcal{N}_{\mathcal{W}/V}^\bullet)=\mathbb{H}^0(V,R^\bullet)$ provided that cohomology group $\mathbb{H}^1(V,\mathcal{N}_{W/V}^\bullet)$ vanishes. 
\end{theorem}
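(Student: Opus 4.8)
The plan is to run the power-series construction of the theorem of existence (Theorem \ref{33a}) for $V$ regarded as a compact holomorphic Poisson submanifold of the total space $(\mathcal{W},\Lambda)$, and to exploit the fibration $p\colon\mathcal{W}\to B$ so as to lower the effective obstruction space from $\mathbb{H}^1(V,R^\bullet)=\mathbb{H}^1(V,\mathcal{N}_{V/\mathcal{W}}^\bullet)$ to $\mathbb{H}^1(V,\mathcal{N}_{V/W}^\bullet)$, which vanishes by hypothesis. Set $n:=\dim\mathbb{H}^0(V,R^\bullet)$, and recall from $(\ref{5g})$ that $n=l+q$ with $l=\dim\mathbb{H}^0(V,\mathcal{N}_{V/W}^\bullet)$. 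First I would fix a basis $\{\psi_\nu=(\psi_{\nu i}^1,\dots,\psi_{\nu i}^r,\psi_\nu^{r+1},\dots,\psi_\nu^{r+q})\}_{\nu=1}^{n}$ of $\mathbb{H}^0(V,R^\bullet)$; since $V$ is compact and connected the last $q$ entries $\psi_\nu^{r+\rho}$ are constants, and by $(\ref{5g})$ the assignment $\nu\mapsto\kappa(\psi_\nu)=(\psi_\nu^{r+1},\dots,\psi_\nu^{r+q})$ spans $\mathbb{C}^q$.

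Next I would seek the family $\mathcal{F}$ in each $\mathcal{U}_i\times M_\epsilon$ in the form $w_i^\lambda=\theta_i^\lambda(z_i,t)$ for $\lambda=1,\dots,r$ together with $u^\rho=\theta^{r+\rho}(t)$ for $\rho=1,\dots,q$, where crucially the $u$-components are taken to be the fixed linear functions $\theta^{r+\rho}(t)=\sum_{\nu=1}^n\psi_\nu^{r+\rho}t_\nu$, independent of $z_i$ and identical on every patch. The first-order terms $\theta_{i|1}^\lambda(z_i,t)=\sum_\nu t_\nu\psi_{\nu i}^\lambda(z)$ then force the characteristic map $T_0(M_\epsilon)\to\mathbb{H}^0(V,R^\bullet)$ to agree with the chosen basis. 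Exactly as in the proof of Theorem \ref{33a}, I would determine the higher homogeneous parts $\theta_{i|m}^\lambda$ by induction, solving (in Notation \ref{notation1}) the congruences expressing that $\mathcal{F}$ is a holomorphic Poisson submanifold of $(\mathcal{W}\times M_\epsilon,\Lambda)$; at the $m$-th step this produces an obstruction $1$-cocycle $(\psi_{m+1}(t),G_{m+1}(t))$ in the \v{C}ech resolution of $R^\bullet$ whose coefficients define a class in $\mathbb{H}^1(V,R^\bullet)$.

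The main obstacle, and the only place the fibration is used, is showing this obstruction is a coboundary, and this is where the choice of the $u$-equations pays off. Because $\theta^{r+\rho}(t)$ is one and the same function on every $\mathcal{U}_i$, the $\mathcal{Q}^\bullet$-components of the patching cochain $\psi_{ik}$ vanish identically; and because $u^\rho$ is pulled back from $B$ with $[\Lambda,u^\rho]=0$, one has $[\Lambda,u^\rho-\theta^{r+\rho}(t)]=0$ identically, so the $\mathcal{Q}^\bullet$-components of the Poisson cochain $G_i$ vanish as well. Thus $(\psi_{m+1}(t),G_{m+1}(t))$ projects to the zero cochain under the surjection of complexes $R^\bullet\to\mathcal{Q}^\bullet$ of $(\ref{5h})$, hence is in fact a cocycle of the subcomplex $\mathcal{N}_{V/W}^\bullet\hookrightarrow R^\bullet$. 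Since $\mathbb{H}^1(V,\mathcal{N}_{V/W}^\bullet)=0$ by hypothesis, it is a coboundary there, and the $w$-component primitive $\theta_{i|m+1}^\lambda(z_i,t)$ supplies the required $(m+1)$-st order correction, completing the inductive step while leaving the $u$-equations unchanged. I expect the delicate point to be the bookkeeping here: keeping the $w$- and $u$-blocks of the cocycle separate and verifying that the $\mathcal{Q}^\bullet$-projection is exactly the zero cochain rather than merely a coboundary.

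Finally I would establish convergence of the formal series $\theta_i^\lambda(z_i,t)$ by repeating the majorant estimates of subsection \ref{3convergence} verbatim (the $u$-components being linear need no estimate), obtaining for sufficiently small $\epsilon>0$ a genuine Poisson analytic family $\mathcal{F}\subset(\mathcal{W}\times M_\epsilon,\Lambda)$ with $V_0=V$. The characteristic map is an isomorphism onto $\mathbb{H}^0(V,R^\bullet)=\mathbb{H}^0(V,\mathcal{N}_{\mathcal{W}/V}^\bullet)$ because its image contains the chosen basis by the first-order computation, while $\dim T_0(M_\epsilon)=n=\dim\mathbb{H}^0(V,R^\bullet)$ forces bijectivity. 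No estimate beyond those already used for Theorem \ref{33a} is needed.
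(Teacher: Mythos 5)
Your proposal is correct and follows essentially the same route as the paper: fix a basis of $\mathbb{H}^0(V,R^\bullet)$, freeze the $u$-equations as fixed linear functions of $t$ (the paper merely normalizes the basis so these become $u^\rho=t_{n-q+\rho}$), observe that the patch-independence of the $u$-equations together with $[\Lambda,u^\rho]=0$ forces the obstruction cocycle to lie in the subcomplex $\mathcal{N}_{V/W}^\bullet\subset R^\bullet$, kill it by the hypothesis $\mathbb{H}^1(V,\mathcal{N}_{V/W}^\bullet)=0$ with a primitive whose $u$-components are zero, and conclude convergence by the estimates of the earlier existence theorem. This matches the paper's proof point for point, including the key reduction of the obstruction space.
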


\begin{proof}
Let $n=\mathbb{H}^0(V, R^\bullet)$. We can choose a base $\{\beta_1,...,\beta_n\}$ of $\mathbb{H}^0(V,R^\bullet)$ such that $\beta_v^{r+\rho}=1$ if $v=n-q+\rho$ or $\beta_v^{r+\rho}=0$ otherwise for $\rho=1,...,q$. 

We shall construct on each neighborhood $\mathcal{U}_i\times M_\epsilon$ a vector-valued holomorphic function of the form
\begin{align*}
\phi_i(z_i,t)&=(\theta_i^1(z_i,t),...,\theta_i^r(z_i,t),t_{n-q+1},...,t_n),\,\,\,\,\,\text{where $t=(t_1,...,t_n)$}
\end{align*}
satisfying the boundary conditions
\begin{align*}
\phi_i(z_i,0)&=0,\\
\left( \frac{\partial \phi_i(z_i,t)}{\partial t_v}\right)_{t=0}&=\beta_{vi}(z),\,\,\,\,\,v=1,...,n.
\end{align*}
such that 
\begin{align}
\phi_i(g_{ik}(z_k,\phi_k(z_k,t)),t)&=f_{ik}(z_k,\phi_k(z_k,t)),\label{5c}\\
[\Lambda,w_i^\alpha-\theta_i^\alpha(z_i,t)]&|_{w_i^\alpha=\theta_i^\alpha(z_i,t),u^\rho=t_{n-q+\rho}}=0,\,\,\,\alpha=1,...,r.\label{5d}
\end{align}
(Note that we have $[\Lambda, u^{\rho}-t_{n-q+\rho}]=0,\rho=1,...,q$ so that we do not need to consider them.) 

Recall Notation \ref{notation1}. Then (\ref{5c}) and (\ref{5d}) are equivalent to the system of congruences
\begin{align}
\phi_i^m(g_{ik}(z_k,\phi_k^m(z_k,t)),t)\equiv_{m} f_{ik}(z_k,\phi_k^m(z_k,t)),\,\,\,\,\,m=1,2,3,\cdots \label{5e}\\
[\Lambda, w_i^\alpha-\theta_i^{\alpha m}(z_i,t)]|_{w_i=\theta_i^{\alpha m}(z_i,t),u^\rho=t_{n-q+\rho}}\equiv_{m} 0,\,\,\,\,\,m=1,2,3,\cdots,\alpha=1,...,r\label{5f}
\end{align}
As in the proof of Theorem $\ref{33a}$, we will construct the formal power series 
\begin{align*}
\phi_i^m(z_i,t)=(\theta_i^{1 m}(z_i,t),\cdots, \theta_i^{r m}(z_i,t),t_{n-q+1},...,t_n)
\end{align*}
satisfying $(\ref{5e})_m$ and $(\ref{5f})_m$ by induction on $m$.

We define $\phi_{i}^1(z_i,t)=\sum_{v=1}^n \beta_{vi}(z)t_v$. Then $(\ref{5e})_1$ holds. On the other hand, since $[\Lambda, \beta_{vi}^\alpha(z)]|_{w_1=u=0}=\sum_{\beta=1}^{r+q} \beta_{vi}^\beta (z) T_{i\alpha}^\beta(z_i,0,0),\alpha=1,...,r$, $[\Lambda,w_i^\alpha-\theta_i^{\alpha 1}(z_i,t)]$ is of the form 
\begin{align*}
\sum_{\beta=1}^r (w_i^\beta-\theta_i^{\beta 1}(z_i,t)) T_{i\alpha}^\beta(z_i,w_i,u)+\sum_{\rho=1}^q (u^\rho-t_{n-q+\rho})T_{i\alpha}^{n-q+\rho}(z_i,w_i,u)+\sum_{\beta=1}^r w_i^\beta P_{i\alpha}^\beta(z_i,w_i,u,t)+\sum_{\rho=1}^q u^\rho P_{i\alpha}^{n-q+\rho}(z_i,w_i,u,t),
\end{align*}
where the degree of $P_{i\alpha}^\gamma(z_i,w_i,u,t),\gamma=1,...,r+q$ in $t$ is $1$ so that $(\ref{5f})_1$ holds.
Now we assume that we have already constructed $\phi_i^m(z_i,t)$ satisfying $(\ref{5e})_m$ and $(\ref{5f})_m$ such that $[\Lambda, w_i^\alpha-\theta_i^{\alpha m}(z_i,t)]$ is of the form (as in (\ref{33f}))
\begin{align}\label{53m}
[\Lambda, w_i^\alpha-\theta_i^{\alpha m}(z_i,t)]=\sum_{\beta=1}^r (w_i^\beta-\theta_i^{\beta m}(z_i,t)) T_{i\alpha}^\beta(z_i,w_i,u)+\sum_{\rho=1}^q (u^\rho-t_{n-q+\rho})T_{i\alpha}^{n-q+\rho}(z_i,w_i,u)+ Q_i^{\alpha m}(z_i,t)\\
+\sum_{\beta=1}^r w_i^\beta P_{i\alpha}^{\beta m}(z_i,w_i,u,t)+\sum_{\rho=1}^q u^\rho P_{i\alpha}^{(n-q+\rho) m}(z_i,w_i,u,t). \notag
\end{align}
such that the degree of $P_{i\alpha}^{\gamma m}(z_i,w_i,u,t),\gamma=1,...,r+q$ is at least $1$ in $t$. Let
\begin{align*}
\psi_{ik}(z,t)&\equiv_{m+1}\phi_i^m(g_{ik}(z_k, \phi_k^m(z_k,t),t)-f_{ik}(z_k,\phi_k^m(z_k,t))\\
G_i^\alpha(z_i,t)&\equiv_{m+1} [\Lambda, w_i^\alpha-\theta_i^{\alpha m}(z_i,t)]|_{w_i^\alpha=\theta_i^{\alpha m}(z_i,t),u^\rho=t_{n-q+\rho}}
\end{align*}
As in the proof of Theorem \ref{33a}, we can show that the collection $\{\psi_{ik}(z,t)\}\in C^1(\mathcal{U}\cap V, \mathcal{N}_{V/\mathcal{W}})$ and $\{(G_i^1(z_i,t),...,G_i^r(z_i,t),0,...,0)\}\in C^0(\mathcal{U}\cap V, \mathcal{N}_{V/\mathcal{W}}\otimes T_{\mathcal{W}}|_V)$ define a $1$-cocycle in the \v{C}ech resolution of $\mathcal{N}_{V/\mathcal{W}}^\bullet$.

We note that $\{\psi_{ik}(z,t)\}$ are in $C^1(\mathcal{U}\cap V,\mathcal{N}_{V/W})$ and $\{(G_i^1(z_i,t),...,G_i^r(z_i,t))\}$ are in $C^0(\mathcal{U}\cap V, \mathcal{N}_{V/W}\otimes T_W|_V)$ so that $(\{\psi_{ik}(z,t)\},\{(G_i^1(z_i,t),...,G_i^r(z_i,t))\})$ define an element in $\mathbb{H}^1(V,\mathcal{N}_{V/W}^\bullet)$. Since $\mathbb{H}^1(V,\mathcal{N}_{V/W}^\bullet)=0$, there exists $\{\chi_i(z,t)=(\chi_i^1(z,t),...,\chi_i^r(z,t))\}$ which are homogenous polynomials of degree $m+1$ in $t_1,...,t_n$ whose coefficients are in $C^0(\mathcal{U}\cap V, \mathcal{N}_{V/W})$ such that
\begin{align}
\psi_{ik}^\lambda(z,t)&=\sum_{\mu=1}^r a_{ik\mu}^\lambda(z)\chi_k^\mu(z,t)-\chi_i^\lambda(z,t),\,\,\,\,\,\lambda=1,...,r.\notag\\
-G_i^\alpha(z_i,t)&=-[ \chi_i^\alpha(z_i,t),\Lambda_0]|_{w_i=0}+\sum_{\beta=1}^r \chi_i^\beta(z_i,t) T_{i\alpha}^\beta(z_i,0,0),\,\,\,\alpha=1,...,r.\notag\\
                           &=-[ \chi_i^\alpha(z_i,t),\Lambda]|_{w_i=u=0}+\sum_{\beta=1}^r \chi_i^\beta(z_i,t) T_{i\alpha}^\beta(z_i,0,0)\label{53l}
\end{align}
We define $\phi_{i|m+1}(z_i,t)=(\chi_i^1(z_i,t),...,\chi_i^r(z_i,t),0,...,0)$. Then $\phi_i^m(z_i,t)=\phi_i^m(z_i,t)+\phi_{i|m+1}(z_i,t)$ satisfy $(\ref{5e})_{m+1}$. On the other hand, from (\ref{53l}), we have
\begin{align*}
-[ \chi_i^\alpha(z_i,t),\Lambda]=-G_i^\alpha(z_i,t)-\sum_{\beta=1}^r \chi_i^\beta(z_i,t) T_{i\alpha}^\beta(z_i,w_i,u)+\sum_{\beta=1}^r w_i^\beta R_{i\alpha}^\beta(z_i,w_i,u,t)+\sum_{\rho=1}^q u^{n-q+\rho} R_{i\alpha}^{q+\rho}(z_i,w_i,u,t)
\end{align*}
where the degree of $R_{i\alpha}^\gamma(z_i,w_i,u,t)$ is $m+1$ in $t$. Therefore we have, from $(\ref{53m})$, 
\begin{align*}
&[\Lambda, w_i^\alpha-\theta_i^{\alpha m}(z_i,t)-\chi_i^\alpha(z_i,t)]=\sum_{\beta=1}^r (w_i^\beta-\theta_i^{\beta m}(z_i,t)-\chi_i^\beta(z_i,t))T_{i\alpha}^\beta(z_i,w_i,u)+\sum_{\rho=1}^q(u^\rho-t_{n-q+\rho})T_{i\alpha}^{r+\rho}(z_i,w_i,u)\\
&-G_i^\alpha(z_i,t)+Q_i^{\alpha m}(z_i,t)+\sum_{\beta=1}^r w_i^\beta  (P_{i\alpha}^{\beta m}(z_i,w_i,u,t)+R_{i\alpha}^\beta(z_i,w_i,u,t))+\sum_{\rho=1}^q u^\rho (P_{i\alpha}^{(r+\rho) m}(z_i,w_i,u,t) +R_{i\alpha}^{n-q+\rho}(z_i,w_i,u,t))
\end{align*}
Lastly, we note that  from $(\ref{53m})$, we have 
\begin{align}\label{jj2}
G_i^\alpha(z_i,t)\equiv_{m+1} Q_i^{\alpha m}(z_i,t)+\sum_{\beta=1}^r \theta_i^{\beta m}(z_i,t) P_{i\alpha}^\beta(z_i,\phi_i^m(z_i,t),t)+\sum_{\rho=1}^q t_{n-q+\rho} P_{i\alpha}^{(n-q+\rho)m} (z_i,\phi_i^m(z_i,t),t)
\end{align}      
so that we obtain, from $(\ref{53m})$ and $(\ref{jj2})$,
\begin{align*}
&[\Lambda,w_i^\alpha-\theta_i^{\alpha m}(z_i,t)-\chi_i^\alpha(z_i,t)]|_{w_i^\alpha=\theta_i^{\alpha m}(z_i,t)+\chi_i^\alpha(z_i,t),u^\rho=t_{n-q+\rho}}\\
&\equiv_{m+1}[\Lambda, w_i^\alpha-\theta_i^{\alpha m}(z_i,t)]|_{w_i^\alpha=\theta_i^{\alpha m}(z_i,t)+\chi_i^\alpha(z_i,t),u^\rho=t_{n-q+\rho}}-[\Lambda, \chi_i^\alpha(z_i,t)]|_{w_i=u=0}\\
&\equiv_{m+1}\sum_{\beta=1}^r\chi_i^\alpha(z_i,t) T_{i\alpha}^\beta(z_i,0,0)+G_i^\alpha(z_i,t)-[\Lambda, \chi_i^\alpha(z_i,t)]|_{w_i=u=0}=0
\end{align*}
Hence $(\ref{5f})_{m+1}$ holds. By subsection \ref{3convergence}, the formal power series $\phi_i(z_i,t)$ converges for $|t|<\epsilon$ for sufficiently small positive number $\epsilon$. Let $M_\epsilon=\{t=(t_1,...,t_n)\in \mathbb{C}^n||t|<\epsilon\}$. Then on each neighborhood $\mathcal{U}_i\times M_\epsilon$ of $\mathcal{W}\times M_\epsilon$, the simultaneous equation $w_i^\alpha-\theta_i^\alpha(z_i,t)=u^\rho-t_{n-q+r}=0,\alpha=1,...,r,\rho=1,...,q$ defines the desired Poisson analytic family $\mathcal{F}$ of compact holomorphic Poisson submanifolds of $V_t,t\in M_\epsilon$ of $(\mathcal{W},\Lambda)$ such that $V_0=V$. This completes the proof of Theorem \ref{5theorem1}.
\end{proof}

\begin{proof}[Proof of Theorem \ref{stability}]
Let $N_\epsilon=\{u=(u^1,...,u^q)\in B||u|<\epsilon\}$, where $\epsilon$ is a small positive number. Let $\mathcal{F}\subset (\mathcal{W}\times M_\epsilon, \Lambda)$ be the Poisson analytic family of compact holomorphic Poisson submanifolds $V_t,t\in M_\epsilon$ of $(\mathcal{W},\Lambda)$ defined by $w_i^\lambda=\theta_i^\lambda(z_i,t),\lambda=1,2,...,r,u^\rho=t_{n-q+\rho},\rho=1,2,...,q$ on $\mathcal{U}_i\times M_\epsilon$ as in the proof of Theorem \ref{5theorem1}. If we define a linear map $u\to t(u)=(0,...,0,u^1,...,u^q)$ of $N_\epsilon$ into $M_\epsilon$, then the union $\mathcal{V}=\bigcup_u V_{t(u)}$ of the compact holomorphic Poisson submanifolds $V_{t(u)},u\in N_\epsilon$ of $(\mathcal{W},\Lambda)$ which is defined by $w_i^\lambda=\theta_i^\lambda(z_i,0,\cdots,0,u^1,\cdots, u^q):=\eta_i^\lambda(z_i,u^1,\cdots, u^q)$ on $\mathcal{U}_i$ forms a holomorphic Poisson fibre submanifold with compact fibres of the holomorphic Poisson fibre manifold $(\mathcal{W},\Lambda )|_{N_\epsilon}$ with $\mathcal{V}\cap W=V$ so that $V$ is a stable holomorphic Poisson submanifold of $(W,\Lambda_0)$.
\end{proof}

As in Theorem 3 in \cite{Kod63}, by combining the exact sequence (\ref{5g}) and Theorem \ref{stability}, we can show
\begin{theorem}
Let $(\mathcal{W},\Lambda, B,p)$ be a holomorphic Poisson fibre manifold such that $(W,\Lambda_0)=p^{-1}(0)$ is the fibre of $(\mathcal{W},\Lambda)$ over a point $0\in B$. Let $V$ be a compact holomorphic Poisson submanifold of $(W,\Lambda_0)$. Assume that $\mathbb{H}^1(V,\mathcal{N}_{W/V}^\bullet)=\mathbb{H}^0(V,\mathcal{N}_{W/V}^\bullet)=0$. Then, for a sufficiently small neighborhood $N$ of $0\in B$, there exists the unique holomorphic Poisson fibre submanifold $\mathcal{V}$ with compact fibers of the holomorphic Poisson fibre manifold $(\mathcal{W},\Lambda)|_N$ such that $\mathcal{V}\cap W=V$.
\end{theorem}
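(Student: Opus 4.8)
The plan is to obtain existence from the stability theorem (Theorem \ref{stability}) and uniqueness from the completeness theorem (Theorem \ref{3.5a}), with the extra hypothesis $\mathbb{H}^0(V,\mathcal{N}_{W/V}^\bullet)=0$ entering only through the exact sequence (\ref{5g}). For existence I would simply invoke Theorem \ref{stability}: since $\mathbb{H}^1(V,\mathcal{N}_{W/V}^\bullet)=0$, that theorem applies to the holomorphic Poisson fibre manifold $(\mathcal{W},\Lambda,B,p)$ and yields a neighborhood $N$ of $0$ together with a holomorphic Poisson fibre submanifold $\mathcal{V}$ with compact fibres of $(\mathcal{W},\Lambda)|_N$ satisfying $\mathcal{V}\cap W=V$. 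Concretely this $\mathcal{V}$ is the union $\bigcup_u V_{t(u)}$ cut out of the maximal family $\mathcal{F}$ of Theorem \ref{5theorem1}, where $t(u)=(0,\dots,0,u^1,\dots,u^q)$.

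The next step is to exploit $\mathbb{H}^0(V,\mathcal{N}_{W/V}^\bullet)=0$. From the exact sequence (\ref{5g}), this vanishing makes $\kappa\colon\mathbb{H}^0(V,R^\bullet)\to\mathbb{C}^q$ an isomorphism, so $n:=\dim\mathbb{H}^0(V,R^\bullet)=q$ and the parameter space $M_\epsilon$ of $\mathcal{F}$ lies in $\mathbb{C}^q$. In the construction of $\mathcal{F}$ each fibre $V_t$ lies over the base point with coordinates $(t_{n-q+1},\dots,t_n)=(t_1,\dots,t_q)$, so the base-coordinate projection $\pi_B\colon M_\epsilon\to B$, $t\mapsto(t_1,\dots,t_q)$, is well defined, and its differential at $0$ is exactly $\kappa\circ\sigma_0$, which is an isomorphism. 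After shrinking $\epsilon$ I may therefore assume $\pi_B$ is a biholomorphism of $M_\epsilon$ onto a neighborhood of $0$ in $B$, and $t(u)=\pi_B^{-1}(u)$.

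For uniqueness I would argue as follows. Because the characteristic map $\sigma_0$ of $\mathcal{F}$ is an isomorphism onto $\mathbb{H}^0(V,\mathcal{N}_{\mathcal{W}/V}^\bullet)=\mathbb{H}^0(V,R^\bullet)$, Theorem \ref{3.5a}, applied with ambient manifold $(\mathcal{W},\Lambda)$, shows that $\mathcal{F}$ is maximal at $0$. Now let $\mathcal{V}'$ be any holomorphic Poisson fibre submanifold with compact fibres over a (possibly smaller) neighborhood of $0$ with $\mathcal{V}'\cap W=V$. Viewed through $p$, it is a Poisson analytic family of compact holomorphic Poisson submanifolds of $(\mathcal{W},\Lambda)$ whose fibre over $0$ is $V=V_0$, so maximality furnishes a holomorphic map $h$ with $h(0)=0$ inducing $\mathcal{V}'$ from $\mathcal{F}$; that is, the fibre of $\mathcal{V}'$ over $u$ equals $V_{h(u)}$. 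The fibre-submanifold condition forces $V_{h(u)}\subset p^{-1}(u)$, while by construction $V_{h(u)}$ lies over $\pi_B(h(u))$; hence $\pi_B\circ h=\mathrm{id}$, so $h=\pi_B^{-1}$ is uniquely determined, independently of $\mathcal{V}'$. Consequently $\mathcal{V}'=\bigcup_u V_{\pi_B^{-1}(u)}=\mathcal{V}$, which gives both the asserted uniqueness and identifies the unique fibre submanifold.

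I expect the main obstacle to be the translation between the three parameter spaces. The routine parts are the cohomological bookkeeping, but the delicate point is to verify cleanly that a holomorphic Poisson fibre submanifold genuinely qualifies as a Poisson analytic family over $N$ to which Theorem \ref{3.5a} applies, and then to identify the base-coordinate projection $\pi_B$ with $\kappa\circ\sigma_0$ so that the hypothesis $\mathbb{H}^0(V,\mathcal{N}_{W/V}^\bullet)=0$ becomes the statement that $\pi_B$ is a local biholomorphism. It is precisely this identification that pins the inducing map $h$ down to the single section $\pi_B^{-1}$; without $n=q$ the completeness theorem would only produce some inducing $h$ and uniqueness would fail.
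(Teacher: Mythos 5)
Your proposal is correct and follows essentially the same route the paper intends: the paper only sketches this proof (``as in Theorem 3 in \cite{Kod63}, by combining the exact sequence (\ref{5g}) and Theorem \ref{stability}''), and Kodaira's Theorem 3 blueprint is exactly what you carry out --- existence from the stability theorem via the family $\mathcal{F}$ of Theorem \ref{5theorem1}, and uniqueness by using $\mathbb{H}^0(V,\mathcal{N}_{V/W}^\bullet)=0$ in (\ref{5g}) to force $n=q$, then applying the completeness theorem to $\mathcal{F}$ so that the inducing map of any competing fibre submanifold is pinned down by the base-coordinate constraint. Your identification of the base projection's differential with $\kappa\circ\sigma_0$ is the correct way to make that last step precise.
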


\begin{example}
We keep the notations in Example $\ref{example51}$. Let us consider a Poisson rational ruled surface $(F_2, z^2\xi\frac{\partial}{\partial z}\wedge \frac{\partial}{\partial \xi})$. We show that the holomorphic Poisson submanifold $\xi=\xi'=0$ of $(F_2, z^2\xi\frac{\partial}{\partial z}\wedge \frac{\partial}{\partial \xi})$ is unstable. Take two copies of $U_i\times \mathbb{P}_\mathbb{C}^1\times \mathbb{C}$, where $U_i=\mathbb{C}$ and write the coordinates as $(z,[\xi_0,\xi_1],t)$ and $(z',[\xi_0',\xi_1'],t')$. Patch $U_i\times \mathbb{P}_\mathbb{C}^1\times \mathbb{C},i=1,2$ by the relation $z'=\frac{1}{z},t=t'$ and $[\xi_0',\xi_1']=[\xi_0,z^2 \xi_1+tz \xi_0]$ and denote it by $X$. Then the projection $\pi:X\to \mathbb{C}$ define a complex analytic family of deformations of $F_2$. We give a holomorphic Poisson structure on $X$ to make a Poisson analytic family. We set $\xi=\frac{\xi_1}{\xi_0}$ and $\xi'=\frac{\xi_1'}{\xi_0'}$. Since $-\xi'\frac{\partial}{\partial z'}\wedge\frac{\partial}{\partial \xi'}=(z^2\xi+tz)\frac{\partial}{\partial z}\wedge \frac{\partial}{\partial \xi}$, $\pi:(X,\Lambda=(z^2\xi+tz)\frac{\partial}{\partial z}\wedge \frac{\partial}{\partial \xi})\to \mathbb{C}$ defines a Poisson analytic family. Since $\xi=\xi'=0$ can not be extended to a complex analytic family as in \cite{Kod63} p.86, it can not be extended to a Poisson analytic family as a  holomorphic Poisson fibre submanifold of $(X,\Lambda)$ so that it is not stable.

\end{example}

\begin{example}
Let us consider $F_0\cong \mathbb{P}_\mathbb{C}^1\times \mathbb{P}_\mathbb{C}^1$ and a Poisson structure $\Lambda_0=\xi \frac{\partial}{\partial z}\wedge \frac{\partial}{\partial \xi}$ on $F_0$. We keep the notations in Example $\ref{example51}$. We show that the holomorphic Poisson submanifold $V:\xi=0$ is unstable. Let us consider a Poisson analytic family $(F_0\times \mathbb{C}, \Lambda=(\xi-tz)\frac{\partial}{\partial z}\wedge \frac{\partial}{\partial \xi})$. Assume that there is a holomorphic Poisson fibre manifold of $\mathcal{V}\subset (F_0\times B,\Lambda)$, where $B=\{t\in \mathbb{C}||t|<\epsilon\}$ for a sufficiently small number $\epsilon>0$ such that $\mathcal{V}|_{t=0}$ is $\xi=\xi'=0$. We may assume that $\mathcal{V}$ is defined by $\xi-\varphi_1(z,t)=0$ on $U_1\times \mathbb{P}_\mathbb{C}^1\times \mathbb{C}$ and $\xi'-\varphi_2(z',t)=0$ on $U_2\times \mathbb{P}_\mathbb{C}^1\times \mathbb{C}$, where $U_i=\mathbb{C},i=1,2$ so that we have a relation $\varphi_1(z,t)-\varphi_2(\frac{1}{z},t)=0$. Hence $\varphi_i(z,t)$ is of the from $\varphi_1(z,t)=f(t)$. On the other hand, since $\xi-\varphi_1(z,t)$ defines a holomorphic Poisson submanifold of $F_0\times \mathbb{C}$, $[\Lambda,\xi-\varphi_1(z,t)]|_{\xi=\varphi_1(z,t)}=0$ so that $-(\xi-tz)\frac{\partial}{\partial z}|_{\xi-\varphi_1(z,t)}=0\iff \varphi_1(z,t)=tz$ which contradicts to $\varphi_1(z,t)=f(t)$. Hence $V:\xi=0$ is not stable as a holomorphic Poisson submanifold while it is stable as a complex submanifold since $H^1(V,\mathcal{N}_{V/F_0})\cong H^1(\mathbb{P}_\mathbb{C}^1,\mathcal{O}_{\mathbb{P}_\mathbb{C}^1})=0$.
\end{example}

\appendix
\section{Deformations of Poisson structures}\label{appendixa}
We denote by $\bold{Art}$ the category of local artinian $k$-algebras with residue field $k$, where $k$ is an algebraically closed field with characteristic $0$, and by $k[\epsilon]$ by the ring of dual numbers.

\begin{definition}
Let $(Y,\Lambda_0)$ be a nonsingular Poisson variety. An infinitesimal deformation of $\Lambda_0$ over $A\in \bold{Art}$ is an algebraic Poisson scheme $(Y\times_{Spec(k)} A,\Lambda)$ which induces $(Y,\Lambda_0)$, where $\Lambda\in H^0(Y,\wedge^2 T_Y)\otimes A$. Then for each $A\in \bold{Art}$, we can define a functor of Artin rings
\begin{align*}
Def_{\Lambda_0}:\bold{Art}&\to (sets)\\
A&\mapsto \{\text{infinitesimal deformations of $\Lambda_0$ over $A$}\}
\end{align*}
We will denote by $\mathbb{H}^i(Y,\wedge^2 T_Y^{\bullet-1})$ the $i$-th hypercohomology group of the following complex of sheaves
\begin{align}
\wedge^2 T_Y^{\bullet-1}: \wedge^2 T_Y\xrightarrow{-[-,\Lambda_0]}\wedge^3 T_Y\xrightarrow{-[-,\Lambda_0]}\wedge^4 T_Y\xrightarrow{-[-,\Lambda_0]}\cdots
\end{align}
\end{definition}
\begin{proposition}
Let $(Y,\Lambda_0)$ be a nonsingular Poisson variety. Then
\begin{enumerate}
\item There is a natural identification $Def_{\Lambda_0}(k[\epsilon])\cong\mathbb{H}^0(Y, \wedge^2 T_Y^{\bullet-1})$.
\item Given an infinitesimal deformation $\eta$ of $\Lambda_0$ over $A\in \bold{Art}$ and a small extension $0\to(t)\to \tilde{A}\to A\to 0$, we can associate  an element $o_\eta(e)\in \mathbb{H}^1(Y,\wedge^2 T_Y^{\bullet-1})$, which is zero if and only if there is a lifting of $\eta$ to $\tilde{A}$.
\end{enumerate}
\end{proposition}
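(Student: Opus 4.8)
The plan is to use a standard \v{C}ech-cocycle computation on the complex $\wedge^2 T_Y^{\bullet-1}$, mirroring the Kodaira-Spencer obstruction theory that the rest of the paper extends to the Poisson setting. First I would fix an affine (or suitably Stein/acyclic) open cover $\mathcal{U}=\{U_i\}$ of $Y$ and represent an element of $\mathbb{H}^0(Y,\wedge^2 T_Y^{\bullet-1})$ by \v{C}ech data in the double complex whose rows are the \v{C}ech complexes $C^\bullet(\mathcal{U},\wedge^p T_Y)$ and whose vertical differential is $-[-,\Lambda_0]$. For part (1), an infinitesimal deformation over $k[\epsilon]$ is a bivector $\Lambda=\Lambda_0+\epsilon\Lambda_1$ with $\Lambda_1\in H^0(Y,\wedge^2 T_Y)$, and the Poisson (integrability) condition $[\Lambda,\Lambda]=0$ expands, using $\epsilon^2=0$ and $[\Lambda_0,\Lambda_0]=0$, to $2\epsilon[\Lambda_0,\Lambda_1]=0$, i.e. $[\Lambda_1,\Lambda_0]=0$. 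Thus $\Lambda_1$ is a global section of $\ker(\wedge^2 T_Y\xrightarrow{-[-,\Lambda_0]}\wedge^3 T_Y)$, which is exactly $\mathbb{H}^0(Y,\wedge^2 T_Y^{\bullet-1})$ since the complex begins in this degree. This identification is visibly natural in the deformation, giving (1).

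For part (2), I would take an infinitesimal deformation $\eta$ over $A$, represented by a bivector $\Lambda_A$ lifting $\Lambda_0$ with $[\Lambda_A,\Lambda_A]=0$ in $H^0(Y,\wedge^2 T_Y)\otimes A$, and a small extension $0\to(t)\to\tilde A\to A\to 0$ with $\mathfrak{m}_{\tilde A}\cdot(t)=0$. Choose, on each $U_i$, an arbitrary lift $\tilde\Lambda_i\in\Gamma(U_i,\wedge^2 T_Y)\otimes\tilde A$ of $\Lambda_A|_{U_i}$; such local lifts exist because $\wedge^2 T_Y$ is locally free and the obstruction to lifting is only a question of the global gluing and the integrability constraint. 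The plan is then to measure two defects simultaneously. On overlaps $U_i\cap U_j$ the difference $\tilde\Lambda_i-\tilde\Lambda_j$ lies in $\Gamma(U_i\cap U_j,\wedge^2 T_Y)\otimes(t)\cong\Gamma(U_i\cap U_j,\wedge^2 T_Y)$, giving a \v{C}ech $1$-cochain $\{g_{ij}\}$ in $C^1(\mathcal{U},\wedge^2 T_Y)$; and since $[\tilde\Lambda_i,\tilde\Lambda_i]$ reduces to $[\Lambda_A,\Lambda_A]=0$ modulo $(t)$, the element $\tfrac12[\tilde\Lambda_i,\tilde\Lambda_i]$ lives in $\Gamma(U_i,\wedge^3 T_Y)\otimes(t)$, giving a $0$-cochain $\{h_i\}$ in $C^0(\mathcal{U},\wedge^3 T_Y)$. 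The pair $(\{g_{ij}\},\{h_i\})$ is the candidate obstruction.

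The key verification is that $(\{g_{ij}\},\{h_i\})$ is a $1$-cocycle in the total complex of the \v{C}ech double complex associated to $\wedge^2 T_Y^{\bullet-1}$, hence defines a class $o_\eta(e)\in\mathbb{H}^1(Y,\wedge^2 T_Y^{\bullet-1})$. This amounts to three identities: that $\{g_{ij}\}$ is a \v{C}ech cocycle ($\delta g=0$ on triple overlaps, immediate from telescoping the definition); that the vertical differential of $\{g_{ij}\}$ matches the \v{C}ech differential of $\{h_i\}$, i.e. $-[g_{ij},\Lambda_0]=h_i-h_j$ on $U_i\cap U_j$ (this follows by expanding $[\tilde\Lambda_i,\tilde\Lambda_i]-[\tilde\Lambda_j,\tilde\Lambda_j]$ and using the graded Jacobi identity together with $[\Lambda_0,\Lambda_0]=0$, working modulo $(t)$); and that $-[h_i,\Lambda_0]=0$, which is the infinitesimal shadow of the graded Jacobi identity $[[\tilde\Lambda_i,\tilde\Lambda_i],\tilde\Lambda_i]=0$. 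I would then check that the class is independent of the chosen local lifts: changing $\tilde\Lambda_i$ by $\sigma_i\in\Gamma(U_i,\wedge^2 T_Y)\otimes(t)$ alters $(\{g_{ij}\},\{h_i\})$ by exactly the total coboundary of $\{\sigma_i\}$, so $o_\eta(e)$ is well-defined. Finally, $o_\eta(e)=0$ precisely when one can adjust the local lifts to a global integrable bivector $\tilde\Lambda$ over $\tilde A$, i.e. a lifting of $\eta$; the vanishing of the \v{C}ech part lets us reglue to a global bivector, and the vanishing of the vertical part kills $[\tilde\Lambda,\tilde\Lambda]$, which is the integrability we need.

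The main obstacle I expect is bookkeeping the graded signs and the interaction between the Schouten bracket and the \v{C}ech differential in the total complex, specifically verifying the middle cocycle identity $-[g_{ij},\Lambda_0]=h_i-h_j$ with the correct normalization of $\tfrac12[\tilde\Lambda_i,\tilde\Lambda_i]$. This is where a careless sign would break the claim, so I would carry out the graded-Jacobi expansion explicitly once and then invoke it; the remaining identities are formally analogous to the obstruction computations already performed for $\mathcal{N}_{V/W}^\bullet$ and $(\wedge^2 T_W\oplus i_*\mathcal{N}_{V/W})^\bullet$ in Sections \ref{section3} and \ref{section4}, so they can be handled by the same pattern.
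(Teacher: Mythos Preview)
Your proposal is correct and follows essentially the same argument as the paper's proof: the paper also chooses local lifts $\tilde{\Lambda}_i$ of $\Lambda_A|_{U_i}$, defines the cocycle $(\{\tfrac12\Pi_i\},\{-\Lambda_{ij}'\})$ from $[\tilde{\Lambda}_i,\tilde{\Lambda}_i]=t\Pi_i$ and $\tilde{\Lambda}_i-\tilde{\Lambda}_j=t\Lambda_{ij}'$, verifies the three cocycle identities via the graded Jacobi identity, and checks well-definedness and the lifting criterion exactly as you outline. The only differences are notational (your $g_{ij},h_i$ are the paper's $\Lambda_{ij}',\tfrac12\Pi_i$ up to sign conventions for $\delta$), and the sign bookkeeping you flag as the main obstacle is precisely what the paper carries out explicitly in its equations (\ref{a10})--(\ref{a12}).
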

\begin{proof}
Let $\Lambda\in H^0(Y,\wedge^2 T_Y)\otimes k[\epsilon]$ be an infinitesimal deformation of $\Lambda_0$ over $Spec(k[\epsilon])$ so that $\Lambda=\Lambda_0+\epsilon \Lambda'$ for some $\Lambda'\in H^0(Y,\wedge^2 T_Y)$. Since $[\Lambda_0+\epsilon \Lambda',\Lambda_0+\epsilon \Lambda']=0$ so that $-[\Lambda',\Lambda_0]=0$. Hence $\Lambda'\in \mathbb{H}^0(Y,\wedge^2 T_Y^{\bullet-1})$.

Now we identify obstructions. Consider a small extension $e:0\to (t)\to \tilde{A}\to A\to 0$. Let $\eta$ be an infinitesimal deformations of $\Lambda_0$ over $A$, in other words, a Poisson structure $\Lambda\in H^0(Y,\wedge^2 T_Y)\otimes A$ on $Y\times_{Spec(k)} A$ over $A$. Let $\mathcal{U}=\{U_i\}$ be an affine open covering of $Y$.  Then $\Lambda$ is locally expressed as $\Lambda_i\in \Gamma(U_i,\wedge^2 T_Y)\otimes A$ with $[\Lambda_i,\Lambda_i]=0$. Let $\tilde{\Lambda}_i\in H^0(Y,\wedge^2 T_Y)\otimes \tilde{A}$ be an arbitrary lifting of $\Lambda_i$ to $\tilde{A}$ so that $[\tilde{\Lambda}_i,\tilde{\Lambda}_i]=t \Pi_i$ for some $\Pi_i\in \Gamma(U_i,\wedge^3 T_Y)$ and $\tilde{\Lambda}_i-\tilde{\Lambda}_j= t\Lambda_{ij}'$ for some $\Lambda_{ij}'\in \Gamma(U_i\cap U_j,\wedge^2 T_Y)$. Then we have 
\begin{align}
&t[\Lambda_0,\Pi_i]=[\tilde{\Lambda}_i,[\tilde{\Lambda}_i,\tilde{\Lambda}_i]]=0 \iff -[\frac{1}{2}\Pi_i,\Lambda_0]=0\label{a10}\\
&(\frac{1}{2}\Pi_i-\frac{1}{2}\Pi_j)=\frac{1}{2}[\tilde{\Lambda}_i,\tilde{\Lambda}_i]-\frac{1}{2}[\tilde{\Lambda}_j,\tilde{\Lambda}_j]=t[\Lambda_0, \Lambda_{ij}'] \iff \delta(\frac{1}{2}\Pi_i)-[-\Lambda_{ij}',\Lambda_0]=0 \label{a11}\\
 &t(\Lambda_{jk}'-\Lambda_{ik}'+\Lambda_{ij}')=\tilde{\Lambda}_j-\tilde{\Lambda}_k-\tilde{\Lambda}_i+\tilde{\Lambda}_k+\tilde{\Lambda}_i-\tilde{\Lambda}_j=0\iff
-\delta(-\Lambda_{ij}')=0.\label{a12}
\end{align}
Hence $(\{\frac{1}{2}\Pi_i\},\{ -\Lambda_{ij}'\})\in C^0(\mathcal{U}, \wedge^3 T_Y)\oplus C^1(\mathcal{U},\wedge^2 T_Y)$ define a $1$-cocycle in the following \v{C}ech resolution of $\wedge^2 T_X^{\bullet-1}$:
\begin{center}
$\begin{CD}
C^0(\mathcal{U},\wedge^4 T_Y)\\
@A-[-,\Lambda_0]AA \\
C^0(\mathcal{U},\wedge^3 T_Y)@>\delta>> C^1(\mathcal{U},\wedge^3 T_Y)\\
@A-[-,\Lambda_0]AA @A-[-,\Lambda_0]AA\\
C^0(\mathcal{U},\wedge^2 T_Y)@>-\delta>> C^1(\mathcal{U},\wedge^2 T_Y)@>\delta>> C^2(\mathcal{U},\wedge^2 T_Y)
\end{CD}$
\end{center} 
Now we choose another arbitrary lifting $\tilde{\Lambda}_i'\in\Gamma(U_i, \wedge^2 T_Y)\otimes \tilde{A}$ of $\Lambda_i$. We show that the associated cohomology class $b:=(\{\frac{1}{2}\Pi_i'\},\{-\Lambda_{ij}''\})$ is cohomologous to $a:=(\{\frac{1}{2}\Pi_i\}, \{-\Lambda_{ij}'\})$. We note that $\tilde{\Lambda}_i'=\tilde{\Lambda}_i+tD_i$ for some $D_i\in \Gamma(U_i, \wedge^2 T_Y)$. Then
\begin{align}
t\frac{1}{2}\Pi_i'-t\frac{1}{2}\Pi_i=\frac{1}{2}[\tilde{\Lambda}_i',\tilde{\Lambda}_i']-\frac{1}{2}[\tilde{\Lambda}_i,\tilde{\Lambda}_i]=[tD_i,\Lambda_0]\iff \frac{1}{2}\Pi_i-\frac{1}{2}\Pi'=-[D_i,\Lambda_0]\label{aa1}\\
t\Lambda_{ij}''-t\Lambda_{ij}'=\tilde{\Lambda}_i'-\tilde{\Lambda}_j'-\tilde{\Lambda}_i+\tilde{\Lambda}_j=t(D_i-D_j)\iff -\Lambda_{ij}'-(-\Lambda_{ij}'')=-\delta(D_i)\label{aa2}
\end{align}
Hence $\{D_i\}\in C^0(\mathcal{U},\wedge^2 T_Y)$ is mapped to $a-b$ so that $a$ is cohomologous to $b$. So given a small extension $e:0\to (t)\to \tilde{A}\to A\to 0$, we can associate an element $o_\eta(e):=$ the cohomology class of $a\in \mathbb{H}^1(Y,\wedge^2 T_Y^{\bullet-1})$. We note that $o_\eta(e)=0$ if and only if there exists a collection $\{\tilde{\Lambda}_i\}$ such that $\Pi_i=0$ (which means $[\tilde{\Lambda}_i,\tilde{\Lambda}_i]=0$) and  $\Lambda_{ij}'=0$ (which means $\{\tilde{\Lambda}_i\}$ glues together to define a Poisson structure on $Y\times Spec(\tilde{A})$) if and only if there is a lifting of $\eta$ to $\tilde{A}$.

\end{proof}

\begin{remark}
We have an exact sequence of complex of sheaves $0\to \wedge^2 T_X^{\bullet -1}\to T_X^\bullet\to T_X\to 0:$
\begin{center}
$\begin{CD}
@. \cdots @. \cdots @.\cdots\\
@.@A-[-.\Lambda_0]AA @A-[-,\Lambda_0]AA @AAA\\
0@>>> \wedge^3 T_X@>>> \wedge^3 T_X@>>> 0@>>>0\\
@. @A-[-,\Lambda_0]AA @A-[-,\Lambda_0]AA @AAA\\
0@>>>\wedge^2 T_X@>>> \wedge^2 T_X@>>> 0@>>>0\\
@.@AAA @A-[-,\Lambda_0]AA @AAA\\
0@>>> 0@>>> T_X@>>> T_X@>>>0
\end{CD}$
\end{center}
which induces 
\begin{align}
\mathbb{H}^0(X,\wedge^2 T_X^{\bullet-1})\to \mathbb{H}^1 (X,T_X^\bullet)\to H^1(X,T_X) \label{a1}\\
\mathbb{H}^1(X,\wedge^2 T_X^{\bullet -1})\to \mathbb{H}^2(X,T_X^\bullet)\to \mathbb{H}^2(X,T_X)\label{a2}
\end{align}

We also have morphisms of deformation functors
\begin{align}
Def_{\Lambda_0}\to Def_{(X,\Lambda_0)}\to Def_X \label{a3}
\end{align}
where $Def_{(X,\Lambda_0)}$ is the functor of flat Poisson deformations of $(X,\Lambda_0)$ $($see \cite{Kim16}$)$ and $Def_X$ is the functor of flat deformations of $X$ $($see \cite{Ser06} $p.64$$)$. Then $(\ref{a1})$ represents the morphisms of tangent spaces for $(\ref{a3})$ $:Def_{\Lambda_0}(k[\epsilon])\to Def_{(X,\Lambda_0)}(k[\epsilon])\to Def_X(k[\epsilon])$, and $(\ref{a2})$ represents the obstruction maps for $(\ref{a3})$.
\end{remark}

\section{Deformations of Poisson closed subschemes}\label{appendixb}

\subsection{The local Poisson Hilbert functor}\

Let $X\subset (Y,\Lambda_0)$ be a closed embedding of algebraic Poisson schemes, where $(Y,\Lambda_0)$ is a nonsingular Poisson variety. An infinitesimal deformation of $X$ in $(Y,\Lambda_0)$ over $A\in \bold{Art}$ is a cartesian diagram of morphisms of schemes
\begin{center}
$\begin{CD}
X@>>> \mathcal{X}\subset (Y\times Spec(A), \Lambda_0)\\
@VVV @VV\pi V\\
Spec(k)@>>> S=Spec(A)
\end{CD}$
\end{center}
where $\pi$ is flat and induced by a projection from $Y\times S$ to $S$, and $\mathcal{X}$ is a Poisson closed subscheme of $(Y\times S,\Lambda_0)$. Then we can define a functor of Artin rings (called the local Poisson Hilbert functor of $X$ in $(Y,\Lambda_0)$)
\begin{align*}
H_X^{(Y,\Lambda_0)}:\bold{Art}&\to (Sets)\\
A&\mapsto \{\text{ infinitesimal deformations of $X$ in $(Y,\Lambda_0)$ over $A$}\}
\end{align*}

\subsection{The complex associated with the normal bundle of a Poisson closed subscheme of a nonsingular Poisson variety}\label{b1}\

Let $(Y,\Lambda_0)$ be a nonsingular Poisson variety and $X$ be a Poisson closed subscheme of $(Y,\Lambda)$ defined by a Poisson ideal sheaf $\,\mathcal{I}$. Assume that $i:X\hookrightarrow Y$ be a regular embedding. Let $\{U_i\}$ be an affine open cover of $Y$ such that $I_i=(f_i^1,...,f_i^N)$ be a Poisson ideal of $\Gamma(U_i,\mathcal{O}_Y)$ defining $X\cap U_i$ and $\{f_i^1,...,f_i^N\}$ is a regular sequence. Since $(f_i^1,...,f_i^N)=(f_j^1,...,f_j^N)$, $f_i^\alpha=\sum_{\beta=1}^N r_{ij\beta}^\alpha f_j^\beta$ for some $r_{ij\beta}^\alpha\in \Gamma(U_i\cap U_j,\mathcal{O}_Y)$. Since $I_i/I_i^2$ is free $\Gamma(U_i\cap X,\mathcal{O}_X)$-module and generated by $\{f_i^\alpha+I_i^2\},\alpha=1,...,N$, $\bar{r}_{ij\alpha}^\beta$ is uniquely determined, where $\bar{r}_{ij\beta}^\alpha$ is the restriction of $r_{ij\beta}^\alpha$ to $\Gamma(U_i\cap U_j\cap X,\mathcal{O}_X)$. Then the normal sheaf $\mathcal{N}_{X/Y}:=\mathscr{H}om_{\mathcal{O}_X}(\mathcal{I}/\mathcal{I}^2,\mathcal{O}_X)$ is locally described in the following way: $\textnormal{Hom}_{\Gamma(U_i\cap X,\mathcal{O}_X)}(I_i/I_i^2,\Gamma(U_i\cap X,\mathcal{O}_X)\cong \oplus^N \Gamma(U_i\cap X,\mathcal{O}_X),\phi\mapsto (\phi(\bar{f}_i^1),...,\phi(\bar{f}_i^N))$, where $\bar{f}_i^\alpha$ is the image of $f_i^\alpha\in I_i$ in $I_i/I_i^2$, and on $U_i\cap U_j$,  $(g_j^1,...,g_j^N)\in \oplus^N \Gamma(U_j\cap X,\mathcal{O}_X)$ is identified with $(\sum_{\beta=1}^N \bar{r}_{ij\beta}^1 g_j^\beta,...,\sum_{\beta=1}^N \bar{r}_{ij\beta}^N g_j^\beta)\in \oplus^N \mathcal{O}_X(U_i\cap X)$.

On the other hand, $(f_i^1,...,f_i^N)$ is a Poisson ideal, in other words, $\{f_i^\alpha,\mathcal{O}_Y\}\subset (f_i^1,...,f_i^N),\alpha=1,...,N$ so that $[\Lambda_0, f_i^\alpha]=\sum_{\beta=1}^n f_i^\beta T_{i\alpha}^\beta $ for some $T_{i\alpha}^\beta\in \Gamma(U_i,T_Y)$. Let $\bar{T}_{i\alpha}^\beta$ be the image of $T_{i\alpha}^\beta$ in $\Gamma(U_i\cap X,T_Y|_X)$. Then we have
\begin{enumerate}
\item We note that $\sum_{\beta,\gamma=1}^N  r_{ij\gamma}^\beta f_j^\gamma T_{i\alpha}^\beta=\sum_{\beta=1}^N f_i^\beta T_{i\alpha}^\beta=[\Lambda_0, f_i^\alpha]=[\Lambda_0,\sum_{\beta=1}^N r_{ij\beta}^\alpha f_j^\beta]=\sum_{\beta=1}^N [\Lambda_0, r_{ij\beta}^\alpha]f_j^\beta+\sum_{\beta=1}^N r_{ij\beta}^\alpha[\Lambda_0, f_j^\beta] =\sum_{\beta=1}^N [\Lambda_0, r_{ij\beta}^\alpha]f_j^\beta+\sum_{\beta,\gamma=1}^N r_{ij\beta}^\alpha f_j^\gamma T_{j\beta}^\gamma $.  Then $\sum_{\gamma=1}^N f_j^\gamma(\sum_{\beta=1}^N r_{ij\gamma}^\beta T_{i\alpha}^\beta-[\Lambda_0, r_{ij\gamma}^\alpha]-\sum_{\beta=1}^N r_{ij\beta}^\alpha T_{j\beta}^\gamma)=0$ so that we get 
\begin{align}\label{b2}
\sum_{\beta=1}^N \bar{r}_{ij\gamma}^\beta \bar{T}_{i\alpha}^\beta =\overline{[\Lambda_0, r_{ij\gamma}^\alpha]}+\sum_{\beta=1}^N \bar{r}_{ij\beta}^\alpha\bar{T}_{j\beta}^\gamma
\end{align}
where $\overline{[\Lambda_0, r_{ij\gamma}^\alpha]}$ is the image of $[\Lambda_0, r_{ij\gamma}^\alpha]$ in $\Gamma(U_i\cap X, T_Y|_X)$.
\item By taking $[\Lambda_0,-]$ on $[\Lambda_0,f_i^\alpha]=\sum_{\beta=1}^N f_i^\beta T_{i\alpha}^\beta$, we get $\sum_{\beta=1}^N f_i^\beta [\Lambda_0, T_{i\alpha}^\beta]-\sum_{\beta=1}^N [\Lambda_0, f_i^\beta]\wedge T_{i\alpha}^\beta=0$. Then $\sum_{\gamma=1}^N f_i^\gamma[\Lambda_0, T_{i\alpha}^\gamma]-\sum_{\beta,\gamma=1}^N f_i^\gamma T_{i\beta}^\gamma \wedge T_{i\alpha}^\beta =\sum_{\gamma=1}^N f_i^\gamma([\Lambda_0,T_{i\alpha}^\gamma]-\sum_{\beta=1}^N T_{i\beta}^\gamma \wedge T_{i\alpha}^\beta )=0$ so that we get 
\begin{align}\label{b3}
\overline{[\Lambda_0, T_{i\alpha}^\gamma]}-\sum_{\beta=1}^N \bar{T}_{i\beta}^\gamma\wedge \bar{T}_{i\alpha}^\beta=0,
\end{align}
where $\overline{[\Lambda_0, T_{i\alpha}^\gamma]}$ is the image of $[\Lambda_0, T_{i\alpha}^\gamma]$ in $\Gamma(U_i\cap X,\wedge^2T_Y|_X)$. 
\end{enumerate}
Now we define the complex $\mathcal{N}_{X/Y}^\bullet$ associated with the normal bundle $\mathcal{N}_{X/Y}$:
\begin{align}
\mathcal{N}_{X/Y}^\bullet:\mathcal{N}_{X/Y}\xrightarrow{\nabla} \mathcal{N}_{X/Y}\otimes T_Y|_X\xrightarrow{\nabla} \mathcal{N}_{X/Y}\otimes \wedge^2 T_Y|_X\xrightarrow{\nabla}\cdots
\end{align}
The complex is defined locally in the following way:
\begin{align*}
\nabla:\oplus^N \Gamma(U_i\cap X, \wedge^p T_Y|_X) &\to \oplus^N \Gamma(U_i\cap X,\wedge^{p+1} T_Y|_X)\\
(g_i^1,...,g_i^N)&\mapsto (-\overline{[g_i^1,\Lambda_0]}+(-1)^p\sum_{\beta=1}^N g_i^\beta \wedge \bar{T}_{i1}^\beta,...,-\overline{[g_i^N,\Lambda_0]}+(-1)^p\sum_{\beta=1}^N g_i^\beta \wedge \bar{T}_{iN}^\beta)
\end{align*}
We denote the $i$-th hypercohomology  group of $\mathcal{N}_{X/Y}^\bullet$ by $\mathbb{H}^i(X,\mathcal{N}_{X/Y}^\bullet)$.

\begin{proposition}[compare \cite{Ser06} Proposition $3.2.1$ and Proposition $3.2.6$] \label{b15}
Given a regular closed embedding of algebraic Poisson schemes $i:X\hookrightarrow (Y,\Lambda_0)$, where $(Y,\Lambda_0)$ is a nonsingular Poisson variety, then
\begin{enumerate}
\item There is a natural identification
\begin{align*}
H_X^{(Y,\Lambda_0)}(k[\epsilon])\cong \mathbb{H}^0(X, \mathcal{N}_{X/Y}^\bullet)
\end{align*}
\item Given an infinitesimal deformation $\eta$ of $X$ in $(Y,\Lambda_0)$ over $A\in \bold{Art}$ and a small extension $e:0\to (t)\to\tilde{A}\to A\to 0$, we can associate an element $o_\eta(e)\in \mathbb{H}^1(X,\mathcal{N}_{X/Y}^\bullet)$, which is zero if and only if there is a lifting of $\eta$ to $\tilde{A}$.
\end{enumerate}
\end{proposition}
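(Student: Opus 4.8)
The plan is to establish both parts by the standard \v{C}ech-theoretic method for deformations over Artin rings, adapting the classical Hilbert-functor computation of Sernesi (\cite{Ser06} Proposition $3.2.1$ and $3.2.6$) to the Poisson setting. The local data is already set up in subsection \ref{b1}: the normal sheaf $\mathcal{N}_{X/Y}$ is described by the transition functions $\bar r_{ij\beta}^\alpha$, the Poisson structure contributes the vector fields $T_{i\alpha}^\beta$ defined by $[\Lambda_0,f_i^\alpha]=\sum_\beta f_i^\beta T_{i\alpha}^\beta$, and the compatibility relations $(\ref{b2})$ and $(\ref{b3})$ are exactly what make the operator $\nabla$ a well-defined differential on $\mathcal{N}_{X/Y}^\bullet$.

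For part $(1)$, I would take an infinitesimal deformation over $k[\epsilon]$ and work locally. On each $U_i$ the deformed ideal is generated by elements $f_i^\alpha+\epsilon g_i^\alpha$ with $g_i^\alpha\in\Gamma(U_i\cap X,\mathcal{O}_X)$; flatness forces this to be a first-order deformation of the ideal, so the $g_i^\alpha$ are determined modulo $I_i$, giving a section of $\oplus^N\mathcal{O}_X$ over $U_i$. The requirement that $\mathcal{X}$ be a Poisson closed subscheme of $(Y\times\mathrm{Spec}(k[\epsilon]),\Lambda_0)$ translates, after applying $[\Lambda_0,-]$ to $f_i^\alpha+\epsilon g_i^\alpha$ and using $(\ref{b2})$, into precisely the condition $-\overline{[g_i^\alpha,\Lambda_0]}+\sum_\beta g_i^\beta\bar T_{i\alpha}^\beta=0$, i.e. that $\{g_i\}$ lies in $\ker\nabla$; the gluing discrepancies on $U_i\cap U_j$ give the \v{C}ech differential. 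Thus the collection $\{g_i\}$ defines a class in $\mathbb{H}^0(X,\mathcal{N}_{X/Y}^\bullet)$, and conversely any such class produces a Poisson-flat first-order deformation. The identification is natural because it is built coordinate-wise from the generators.

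For part $(2)$, given $\eta$ over $A$ and a small extension $e:0\to(t)\to\tilde A\to A\to 0$, I would lift the local generators and the Poisson-submanifold data arbitrarily: choose liftings $\tilde f_i^\alpha$ of the generators of the deformed ideal and measure the two separate failures. First, the failure of the lifted generators to define a closed subscheme compatible across $U_i\cap U_j$ produces a \v{C}ech $1$-cochain valued in $\mathcal{N}_{X/Y}$ (the purely commutative Hilbert-functor obstruction). Second, the failure of the lifted ideal to be Poisson, namely $[\Lambda_0,\tilde f_i^\alpha]-\sum_\beta\tilde f_i^\beta\tilde T_{i\alpha}^\beta\in t\cdot(\text{something})$, produces a $0$-cochain valued in $\mathcal{N}_{X/Y}\otimes T_Y|_X$. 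Exactly as in the obstruction computations in the proof of Theorem \ref{33a} and in Appendix \ref{appendixa} (compare $(\ref{a10})$--$(\ref{a12})$), one checks using the graded Jacobi identity and $(\ref{b3})$ that this pair is a $1$-cocycle in the \v{C}ech resolution of $\mathcal{N}_{X/Y}^\bullet$, and that changing the liftings alters it by a coboundary; hence it determines a well-defined class $o_\eta(e)\in\mathbb{H}^1(X,\mathcal{N}_{X/Y}^\bullet)$. Vanishing of $o_\eta(e)$ is equivalent to being able to choose liftings for which both cochains vanish simultaneously, which is exactly the existence of a Poisson-flat lifting of $\eta$ to $\tilde A$.

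The main obstacle I expect is the bookkeeping in part $(2)$: one must track \emph{two} coupled obstruction cochains (the subscheme obstruction and the Poisson obstruction) living in adjacent terms of the double complex and verify that together they form a single hypercohomology cocycle, rather than treating each separately. The key technical inputs are the relations $(\ref{b2})$ and $(\ref{b3})$, which guarantee the differential $\nabla$ and its interaction with $[\Lambda_0,-]$ behave consistently, and the graded Jacobi identity, which controls the cross-term when $[\Lambda_0,-]$ is applied to the defect $[\Lambda_0,\tilde f_i^\alpha]-\sum_\beta\tilde f_i^\beta\tilde T_{i\alpha}^\beta$. Flatness over the small extension should be handled exactly as in the commutative case, so I would invoke \cite{Ser06} for that portion and concentrate the new work on the Poisson compatibility.
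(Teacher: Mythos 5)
Your proposal is correct and follows essentially the same route as the paper: part (1) via the local generators $f_i^\alpha+\epsilon g_i^\alpha$ whose restrictions glue to a $\nabla$-closed section of $\mathcal{N}_{X/Y}$, and part (2) via arbitrary liftings whose gluing defect (a $1$-cochain in $\mathcal{N}_{X/Y}$) and Poisson defect (a $0$-cochain in $\mathcal{N}_{X/Y}\otimes T_Y|_X$) assemble into a single hypercohomology $1$-cocycle, well-defined up to coboundary and vanishing exactly when a Poisson-flat lifting exists. The paper carries out the same bookkeeping you anticipate, using the $A$-coefficient analogues of $(\ref{b2})$ and $(\ref{b3})$ together with the graded Jacobi identity, and concludes by absorbing the vanishing restricted cochains into modified liftings of the transition data $\tilde{R}_{ij\beta}^\alpha$ and the vector fields $\tilde{T}_{i\alpha}^\beta$.
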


\begin{proof}
Let $\mathcal{U}=\{U_i\}$ be an affine open covering of $Y$ and let $I_i=(f_i^1,...,f_i^N)$ be a Poisson ideal of $\Gamma(U_i,\mathcal{O}_Y)$ defining $U_i\cap X$  such that $\{f_i^1,\cdots f_i^N\}$ is a regular sequence. We keep the notations in subsection \ref{b1}. 

A first-order deformation of $X$ in $(Y,\Lambda_0)$ is a flat family
\begin{center}
$\begin{CD}
X@>>> \mathcal{X}\subset (Y\times Spec(k[\epsilon]),\Lambda_0)\\
@VVV @VVV\\
Spec(k)@>>> Spec(k[\epsilon])
\end{CD}$
\end{center}
so that $\mathcal{X}$ is determined by a Poisson ideal sheaf $\mathcal{I}$ generated by $\{f_i^\alpha+\epsilon g_i^\alpha\},\alpha=1,...,N$ for some $g_i^\alpha\in \Gamma(U_i,\mathcal{O}_Y)$. Let $(\bar{g}_i^1,...,\bar{g}_i^N)\in \oplus^N \Gamma(X\cap U_i,\mathcal{O}_X)$ be the image of $(g_i^1,...,g_i^N)\in \oplus^N \Gamma(U_i,\mathcal{O}_Y)$. Since $(f_i^1+\epsilon g_i^1,...,f_i^N+\epsilon g_i^N)=(f_j^1+\epsilon g_j^1,...,f_i^N+\epsilon g_i^N)$, $f_i^\alpha+\epsilon g_i^\alpha=\sum_{\beta=1}^N (r_{ij\beta}^\alpha+\epsilon h_{ij\beta}^\alpha)(f_j^\beta+\epsilon g_j^\beta)$ for some $h_{ij\beta}^\alpha\in \Gamma(U_i\cap U_j, \mathcal{O}_Y)$ so that  we have $\bar{g}_i^\alpha=\sum_{\beta=1}^n \bar{r}_{ij\beta}^\alpha \bar{g}_i^\beta$. Hence $\{(\bar{g}_i^1,...,\bar{g}_i^N)\}$ define a global section in $H^0(X,\mathcal{N}_{Y/X})$. On the other hand, since $(f_i^1+\epsilon g_i^1,...,f_i^N+\epsilon g_i^N)$ is a Poisson ideal, we have $[\Lambda_0, f_i^\alpha+\epsilon g_i^\alpha]=\sum_{\beta=1}^N (f_i^\beta+\epsilon g_i^\beta)(T_{i\alpha}^\beta+\epsilon W_{i\alpha}^\beta)$ for some $W_{i\alpha}^\beta\in \Gamma(U_i,T_Y)$. Then $\overline{[\Lambda_0,g_i^\alpha]}=\sum_{\beta=1}^N \bar{g}_i^\beta \bar{T}_{i\alpha}^\beta$ so that $\nabla(\{(\bar{g}_i^1,...,\bar{g}_i^N)\})=0$. Hence $\{(\bar{g}_i^1,...,\bar{g}_i^N)\}\in \mathbb{H}^0(X,\mathcal{N}_{X/Y}^\bullet)$.

Now we identify obstructions. Consider a small extension $e:0\to \tilde{A}\to A\to 0$. Let $\eta:=(\mathcal{X}\subset (Y\times Spec(A),\Lambda_0))$ be an infinitesimal deformation of $X$ in $(Y,\Lambda_0)$ over $A$. Then $\mathcal{X}$ is determined by a Poisson ideal sheaf $\mathcal{I}_A$ generated by $(F_i^1,...,F_i^N)$ in $\Gamma(U_i,\mathcal{O}_Y)\otimes A$ such that $F_i^\alpha\equiv f_i^\alpha\otimes 1 \mod (\mathfrak{m}_A)$ and $\{ F_i^1,...,F_i^N\}$ is a regular sequence. Since $(F_i^1,...,F_i^N)=(F_j^1,...,F_j^N)$, we have $F_i^\alpha=\sum_{\beta=1}^N R_{ij\beta}^\alpha F_j^\beta$ for some $R_{ij\beta}^\alpha\in \Gamma(U_i,\mathcal{O}_Y)\otimes A$. On the other hand, since $(F_i^1,...,F_i^N)$ is a Poisson ideal,  we have $[\Lambda_0, F_i^\alpha]=\sum_{\beta=1}^N F_i^\beta W_{i\alpha}^\beta $ for some $W_{i\alpha}^\beta\in \Gamma(U_i,T_Y)\otimes A$. Let $\tilde{F}_i^\alpha\in \Gamma(U_i, \mathcal{O}_Y)\otimes A$ be an arbitrary lifting of $F_i^\alpha$, $\tilde{T}_{i\alpha}^\beta\in \Gamma(U_i, T_Y)\otimes \tilde{A}$ be an arbitrary lifting of $W_{i\alpha}^\beta$, and $\tilde{R}_{ij\alpha}^\beta\in \Gamma(U_i\cap U_j, \mathcal{O}_Y)\otimes \tilde{A}$ be an arbitrary lifting of $R_{ij\beta}^\alpha$. Then $[\Lambda_0,\tilde{F}_i^\alpha]-\sum_{\beta=1}^N \tilde{F}_i^\beta \tilde{T}_{i\alpha}^\beta =t G_i^\alpha$ for some $G_i^\alpha\in \Gamma(U_i,T_Y)$, $\tilde{F}_i^\alpha-\sum_{\beta=1}^N \tilde{R}_{ij\beta}^\alpha \tilde{F}_j^\beta=t h_{ij}^\alpha$ for some $h_{ij}^\alpha\in \Gamma(U_i\cap U_j,\mathcal{O}_Y)$, and $\tilde{R}_{ik\gamma}^\alpha-\sum_{\beta=1}^N \tilde{R}_{ij\beta}^\alpha \tilde{R}_{jk\gamma}^\beta=tP_{ijk\gamma}^\alpha$ for some $P_{ijk\gamma}^\alpha\in \Gamma(U_i\cap U_j\cap U_k,\mathcal{O}_Y)$. We will show that $\{(-\bar{G}_i^1,...,-\bar{G}_i^N)\}\oplus\{ (\bar{h}_{ij}^1,..., \bar{h}_{ij}^N)\}\in C^0(\mathcal{U},\mathcal{N}_{X/Y}\otimes T_Y|_X)\oplus C^1(\mathcal{U}, \mathcal{N}_{X/Y})$ define a $1$-cocycle in the following \v{C}ech resolution of $\mathcal{N}_{X/Y}^\bullet$:
\begin{center}
$\begin{CD}
C^0(\mathcal{U}\cap X,\mathcal{N}_{X/Y}\otimes \wedge^2 T_{Y}|_X)\\
@A\nabla AA \\
C^0(\mathcal{U}\cap X,\mathcal{N}_{X/Y}\otimes T_{Y}|_X)@>\delta>> C^1(\mathcal{U}\cap X,\mathcal{N}_{X/Y}\otimes T_Y|_X)\\
@A\nabla AA @A\nabla AA\\
C^0(\mathcal{U}\cap X,\mathcal{N}_{X/Y})@>-\delta>> C^1(\mathcal{U}\cap X,\mathcal{N}_{X/Y})@>\delta>> C^2(\mathcal{U}\cap X,\mathcal{N}_{X/Y})
\end{CD}$
\end{center}

First we show that $\nabla(\{(-\bar{G}_i^1,...,-\bar{G}_i^N)\})=0$. As in (\ref{b3}), we can show $\sum_{\gamma=1}^N F_i^\gamma([\Lambda_0, W_{i\alpha}^\gamma]-\sum_{\beta=1}^N W_{i\beta}^\gamma\wedge W_{i\alpha}^\beta)=0$ so that we have $\sum_{\gamma=1}^N \tilde{F}_i^\gamma([\Lambda_0,\tilde{T}_{i\alpha}^\gamma]-\sum_{\beta=1}^N \tilde{T}_{i\beta}^\gamma\wedge \tilde{T}_{i\alpha}^\beta)=\sum_{\gamma=1}^N \tilde{F}_i^\gamma tQ_{i\alpha}^\gamma=\sum_{\gamma=1}^N f_i^\gamma tQ_{i\alpha}^\gamma$ for some $Q_{i\alpha}^\gamma\in \Gamma(U_i, \wedge^2 T_Y)$. Then we have
\begin{align}\label{b4}
&t[\Lambda_0,G_i^\alpha]=-\sum_{\beta=1}^N[\Lambda_0, \tilde{F}_i^\beta \tilde{T}_{i\alpha}^\beta ]=-\sum_{\beta=1}^N \tilde{F}_i^\beta[\Lambda_0,\tilde{T}_{i\alpha}^\beta] +\sum_{\beta=1}^N [\Lambda_0, \tilde{F}_i^\beta]\wedge \tilde{T}_{i\alpha}^\beta\\
&=-\sum_{\gamma=1}^N \tilde{F}_i^\gamma[\Lambda_0,\tilde{T}_{i\alpha}^\gamma]+\sum_{\beta=1}^N tG_i^\beta\wedge T_{i\alpha}^\beta+\sum_{\beta,\gamma=1}^N\tilde{F}_i^\gamma \tilde{T}_{i\beta}^\gamma \wedge \tilde{T}_{i\alpha}^\beta=-\sum_{\gamma=1}^N \tilde{F}_i^\gamma([\Lambda_0, \tilde{T}_{i\alpha}^\gamma]-\tilde{T}_{i\beta}^\gamma\wedge \tilde{T}_{i\alpha}^\beta)+\sum_{\beta=1}^N tG_i^\beta\wedge T_{i\alpha}^\beta\notag
\end{align}
By taking $-$ on (\ref{b4}), we obtain
\begin{align}\label{b11}
t\overline{[\Lambda_0, G_i^\alpha]}=-\sum_{\gamma=1}^N \bar{f}_i^\gamma t\bar{Q}_{i\alpha}^\gamma +\sum_{\beta=1}^N t\bar{G}_i^\beta\wedge \bar{T}_{i\alpha}^\beta=\sum_{\beta=1}^N t\bar{G}_i^\beta\wedge \bar{T}_{i\alpha}^\beta \iff -\overline{[ -G_i^\alpha,\Lambda_0]}+(-1)^1\sum_{\beta=1}^N -\bar{G}_i^\beta\wedge \bar{T}_{i\alpha}^\beta=0
\end{align}

Next we show that $\delta(\{(-\bar{G}_i^1,...,-\bar{G}_i^N)\})+\nabla(\{\bar{h}_{ij}^1,...,\bar{h}_{ij}^N\})=0$. We have 
\begin{align}\label{b6}
t(G_i^\alpha-\sum_{\beta=1}^N r_{ij\beta}^\alpha G_j^\beta )=[\Lambda_0, \tilde{F}_i^\alpha]-\sum_{\beta=1}^N \tilde{F}_i^\beta\tilde{T}_{i\alpha}^\beta -\sum_{\beta=1}^N \tilde{R}_{ij\beta}^\alpha [\Lambda_0,\tilde{F}_j^\beta]+\sum_{\beta,\gamma=1}^N \tilde{R}_{ij\beta}^\alpha \tilde{F}_j^\gamma \tilde{T}_{j\beta}^\gamma 
\end{align}
On the other hand, we have
\begin{align}\label{b7}
t[\Lambda_0, h_{ij}^\alpha]-t\sum_{\beta=1}^N  h_{ij}^\beta T_{i\alpha}^\beta=[\Lambda_0,\tilde{F}_i^\alpha]-\sum_{\beta=1}^N[\Lambda_0, \tilde{R}_{ij\beta}^\alpha\tilde{F}_j^\beta]-\sum_{\beta=1}^N \tilde{F}_i^\beta\tilde{T}_{i\alpha}^\beta +\sum_{\beta,\gamma=1}^N \tilde{R}_{ij\gamma}^\beta \tilde{F}_j^\gamma \tilde{T}_{i\alpha}^\beta\\
=[\Lambda_0,\tilde{F}_i^\alpha]-\sum_{\beta=1}^N \tilde{F}_j^\beta[\Lambda_0, \tilde{R}_{ij\beta}^\alpha]-\sum_{\beta=1}^N  \tilde{R}_{ij\beta}^\alpha [\Lambda_0,\tilde{F}_j^\beta]-\sum_{\beta=1}^N \tilde{F}_i^\beta \tilde{T}_{i\alpha}^\beta+\sum_{\beta,\gamma=1}^N \tilde{R}_{ij\gamma}^\beta \tilde{F}_j^\gamma \tilde{T}_{i\alpha}^\beta\notag
\end{align}

As in (\ref{b2}), we can show $\sum_{\gamma=1}^N F_j^\gamma(\sum_{\beta=1}^N R_{ij\gamma}^\beta W_{i\alpha}^\beta- [\Lambda_0, R_{ij\gamma}^\alpha]-\sum_{\beta=1}^N R_{ij\beta}^\alpha W_{j\beta}^\gamma)=0$ so that we have $\sum_{\gamma=1}^N \tilde{F}_j^\gamma(\sum_{\beta=1}^N \tilde{R}_{ij\gamma}^\beta \tilde{T}_{i\alpha}^\beta- [\Lambda_0, \tilde{R}_{ij\gamma}^\alpha]-\sum_{\beta=1}^N \tilde{R}_{ij\beta}^\alpha \tilde{T}_{j\beta}^\gamma)=\sum_{\gamma=1}^N \tilde{F}_j^\gamma tS_{ij\gamma}^\alpha=\sum_{\gamma=1}^N f_j^\gamma tS_{ij\gamma}^\alpha$ for some $S_{ij\gamma}^\alpha\in \Gamma(U_i\cap U_j,T_Y)$. Then from (\ref{b6}) and (\ref{b7}), we obtain

\begin{align}\label{b5}
&t(G_i^\alpha-\sum_{\beta=1}^N r_{ij\beta}^\alpha G_j^\beta)-t([\Lambda_0, h_{ij}^\alpha]-\sum_{\beta=1}^N T_{i\alpha}^\beta h_{ij}^\beta)=\sum_{\beta,\gamma=1}^N \tilde{F}_j^\gamma \tilde{R}_{ij\beta}^\alpha \tilde{T}_{j\beta}^\gamma+\sum_{\beta=1}^N \tilde{F}_j^\gamma [\Lambda_0, \tilde{R}_{ij\gamma}^\alpha]-\sum_{\beta,\gamma=1}^N \tilde{F}_j^\gamma\tilde{R}_{ij\gamma}^\beta \tilde{T}_{i\alpha}^\beta 
\end{align}
By taking $-$ on (\ref{b5}) , we get
\begin{align}\label{b10}
&t(\bar{G}_i^\alpha-\sum_{\beta=1}^N \bar{r}_{ij\beta}^\alpha \bar{G}_j^\beta)-t(\overline{[\Lambda_0, h_{ij}^\alpha]}-\sum_{\beta=1}^N \bar{h}_{ij}^\beta \bar{T}_{i\alpha}^\beta )=-\sum_{\gamma=1}^N \bar{f}_i^\gamma t\bar{S}_{ij\gamma}^\alpha=0 \iff (\bar{G}_i^\alpha-\sum_{\beta=1}^N \bar{r}_{ij\beta}^\alpha \bar{G}_j^\beta)+(-\overline{[ h_{ij}^\alpha,\Lambda_0]}+\sum_{\beta=1}^N \bar{h}_{ij}^\beta \bar{T}_{i\alpha}^\beta )=0
\end{align}
Lastly, we show that $\delta(\{(\bar{h}_{ij}^1,...,\bar{h}_{ij}^N)\})=0$.
\begin{align}\label{b8}
t(\sum_{\beta=1}^N r_{ij\beta}^\alpha h_{jk}^\beta-h_{ik}^\alpha+h_{ij}^\alpha)=\sum_{\beta=1}^N \tilde{R}_{ij\beta}^\alpha\tilde{F}_j^\beta-\sum_{\beta,\gamma=1}^N\tilde{R}_{ij\beta}^\alpha \tilde{R}_{jk\gamma}^\beta\tilde{F}_k^\gamma-\tilde{F}_i^\alpha+\sum_{\beta=1}^N \tilde{R}_{ik\beta}^\alpha \tilde{F}_k^\beta+\tilde{F}_i^\alpha-\sum_{\beta=1}^N \tilde{R}_{ij\beta}^\alpha \tilde{F}_j^\beta\\
=-\sum_{\beta,\gamma=1}^N\tilde{R}_{ij\beta}^\alpha \tilde{R}_{jk\gamma}^\beta\tilde{F}_k^\gamma +\sum_{\beta=1}^N \tilde{R}_{ik\beta}^\alpha \tilde{F}_k^\beta=\sum_{\gamma=1}^N (\tilde{R}_{ik\gamma}^\alpha-\sum_{\beta=1}^N \tilde{R}_{ij\beta}^\alpha \tilde{R}_{jk\gamma}^\beta) \tilde{F}_k^\gamma=\sum_{\gamma=1}^N tP_{ijk\gamma}^\alpha \tilde{F}_k^\gamma=\sum_{\gamma=1}^N tP_{ijk\gamma}^\alpha f_{k\gamma}\notag
\end{align}
By taking $-$ on (\ref{b8}), we get
\begin{align}\label{b9}
t(\sum_{\beta=1}^N \bar{r}_{ij\beta}^\alpha \bar{h}_{jk}^\beta-\bar{h}_{ik}^\alpha+\bar{h}_{ij}^\alpha)=\sum_{\gamma=1}^N t\bar{P}_{ijk\gamma}^\alpha \bar{f}_{k\gamma}=0\iff \sum_{\beta=1}^N \bar{r}_{ij\beta}^\alpha \bar{h}_{jk}^\beta-\bar{h}_{ik}^\alpha+\bar{h}_{ij}^\alpha=0
\end{align}

Hence from (\ref{b11}), (\ref{b10}) and (\ref{b9}), $\{(-\bar{G}_i^1,...,-\bar{G}_i^N)\}\oplus \{(\bar{h}_{ij}^1,..., \bar{h}_{ij}^N)\}\in C^0(\mathcal{U}\cap X,\mathcal{N}_{X/Y}\otimes T_Y|_X)\oplus C^1(\mathcal{U}\cap X, \mathcal{N}_{X/Y})$ define a $1$-cocycle in the above \v{C}ech resoultion.

Now we choose another arbitrary lifting $F'^\alpha_i\in \Gamma(U_i,\mathcal{O}_Y)\otimes \tilde{A}$ of $F_i^\alpha$, another arbitrary lifting $\tilde{T}'^\beta_{i\alpha}\in \Gamma(U_i, T_Y)\otimes \tilde{A}$ of $W_{i\alpha}^\beta$ and another arbitrary lifting $R'^\alpha_{ij\beta}\in \Gamma(U_i\cap U_j, \mathcal{O}_Y)\otimes \tilde{A}$. We show that the associated cohomology class $b:=\{(-\bar{G}'^1_i,..., -\bar{G}'^N_i)\}\oplus\{ (\bar{h}'^1_{ij},...,\bar{h}'^N_{ij})\}$ is cohomologous to $a:=\{(-\bar{G}^1_i,..., -\bar{G}^N_i)\}\oplus \{(\bar{h}^1_{ij},...,\bar{h}^N_{ij})\}$. We note that $\tilde{F}'^\alpha_i=\tilde{F}_i^\alpha+t A_i^\alpha$ for some $A_i^\alpha\in \Gamma(U_i, \mathcal{O}_Y)$, $\tilde{T}'^\beta_{i\alpha}=\tilde{T}_{i\alpha}^\beta +tB_{i\alpha}^\beta$ for some $B_{i\alpha}^\beta\in \Gamma(U_i, T_Y)$ and $\tilde{R}'^\alpha_{ij\beta}=\tilde{R}_{ij\beta}^\alpha+tC_{ij\beta}^\alpha$ for some $C_{ij\beta}^\alpha\in \Gamma(U_i\cap U_j, \mathcal{O}_Y)$.

\begin{align}\label{b11}
t(G'^\alpha_i-G_i^\alpha)=[\Lambda_0,  \tilde{F}'^\alpha_i]-\sum_{\beta=1}^N \tilde{F}'^\beta_i \tilde{T}'^\beta_{i\alpha}-[\Lambda_0, \tilde{F}_i^\alpha]+\sum_{\beta=1}^N \tilde{F}_i^\beta \tilde{T}_{i\alpha}^\beta=[\Lambda_0, t A_i^\alpha]-\sum_{\beta=1}^N tA_i^\beta T_{i\alpha}^\beta-\sum_{\beta=1}^N f_i^\beta tB_{i\alpha}^\beta
\end{align}
By taking $-$ on (\ref{b11}), we get  $-\bar{G}'^\alpha_i-(-\bar{G}_i^\alpha)=-[\bar{A}_i^\alpha,\Lambda_0]+\sum_{\beta=1}^N \bar{A}_i^\beta \bar{T}_{i\alpha}^\beta$.

On the other hand,
\begin{align}\label{b12}
t(h'^\alpha_{ij}-h_{ij}^\alpha)=\tilde{F}'^\alpha_i-\sum_{\beta=1}^N \tilde{R}'^\alpha_{ij\beta} \tilde{F}'^\beta_j-\tilde{F}^\alpha_i+\sum_{\beta=1}^N \tilde{R}^\alpha_{ij\beta} \tilde{F}^\beta_j=tA_i^\alpha-\sum_{\beta=1}^N r_{ij\beta}^\alpha tA_j^\beta-\sum_{\beta=1}^N tC_{ij\beta}^\alpha f_j^\beta.
\end{align}
By taking $-$ on (\ref{b12}), we get $h'^\alpha_{ij}-h^\alpha_{ij}=\bar{A}_i^\alpha-\sum_{\beta=1}^N \bar{r}_{ij\beta}^\alpha \bar{A}_j^\beta.$ Hence $\{(\bar{A}_i^1,...,\bar{A}_i^N)\}$ is mapped to $b-a$ so that $a$ is cohomologous to $a$. So given a small extension $e:0\to (t)\to\tilde{A}\to A\to 0$, we can associate an element $o_\eta(e):=$ the cohomology class $a\in \mathbb{H}^1(X,\mathcal{N}_{X/Y}^\bullet)$. We note that $o_\eta(e)=0$ if and only if there exists a collection $\{\tilde{F}_i^\alpha\},\{\tilde{T}_{i\alpha}^\beta\}$ and $\{\tilde{R}_{ij\beta}^\alpha\}$ such that $\bar{h}_{ij}^\alpha=0$ and $\bar{G}_i^\alpha=0,\alpha=1,...,N$:
\begin{enumerate}
\item If $\bar{h}_{ij}^\alpha=0$, then $h_{ij}^\alpha=f_j^1L_j^1+\cdots f_j^NL_j^N$ for some $L_j^\beta\in \Gamma(U_i\cap U_j, \mathcal{O}_Y)$. Then $\tilde{F}_i^\alpha-\sum_{\beta=1}^N \tilde{R}_{ij\beta}^\alpha \tilde{F}_j^\beta=t(\sum_{\beta=1}^N \tilde{F}_j^\beta L_j^\beta)$. Hence $\tilde{F}_i^\alpha-\sum_{\beta=1}^N (\tilde{R}_{ij\beta}^\alpha+tL_j^\beta)\tilde{F}_j^\beta=0$ so that $(\tilde{F}_i^1,...,\tilde{F}_i^N)=(\tilde{F}_j^1,...,\tilde{F}_j^N)$. Hence $\{(\tilde{F}_i^1,...,\tilde{F}_i^N)\}$ is an ideal sheaf on $Y\times Spec(\tilde{A})$.
\item If $\bar{G}_i^\alpha=0$, then $G_i^\alpha=f_i^1P_i^1+\cdots +f_i^NP_i^N$ for some $P_i^\beta\in \Gamma(U_i,T_Y)$, then $[\Lambda_0,\tilde{F}_i^\alpha]-\sum_{\beta=1}^N \tilde{F}_i^\beta \tilde{T}_{i\alpha}^\beta=t\sum_{\beta=1}^N f_i^\beta P_i^\beta=t\sum_{\beta=1}^N \tilde{F}_i^\beta P_i^\beta$. Hence $[\Lambda_0,\tilde{F}_i^\alpha]=\sum_{\beta=1}^N \tilde{F}_i^\beta (\tilde{T}_{i\alpha}^\beta+t P_{i\beta})$ so that $(\tilde{F}_i^1,...,\tilde{F}_i^N)$ is a Poisson ideal.
\end{enumerate}
Hence $o_\eta(e)=0$ if and only if there is a lifting of $\eta$ to $\tilde{A}$.

\end{proof}

\subsection{Deformations of Poisson closed subschemes of codimension $1$ and Poisson semi-regularity}(compare \cite{Ser06} p.143-144)\

Let $(L,\nabla)$ be a Poisson invertible sheaf on a nonsingular Poisson variety $(X,\Lambda_0)$, where $\nabla$ is a Poisson connection on $L$, and we denote by $\mathbb{H}^i(X,L^\bullet)$ the $i$-th hypercohomology group of the following complex of sheaves (see \cite{Kim16})
\begin{align}
L^\bullet:L\xrightarrow{\nabla}L\otimes T_X\xrightarrow{\nabla} L\otimes \wedge^2 T_X\xrightarrow{\nabla} L\otimes \wedge^2 T_X\xrightarrow{\nabla}\cdots
\end{align}
Let $s\in \mathbb{H}^0(X,L^\bullet)$ and $D=div(s)\subset (X,\Lambda_0)$ be the Poisson divisor associated with $s$. Then we have a morphism of functors of Artin rings:
\begin{align}
H_D^{(X,\Lambda_0)}\to Def_{(L,\nabla)} \label{s1}
\end{align}
where $Def_{(L,\nabla)}$ is the functor of deformations of $(L,\nabla)$ (see \cite{Kim16}).
Let us consider the following exact sequence $0\to \mathcal{O}_X^\bullet\xrightarrow{s} L^\bullet\to L_D^\bullet\to 0:$
\begin{center}
$\begin{CD}
@.\cdots@.\cdots @.\cdots @.\\
@. @A-[-,\Lambda_0]AA @A\nabla AA @AAA @.\\
0@>>>\wedge^2 T_X@>s>> L\otimes \wedge^2 T_X @>>> L_D\otimes \wedge^2 T_X|_D@>>>0\\
@. @A-[-,\Lambda_0]AA @A\nabla AA @AAA @.\\
0@>>> T_X@>s>> L\otimes T_X@>>> L_D\otimes T_X|_D@>>>0\\
@. @A-[-,\Lambda_0]AA @A\nabla AA @AAA @.\\
0@>>> \mathcal{O}_X@>s>> L@>>> L_D@>>>0
\end{CD}$
\end{center}
so that we have a long exact sequence
\begin{align*}
0\to\mathbb{H}^0(X,\mathcal{O}_X^\bullet)\to \mathbb{H}^0(X,L^\bullet)\to \mathbb{H}^0(D,L_D^\bullet)\xrightarrow{\delta_0} \mathbb{H}^1(X,\mathcal{O}_X^\bullet)\to \mathbb{H}^1(X,L^\bullet)\to \mathbb{H}^1(D,L_D^\bullet)\xrightarrow{\delta_1} \mathbb{H}^2(X,\mathcal{O}_X^\bullet)\to \cdots
\end{align*}
Then $\delta_0$ represents the morphism of tangent spaces for (\ref{s1}): $H_D^{(X,A)}(k[\epsilon])\to Def_{(L,\nabla)}(k[\epsilon])$ and $\delta_1$ represents the obstruction map for (\ref{s1}).
\begin{remark}[Poisson semi-regularity]
Let $(X,\Lambda_0)$ be a nonsingular Poisson projective variety. A Poisson Cartier divisor $D$ on $X$ is called Poisson semi-regular if the natural map
\begin{align*}
\mathbb{H}^1(X, \mathcal{O}_X(D)^\bullet)\to \mathbb{H}^1(D,\mathcal{O}_D(D)^\bullet)
\end{align*}
is zero. If $D\subset (X,\Lambda_0)$ is Poisson semi-regular and $Def_{(L,\nabla)}$ is smooth, then $H_D^{(X,\Lambda_0)}$ is smooth so that $D\subset (X,\Lambda_0)$ is unobstructed. 
\end{remark}

\section{Simultaneous deformations of Poisson structures and Poisson closed subschemes}\label{appendixc}

\subsection{The local extended Poisson Hilbert functor}\

Let $X\subset (Y,\Lambda_0)$ be a closed embedding of algebraic Poisson schemes where $(Y,\Lambda_0)$ is a nonsingular Poisson variety, and $A\in \bold{Art}$. An infinitesimal simultaneous deformation of $X$ in $(Y,\Lambda_0)$ over $A\in \bold{Art}$ is a cartesian diagram of morphisms of schemes
\begin{center}
$\eta:\begin{CD}
X@>>> \mathcal{X}\subset(Y\times Spec(A),\Lambda)\\
@VVV @VV\pi V\\
Spec(k)@>>> S=Spec(A)
\end{CD}$
\end{center}
where $\pi$ is flat, and it is induced by the projection from $Y\times S$ to $S$, $\Lambda\in \Gamma(Y\times S, \mathscr{H}om(\wedge^2 \Omega_{Y\times S/S}^1,\mathcal{O}_{Y\times S}))$ defines a Poisson structure on $Y\times S$ over $S$, $\mathcal{X}$ is a Poisson closed subscheme of $(Y\times S,\Lambda)$, and $(Y\times S,\Lambda)$ induces $(Y,\Lambda_0)$. Then we can define a functor of Artin rings (called the local extended Poisson Hilbert functor of $X$ in $(Y,\Lambda_0)$)
\begin{align*}
EH_X^{(Y,\Lambda_0)}:\bold{Art}&\to (Sets)\\
A&\mapsto\{\text{infinitesimal simultaneous deformations of $X$ in $(Y,\Lambda_0)$}\}
\end{align*}

\subsection{The extended complex associated with the normal bundle $\mathcal{N}_{X/Y}$ of a Poisson closed subschemes of a nonsingular Poisson variety}\

Let $(Y,\Lambda_0)$ be a nonsingular Poisson variety and $X$ be a Poisson closed subscheme of $(Y,\Lambda_0)$ defined by a Poisson ideal sheaf $\mathcal{I}$. Assume that $i:X\hookrightarrow Y$ be a regular embedding. We keep the notations in subsection \ref{b1}.

We define the extended complex  $(\wedge^2 T_Y\oplus i_* \mathcal{N}_{X/Y})^\bullet$ associated with the normal bundle $\mathcal{N}_{X/Y}$:
\begin{align}
(\wedge^2 T_Y\oplus i_*\mathcal{N}_{X/Y})^\bullet:\wedge^2 T_Y\oplus i_*\mathcal{N}_{X/Y}\xrightarrow{\tilde{\nabla}}\wedge^3 T_Y\oplus (i_*\mathcal{N}_{X/Y}\otimes T_Y|_X)\xrightarrow{\tilde{\nabla}}\wedge^4 T_Y\oplus (i_*\mathcal{N}_{X/Y}\otimes \wedge^2 T_Y|_X)\xrightarrow{\tilde{\nabla}}\cdots
\end{align}
The complex is defined locally in the following way:
\begin{align*}
\tilde{\nabla}&:\Gamma(U_i, \wedge^{p+2} T_Y)\oplus (\oplus^r \Gamma(U_i\cap X,\wedge^p T_Y|_X)\to \Gamma(U_i, \wedge^{p+3} T_Y)\oplus (\oplus^r \Gamma(U_i\cap X, \wedge^{p+1} T_Y|_X)\\
&(\Pi_i,(g_i^1,...,g_i^N))\mapsto (-[\Pi_i, \Lambda_0], \overline{[\Pi_i, f_i^1]}-\overline{[g_i^1, \Lambda_0]}+(-1)^p\sum_{\beta=1}^N g_i^\beta\wedge \bar{T}_{i1}^\beta,...,\overline{[\Pi_i, f_i^N]}-\overline{[g_i^N,\Lambda_0]}+(-1)^p \sum_{\beta=1}^N g_i^\beta \wedge \bar{T}_{iN}^\beta)
\end{align*}

\begin{proposition}\label{cp}
Given a regular closed embedding of algebraic Poisson schemes $i:X\hookrightarrow (Y,\Lambda_0)$, where $(Y,\Lambda_0)$ is a nonsingular Poisson variety, 
\begin{enumerate}
\item There is a natural identification
\begin{align*}
EH_X^{(Y,\Lambda_0)}(k[\epsilon])\cong \mathbb{H}^0(Y,(\wedge^2 T_Y\oplus i_* \mathcal{N}_{X/Y})^\bullet)
\end{align*}
\item Given an infinitesimal simultaneous deformation $\eta$ of $X$ in $(Y,\Lambda_0)$ over $A\in \bold{Art}$ and a small extension $e:0\to (t)\to\tilde{A}\to A\to 0$, we can associate an element $o_\eta(e)\in \mathbb{H}^1(Y,(\wedge^2 T_Y\oplus i_*\mathcal{N}_{X/Y})^\bullet)$, which is zero if and only if there is a lifting of $\eta$ to $\tilde{A}$.
\end{enumerate}
\end{proposition}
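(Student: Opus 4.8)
The plan is to mirror, in the simultaneous (extended) setting, the two-part Čech-theoretic argument already carried out in Proposition \ref{b15} for the pure Hilbert functor $H_X^{(Y,\Lambda_0)}$, now combining the Poisson-structure cochains $\{\Pi_i\}$ with the subscheme cochains $\{(g_i^1,\dots,g_i^N)\}$ into a single cocycle in the Čech resolution of the extended complex $(\wedge^2 T_Y\oplus i_*\mathcal{N}_{X/Y})^\bullet$. First I would fix an affine open cover $\mathcal{U}=\{U_i\}$ with Poisson ideals $I_i=(f_i^1,\dots,f_i^N)$ given by regular sequences, and keep all the notation of subsection \ref{b1}: the transition data $r_{ij\beta}^\alpha$, the Poisson data $T_{i\alpha}^\beta$ defined by $[\Lambda_0,f_i^\alpha]=\sum_\beta f_i^\beta T_{i\alpha}^\beta$, and the identities $(\ref{b2})$ and $(\ref{b3})$ relating them.

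For part (1), a first-order simultaneous deformation over $k[\epsilon]$ is a Poisson structure $\Lambda=\Lambda_0+\epsilon\Lambda'$ on $Y\times\mathrm{Spec}(k[\epsilon])$ together with a Poisson ideal sheaf locally generated by $\{f_i^\alpha+\epsilon g_i^\alpha\}$. I would show that flatness of $\pi$ and compatibility of generators on overlaps forces $\{(\bar g_i^1,\dots,\bar g_i^N)\}$ to glue into a section of $\mathcal{N}_{X/Y}$ (exactly as in Proposition \ref{b15}), that $[\Lambda,\Lambda]=0$ forces $-[\Lambda',\Lambda_0]=0$ (as in Appendix \ref{appendixa}), and that the Poisson-ideal condition $[\Lambda,f_i^\alpha+\epsilon g_i^\alpha]\in(f_i^\bullet+\epsilon g_i^\bullet)$ yields, at order $\epsilon$, the coupling relation $\overline{[\Lambda',f_i^\alpha]}-\overline{[g_i^\alpha,\Lambda_0]}+\sum_\beta \bar g_i^\beta\,\bar T_{i\alpha}^\beta=0$. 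These three conditions are precisely the statement that $(\{\Lambda'\},\{(\bar g_i^\alpha)\})$ lies in $\ker\tilde{\nabla}$, i.e. in $\mathbb{H}^0(Y,(\wedge^2 T_Y\oplus i_*\mathcal{N}_{X/Y})^\bullet)$, and conversely every such $0$-cocycle comes from a unique first-order deformation.

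For part (2), I would take an infinitesimal simultaneous deformation $\eta$ over $A$ given by a global Poisson structure $\Lambda$ (locally $\Lambda_i$, with $[\Lambda_i,\Lambda_i]=0$) and a Poisson ideal sheaf generated by $(F_i^1,\dots,F_i^N)$ with $[\Lambda_i,F_i^\alpha]=\sum_\beta F_i^\beta W_{i\alpha}^\beta$ and $F_i^\alpha=\sum_\beta R_{ij\beta}^\alpha F_j^\beta$. Choosing arbitrary liftings $\tilde\Lambda_i,\tilde F_i^\alpha,\tilde W_{i\alpha}^\beta,\tilde R_{ij\beta}^\alpha$ to $\tilde A$, the obstructions to simultaneously preserving $[\tilde\Lambda_i,\tilde\Lambda_i]=0$, the Poisson-ideal property, and the gluing relations appear as $t\Pi_i=[\tilde\Lambda_i,\tilde\Lambda_i]$, $tG_i^\alpha=[\tilde\Lambda_i,\tilde F_i^\alpha]-\sum_\beta\tilde F_i^\beta\tilde W_{i\alpha}^\beta$, and $th_{ij}^\alpha=\tilde F_i^\alpha-\sum_\beta\tilde R_{ij\beta}^\alpha\tilde F_j^\beta$. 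The claim is that $(\{-\tfrac12\Pi_i\},\{(-\bar G_i^\alpha)\})\oplus(\{0\},\{(\bar h_{ij}^\alpha)\})$ defines a $1$-cocycle for $(\wedge^2 T_Y\oplus i_*\mathcal{N}_{X/Y})^\bullet$, that its class $o_\eta(e)$ is independent of the liftings, and that it vanishes iff $\eta$ lifts to $\tilde A$. I would verify the cocycle and well-definedness relations by the same bookkeeping as in $(\ref{a10})$--$(\ref{a12})$ and $(\ref{b4})$--$(\ref{b12})$, now carrying the extra cross-terms $[\tilde\Lambda_i,\tilde F_i^\alpha]$ and $[\tfrac12\Pi_i,f_i^\alpha]$ that glue the two blocks; the graded Jacobi identity, as used in $(\ref{mm2})$, is what makes the $\tilde{\nabla}\circ\tilde{\nabla}=0$ compatibility hold so that these cross-terms fit the extended differential.

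The main obstacle I anticipate is neither the top ($\Lambda'$-only) block nor the bottom ($g$-only) block in isolation — those reproduce Appendix \ref{appendixa} and Proposition \ref{b15} respectively — but the \emph{coupling} terms that link them: tracking how $t\,\overline{[\Lambda',f_i^\alpha]}$, $t\,\overline{[\tfrac12\Pi_i,f_i^\alpha]}$, and the mixed derivative terms from differentiating $[\Lambda_i,F_i^\alpha]=\sum_\beta F_i^\beta W_{i\alpha}^\beta$ in both the $\Lambda$ and $F$ directions reassemble correctly modulo $\mathcal{I}_i$. Concretely, I must confirm that $[\tfrac12\Pi(t),f_i^\alpha]|_{X}=\sum_\beta -\bar G_i^\beta\wedge\bar T_{i\alpha}^\beta+\overline{[\Lambda_0,G_i^\alpha]}$, the exact analogue of $(\ref{43u})$ in the smooth case, holds here at the level of Poisson ideals; this is where one must use identity $(\ref{b3})$ together with a careful reduction of $\sum_\gamma\tilde F_i^\gamma(\cdots)$-type expressions modulo $(t)$ and modulo $\mathcal{I}_i$, exactly as the auxiliary $Q_{i\alpha}^\gamma$ and $S_{ij\gamma}^\alpha$ were introduced in the proof of Proposition \ref{b15}. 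Once that single coupling identity is established, the remaining verifications — lifting-independence and the "vanishing iff liftable" dichotomy, split as in the two enumerated cases at the end of Proposition \ref{b15} — follow by the same routine manipulations applied blockwise.
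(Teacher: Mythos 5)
Your strategy is essentially the paper's own: part (1) is argued exactly as in the paper (the three conditions you list are the paper's verification that $(\Lambda',\{(\bar g_i^\alpha)\})$ is $\tilde{\nabla}$-closed, up to your sign conventions), and in part (2) you set up the same \v{C}ech resolution of $(\wedge^2 T_Y\oplus i_*\mathcal{N}_{X/Y})^\bullet$, and your key coupling identity $\overline{[\tfrac12\Pi,f_i^\alpha]}=\overline{[\Lambda_0,G_i^\alpha]}-\sum_\beta \bar{G}_i^\beta\wedge\bar{T}_{i\alpha}^\beta$ is precisely the paper's relation $(\ref{c3})$, the algebraic analogue of $(\ref{43u})$. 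However, there is one genuine defect in your part (2).

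You choose \emph{arbitrary} local liftings $\tilde{\Lambda}_i$ of the $\Lambda_i$, yet you place $\{0\}$ in the $C^1(\mathcal{U},\wedge^2 T_Y)$ slot of your would-be cocycle. Arbitrary local liftings need not agree on overlaps: $\tilde{\Lambda}_i-\tilde{\Lambda}_j=t\Lambda'_{ij}$ with $\Lambda'_{ij}\in\Gamma(U_i\cap U_j,\wedge^2 T_Y)$ nonzero in general, and then $\tfrac12\Pi_i-\tfrac12\Pi_j=t[\Lambda_0,\Lambda'_{ij}]$ --- this is exactly $(\ref{a11})$ --- so with a zero entry in that slot your cochain is simply not closed; moreover the gluing relation between $\{-\bar{G}_i^\alpha\}$ and $\{\bar{h}_{ij}^\alpha\}$ acquires the extra term $\overline{[-\Lambda'_{ij},f_i^\beta]}$, cf.\ $(\ref{c8})$. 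The paper handles this by carrying the component $\{-\Lambda'_{ij}\}$ in the cocycle, and it is needed again in the well-definedness step (where two liftings differ by $tD_i$ with $D_i$ only locally defined) and in the ``vanishes iff liftable'' step (vanishing must force the corrected bivectors to glue). There are two repairs: (i) include the component $\{-\Lambda'_{ij}\}$, as the paper does; or (ii) note that since $\Lambda\in H^0(Y,\wedge^2 T_Y)\otimes A$ and tensoring the $k$-vector space $H^0(Y,\wedge^2 T_Y)$ with the surjection $\tilde{A}\to A$ remains surjective, a \emph{global} lifting $\tilde{\Lambda}\in H^0(Y,\wedge^2 T_Y)\otimes\tilde{A}$ exists, so one may arrange $\Lambda'_{ij}=0$; but then you must say ``global lifting,'' not ``arbitrary lifting,'' and rerun the independence and vanishing arguments within that class (this does work: if the class is the coboundary of $(\{D_i\},\{A_i^\alpha\})$, the $C^1(\mathcal{U},\wedge^2 T_Y)$-component forces $D_i=D_j$, i.e.\ $D$ global, and $\tilde{\Lambda}+tD$, $\tilde{F}_i^\alpha+tA_i^\alpha$ give the lifting of $\eta$). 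Finally, a minor internal inconsistency: with $\tilde{\nabla}$ as defined, the closedness expressed by your own coupling identity holds for the pair $(+\tfrac12\Pi_i,\,-\bar{G}_i^\alpha)$, as in the paper, not for $(-\tfrac12\Pi_i,\,-\bar{G}_i^\alpha)$; adjust the sign of the $\Pi$-component (or of your identity) so the two blocks are compatible.
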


\begin{proof}
Let $\mathcal{U}=\{U_i\}$ be an affine open covering of $Y$ and let $I_i=(f_i^1,...,f_i^N)$ be a Poisson ideal of $\Gamma(U_i,\mathcal{O}_Y)$ defining $U_i\cap X$  such that $\{f_i^1,\cdots f_i^N\}$ is a regular sequence. We keep the notations in subsection \ref{b1}.

A first-order simultaneous deformation of $X$ in $(Y,\Lambda_0)$ is a flat family
\begin{center}
$\begin{CD}
X@>>> \mathcal{X}\subset (Y\times Spec(k[\epsilon]),\Lambda_0+\epsilon \Lambda')\\
@VVV @VVV\\
Spec(k[\epsilon])@>>> Spec(k[\epsilon])
\end{CD}$
\end{center}
Since $\Lambda_0+\epsilon \Lambda'$ with $\Lambda'\in H^0(Y,\wedge^2 T_Y)$ define a Poisson structure on $Y\times Spec(k[\epsilon])$, we have $[\Lambda_0,\Lambda']=0$.
Assume that $\mathcal{X}$ is determined by a Poisson ideal sheaf $\mathcal{I}$ generated by $\{f_i^\alpha+\epsilon g_i^\alpha\},\alpha=1,...,N$ for some $g_i^\alpha\in \Gamma(U_i,\mathcal{O}_Y)$. Then $(\bar{g}_i^1,...,\bar{g}_i^N)$ define a global section in $H^0(X,\mathcal{N}_{X/Y})$ as in the proof of Proposition \ref{b15}. On the other hand, $(f_i^1+\epsilon g_i^1,...,f_i^N+\epsilon g_i^N)$ is a Poisson ideal, we have $[\Lambda_0+\epsilon \Lambda',f_i^\alpha+\epsilon g_i^\alpha]=\sum_{\beta=1}^N(f_i^\beta+\epsilon g_i^\beta)(T_{i\alpha}^\beta+\epsilon W_{i\alpha}^\beta)$ for some $W_{i\alpha}^\beta\in \Gamma(U_i,T_Y)$. Then $\overline{[\Lambda_0,g_i^\alpha]} +\overline{[\Lambda',f_i^\alpha]}=\sum_{\beta=1}^N \bar{g}_i^\beta\bar{T}_{i\alpha}^\beta$ so that $\tilde{\nabla}(-\Lambda',(\bar{g}_i^1,...,\bar{g}_i^N))=0$. Hence $(-\Lambda',\{ (\bar{g}_i^1,...,\bar{g}_i^N) \} )\in \mathbb{H}^0(Y ,(\wedge^2 T_Y\oplus i_* \mathcal{N}_{X/Y})^\bullet)$.

Next we identify obstructions. Consider a small extension $e:0\to (t)\to \tilde{A}\to A\to 0$. Let $\eta:=(\mathcal{X}\subset (Y\times Spec(A),\Lambda))$ be an infinitesimal simultaneous deformation of $X$ in $(Y,\Lambda_0)$ over $A$. Then $\mathcal{X}$ is determined by a Poisson ideal sheaf $\mathcal{I}_A$ generated by $(F_i^1,...,F_i^N)$ in $\Gamma(U_i,\mathcal{O}_Y)\otimes A$ such that $F_i^\alpha\equiv f_i^\alpha\otimes 1 \mod \mathfrak{m}_A$, and $\{F_i^1,...,F_i^N\}$ is a regular sequence. Let $\Lambda_i\in \Gamma(U_i,\wedge^2 T_Y)\otimes A$ be the restriction of $\Lambda$ on $U_i$. Since $(F_i^1,...,F_i^N)=(F_j^1,..,F_j^N)$, we have $F_i^\alpha=\sum_{\beta=1}^N R_{ij\beta}^\alpha F_j^\beta$ for some $R_{ij\beta}^\alpha\in\Gamma(U_i\cap U_j, \mathcal{O}_Y)\otimes A$. Since $(F_i^1,...,F_i^N)$ is a Poisson ideal, we have $[\Lambda_i,F_i^\alpha]=\sum_{\beta=1}^N F_i^\beta W_{i\alpha}^\beta$ for some $W_{i\alpha}^\beta\in \Gamma(U_i,T_Y)\otimes A$. Let $\tilde{\Lambda}_i\in \Gamma(U_i,\wedge^2 T_Y)\otimes \tilde{A}$ be an arbitrary lifting of $\Lambda_i$, $\tilde{F}_i^\alpha\in \Gamma(U_i,\mathcal{O}_Y)\otimes \tilde{A}$ be an arbitrary lifting of $F_i^\alpha$, $\tilde{T}_{i\alpha}^\beta\in \Gamma(U_i,T_Y)\otimes \tilde{A}$ be an arbitrary lifting of $W_{i\alpha}^\beta$, and $\tilde{R}_{ij\beta}^\alpha\in \Gamma(U_i,\mathcal{O}_Y)\otimes \tilde{A}$ be an arbitrary lifting of $R_{ij\beta}^\alpha$. Then $[\tilde{\Lambda}_i,\tilde{\Lambda}_i]=t\Pi_i$ for some $\Pi_i\in \Gamma(U_i,\wedge^3 T_Y)$, $\tilde{\Lambda}_i-\tilde{\Lambda}_j=t\Lambda_{ij}'$ for some $\Lambda_{ij}'\in \Gamma(U_i\cap U_j,\wedge^2 T_Y)$, $[\tilde{\Lambda}_i,\tilde{F}_i^\alpha]-\sum_{\beta=1}^N \tilde{F}_i^\beta \tilde{T}_{i\alpha}^\beta =t G_i^\alpha$ for some $G_i^\alpha\in \Gamma(U_i,T_Y)$, $\tilde{F}_i^\alpha-\sum_{\beta=1}^N \tilde{R}_{ij\beta}^\alpha \tilde{F}_j^\beta=t h_{ij}^\alpha$ for some $h_{ij}^\alpha\in \Gamma(U_i\cap U_j,\mathcal{O}_Y)$, and $\tilde{R}_{ik\gamma}^\alpha-\sum_{\beta=1}^N \tilde{R}_{ij\beta}^\alpha \tilde{R}_{jk\gamma}^\beta=tP_{ijk\gamma}^\alpha$ for some $P_{ijk\gamma}^\alpha\in \Gamma(U_i\cap U_j\cap U_k,\mathcal{O}_Y)$. We will show that $(\{\frac{1}{2}\Pi_i\},\{(-\bar{G}_i^1,...,-\bar{G}_i^N)\})\oplus (\{-\Lambda_{ij}'\},\{\bar{h}_{ij}^1,...,\bar{h}_{ij}^N\})\in C^0(\mathcal{U},\wedge^3 T_Y\oplus i_*(\mathcal{N}_{X/Y}\otimes T_Y|_X))\oplus \mathcal{C}^1(\mathcal{U},\wedge^2 T_Y\oplus i_*\mathcal{N}_{X/Y})$ define a $1$-cocycle in the following \v{C}ech resolution of $(\wedge^2 T_Y\oplus i_*\mathcal{N}_{X/Y})^\bullet$:
 
 \begin{center}
$\begin{CD}
C^0(\mathcal{U},\wedge^4T_Y\oplus i_*(\mathcal{N}_{X/Y}\otimes \wedge^2 T_{Y}|_X))\\
@A\tilde{\nabla}AA \\
C^0(\mathcal{U},\wedge^3 T_Y\oplus i_*(\mathcal{N}_{X/Y}\otimes T_{Y}|_X))@>\delta>> C^1(\mathcal{U},\wedge^3T_Y\oplus i_*(\mathcal{N}_{X/Y}\otimes T_Y|_X))\\
@A\tilde{\nabla}AA @A\tilde{\nabla}AA\\
C^0(\mathcal{U},\wedge^2 T_Y\oplus i_*\mathcal{N}_{X/Y})@>-\delta>> C^1(\mathcal{U},\wedge^2 T_Y\oplus i_*\mathcal{N}_{X/Y})@>\delta>> C^2(\mathcal{U}, \wedge^2 T_Y\oplus i_*\mathcal{N}_{X/Y})
\end{CD}$
\end{center}

First we show that $\tilde{\nabla}((\{\frac{1}{2}\Pi_i\},\{(-\bar{G}_i^1,...,-\bar{G}_i^N)\}))=0$. From (\ref{a10}), we have 
\begin{align}\label{c2}
-[\frac{1}{2}\Pi_i,\Lambda_0]=0
\end{align}
 On the other hand, as in (\ref{b3}), we can show $\sum_{\gamma=1}^N F_i^\gamma([\Lambda_i,W_{i\alpha}^\gamma]-\sum_{\beta=1}^N W_{i\beta}^\gamma\wedge W_{i\alpha}^\beta)$ so that $\sum_{\gamma=1}^N \tilde{F}_i^\gamma([\Lambda_i,\tilde{T}_{i\alpha}^\gamma]-\sum_{\beta=1}^N \tilde{T}_{i\beta}^\gamma \wedge\tilde{T}_{i\alpha}^\beta)=\sum_{\gamma=1}^N \tilde{F}_i^\gamma tQ_{i\alpha}^\gamma=\sum_{\gamma=1}^N f_i^\gamma tQ_{i\alpha}^\gamma$ for some $Q_{i\alpha}^\gamma\in \Gamma(U_i, \wedge^2 T_Y)$. Then we have
\begin{align}\label{c1}
t[\Lambda_0, G_i^\alpha]&=[\tilde{\Lambda}_i,[\tilde{\Lambda}_i,\tilde{F}_i^\alpha]]-\sum_{\beta=1}^N[\tilde{\Lambda}_i, \tilde{F}_i^\beta\tilde{T}_{i\alpha}^\beta]=t[\frac{1}{2}\Pi_i,f_i^\alpha]-\sum_{\beta=1}^N \tilde{F}_i^\beta[\tilde{\Lambda}_i,\tilde{T}_{i\alpha}^\beta]+\sum_{\beta=1}^N[\tilde{\Lambda}_i,\tilde{F}_i^\beta]\wedge \tilde{T}_{i\alpha}^\beta\\
&=t[\frac{1}{2}\Pi_i,f_i^\alpha]-\sum_{\gamma=1}^N \tilde{F}_i^\gamma [\tilde{\Lambda}_i,\tilde{T}_{i\alpha}^\gamma]+\sum_{\beta=1}^N tG_i^\beta \wedge T_{i\alpha}^\beta+\sum_{\beta,\gamma=1}^N \tilde{F}_i^\gamma \tilde{T}_{i\beta}^\gamma\wedge\tilde{T}_{i\alpha}^\beta \notag 
\end{align}
By taking $-$ on (\ref{c1}), we obtain
\begin{align}
t\overline{[\Lambda_0, G_i^\alpha]}=t\overline{[\frac{1}{2}\Pi_i,f_i^\alpha]}-\sum_{\gamma=1}^N \bar{f}_i^\gamma tQ_{i\alpha}^\gamma+\sum_{\beta=1}^N t\bar{G}_i^\beta \wedge  \bar{T}_{i\alpha}^\beta=t\overline{[\frac{1}{2}\Pi_i,f_i^\alpha]}+\sum_{\beta=1}^N t\bar{G}_i^\beta \wedge  \bar{T}_{i\alpha}^\beta \notag\\
\iff \overline{[\frac{1}{2}\Pi_i,f_i^\alpha]}-\overline{[-\bar{G}_i^\alpha,\Lambda_0]}+(-1)^1\sum_{\beta=1}^N -\bar{G}_i^\beta\wedge \bar{T}_{i\alpha}^\beta=0\label{c3}
\end{align}

Next we show that $\delta((\{\frac{1}{2}\Pi_i\},\{(-\bar{G}_i^1,...,-\bar{G}_i^N)\}))+\tilde{\nabla}((\{-\Lambda_{ij}'\},\{\bar{h}_{ij}^1,...,\bar{h}_{ij}^N\}))=0$. From (\ref{a11}), we have
\begin{align}\label{c4}
\delta(\frac{1}{2}\Pi_i)-[-\Lambda_{ij}',\Lambda_0]=0
\end{align}

On the other hand, we have 
\begin{align}\label{c5}
t(G_i^\alpha-\sum_{\beta=1}^N r_{ij\beta}^\alpha G_j^\beta)=[\tilde{\Lambda}_i,\tilde{F}_i^\alpha]-\sum_{\beta=1}^N \tilde{F}_i^\beta \tilde{T}_{i\alpha}^\beta-\sum_{\beta=1}^N \tilde{R}_{ij\beta}^\alpha [\tilde{\Lambda}_j,\tilde{F}_j^\beta]+\sum_{\beta,\gamma=1}^N \tilde{R}_{ij\beta}^\alpha \tilde{F}_j^\gamma\tilde{T}_{j\beta}^\gamma
\end{align}
\begin{align}\label{c6}
& t[\Lambda_0,h_{ij}^\alpha]-t\sum_{\beta=1}^N h_{ij}^\beta T_{i\alpha}^\beta=[\tilde{\Lambda}_i,\tilde{F}_i^\alpha]-\sum_{\beta=1}^N [\tilde{\Lambda}_i,\tilde{R}_{ij\beta}^\alpha\tilde{F}_j^\beta]-\sum_{\beta=1}^N \tilde{F}_i^\beta\tilde{T}_{i\alpha}^\beta+\sum_{\beta,\gamma=1}^N \tilde{R}_{ij\gamma}^\beta\tilde{F}_j^\gamma \tilde{T}_{i\alpha}^\beta\\
& =[\tilde{\Lambda}_i,\tilde{F}_i^\alpha]-\sum_{\beta=1}^N \tilde{F}_j^\beta[\tilde{\Lambda}_i,\tilde{R}_{ij\beta}^\alpha]-\sum_{\beta=1}^N \tilde{R}_{ij\beta}^\alpha[\tilde{\Lambda}_j+t \Lambda_{ij}',\tilde{F}_j^\beta]-\sum_{\beta=1}^N\tilde{F}_i^\beta\tilde{T}_{i\alpha}^\beta+\sum_{\beta,\gamma=1}^N \tilde{R}_{ij\gamma}^\beta\tilde{F}_j^\gamma \tilde{T}_{i\alpha}^\beta\notag
 \end{align}
  As in (\ref{b2}), we can show $\sum_{\gamma=1}^N F_j^\gamma (\sum_{\beta=1}^N R_{ij\gamma}^\beta W_{i\alpha}^\beta-[\Lambda_i, R_{ij\gamma}^\alpha]-\sum_{\beta=1}^N R_{ij\beta}^\alpha W_{j\beta}^\gamma)=0$ so that we have $\sum_{\gamma=1}^N \tilde{F}_j^\gamma(\sum_{\beta=1}^N\tilde{R}_{ij\gamma}^\beta \tilde{T}_{i\alpha}^\beta-[\tilde{\Lambda}_i, \tilde{R}_{ij\gamma}^\alpha]-\sum_{\beta=1}^N \tilde{R}_{ij\beta}^\alpha \tilde{T}_{j\beta}^\gamma)=\sum_{\gamma=1}^N \tilde{F}_j^\gamma tS_{ij\gamma}^\alpha$ for some $S_{ij\gamma}^\alpha\in \Gamma(U_i\cap U_j,T_Y)$.
 Then from (\ref{c5}) and (\ref{c6}), we get
 \begin{align}\label{c7}
 &t(G_i^\alpha-\sum_{\beta=1}^N r_{ij\beta}^\alpha G_j^\beta)- t[\Lambda_0,h_{ij}^\alpha]+t\sum_{\beta=1}^N h_{ij}^\beta T_{i\alpha}^\beta\\
 &=\sum_{\beta,\gamma=1}^N \tilde{R}_{ij\beta}^\alpha \tilde{F}_j^\gamma\tilde{T}_{j\beta}^\gamma+\sum_{\gamma=1}^N \tilde{F}_j^\gamma [\tilde{\Lambda}_i,\tilde{R}_{ij\gamma}^\alpha] +\sum_{\beta=1}^N r_{ij\beta}^\alpha[t\Lambda_{ij}',f_j^\beta]-\sum_{\beta,\gamma=1}^N \tilde{R}_{ij\gamma}^\beta\tilde{F}_j^\gamma \tilde{T}_{i\alpha}^\beta\notag
 \end{align}
 By taking $-$ on (\ref{c7}), we get
 \begin{align}
 t(\bar{G}_i^\alpha-\sum_{\beta=1}^N \bar{r}_{ij\beta}^\alpha \bar{G}_j^\beta)- t\overline{[\Lambda_0,h_{ij}^\alpha]}+t\sum_{\beta=1}^N \bar{h}_{ij}^\beta \bar{T}_{i\alpha}^\beta=-\sum_{\gamma=1}^N t\bar{S}_{ij\gamma} \tilde{f}_i^\gamma+\sum_{\beta=1}^N t \bar{r}_{ij\beta}^\alpha\overline{[\Lambda_{ij}',f_j^\beta]}=\sum_{\beta=1}^N t\bar{r}_{ij\beta}^\alpha \overline{[ \Lambda_{ij}', f_j^\beta]}=t\overline{[\Lambda_{ij}', f_i^\beta]} \notag\\
 \iff  (\sum_{\beta=1}^N \bar{r}_{ij\beta}^\alpha\cdot(-\bar{G}_j^\beta)-(-\bar{G}_i^\alpha))+ \overline{[-\Lambda_{ij}', f_i^\beta]}-\overline{[h_{ij}^\alpha, \Lambda_0]}+\sum_{\beta=1}^N \bar{h}_{ij}^\beta \bar{T}_{i\alpha}^\beta=0\label{c8}
 \end{align}

 Lastly, from (\ref{a12}) and (\ref{b9}), we have
 \begin{align}\label{c99}
 \delta(\{-\Lambda_{ij}'\},\{(\bar{h}_{ij}^1,...,\bar{h}_{ij}^N)\})=0.
 \end{align}
 Hence from (\ref{c2}),(\ref{c3}),(\ref{c4}), (\ref{c8}), and (\ref{c99}), $(\{\frac{1}{2}\Pi_i\},\{(-\bar{G}_i^1,...,-\bar{G}_i^N)\})\oplus (\{-\Lambda_{ij}'\},\{(\bar{h}_{ij}^1,...,\bar{h}_{ij}^N)\})\in C^0(\mathcal{U},\wedge^3 T_Y\oplus i_*(\mathcal{N}_{X/Y}\otimes T_Y|_X))\oplus \mathcal{C}^1(\mathcal{U},\wedge^2 T_Y\oplus i_*\mathcal{N}_{X/Y})$ define a $1$-cocycle in the above \v{C}ech resolution.

Now we choose another arbitrary lifting $F'^\alpha_i\in \Gamma(U_i,\mathcal{O}_Y)\otimes \tilde{A}$ of $F_i^\alpha$, another arbitrary lifting $\tilde{T}'^\beta_{i\alpha}\in \Gamma(U_i, T_Y)\otimes \tilde{A}$ of $W_{i\alpha}^\beta$, another arbitrary lifting $R'^\alpha_{ij\beta}\in \Gamma(U_i\cap U_j, \mathcal{O}_Y)\otimes \tilde{A}$ of $R_{ij\beta}^\alpha$ and another arbitrary lifting $\tilde{\Lambda}'_i\in \Gamma(U_i, \wedge^2 T_Y)\otimes \tilde{A}$ of $\Lambda_i$. We show that the associated cohomology class $b:=(\{\frac{1}{2}\Pi_i'\},\{(-\bar{G}'^1_i,..., -\bar{G}'^N_i)\})\oplus (\{-\Lambda_{ij}''\},\{(\bar{h}'^1_{ij},...,\bar{h}'^N_{ij})\})$ is cohomologous to $a:=(\{\frac{1}{2}\Pi_i\},\{(-\bar{G}^1_i,..., -\bar{G}^N_i)\})\oplus (\{-\Lambda_{ij}'\},\{ (\bar{h}^1_{ij},...,\bar{h}^N_{ij})\})$. We note that $\tilde{F}'^\alpha_i=\tilde{F}_i^\alpha+t A_i^\alpha$ for some $A_i^\alpha\in \Gamma(U_i, \mathcal{O}_Y)$, $\tilde{T}'^\beta_{i\alpha}=\tilde{T}_{i\alpha}^\beta +tB_{i\alpha}^\beta$ for some $B_{i\alpha}^\beta\in \Gamma(U_i, T_Y)$, $\tilde{R}'^\alpha_{ij\beta}=\tilde{R}_{ij\beta}^\alpha+tC_{ij\beta}^\alpha$ for some $C_{ij\beta}^\alpha\in \Gamma(U_i\cap U_j, \mathcal{O}_Y)$ and $\tilde{\Lambda}'_i=\tilde{\Lambda}_i+t D_i$ for some $D_i\in \Gamma(U_i, \wedge^2 T_Y)$. Then

\begin{align}\label{c20}
t(G'^\alpha_i-G_i^\alpha)=[\tilde{\Lambda}_i',  \tilde{F}'^\alpha_i]-\sum_{\beta=1}^N \tilde{F}'^\beta_i \tilde{T}'^\beta_{i\alpha}-[\tilde{\Lambda}_i, \tilde{F}_i^\alpha]+\sum_{\beta=1}^N \tilde{F}_i^\beta \tilde{T}_{i\alpha}^\beta=[tD_i,f_i^\alpha]+[\Lambda_0, t A_i^\alpha]-\sum_{\beta=1}^N tA_i^\beta T_{i\alpha}^\beta-\sum_{\beta=1}^N f_i^\beta tB_{i\alpha}^\beta
\end{align}
By taking $-$ on (\ref{c20}), we get  
\begin{align}
\bar{G}'^\alpha_i-\bar{G}_i^\alpha=\overline{[D_i,f_i^\alpha]}+\overline{[\Lambda_0,  \bar{A}_i^\alpha]}-\sum_{\beta=1}^N \bar{A}_i^\beta \bar{T}_{i\alpha}^\beta\iff -\bar{G}^\alpha_i-(-\bar{G}'^\alpha_i)=\overline{[D_i,f_i^\alpha]}-\overline{[\Lambda_0,-  A_i^\alpha]}+\sum_{\beta=1}^N -\bar{A}_i^\beta \bar{T}_{i\alpha}^\beta
\end{align}
 and from (\ref{b12}), we have $\bar{h}'^\alpha_{ij}-\bar{h}_{ij}^\alpha=\bar{A}_i^\alpha-\sum_{\beta=1}^N \bar{r}_{ij\beta}^\alpha \bar{A}_j^\beta$ so that $\{\bar{h}^\alpha_{ij}-\bar{h}'^\alpha_{ij}\}=-\delta(\{-\bar{A}_i^\alpha\})$. On the other hand, from (\ref{aa1}) and (\ref{aa2}), we have $\frac{1}{2}\Pi_i-\frac{1}{2}\Pi'_i=-[D_i,\Lambda_0]$ and $ -\Lambda_{ij}'-(-\Lambda_{ij}'')=-\delta(D_i)$. 
 Hence $(\{D_i\}\oplus \{(-\bar{A}_i^1,...,-\bar{A}_i^N)\})$ is mapped to $a-b$ so that $a$ is cohomologous to $b$. So given a small extension $e:0\to (t)\to \tilde{A}\to A \to 0$, we can associate an element $o_\eta(e):=$ the cohomology class $a\in \mathbb{H}^1(Y,(\wedge^2 T_Y\oplus i_*\mathcal{N}_{X/Y})^\bullet)$. We note that $o_\eta(e)=0$ if and only if there exists collections $\{\tilde{F}_i^\alpha\},\{\tilde{T}_{i\alpha}^\beta\},\{\tilde{R}_{ij\beta}^\alpha\}$ and $\{\tilde{\Lambda}_i\}$ such that $\bar{h}_{ij}^\alpha=0,\bar{G}_i^\alpha=0,\alpha=1,...,N, \Pi_i=0$, and $\Lambda_{ij}'=0$:
\begin{enumerate}
\item If $\bar{h}_{ij}^\alpha=0$, then $(\tilde{F}_i^1,...,\tilde{F}_i^N)=(\tilde{F}_j^1,...,\tilde{F}_j^N)$ so that $\{(\tilde{F}_{i1},...,\tilde{F}_{iN})\}$ define an ideal sheaf on $Y\times Spec(\tilde{A})$.
\item If $\Pi_i=0$ and $\Lambda_{ij}'=0$, then $[\tilde{\Lambda}_i,\tilde{\Lambda}_i]=0$, and $\{\tilde{\Lambda}_i\}$ glues together to define a Poisson structure on $Y\times Spec(\tilde{A})$.
\item  If $\bar{G}_i^\alpha=0$, then $G_i^\alpha=\sum_{\beta=1}^N f_i^\beta P_i^\beta$ for some $P_i^\beta\in \Gamma(U_i,T_Y)$. Then $[\tilde{\Lambda}_i,\tilde{F}_i^\alpha]=\sum_{\beta=1}^N(\tilde{T}_i^\beta+tP_i^\beta)\tilde{F}_i^\beta$ so that $(\tilde{F}_i^1,...\tilde{F}_i^N)$ defines a Poisson ideal.
\end{enumerate}
Hence $o_\eta(e)=0$ if and only if there is a lifting of $\eta$ to $\tilde{A}$.
\end{proof}

\bibliographystyle{amsalpha}
\bibliography{References-Rev9}

\end{document}